\tikzset{node distance=2cm, auto}
\setlist[description]{font=\normalfont\bfseries\:\!}
\renewcommand\labelenumi{(\alph{enumi})}
\renewcommand\theenumi\labelenumi
\numberwithin{equation}{section}
\newtheorem{theorem}{Theorem}[section]
\newtheorem{proposition}[theorem]{Proposition}
\newtheorem{lemma}[theorem]{Lemma}
\newtheorem{corollary}[theorem]{Corollary}
\theoremstyle{definition}
\newtheorem{definition}[theorem]{Definition}
\newtheorem{example}[theorem]{Example}
\newtheorem{remark}[theorem]{Remark}
\newenvironment{content}[1][blank]{\noindent{\large\bfseries\scshape\MakeLowercase{#1}}.\thinspace\noindent}{\vskip .5cm}
\newenvironment{acknowledgements}{\begin{content}[Acknowledgements]}{\end{content}}
\renewenvironment{abstract}{\begin{content}[Abstract]}{\par\noindent\rule{\textwidth}{1pt}\end{content}}
\newcommand{\CategoryFont}[1]{\mathsf{\uppercase{#1}}}
\newcommand{\FunctorFont}[1]{\mathsf{#1}}
\renewcommand{\|}{\:|\:}
\renewcommand{\bar}[1]{\mkern 1.5mu\overline{\mkern-1.5mu#1\mkern-1.5mu}\mkern 1.5mu}
\newcommand{\x}{\otimes}
\renewcommand{\,}{,\dots,}
\newcommand{\1}{\mathds{1}}
\renewcommand{\=}{\mathrel{\mathop:}=}
\renewcommand{\epsilon}{\varepsilon}
\renewcommand{\o}{\circ}
\renewcommand{\b}{\bullet}
\newcommand{\<}{\langle}
\renewcommand{\>}{\rangle}
\newcommand{\op}{\mathsf{op}}
\renewcommand{\*}{\ast}
\newcommand{\Inc}{\mathsf{Inc}}
\newcommand{\mathdash}{\relbar\mkern-5mu\relbar\mkern-5mu\relbar}
\newcommand{\id}{\mathsf{id}}
\newcommand{\CC}{{\mathbf{K}}}
\newcommand{\DD}{{\mathbf{J}}}
\newcommand{\X}{\mathbb{X}}
\newcommand{\End}{\CategoryFont{End}}
\newcommand{\Cat}{\CategoryFont{Cat}}
\newcommand{\Mnd}{\CategoryFont{Mnd}}
\newcommand{\Fib}{\CategoryFont{Fib}}
\newcommand{\Indx}{\CategoryFont{Indx}}
\newcommand{\RestrCat}{\CategoryFont{RCat}}
\newcommand{\sRestrCat}{\CategoryFont{sRCat}}
\newcommand{\TngCat}{\CategoryFont{TngCat}}
\newcommand{\cTngCat}{\CategoryFont{cTngCat}}
\newcommand{\TngMnd}{\CategoryFont{TngMnd}}
\newcommand{\T}{\mathrm{T}}
\newcommand{\TT}{\mathbb{T}}
\newcommand{\Tng}{\CategoryFont{Tng}}
\newcommand{\cTng}{\CategoryFont{CTng}}
\newcommand{\Cart}{\CategoryFont{Cart}}
\newcommand{\q}{\mathbf{q}}
\newcommand{\p}{\mathbf{p}}
\newcommand{\TngFib}{\CategoryFont{TngFib}}
\newcommand{\TngIndx}{\CategoryFont{TngIndx}}
\newcommand{\TngRestrCat}{\CategoryFont{TngRCat}}
\newcommand{\U}{\CategoryFont{U}}
\newcommand{\Univ}[1]{\mathcal{\uppercase{#1}}}
\newcommand{\UnivQ}{\Univ{Q}}
\newcommand{\UnivZ}{\Univ{Z}}
\newcommand{\UnivS}{\Univ{S}}
\newcommand{\Base}{\FunctorFont{Base}}
\newcommand{\Tot}{\FunctorFont{Tot}}
\newcommand{\pt}{{\mathsf{pt}}}
\renewcommand{\H}{\mathsf{H}}
\newcommand{\Curv}{\rho}
\newcommand{\Tors}{\tau}
\newcommand{\Riem}{\mathsf{R}}
\newcommand{\TorsTens}{\mathsf{W}}
\newcommand{\Dsply}{\mathscr{D}}
\newcommand{\Split}{\FunctorFont{Split}}
\newcommand{\Parall}{\FunctorFont{Prll}}
\newcommand{\Equif}{\CategoryFont{Equif}}
\newcommand{\IND}{\mathscr{I}}
\newcommand{\I}{\mathbb{I}}
\newcommand{\E}{\mathbb{E}}
\newcommand{\VF}{\CategoryFont{VF}}
\newcommand{\DO}{\CategoryFont{DO}}
\newcommand{\DB}{\CategoryFont{DB}}
\newcommand{\LS}{\CategoryFont{LS}}
\newcommand{\HAC}{\CategoryFont{HAC}}
\newcommand{\HLC}{\CategoryFont{HLC}}
\newcommand{\VAC}{\CategoryFont{VAC}}
\newcommand{\VLC}{\CategoryFont{VLC}}
\newcommand{\AC}{\CategoryFont{AC}}
\newcommand{\LC}{\CategoryFont{LC}}
\newcommand{\LSC}{\CategoryFont{LSC}}
\title{The formal theory of tangentads\\ {\Large PART II}}
\date{}
\author{\uppercase{Marcello Lanfranchi}}
\affil{\normalsize\textit{Macquarie University, School of Mathematical and Physical Sciences}}
\begin{document}

\maketitle

\begin{abstract}\noindent
Tangent category theory is a well-established categorical framework for differential geometry. A long list of fundamental geometric constructions, such as the tangent bundle functor, vector fields, Euclidean spaces, and vector bundles have been successfully generalized and internalized within tangent categories. Over the past decade, the theory has also been extended in several directions, yielding concepts such as tangent monads, tangent fibrations, tangent restriction categories, and reverse tangent categories. It is natural to wonder how these new flavours of the theory interact with the geometric constructions. How does a tangent monad or a tangent fibration lift to the tangent category of differential bundles of a tangent category? What is the correct notion of connections for a tangent restriction category? In previous work, we introduced tangentads, a unifying framework that generalizes many tangent-like notions, and developed a formal theory of vector fields for tangentads. In this paper, we extend this formal theory to three further fundamental constructions. These are differential objects, which generalize Euclidean spaces, differential bundles, which represent vector bundles in tangent category theory, and connections on differential bundles, which are the analogue of Koszul connections. These notions are introduced in the general theory of tangentads via appropriate universal properties. We then extend some of the main results of tangent category theory, including the equivalence between differential objects and differential bundles over the terminal object, and show that connections admit well-defined notions of covariant derivative, curvature, and torsion. Finally, we construct connections using PIE limits and apply our framework to several concrete instances of tangentads.
\end{abstract}


\tableofcontents

\section{Introduction}
\label{section:introduction}
In recent years, tangent category theory, a well-established categorical framework for differential geometry, has been extended in new directions. In~\cite{cockett:differential-bundles}, the notion of tangent fibrations was introduced to study differential bundles. Tangent monads were studied in~\cite{cockett:tangent-monads} as the tangent-categorical analogue of monads, while tangent restriction categories were introduced in~\cite{cockett:tangent-cats} to take into account the partiality of morphisms.

\par It is natural to ask whether these new contexts admit extensions of the constructions and results of tangent category theory. Furthermore, one might hope to develop such extensions in a way that not only encompasses the existing flavours of tangent category theory, but can also be applied in future investigations in other settings.

\par In previous papers~\cite{lanfranchi:tangentads-II}, we developed a formal theory of vector fields by generalizing several results from the theory of vector fields to tangent fibrations, tangent monads, and tangent restriction categories. Our approach was, in fact, more general. We introduced a notion of vector fields for any tangentad, which is a $2$-categorical formalization of a tangent category. We proved that vector fields in this general context carry both a Lie algebra structure and a $2$-monad structure. We also showed how to construct vector fields in the presence of PIE limits, which are a special class of $2$-limits.

\par In this paper, we develop a formal theory for three other important constructions of tangent category theory, that is differential objects, differential bundles, and connections on differential bundles. Differential objects are the analogue in tangent categories of Euclidean spaces, that is, spaces whose tangent bundle is isomorphic to two copies of the space itself. Differential bundles are the analogue of vector bundles in differential geometry, and connections on differential bundles correspond to Koszul connections on vector bundles.

\par We begin by recalling the definition of tangentads, introduced in previous work~\cite{lanfranchi:tangentads-I}, which provide an abstraction of tangent categories in the general context of $2$-category theory. In particular, tangent fibrations, tangent monads, and tangent restriction categories are all examples of tangentads in suitable $2$-categories. We also recall the definition of Cartesian tangentads, which are Cartesian objects in the $2$-category of tangentads.

\par After briefly recalling the main definitions and results concerning differential objects in the context of tangent categories, we present the universal property satisfied by this construction and use it to characterize differential objects for arbitrary Cartesian tangentads. We prove that this construction is $2$-functorial and compute it for several examples of tangentads.

\par We then carry out analogous developments for differential bundles and for connections: we recall the main definitions and results in the context of tangent categories; we present the universal properties enjoyed by these constructions, and extend to the formal context some results. We prove that, under mild conditions, a tangentad admits the construction of differential objects whenever it admits the construction of differential bundles, generalizing the important theorem that establishes that differential bundles over the terminal object are equivalent to differential objects. We construct a notion of curvature and torsion for connections, and construct connections using PIE limits.

\vskip .5cm

\begin{acknowledgements}
The author would like to thank JS Lemay, Richard Garner, and Steve Lack for useful discussions and suggestions on this work. This material is based upon work supported by the AFOSR under award number FA9550-24-1-0008.
\end{acknowledgements}


\section{Recollection on tangentads}
\label{section:tangentads}
Tangentads were initially introduced in~\cite{lanfranchi:grothendieck-tangent-cats} with the name of tangent objects, to establish a Grothendieck construction in the context of tangent categories. In~\cite{lanfranchi:tangentads-I}, this notion was revisited and proved to be suitable to capture a long list of existing concepts in tangent category theory, such as tangent fibrations, tangent monads, tangent split restriction categories, reverse tangent categories, strong display tangent categories, and infinitesimal objects. In this section, we briefly review the definition of a tangentad in a $2$-category.

\subsection{The definition of a tangentad}
\label{subsection:definition-tangentad}
The notion of tangentads was inspired by Leung's approach to tangent categories~\cite{leung:weil-algebras}. In this section, we recall this definition in a more explicit form.

\begin{definition}[{\cite[Definition~4.3]{lanfranchi:grothendieck-tangent-cats}}]
\label{definition:tangentad}
A tangentad in a $2$-category $\CC$ consists of the following data:
\begin{description}
\item[Base object] An object $\X$ of $\CC$;

\item[Tangent bundle $1$-morphism] A $1$-endomorphism $\T\colon\X\to\X$;

\item[Projection]  A $2$-morphism $p\colon\T\Rightarrow\id_\X$, which admits all pointwise $n$-fold pullbacks
\begin{equation*}
\begin{tikzcd}
	{\T_n} & \T \\
	\T & {\id_\X}
	\arrow["{\pi_n}", from=1-1, to=1-2]
	\arrow["{\pi_1}"', from=1-1, to=2-1]
	\arrow["\lrcorner"{anchor=center, pos=0.125}, draw=none, from=1-1, to=2-2]
	\arrow["\dots"{marking, allow upside down}, shift right=3, draw=none, from=1-2, to=2-1]
	\arrow["p", from=1-2, to=2-2]
	\arrow["p"', from=2-1, to=2-2]
\end{tikzcd}
\end{equation*}
which are preserved  by all iterates $\T^n$ of $\T$;

\item[Zero morphism] A $2$-morphism $z\colon\id_\X\Rightarrow\T$;

\item[Sum morphism] A $2$-morphism $s\colon\T_2\Rightarrow\T$;
\end{description}
such that, $\p\=(p,z,s)$ is a $\bar\T$-additive bundle of $\End(\X)$, where $\bar\T$ sends an endomorphism $F\colon\X\to\X$ to $\T\o F$;
\begin{description}
\item[Vertical lift] A $2$-morphism $l\colon\T\Rightarrow\T^2$ such that:
\begin{align*}
&(z,l)\colon\p\to\T\p
\end{align*}
is an additive bundle morphism;

\item[Canonical flip] A $2$-morphism $c\colon\T^2\Rightarrow\T^2$ such that:
\begin{align*}
&(\id_\X,c)\colon\T\p\to\p_\T
\end{align*}
is an additive bundle morphism;
\end{description}
subject to the following conditions:
\begin{enumerate}
\item The vertical lift is coassocative, and compatible with the canonical flip:
\begin{equation*}
\begin{tikzcd}
{\T} & {\T^2} \\
{\T^2} & {\T^3}
\arrow["l", from=1-1, to=1-2]
\arrow["l"', from=1-1, to=2-1]
\arrow["{\T l}", from=1-2, to=2-2]
\arrow["{l_\T}"', from=2-1, to=2-2]
\end{tikzcd}\hfill\quad
\begin{tikzcd}
{\T} & {\T^2} \\
& {\T^2}
\arrow["l", from=1-1, to=1-2]
\arrow["l"', from=1-1, to=2-2]
\arrow["c", from=1-2, to=2-2]
\end{tikzcd}\hfill\quad
\begin{tikzcd}
{\T^2} & {\T^3} & {\T^3} \\
{\T^2} && {\T^3}
\arrow["{l_\T}", from=1-1, to=1-2]
\arrow["c"', from=1-1, to=2-1]
\arrow["{\T c}", from=1-2, to=1-3]
\arrow["{c_\T}", from=1-3, to=2-3]
\arrow["{\T l}"', from=2-1, to=2-3]
\end{tikzcd}
\end{equation*}

\item The canonical flip is a symmetric braiding:
\begin{equation*}
\begin{tikzcd}
{\T^2} & {\T^2} \\
& {\T^2}
\arrow["c", from=1-1, to=1-2]
\arrow[equals, from=1-1, to=2-2]
\arrow["c", from=1-2, to=2-2]
\end{tikzcd}\hfill\quad
\begin{tikzcd}
{\T^3} & {\T^3} & {\T^3} \\
{\T^3} & {\T^3} & {\T^3}
\arrow["{\T c}", from=1-1, to=1-2]
\arrow["{c_\T}"', from=1-1, to=2-1]
\arrow["{c_\T}", from=1-2, to=1-3]
\arrow["{\T c}", from=1-3, to=2-3]
\arrow["{\T c}"', from=2-1, to=2-2]
\arrow["{c_\T}"', from=2-2, to=2-3]
\end{tikzcd}
\end{equation*}

\item The tangent bundle is locally linear, that is, the following is a pointwise pullback diagram:
\begin{equation*}
\begin{tikzcd}
{\T_2} & {\T\o\T_2} & {\T^2} \\
\id_\X && {\T}
\arrow["{z_\T\times l}", from=1-1, to=1-2]
\arrow["{\pi_1p}"', from=1-1, to=2-1]
\arrow["{\T s}", from=1-2, to=1-3]
\arrow["{\T p}", from=1-3, to=2-3]
\arrow["z"', from=2-1, to=2-3]
\end{tikzcd}
\end{equation*}
\end{enumerate}
Furthermore, a tangentad admits negatives when it is equipped with:
\begin{description}
\item[Negation] A $2$-morphism $n\colon\T\Rightarrow\T$ such that:
\begin{equation*}
\begin{tikzcd}
\T & {\T_2} \\
{\id_\X} & \T
\arrow["{\<n,\id_\T\>}", from=1-1, to=1-2]
\arrow["p"', from=1-1, to=2-1]
\arrow["s", from=1-2, to=2-2]
\arrow["z"', from=2-1, to=2-2]
\end{tikzcd}
\end{equation*}
\end{description}
\end{definition}

We suggest the reader consult~\cite{lanfranchi:tangentads-I} for more details on this definition and for more examples.

\begin{example}
\label{example:tangent-categories}
Tangent categories, initially introduced in~\cite{rosicky:tangent-cats}, are the archetypal example of tangentads. In particular, a tangent category is precisely a tangentad in the $2$-category $\Cat$ of categories.
\end{example}

\begin{example}
\label{example:tangent-monads}
Tangent monads, introduced in~\cite[Definition~19]{cockett:tangent-monads}, are monads in the $2$-category of tangent categories. Concretely, a tangent monad on a tangent category $(\X,\TT)$ consists of a monad $S\colon\X\to\X$ equipped with a natural transformation $\alpha_M\colon S\T M\to\T SM$, for $M\in\X$, such that $(S,\alpha)\colon(\X,\TT)\to(\X,\TT)$ is a lax tangent morphism. Furthermore, the unit $\eta_M\colon M\to SM$ and the multiplication $\mu_M\colon S^2M\to SM$ of the monad $S$ are tangent natural transformations, that is, they are compatible with $\alpha$.
\par More generally, a tangent monad internal to a $2$-category $\CC$ is monad in the $2$-category $\Tng(\CC)$ of tangentads of $\CC$.
\par \cite[Proposition~4.2]{lanfranchi:tangentads-I} shows that, for a $2$-category $\CC$, the $2$-category $\Mnd(\Tng(\CC))$ of monads in the $2$-category of tangentads of $\CC$ is isomorphic to the $2$-category $\Tng(\Mnd(\CC))$ of tangentads in the $2$-category of monads of $\CC$. In particular, tangent monads are tangentads in the $2$-category $\Mnd\=\Mnd(\Cat)$ of monads.
\end{example}

\begin{example}
\label{example:tangent-fibrations}
Tangent fibrations, introduced in~\cite[Definition~5.2]{cockett:differential-bundles}, are (cloven) fibrations $\Pi\colon(\X',\TT')\to(\X,\TT)$ between two tangent categories which preserve the tangent structures strictly, that is, $\Pi$ strictly preserves the tangent structures, and the tangent bundle functors define a Cartesian morphism of fibrations.
\par \cite[Proposition~5.1]{lanfranchi:grothendieck-tangent-cats} proves that tangent fibrations are tangentads in the $2$-category $\Fib$. Concretely, the objects of $\Fib$ are triples $(\X,\X';\Pi)$ formed by two categories $\X$ and $\X'$ and a fibration $\Pi\colon\X'\to\X$. $1$-morphisms of $\Fib$ $(F,F')\colon(\X_\o,\X_\o';\Pi_\o)\to(\X_\b,\X'_\b;\Pi_\b)$ are pairs of functors $F\colon\X_\o\to\X_\b$ and $F'\colon\X_\o'\to\X_\b'$ which strictly commute with the fibrations, that is, $\Pi_\b\o F'=F\o\Pi_\o$, and preserve the Cartesian lifts, that is, $F'$ sends each Cartesian lift $\varphi_f\colon f^\*E\to E$ of a morphism $f\colon A\to B$ of $\X_\o$ to a Cartesian lift of $Ff$. Finally, $2$-morphisms of $\Fib$
\begin{align*}
&(\varphi,\varphi')\colon(F,F')\Rightarrow(G,G')\colon(\X_\o,\X_\o';\Pi_\o)\to(\X_\b,\X_\b';\Pi_\b)
\end{align*}
are pairs of natural transformations $\varphi\colon F\Rightarrow G$ and $\varphi'\colon F'\Rightarrow G'$ which commute with the fibrations, that is, $\Pi_\b\varphi'=\varphi_{\Pi_\o}$.
\end{example}

\begin{example}
\label{example:tangent-indexed-categories}
Tangent indexed categories were introduced in~\cite[Definition~5.3]{lanfranchi:grothendieck-tangent-cats} as tangentads in the $2$-category $\Indx$ of indexed categories. Concretely, the objects of $\Indx$ are pairs $(\X,\IND)$ formed by a category $\X$ and an indexed category, a.k.a., a pseudofunctor $\X^\op\to\Cat$. The $1$-morphisms $(F,F',\xi)\colon(\X_\o,\IND_\o)\to(\X_\b,\IND_\b)$ of $\Indx$ are pseudonatural tranformations, that are, functors $F\colon\X_\o\to\X_\b$ together with a collection of functors $F_A'\colon\IND_\o(A)\to\IND_\b(FA)$, indexed by the objects $A$ of $\X_\o$, and a collection of natural isomorphisms $\xi^f\colon F_A'\o\IND_\o(f)\Rightarrow\IND_\b(Ff)\o F_B'$, called distributors, indexed by the morphisms $f\colon A\to B$ of $\X_\o$. Finally, the $2$-morphisms of $\Indx$
\begin{align*}
&(\varphi,\varphi')\colon(F,F',\xi)\to(G,G',\kappa)\colon(\X_\o,\IND_\o)\to(\X_\b,\IND_\b)
\end{align*}
consist of a natural transformation $\varphi\colon F\Rightarrow G$ together with a collection $\varphi'$ of natural transformations $\varphi_A'\colon F_A'\Rightarrow G_A'\o\IND_\b(\varphi_A)$ which are compatible with the distributors.
\par A tangent indexed category, that is, a tangentad in $\Indx$, consists of the following data:
\begin{description}
\item[Base tangent category] A tangent category $(\X,\TT)$;

\item[Indexed category] An indexed category $\IND\colon\X^\op\to\Cat$ which sends each object $M\in\X$ to a category $\X^M$ and each morphism $f\colon M\to N$ to a functor $f^\*\X^N\to\X^N$;

\item[Indexed tangent functor] A list of functors $\T'^M\colon\X^M\to\X^{\T M}$, indexed by the objects of $M$, together with a list of natural isomorphisms $\xi^f_E\colon(\T f)^\*\T'^NE\to\T'^Mf^\*E$, indexed by the morphisms $f\colon M\to N$ of $\X$;

\item[Indexed projection] A list of natural transformations $p'^M_E\colon\T'^ME\to p^\*E$;

\item[Indexed zero morphism] A list of natural transformations $z'^M_E\colon E\to z^\*\T'^ME$;

\item[Indexed sum morphism] A list of natural transformations $s'^M_E\colon\T'^M_2E\to s^\*\T'^ME$;

\item[Indexed vertical lift] A list of natural transformations $l'^M_E\colon\T'^ME\to l^\*\T'^{\T M}\T'^ME$;

\item[Indexed canonical flip] A list of natural transformations $c'^M_E\colon\T'^{\T M}\T'^ME\to c^\*\T'^{\T M}\T'^ME$.
\end{description}
The indexed projection, zero morphism, sum morphism, vertical lift, and canonical lift must also be compatible with the distributors of $\T'$ and need to satisfy the axioms of a tangent structure.
\par Finally, we mention that \cite[Theorem~5.5]{lanfranchi:grothendieck-tangent-cats} shows a $2$-equivalence between the $2$-category $\TngFib$ of tangent fibrations and the $2$-category $\TngIndx$ of tangent indexed categories.
\end{example}

\begin{example}
\label{example:tangent-restriction-categories}
Tangent restriction categories, introduced in~\cite[Definition~6.14]{cockett:tangent-cats}, are restriction categories equipped with an endofunctor $\T$ which preserves the restriction idempotents, together with structural total natural transformations $p_M\colon\T M\to M$, $z_M\colon M\to\T M$, $s_M\colon\T_2M\to\T M$, $l_M\colon\T M\to\T^2M$, and $c_M\colon\T^2M\to\T^2M$ similar to the structural natural transformations of a tangent structure and that satisfy similar axioms, but in which the $n$-fold pullback of $p$ along itself and the pullback diagram of the universality of the vertical lift are replaced with restriction pullbacks. We suggest the reader consult~\cite{cockett:restrictionI} for a definition of restriction category and~\cite{cockett:restrictionIII} for a definition of restriction limits. As noticed in~\cite[Section~4.6]{lanfranchi:tangentads-I}, tangent restriction categories are not an example of tangentads. Nevertheless, as proved in~\cite[Proposition~4.35]{lanfranchi:tangentads-I}, tangent split restriction categories, that is, tangent restriction categories whose restriction idempotents split, are precisely the tangentads in the $2$-category $\sRestrCat$ of split restriction categories, restriction functors, and total natural transformations.
\par As proved by~\cite[Lemma~4.36]{lanfranchi:tangentads-I}, every tangent restriction category embeds into a tangent split restriction category. Thus, in particular, every tangent restriction category embeds into a tangentad.
\end{example}

\subsection{The pointwise tangent structure on the Hom categories}
\label{subsection:hom-categories}
In this section, we recall a construction presented in~\cite{lanfranchi:tangentads-II}. We show that the Hom-categories in the $2$-category of tangentads of a given $2$-category $\CC$ come with a tangent structure. This plays a crucial role in our story.

\begin{proposition}
\label{proposition:hom-tangent-categories}
For two tangentads $(\X,\TT)$ and $(\X',\TT')$ (with negatives), the category of lax tangent morphisms $\Tng(\CC)[\X',\TT';\X,\TT]$ from $(\X',\TT')$ to $(\X,\TT)$ comes with a tangent structure (with negatives) defined pointwise. Concretely, the tangent bundle functor sends a lax tangent morphism $(F,\alpha)\colon(\X',\TT')\to(\X,\TT)$ to the lax tangent morphism:
\begin{align*}
&\bar\T(F,\alpha)\colon(\X',\TT')\xrightarrow{(F,\alpha)}(\X,\TT)\xrightarrow{(\T,c)}(\X,\TT)
\end{align*}
Moreover, $\bar\T$ sends a morphism $\varphi\colon(F,\alpha)\to(G,\beta)$ of lax tangent morphisms to the morphism:
\begin{align*}
&\bar\T\varphi\=\T\varphi\colon\bar\T(F,\alpha)\to\bar\T(G,\beta)
\end{align*}
Furthermore, the Hom-category $2$-functor $\Tng(\CC)(-,?)$ lifts along the forgetful functor $\U\colon\TngCat\to\Cat$ to a $2$-functor
\begin{align*}
&\Tng(\CC)^\op\times\Tng(\CC)\to\TngCat
\end{align*}
strict and contravariant in the first argument and lax and covariant in the second.
\end{proposition}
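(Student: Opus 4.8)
The plan is to construct the pointwise structure by post-composition and whiskering, and then reduce each tangent axiom for the Hom-category to the corresponding axiom in the target tangentad $(\X,\TT)$. First I would recall that lax tangent morphisms compose, strictly associatively and unitally inside the $2$-category $\Tng(\CC)$, and that the coherence required of the canonical flip in Definition~\ref{definition:tangentad} is precisely what makes $(\T,c)$ a (strong) tangent endomorphism of $(\X,\TT)$. Post-composition with $(\T,c)$ then yields the endofunctor $\bar\T$ of $\Tng(\CC)[\X',\TT';\X,\TT]$, sending $(F,\alpha)$ to $(\T,c)\circ(F,\alpha)$; and I would define the structural cells $\bar p,\bar z,\bar s,\bar l,\bar c$ (and the negation $\bar n$) at the component $(F,\alpha)$ by right-whiskering $p,z,s,l,c$ (and $n$) with the underlying $1$-morphism $F$, e.g. $\bar p_{(F,\alpha)}\=p\ast F\colon\T F\Rightarrow F$.

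The first substantive step is to verify that each whiskered cell is a morphism of lax tangent morphisms, i.e. compatible with the relevant lax structures. For $\bar p$ this compatibility unwinds to the coherence of $p$ against $c$ and against $\alpha$, which is guaranteed by $(\T,c)$ being a tangent morphism together with the defining axioms of $(F,\alpha)$; the cases of $\bar z,\bar s,\bar l,\bar c,\bar n$ are identical in spirit. Because right-whiskering $(-)\ast F$ is functorial and preserves pasting, every \emph{equational} tangent axiom for $\bar\TT$ (coassociativity and flip-compatibility of $\bar l$, the two symmetry equations for $\bar c$, and the negation axiom) is obtained simply by whiskering the corresponding equation for $\TT$ by $F$, so no genuinely new computation is required.

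I expect the main obstacle to be the \emph{universal} part of the structure: producing the $n$-fold pullbacks of $\bar p$ and establishing the universality of the vertical lift $\bar l$ inside the ordinary category of lax tangent morphisms, rather than merely checking equations. My plan is to set $\bar\T_n(F,\alpha)\=\T_n\ast F$. Since the pullbacks defining $\T_n$ are \emph{pointwise}, they are preserved by right-whiskering, so $\T_n F$ is the $n$-fold pullback of $p\ast F$ already in $\CC[\X',\X]$; the lax structure on $\T_n F$ and the tangent-naturality of the projections are then forced by this universal property, so that the forgetful functor $\Tng(\CC)[\X',\TT';\X,\TT]\to\CC[\X',\X]$ creates these pullbacks. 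Preservation by $\bar\T^n$ reduces to preservation of the pointwise pullbacks by $\T^n$, which is part of Definition~\ref{definition:tangentad}; the universality of $\bar l$ is obtained the same way from the local-linearity pullback of $\TT$, again using that it is pointwise and hence survives right-whiskering by $F$ before being lifted to lax tangent morphisms.

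Finally, for the $2$-functor $\Tng(\CC)(-,?)\colon\Tng(\CC)^\op\times\Tng(\CC)\to\TngCat$ I would treat the two variables separately. In the first variable, precomposition by a lax tangent morphism $H$ sends $(F,\alpha)$ to $(F,\alpha)\circ H$; as $\bar\T$ is post-composition with $(\T,c)$ and composition in $\CC$ is strictly associative, precomposition commutes on the nose with $\bar\T$ and with every whiskered structural cell, so it is a \emph{strict} tangent morphism, giving strict contravariance. In the second variable, post-composition by a lax tangent morphism $K\colon(\X,\TT)\to(\Y,\mathbb S)$ with lax structure $\kappa\colon K\T\Rightarrow\mathbb S K$ carries the comparison cell $\kappa\ast F\colon K\circ\bar\T(F,\alpha)\Rightarrow\bar{\mathbb S}\bigl(K\circ(F,\alpha)\bigr)$ (with $\bar{\mathbb S}$ the tangent bundle functor of the target Hom-category), whose coherence against $\bar p,\bar z,\bar s,\bar l,\bar c$ is exactly the coherence of $\kappa$ against $p,z,s,l,c$; hence $K\circ-$ is a \emph{lax} tangent morphism, giving lax covariance. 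The remaining data—the action on tangent natural transformations and the fact that these assignments respect composition and identities—follow from the interchange law and the functoriality of whiskering; the content of ``lax in the second argument'' is precisely that the functors $K\circ-$ are lax (not strict) tangent morphisms, as just shown, and everything lifts along $\U$ by construction.
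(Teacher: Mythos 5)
Your proposal is correct and follows exactly the construction the statement itself prescribes: post-composition with $(\T,c)$ for $\bar\T$, right-whiskering of $p,z,s,l,c,n$ for the structural cells, reduction of the equational axioms to those of $\TT$, creation of the $n$-fold pullbacks and of the universality of $\bar l$ along the forgetful functor to $\CC[\X',\X]$ using pointwiseness, and strict precomposition versus lax postcomposition for the two-variable $2$-functoriality. The paper does not reprove this proposition (it is recalled from the companion paper~\cite{lanfranchi:tangentads-II}), but your whiskering argument is the intended one and I see no gaps.
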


In the following, we denote by $[\X',\TT'\|\X,\TT]$ the Hom-tangent category of two tangentads $(\X',\TT')$ and $(\X,\TT)$ of a given $2$-category.

\subsection{Cartesian tangentads}
\label{subsection:cartesian-tangentads}
In~\cite[Section~3.2]{lanfranchi:tangentads-I}, Cartesian tangentads were introduced. In this section, we recall this definition. In the following, we assume that $\CC$ is a Cartesian $2$-category, that is, a $2$-category with finite $2$-products and we denote by $\1$ the $2$-terminal object of $\CC$ and by $\X\times\X'$ the Cartesian $2$-product of two objects $\X$ and $\X'$ of $\CC$.

\begin{definition}[{\cite[Section~5]{carboni:cartesian-objects}}]
\label{definition:cartesian-object}
In a Cartesian $2$-category $\CC$, a \textbf{Cartesian object} consists of an object $\X$ of $\CC$ together with two $1$-morphisms
\begin{align*}
&\*\colon\1\to\X\\
&\times\colon\X\times\X\to\X
\end{align*}
which are right adjoints to the terminal $1$-morphism $!\colon\X\to\1$ and to the diagonal map $\Delta\colon\X\to\X\times\X$, respectively.
\end{definition}

Cartesian objects in the $2$-category of categories are precisely Cartesian categories, that is, categories with finite products. Cartesian objects in a $2$-category form a new $2$-category denoted by $\Cart(\CC)$ whose $1$-morphisms are $1$-morphisms of $\CC$ which preserve the Cartesian structure up to $2$-isomorphisms, and whose $2$-morphisms are $2$-morphisms of $\CC$ which commute with the coherence $2$-isomorphisms of the $1$-morphisms. Furthermore, if $\CC$ is a Cartesian $2$-category so does the $2$-category $\Tng(\CC)$ of tangentads of $\CC$. We suggest the reader consult~\cite{lanfranchi:tangentads-I} for details.

\begin{definition}[{\cite[Definition~3.18]{lanfranchi:tangentads-I}}]
\label{definition:cartesian-tangentad}
A \textbf{Cartesian tangentad} is equivalently a tangentad in the $2$-category $\Cart(\CC)$ of Cartesian objects of $\CC$ or a Cartesian object in the Cartesian $2$-category $\Tng(\CC)$ of tangentads of $\CC$.
\end{definition}

We denote by $\cTng(\CC)$ the $2$-category $\Cart(\Tng(\CC))\cong\Tng(\Cart(\CC))$ of Cartesian tangentads. Concretely, a Cartesian tangentad consists of a Cartesian object $\X$ of $\CC$ equipped with a tangent structure $\TT$ in $\CC$ whose tangent bundle $1$-morphism $\T\colon\X\to\X$ such that the $2$-morphisms
\begin{equation*}
\begin{tikzcd}
\1 & \X \\
& \X
\arrow["{\*}", from=1-1, to=1-2]
\arrow[""{name=0, anchor=center, inner sep=0}, "{\*}"', curve={height=12pt}, from=1-1, to=2-2]
\arrow["\T", from=1-2, to=2-2]
\arrow["{\alpha^\*}", shorten <=5pt, shorten >=5pt, Rightarrow, from=1-2, to=0]
\end{tikzcd}\hfill\quad
\begin{tikzcd}
{\X\times\X} & \X \\
{\X\times\X} & \X
\arrow["\times", from=1-1, to=1-2]
\arrow["{\T\times\T}"', from=1-1, to=2-1]
\arrow["{\alpha^\times}", Rightarrow, from=1-2, to=2-1]
\arrow["\T", from=1-2, to=2-2]
\arrow["\times"', from=2-1, to=2-2]
\end{tikzcd}
\end{equation*}
induced by the adjunctions $\*\dashv!$ and $\times\dashv\Delta$ are invertible and compatible with the structural $2$-morphisms of the tangent structure.

\subsection{The Cartesian structure of the Hom-tangent categories}
\label{subsection:cartesian-hom-categories}
In this section, we show that the Hom-tangent categories inherit a Cartesian structure provided the target tangentad is Cartesian. Consider an ambient $2$-category $\CC$ which admits finite $2$-products, whose terminal object is denoted by $\1$ and Cartesian $2$-product by $\times$. A Cartesian object of $\CC$ is an object $\X$ of $\CC$ together with two $1$-morphisms
\begin{align*}
&\*\colon\1\to\X\\
&\times\colon\X\times\X\to\X
\end{align*}
which are right adjoints to the $1$-morphisms $!\colon\X\to\1$ and $\Delta\=\<\id_\X,\id_\X\>\colon\X\to\X\times\X$, respectively. The next result establishes that the Hom-categories $\CC[\X',\X]$ are Cartesian provided that $\X$ is a Cartesian object of $\CC$.

\begin{lemma}
\label{lemma:cartesian-hom-cats}
If $\X$ is a Cartesian object of $\CC$, the Hom-categories $\CC[\X',\X]$ are Cartesian, whose terminal object is the $1$-morphism
\begin{align*}
&\top\colon\X'\xrightarrow{!'}\1\xrightarrow{\*}\X
\end{align*}
and whose Cartesian product between two $1$-morphisms $A,B\colon\X'\to\X$ is the $1$-morphism
\begin{align*}
&A\Box B\colon\X'\xrightarrow{\<A,B\>}\X\times\X\xrightarrow{\times}\X
\end{align*}
and whose projections are the $2$-morphisms
\begin{align*}
&\pi_1\colon A\times B=\times\o\<A,B\>\xrightarrow{\pi_1\<A,B\>}\Pi_1\o\<A,B\>=A\\
&\pi_2\colon A\times B=\times\o\<A,B\>\xrightarrow{\pi_2\<A,B\>}\Pi_2\o\<A,B\>=B
\end{align*}
where $\Pi_1,\Pi_2\colon\X\times\X\to\X$ are the two projections and $\pi_k\colon\times\Rightarrow\Pi_k$ are the corresponding $2$-morphisms.
\end{lemma}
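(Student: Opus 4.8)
The plan is to establish that $\CC[\X', \X]$ is a Cartesian category by directly verifying that the proposed terminal object $\top$ and binary product $A \Box B$ satisfy the relevant universal properties, working entirely through the adjunctions $\* \dashv \,!$ and $\times \dashv \Delta$ that witness $\X$ as a Cartesian object. Rather than checking universal properties from scratch, I would exploit the fact that the $2$-functor $\CC(\X', -)\colon \CC \to \Cat$ sends right adjoints to right adjoints: since $\*\colon \1 \to \X$ and $\times\colon \X \times \X \to \X$ are right adjoints in $\CC$, post-composition with them induces right adjoints on the Hom-category level, and these right adjoints are precisely the functors picking out terminal objects and products.

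First I would recall that $\CC(\X', -)$ preserves adjunctions: if $R\colon \Y \to \Z$ is right adjoint to $L\colon \Z \to \Y$ in $\CC$, then $R \o (-)\colon \CC[\X', \Y] \to \CC[\X', \Z]$ is right adjoint to $L \o (-)$. Applying this to $\*\dashv\,!$ gives that $\*\o(-)\colon \CC[\X',\1] \to \CC[\X',\X]$ is right adjoint to $!\o(-)$. Since $\1$ is $2$-terminal, $\CC[\X', \1]$ is the terminal category, so its image under $\*\o(-)$ is the object $\top = \*\o\,!'$, and being the value of a right adjoint on the terminal category, it is terminal in $\CC[\X', \X]$. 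Second, applying the same principle to $\times \dashv \Delta$ yields that $\times\o(-)\colon \CC[\X', \X\times\X] \to \CC[\X', \X]$ is right adjoint to $\Delta\o(-)$. Then I would use the canonical isomorphism $\CC[\X', \X\times\X] \cong \CC[\X',\X] \times \CC[\X',\X]$ coming from the $2$-product structure: under this identification the pair $(A, B)$ corresponds to $\<A, B\>$, the diagonal functor $\Delta\o(-)$ corresponds to the diagonal on $\CC[\X',\X]$, and hence its right adjoint $\times\o(-)$ sends $(A,B)$ to $\times\o\<A,B\> = A\Box B$, which is therefore the product.

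Having identified the product abstractly as a right adjoint to the diagonal, the remaining task is to verify that the projections are the $2$-morphisms $\pi_k\<A,B\>$ claimed in the statement and that they exhibit the correct universal property explicitly. Here I would unwind the counit of the adjunction $\times\dashv\Delta$, which is the pair of $2$-morphisms $\pi_1, \pi_2\colon \times \Rightarrow \Pi_k$ (the whiskered projections), and check by the triangle identities that whiskering these by $\<A,B\>$ recovers exactly the $\pi_k\<A,B\>$ of the statement, together with the fact that $\Pi_k\o\<A,B\> = A$ and $= B$ respectively by the $2$-product universal property.

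The main obstacle I anticipate is not conceptual but bookkeeping: ensuring that the isomorphism $\CC[\X', \X\times\X] \cong \CC[\X',\X]\times\CC[\X',\X]$ genuinely transports the diagonal and its counit correctly, so that the abstractly-constructed product agrees on the nose with $A\Box B$ and its projections $\pi_k\<A,B\>$ rather than merely up to isomorphism. Carefully tracking the $2$-dimensional data — how the counit $2$-cells of $\times\dashv\Delta$ interact with the structural $2$-morphisms $\pi_k$ of the $2$-product and with the pairing $\<A,B\>$ — is the delicate part, and I would want to confirm the triangle identities hold at the level of whiskered $2$-cells. Once that compatibility is pinned down, the Cartesian structure of $\CC[\X',\X]$ follows formally from the preservation of adjunctions by representable $2$-functors.
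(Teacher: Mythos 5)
Your proposal is correct, and while it rests on the same two adjunctions $!\dashv\*$ and $\Delta\dashv\times$ as the paper's proof, it packages them differently. The paper argues at the level of individual $2$-morphisms: it whiskers the unit $\eta^\*$ at each $A\colon\X'\to\X$ to produce the map $A\to\top$, establishes its uniqueness by pushing a candidate $\varphi\colon A\to\top$ down to $\CC[\X',\1]$ (where $2$-terminality forces it to be an identity) and then applying the triangle identities, and similarly extracts the projections $\pi_k$ from $\epsilon^\times$ and verifies the product property by hand. You instead invoke the general principle that the representable $2$-functor $\CC(\X',-)$ sends the adjunctions $!\dashv\*$ and $\Delta\dashv\times$ to adjunctions $!\o(-)\dashv\*\o(-)$ and $\Delta\o(-)\dashv\times\o(-)$ on Hom-categories, then recognize terminal objects and binary products as right adjoints to the unique functor into the terminal category and to the diagonal, respectively. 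This buys you the existence of the Cartesian structure essentially for free, at the cost of the transport argument you correctly flag: you must check that the canonical isomorphism $\CC[\X',\X\times\X]\cong\CC[\X',\X]\times\CC[\X',\X]$ carries $\Delta\o(-)$ to the genuine diagonal (which uses $\Pi_k\o\Delta=\id_\X$, i.e. $\Delta=\<\id_\X,\id_\X\>$) and that the transported counit is exactly $\pi_k\<A,B\>$ — which is precisely the unwinding the paper performs directly. The two arguments are therefore interchangeable in content; yours is the more reusable formulation, the paper's is the more self-contained one.
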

\begin{proof}
Consider the units and the counits of the adjunctions $(\eta^\*,\epsilon^\*)\colon!\dashv\*$ and $(\eta^\times,\epsilon^\times)\colon\Delta\dashv\times$ in $\CC$:
\begin{align*}
\eta^\*&\colon\id_\X\to\*\:\o\:!                &\epsilon^\*&\colon!\o\*\to\id_\1\\
\eta^\times&\colon\id_\X\to\times\o\Delta   &\epsilon^\times&\colon\Delta\o\times\to\id_{\X\times\X}
\end{align*}
Consider a $1$-morphism $A\colon\X'\to\X$ and define the following $2$-morphism
\begin{align*}
&A\xrightarrow{\eta^\*_A}\*\:\o\:!\o A=\*\:\o\:!'
\end{align*}
where we used that $!\o A=!'$, since $\1$ is $2$-terminal in $\CC$. If $\varphi\colon A\to\top$ is a $2$-morphism, by employing the counit $\epsilon^\*$ we define a $2$-morphism
\begin{align*}
&!\o A\xrightarrow{\varphi}!\o\*\:\o\:!\o A\xrightarrow{\epsilon^\*!A}!\o A
\end{align*}
However, since $\1$ is $2$-terminal in $\CC$, such a morphism must coincide with the identity on $!\o A$. Therefore, by employing the unit again and using the triangle identities, we conclude that $\varphi$ must coincide with $\eta^\*$, that is, that $\top$ is the terminal object of $\CC[\X',\X]$. To prove that $A\Box B$ is the Cartesian product of two $1$-morphisms $A,B\colon\X'\to\X$, first, notice that the counit $\epsilon^\times$ gives rise to the two projections
\begin{align*}
&\pi_1\colon\times=\Pi_1\o\Delta\o\times\xrightarrow{\Pi_1(\epsilon^\times)}\Pi_1\\
&\pi_2\colon\times=\Pi_2\o\Delta\o\times\xrightarrow{\Pi_2(\epsilon^\times)}\Pi_2
\end{align*}
where we used that $\Pi_k\o\Delta=\id_\X$ since $\Delta=\<\id_\X,\id_\X\>$, by definition. From this and using the triangle identities of $\eta^\times$ and $\epsilon^\times$, one deduces that
\begin{equation*}
\begin{tikzcd}
A & {A\Box B} & B
\arrow["{\pi_1}"', from=1-2, to=1-1]
\arrow["{\pi_2}", from=1-2, to=1-3]
\end{tikzcd}
\end{equation*}
is a product diagram in $\CC[\X',\X]$.
\end{proof}

\cite[Definition~3.13]{lanfranchi:tangentads-I} establishes that a Cartesian tangentad is a Cartesian object in the $2$-category $\Tng(\CC)$ of tangentads of $\CC$. From this observation, it follows by Lemma~\ref{lemma:cartesian-hom-cats} that the Hom-categories $[\X',\TT'\|\X,\TT]$ are Cartesian categories, provided that $(\X,\TT)$ is a Cartesian tangentad. One also needs the tangent structure of Proposition~\ref{proposition:hom-tangent-categories} to be compatible with the Cartesian structure.

\begin{proposition}
\label{proposition:cartesian-hom-tangent-cats}
The Hom-tangent categories $[\X',\TT'\|\X,\TT]$ are Cartesian tangent categories provided that $(\X,\TT)$ is a Cartesian tangentad.
\end{proposition}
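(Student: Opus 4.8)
The plan is to assemble the two structures that are already in place just before the statement. By Definition~\ref{definition:cartesian-tangentad} the Cartesian tangentad $(\X,\TT)$ is a Cartesian object of the $2$-category $\Tng(\CC)$, so Lemma~\ref{lemma:cartesian-hom-cats}, applied inside $\Tng(\CC)$ rather than inside $\CC$, shows that the underlying Hom-category of $[\X',\TT'\|\X,\TT]$ is Cartesian, with terminal object $\top=\*\o!'$ and products $A\Box B=\times\o\<A,B\>$. By Proposition~\ref{proposition:hom-tangent-categories} this Hom-category also carries a pointwise tangent structure whose bundle functor is $\bar\T(-)=(\T,c)\o(-)$. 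What remains is to show that these are compatible, that is, that $\bar\T$ preserves the above finite products in a way compatible with the projections and with the structural transformations $p,z,s,l,c$; this is precisely what it means for $[\X',\TT'\|\X,\TT]$ to be a Cartesian tangent category.

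The strategy I would follow is to realize the Cartesian structure through post-composition with the right adjoints $\*$ and $\times$, exploiting that these are \emph{strong} tangent morphisms: this is exactly the content of Definition~\ref{definition:cartesian-tangentad}, which makes the comparison $2$-morphisms $\alpha^\*$ and $\alpha^\times$ invertible. Since $\Tng(\CC)(-,?)$ is a $2$-functor, covariant in its second argument (Proposition~\ref{proposition:hom-tangent-categories}), it preserves the defining adjunctions $!\dashv\*$ and $\Delta\dashv\times$ of the Cartesian object $(\X,\TT)$. Post-composition therefore yields adjunctions $!_*\dashv\*_*$ and $\Delta_*\dashv\times_*$ in $\TngCat$, and because $\*$ and $\times$ are strong, the right adjoints $\*_*$ and $\times_*$ are strong tangent functors.

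I would then identify the Cartesian structure of Lemma~\ref{lemma:cartesian-hom-cats} with these adjoint functors. The terminal object $\top$ is the value of $\*_*$ on the unique object of the trivial tangent category $[\X',\TT'\|\1]$, and the product $A\Box B$ is $\times_*$ evaluated at $\<A,B\>$ under the identification $[\X',\TT'\|(\X,\TT)\times(\X,\TT)]\cong[\X',\TT'\|\X,\TT]\times[\X',\TT'\|\X,\TT]$ of tangent categories coming from the universal property of the $2$-product in the second argument. Because $\*_*$ and $\times_*$ are strong tangent functors, their comparison isomorphisms are exactly the canonical maps $\bar\T\top\xrightarrow{\sim}\top$ and $\bar\T(A\Box B)\xrightarrow{\sim}\bar\T A\Box\bar\T B$, and strongness of the comparison guarantees compatibility with all of $p,z,s,l,c$ for free. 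Concretely, these comparison isomorphisms are whiskerings of $\alpha^\*$ and $\alpha^\times$, which is why the invertibility of those two $2$-morphisms is the crucial hypothesis.

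The main obstacle is not conceptual but a matter of coherence bookkeeping. First, I must check that the Cartesian structure produced in the proof of Lemma~\ref{lemma:cartesian-hom-cats} through the units and counits agrees with the one obtained from the adjoint functors $\*_*$ and $\times_*$, and that the identification $[\X',\TT'\|(\X,\TT)\times(\X,\TT)]\cong[\X',\TT'\|\X,\TT]\times[\X',\TT'\|\X,\TT]$ respects the full pointwise tangent structure and the projections. Once this is in place, the compatibility of the product comparison with $p,z,s,l,c$ is automatic from the definition of a strong tangent functor, so no pointwise diagram chase is needed; a reader preferring a hands-on argument can instead verify the same compatibilities componentwise, directly from the invertibility and compatibility of $\alpha^\*$ and $\alpha^\times$ asserted by Definition~\ref{definition:cartesian-tangentad}, at the cost of several routine verifications.
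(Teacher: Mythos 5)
Your proposal is correct and follows essentially the same route as the paper, which simply observes that both the tangent structure and the Cartesian structure of $[\X',\TT'\|\X,\TT]$ are induced pointwise from $(\X,\TT)$ and are compatible because $(\X,\TT)$ is a Cartesian object in $\Tng(\CC)$. Your version merely spells out the details the paper leaves implicit — realizing the Cartesian structure via post-composition with the strong tangent morphisms $\*$ and $\times$ and letting the invertibility of $\alpha^\*$ and $\alpha^\times$ supply the compatibility with $p,z,s,l,c$ — which is a sound and welcome elaboration rather than a different argument.
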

\begin{proof}
Since the tangent structure and the Cartesian structure of $[\X',\TT'\|\X,\TT]$ are both induced by the tangent structure and the Cartesian structure of $(\X,\TT)$, which are compatible since it is a Cartesian object in $\Tng(\CC)$, it follows that $[\X',\TT'\|\X,\TT]$ is a Cartesian tangent category.
\end{proof}

\section{The formal theory of differential objects}
\label{section:differential-objects}
In every Cartesian tangent category, one can define a suitable class of objects, called \textbf{differential objects}, that generalize Euclidean spaces in differential geometry. This section aims to formalize this notion in the context of tangentads.

\subsection{Differential objects in tangent category theory}
\label{subsection:tangent-category-differential-objects}
We start by recalling the definition of a differential object.

\begin{definition}[{\cite[Definition~4.8]{cockett:tangent-cats}}]
\label{definition:differential-object}
A \textbf{differential object} in a Cartesian tangent category $(\X,\TT)$ is a four-tuple $(A,\zeta_A,\sigma_A,{\hat p}_A)$ consisting of an object $A$ of $\X$ and three morphisms, so defined:
\begin{description}
\item[Zero morphism] A morphism $\zeta_A\colon\*\to A$;

\item[Sum morphism] A morphism $\sigma_A\colon A\times A\to A$;
\end{description}
such that the triple $(A,\zeta_A,\sigma_A)$ is a commutative monoid in the Cartesian category $\X$. Moreover:
\begin{description}
\item[Differential projection] A morphism ${\hat p}_A\colon\T A\to A$;
\end{description}
which satisfies the following conditions:
\begin{enumerate}
\item Additivity 1. The morphism ${\hat p}_A\colon(\T A,\T\zeta_A,\T\sigma_A)\to(A,\zeta_A,\sigma_A)$ is a morphism of commutative monoids, where we identify $\T(A\times A)$ with $\T A\times\T A$ and $\T\*$ with $\*$. Concretely, this corresponds to the commutativity of the following diagrams:
\begin{equation*}
\begin{tikzcd}
{\T A} & A \\
{*}
\arrow["{{\hat p}_A}", from=1-1, to=1-2]
\arrow["{\T\zeta_A}", from=2-1, to=1-1]
\arrow["\zeta_A"', from=2-1, to=1-2]
\end{tikzcd}\hfill\quad
\begin{tikzcd}
{\T A\times\T A} & {A\times A} \\
{\T A} & A
\arrow["{{\hat p}_A\times{\hat p}_A}", from=1-1, to=1-2]
\arrow["{\T\sigma_A}"', from=1-1, to=2-1]
\arrow["\sigma_A", from=1-2, to=2-2]
\arrow["{{\hat p}_A}"', from=2-1, to=2-2]
\end{tikzcd}
\end{equation*}

\item Additivity 2. The morphism $(!,{\hat p}_A)\colon(p\colon\T A\to A,z_A,s_A)\to(!\colon A\to\*,\zeta_A,\sigma_A)$ is an additive bundle morphism. Concretely, this corresponds to the commutativity of the following diagrams:
\begin{equation*}
\begin{tikzcd}
{\T A} & A \\
A & {*}
\arrow["{{\hat p}_A}", from=1-1, to=1-2]
\arrow["{z_A}", from=2-1, to=1-1]
\arrow["{!}"', from=2-1, to=2-2]
\arrow["\zeta_A"', from=2-2, to=1-2]
\end{tikzcd}\hfill\quad
\begin{tikzcd}
{\T_2A} & {A\times A} \\
{\T A} & A
\arrow["{{\hat p}_A\times_!{\hat p}_A}", from=1-1, to=1-2]
\arrow["{s_A}"', from=1-1, to=2-1]
\arrow["\sigma_A", from=1-2, to=2-2]
\arrow["{{\hat p}_A}"', from=2-1, to=2-2]
\end{tikzcd}
\end{equation*}

\item Linearity. The following diagram commutes:
\begin{equation*}
\begin{tikzcd}
{\T^2A} & {\T A} \\
{\T A} & A
\arrow["{\T{\hat p}_A}", from=1-1, to=1-2]
\arrow["{{\hat p}_A}", from=1-2, to=2-2]
\arrow["{l_A}", from=2-1, to=1-1]
\arrow["{{\hat p}_A}"', from=2-1, to=2-2]
\end{tikzcd}
\end{equation*}

\item Universality. The following is a product diagram:
\begin{equation*}
\begin{tikzcd}
A & {\T A} & A
\arrow["{p_A}"', from=1-2, to=1-1]
\arrow["{{\hat p}_A}", from=1-2, to=1-3]
\end{tikzcd}
\end{equation*}
\end{enumerate}
\end{definition}

The commutative monoid structure of a differential object captures the additive structure of the vector space underlying a Euclidean space, where $\zeta_A\colon\*\to A$ sets the zero vector and $\sigma_A\colon A\times A\to A$ represents the sum of vectors. The differential projection ${\hat p}_A\colon\T A\to A$ corresponds to the canonical map which identifies the tangent space $\T_xA$ at each point $x$ of a Euclidean space $A$ to the space $A$ itself, by sending a tangent vector $v$ of $\T_xA$ to the point $x+v$, where $x$ is intepreted both as a point of $A$ and as a vector.
\par In a Cartesian tangent category $(\X,\TT)$ there is a Cartesian tangent category, denoted by $\DO(\X,\TT)$, whose objects are differential objcts $(A,\zeta_A,\sigma_A,{\hat p}_A)$ and morphisms are linear morphisms of differential objects, that are, morphisms $f\colon A\to B$ that commute with the differential projection, $\hat p_A\o f=\T f\o\hat p_B$. The tangent bundle functor of $\DO(\X,\TT)$ sends a differential object $(A,\zeta_,\sigma_A,{\hat p}_A)$ to the differential object $(\T A,\T\zeta_A,\T\sigma_A,{\hat p}_{\T A})$, where we identified $\T(A\times A)$ with $\T A\times\T A$ and $\T\*$ with $\*$, and whose differential projection is defined as follows:
\begin{align*}
&{\hat p}_{\T A}\colon\T^2A\xrightarrow{c}\T^2A\xrightarrow{\T{\hat p}_A}\T A
\end{align*}
Finally, the structural natural transformations of $\DO(\X,\TT)$ are defined as in $(\X,\TT)$.

\begin{lemma}[{\cite[Section~4.1]{cockett:tangent-cats}}]
\label{lemma:DO-functoriality}
There is a $2$-endofunctor
\begin{align*}
&\DO\colon\cTngCat_\cong\to\cTngCat_\cong
\end{align*}
on the category of Cartesian tangent categories and strong tangent morphisms which preserve finite products, that sends a Cartesian tangent category $(\X,\TT)$ to the Cartesian tangent category $\DO(\X,\TT)$ of differential objects of $(\X,\TT)$ and that sends a strong tangent morphism $(F,\alpha)\colon(\X',\TT')\to(\X,\TT)$ which preserves finite products to the strong tangent morphism which sends a differential object $(A,\zeta_A,\sigma_A,{\hat p}_A)$ to the differential object $(FA,F\zeta_A,F\sigma_A,{\hat p}_{FA})$, whose differential projection is defined as follows:
\begin{align*}
&{\hat p}_{FA}\colon\T FA\xrightarrow{\alpha^{-1}}F\T'A\xrightarrow{F{\hat p}_A}FA
\end{align*}
\end{lemma}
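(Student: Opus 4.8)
The plan is to treat this as a structured verification: the action of $\DO$ on objects has already been described in the paragraph preceding the statement (and is the content of~\cite[Section~4.1]{cockett:tangent-cats}), so I would take for granted that $\DO(\X,\TT)$ is a Cartesian tangent category, with tangent bundle functor sending $(A,\zeta_A,\sigma_A,\hat{p}_A)$ to $(\T A,\T\zeta_A,\T\sigma_A,\hat{p}_{\T A})$ where $\hat{p}_{\T A}=\T\hat{p}_A\o c$. The real work is defining $\DO$ on $1$- and $2$-morphisms and checking $2$-functoriality. The guiding principle I would make explicit at the outset is that all four conditions of Definition~\ref{definition:differential-object} are expressed purely through finite-product diagrams together with the structural $2$-morphisms $p,z,s,l,c$ of the tangent structure, and that a strong tangent morphism preserving finite products preserves exactly this data: products up to coherent isomorphism, and the tangent structure through the \emph{invertible} comparison $\alpha\colon F\T'\Rightarrow\T F$ compatible with $p,z,s,l,c$. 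The formula $\hat{p}_{FA}=F\hat{p}_A\o\alpha^{-1}$ is then nothing but the transport of $\hat{p}_A$ along $\alpha$.

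For the action on $1$-morphisms I would fix a strong product-preserving tangent morphism $(F,\alpha)\colon(\X',\TT')\to(\X,\TT)$ and show that $(FA,F\zeta_A,F\sigma_A,\hat{p}_{FA})$ is a differential object. First, $(FA,F\zeta_A,F\sigma_A)$ is a commutative monoid because a finite-product-preserving functor carries commutative monoid objects to commutative monoid objects. Then I would verify the four axioms by diagram chase: additivity~$1$ and additivity~$2$ follow from naturality of $\alpha$, the compatibility of $\alpha$ with the zero and sum morphisms $z,s$, and the corresponding additivity of $\hat{p}_A$ in $\X'$; linearity follows from the compatibility of $\alpha$ with the vertical lift $l$ together with linearity of $\hat{p}_A$; and universality is clean, since applying the product-preserving $F$ to the universality product diagram for $\hat{p}_A$ yields a product diagram $FA\xleftarrow{Fp'_A}F\T'A\xrightarrow{F\hat{p}_A}FA$, which the isomorphism $\alpha^{-1}$ transports to $FA\xleftarrow{p_{FA}}\T FA\xrightarrow{\hat{p}_{FA}}FA$ because compatibility of $\alpha$ with the projection gives $Fp'_A\o\alpha^{-1}=p_{FA}$. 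Finally I would check that $\DO(F,\alpha)$ sends linear morphisms of differential objects to linear morphisms (immediate from naturality of $\alpha$), that it preserves the products of $\DO(\X',\TT')$ (computed as in $\X'$, hence preserved by $F$), and that it is itself a strong tangent morphism with comparison cell induced by $\alpha$.

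For the action on $2$-morphisms, given a tangent natural transformation $\varphi\colon(F,\alpha)\Rightarrow(G,\beta)$ I would set $\DO(\varphi)$ to have components $\varphi_A\colon FA\to GA$ and show each is linear with respect to $\hat{p}_{FA}$ and $\hat{p}_{GA}$: this reduces, via the definitions $\hat{p}_{FA}=F\hat{p}_A\o\alpha^{-1}$ and $\hat{p}_{GA}=G\hat{p}_A\o\beta^{-1}$, to the compatibility of $\varphi$ with $\alpha$ and $\beta$ that is part of being a tangent natural transformation. Naturality of $\DO(\varphi)$, and preservation of identities and of both vertical and horizontal composition of $2$-cells, are then inherited componentwise from the corresponding properties of $\varphi$ and the functoriality of $\T$, so $2$-functoriality follows formally.

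The main obstacle I anticipate is not any single axiom in isolation but the coherence bookkeeping, which enters in two places. First, additivity~$1$ and~$2$ require identifying $\T(A\times A)$ with $\T A\times\T A$, $F(A\times A)$ with $FA\times FA$, and their composites, so the comparison isomorphisms for products must be threaded through every square; these identifications are exactly where sign-free but error-prone mismatches arise. Second, and most delicately, verifying that $\DO(F,\alpha)$ strongly preserves the tangent bundle functor of $\DO$ forces a reconciliation of the $\DO$-differential projection $\hat{p}_{\T FA}=\T\hat{p}_{FA}\o c_{FA}$ with the transported projection $\hat{p}_{F\T'A}$ along $\alpha$, which hinges on the compatibility of $\alpha$ with the canonical flip $c$ (a condition involving $\alpha$ twice). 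I expect this last compatibility to be the crux of the argument, while the remaining verifications, though numerous, are routine once the transport principle and the product coherences are in place.
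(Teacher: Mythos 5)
Your proposal is correct in outline and is essentially the standard argument; note, however, that the paper offers no proof of this lemma at all --- it is stated as a recollection and cited directly from Cockett and Cruttwell, so there is nothing to compare against beyond that reference. Your identification of the two genuine pressure points (threading the product-comparison isomorphisms through the additivity squares, and using the compatibility of $\alpha$ with the canonical flip $c$ to reconcile $\hat p_{\T FA}=\T\hat p_{FA}\o c$ with the transported projection $\hat p_{F\T'A}$) matches where the real work lies in the cited source.
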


\subsection{The universal property of differential objects}
\label{subsection:universal-property-differential-objects}
This section aims to characterize the correct universal property of the tangent category $\DO(\X,\TT)$ of differential objects and linear morphisms of a Cartesian tangent category $(\X,\TT)$. For starters, consider the forgetful functor $\U\colon\DO(\X,\TT)\to(\X,\TT)$ which sends a differential object $A$ to its underlying object of $(\X,\TT)$.
\par Since $(\X,\TT)$ is Cartesian, there are two (strong) tangent morphisms
\begin{align*}
\*&\colon\1\to(\X,\TT)              &\times&\colon(\X,\TT)\times(\X,\TT)\to(\X,\TT)
\end{align*}
sending the unique object of the terminal tangent category $\1$ to the (chosen) terminal object $\*$ of $(\X,\TT)$ and sending a pair of objects $(A,B)$ to their Cartesian product $A\times B$, respectively. Therefore, we can define the following natural transformations
\begin{align*}
&\UnivZ_{(A,\zeta_A,\sigma_A,\hat p_A)}\colon\*(!(A,\zeta_A,\sigma_A,\hat p_A))=\*\xrightarrow{\zeta_A}A=\U(A,\zeta_A,\sigma_A,\hat p_A)\\
&\UnivS_{(A,\zeta_A,\sigma_A,\hat p_A)}\colon\times(\<\U,\U\>(A,\zeta_A,\sigma_A,\hat p_A))=A\times A\xrightarrow{\sigma_A}A=\U(A,\zeta_A,\sigma_A,\hat p_A)
\end{align*}
where $!\colon(\X,\TT)\to\*$ is the terminal morphism, that is, the constant functor. However, as shown by Lemma~\ref{lemma:cartesian-hom-cats} and by Proposition~\ref{proposition:cartesian-hom-tangent-cats}, the functors $\*\:\o\:!$ and $\times\o\<\U,\U\>$ are precisely the terminal object $\top$ and the Cartesian product $\U\Box\U$ in the Cartesian Hom-tangent category $[\DO(\X,\TT)\|\X,\TT]$.

\begin{lemma}
\label{lemma:universal-DO-cartesian-structure}
The forgetful functor $\U\colon\DO(\X,\TT)\to(\X,\TT)$ equipped with the natural transformations $\UnivZ$ and $\UnivS$, is a commutative monoid in the Cartesian Hom-tangent category $[\DO(\X,\TT)\|\X,\TT]$.
\end{lemma}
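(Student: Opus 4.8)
The plan is to verify directly that the forgetful functor $\U$, together with the two 2-morphisms $\UnivZ$ and $\UnivS$, satisfies the three commutative-monoid axioms (unit, associativity, commutativity) in the Cartesian tangent category $[\DO(\X,\TT)\|\X,\TT]$. The essential observation is that all the structure here is ``pointwise'': by Lemma~\ref{lemma:cartesian-hom-cats} the terminal object $\top$ and the product $\U\Box\U$ in the Hom-category are computed by postcomposing with $\*\:\o\:!$ and with $\times\o\<\U,\U\>$, so a 2-morphism into or out of such an object is, componentwise at a differential object $(A,\zeta_A,\sigma_A,\hat p_A)$, simply a morphism of $\X$ between $\*$, $A\times A$, and $A$. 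Under this identification, $\UnivZ$ has component $\zeta_A\colon\*\to A$ and $\UnivS$ has component $\sigma_A\colon A\times A\to A$.

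Given this, each monoid axiom for $(\U,\UnivZ,\UnivS)$ in the Hom-tangent category reduces, component by component, to the corresponding monoid axiom for $(A,\zeta_A,\sigma_A)$ in $\X$. First I would make precise that the product and terminal projections in $[\DO(\X,\TT)\|\X,\TT]$ are induced by the counit $\epsilon^\times$ and $\epsilon^\*$ exactly as in the proof of Lemma~\ref{lemma:cartesian-hom-cats}, so that forming iterated products such as $\U\Box\U\Box\U$ and the diagonal $\<\U,\U\>$ also proceeds pointwise. Then the left/right unit laws, the associativity square, and the symmetry square for $\UnivZ,\UnivS$ become, at each component $A$, precisely the commutative-monoid equations $\sigma_A\o\<\zeta_A\o{!},\id\>=\id=\sigma_A\o\<\id,\zeta_A\o{!}\>$, associativity of $\sigma_A$, and $\sigma_A\o\mathrm{swap}=\sigma_A$. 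These hold because $(A,\zeta_A,\sigma_A)$ is by definition a commutative monoid in $\X$ (Definition~\ref{definition:differential-object}).

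It remains to check that $\UnivZ$ and $\UnivS$ are genuinely morphisms in the Hom-\emph{tangent} category, i.e.\ that they are compatible with the pointwise tangent structure of Proposition~\ref{proposition:hom-tangent-categories}; being a commutative monoid in a tangent category is a condition in that category, not merely in the underlying Cartesian category, but the monoid \emph{equations} themselves live in the underlying Cartesian structure and so follow from the previous paragraph. What does require comment is that $\UnivZ\colon\top\to\U$ and $\UnivS\colon\U\Box\U\to\U$ are well-defined 2-morphisms of $\Tng(\CC)$, i.e.\ that $\zeta_A$ and $\sigma_A$ are \emph{linear} morphisms of differential objects and hence morphisms of $\DO(\X,\TT)$; this is where Additivity~1 and Additivity~2 from Definition~\ref{definition:differential-object} enter, guaranteeing compatibility of $\zeta_A,\sigma_A$ with $\hat p_A$ and thus with the tangent structure.

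**The main obstacle** I expect is purely bookkeeping rather than conceptual: one must keep straight the two layers of identification (the identification of Hom-products with pointwise products from Lemma~\ref{lemma:cartesian-hom-cats}, and the identification of the tangent bundle of the Hom-category with the pointwise $(\T,c)$-construction from Proposition~\ref{proposition:hom-tangent-categories}) and confirm that the coherence 2-isomorphisms $\alpha^\*,\alpha^\times$ of the Cartesian tangentad $(\X,\TT)$ intertwine these two layers correctly, so that ``the monoid structure is pointwise'' is a statement internal to the \emph{tangent} Hom-category and not just its underlying category. Once the pointwise description is set up carefully, no separate computation is needed, and the result follows from the fact that $(\X,\TT)$ is a Cartesian object in $\Tng(\CC)$ together with the defining monoid axioms of a differential object.
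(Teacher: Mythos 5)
Your overall strategy --- everything is pointwise, so the monoid equations for $(\U,\UnivZ,\UnivS)$ reduce at each component to the monoid equations for $(A,\zeta_A,\sigma_A)$, and compatibility with the pointwise tangent structure of Proposition~\ref{proposition:hom-tangent-categories} holds because $\T^\DO$ sends $(A,\zeta_A,\sigma_A,\hat p_A)$ to $(\T A,\T\zeta_A,\T\sigma_A,\hat p_{\T A})$ --- is the same as the paper's. However, there is one concrete gap: you never actually establish that $\UnivZ$ and $\UnivS$ are \emph{natural}, and the condition you offer in its place is not the right one. Naturality of $\UnivZ$ at a morphism $f\colon(A,\zeta_A,\sigma_A,\hat p_A)\to(B,\zeta_B,\sigma_B,\hat p_B)$ of $\DO(\X,\TT)$ is the equation $f\o\zeta_A=\zeta_B$ (and, for $\UnivS$, $f\o\sigma_A=\sigma_B\o(f\times f)$); this is a condition on the \emph{morphisms} of $\DO(\X,\TT)$, not on the components $\zeta_A$, $\sigma_A$. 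Since morphisms of $\DO(\X,\TT)$ are by definition only \emph{linear} --- they commute with the differential projections $\hat p$ --- naturality is precisely the nontrivial statement that linear morphisms of differential objects are automatically additive. The paper imports this from \cite[Propositions~3.7 and~2.16]{cockett:differential-bundles}; it is proved there using the universality of the differential projection, and it does not follow from the Additivity axioms of Definition~\ref{definition:differential-object} alone, which concern the single object $A$ and say nothing about an arbitrary linear $f$.

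Relatedly, your claim that what needs checking is that ``$\zeta_A$ and $\sigma_A$ are linear morphisms of differential objects and hence morphisms of $\DO(\X,\TT)$'' conflates two different things: the components of $\UnivZ$ and $\UnivS$ live in $\X$, being morphisms between values of the functors $\top$, $\U\Box\U$, $\U\colon\DO(\X,\TT)\to\X$, so there is nothing to verify about their membership in $\DO(\X,\TT)$. Once naturality is supplied by the linear-implies-additive result, the rest of your argument --- the pointwise monoid equations and the tangent-compatibility computation $\UnivZ_{\T^\DO(A,\zeta_A,\sigma_A,\hat p_A)}=\T\zeta_A=\bar\T(\UnivZ_{(A,\zeta_A,\sigma_A,\hat p_A)})$ --- goes through exactly as you describe.
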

\begin{proof}
First, we need to prove that $\UnivZ$ and $\UnivS$ are tangent natural transformations. Thanks to~\cite[Propositions~3.7 and~2.16]{cockett:differential-bundles}, linear morphisms of differential objects are additive, that is they preserve the commutative monoid structures. In particular, this implies that $\UnivZ$ and $\UnivS$ are natural transformations. To prove they are compatible with the tangent structure, notice that, by definition of the tangent structure on $\DO(\X,\TT)$, we have
\begin{align*}
&\UnivZ_{\T^\DO(A,\zeta_A,\sigma_A,\hat p_A)}=\T\zeta_A=\bar\T(\UnivZ_{(A,\zeta_A,\sigma_A,\hat p_A)})\\
&\UnivS_{\T^\DO(A,\zeta_A,\sigma_A,\hat p_A)}=\T\sigma_A=\bar\T(\UnivS_{(A,\zeta_A,\sigma_A,\hat p_A)})
\end{align*}
Thus, $\UnivZ$ and $\UnivS$ are compatible with the tangent structure. To prove that $(\U,\UnivZ,\UnivS)$ defines a commutative monoid, one simply uses the fact that each $(A,\zeta_A,\sigma_A)$ constitutes a commutative monoid.
\end{proof}

The next step is to show that the commutative monoid $(\U,\UnivZ,\UnivS)$ comes with a differential projection. Define:
\begin{align*}
&\Univ{\hat p}_{(A,\zeta_A,\sigma_A,\hat p_A)}\colon\bar\T\U(A,\zeta_A,\sigma_A,\hat p_A)=\T A\xrightarrow{\hat p_A}A=\U(A,\zeta_A,\sigma_A,\hat p_A)
\end{align*}
By the linearity of the morphisms in $\DO(\X,\TT)$, $\Univ{\hat p}$ is natural. Furthermore, since
\begin{align*}
&\Univ{\hat p}_{\T^\DO(A,\zeta_A,\sigma_A,\hat p_A)}=\T\hat p_A\o c
\end{align*}
and $c^2=\id_{\T^2}$, the following diagram commutes:
\begin{equation*}
\begin{tikzcd}
{\bar\T\o\U\o\bar\T} & {\bar\T^2\o\U} & {\bar\T^2\o\U} \\
{\U\o\bar\T} && {\T\o\U}
\arrow[equals, from=1-1, to=1-2]
\arrow["{\Univ{\hat p}_{\bar\T}}"', from=1-1, to=2-1]
\arrow["{c_\U}", from=1-2, to=1-3]
\arrow["{\bar\T\Univ{\hat p}}", from=1-3, to=2-3]
\arrow[equals, from=2-1, to=2-3]
\end{tikzcd}
\end{equation*}
Thus, $\Univ{\hat p}$ is a tangent natural transformation.

\begin{definition}
\label{definition:pointwise-differential-object}
A \textbf{pointwise differential object} consists of a differential object $(G,\beta;\zeta,\sigma,\hat p)$ in a Hom-tangent category $[\X',\TT'\|\X,\TT]$ such that, the diagram:
\begin{equation*}
\begin{tikzcd}
{(G,\beta)} & {\bar\T(G,\beta)} & {(G,\beta)}
\arrow["{\bar p}"', from=1-2, to=1-1]
\arrow["{\hat p}", from=1-2, to=1-3]
\end{tikzcd}
\end{equation*}
is a pointwise product diagram in $[\X',\TT'\|\X,\TT]$.
\end{definition}

\begin{proposition}
\label{proposition:universality-differential-object}
The forgetful functor $\U\colon\DO(\X,\TT)\to(\X,\TT)$ equipped with the tangent natural transformations $\UnivZ$, $\UnivS$, and $\Univ{\hat p}$ defines a pointwise differential object in the Cartesian Hom-tangent category $[\DO(\X,\TT)\|\X,\TT]$. 
\end{proposition}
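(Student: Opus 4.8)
The plan is to verify the four defining conditions of Definition~\ref{definition:differential-object} for the datum $(\U,\UnivZ,\UnivS,\Univ{\hat p})$ in the Cartesian Hom-tangent category $[\DO(\X,\TT)\|\X,\TT]$, together with the pointwise strengthening of the universality condition required by Definition~\ref{definition:pointwise-differential-object}. The guiding observation is that both the tangent structure (Proposition~\ref{proposition:hom-tangent-categories}) and the Cartesian structure (Lemma~\ref{lemma:cartesian-hom-cats} together with Proposition~\ref{proposition:cartesian-hom-tangent-cats}) on this Hom-tangent category are defined pointwise, so that each condition may be checked componentwise at an arbitrary differential object $(A,\zeta_A,\sigma_A,\hat p_A)$, where by construction $\UnivZ_A=\zeta_A$, $\UnivS_A=\sigma_A$, $\Univ{\hat p}_A=\hat p_A$, and $\bar p_A=p_A$. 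By Lemma~\ref{lemma:universal-DO-cartesian-structure} the triple $(\U,\UnivZ,\UnivS)$ is already a commutative monoid, and the discussion preceding the statement shows that $\Univ{\hat p}$ is a tangent natural transformation; it therefore remains to verify the conditions governing the differential projection.

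For the additivity conditions and linearity, I would note that these are equalities of tangent natural transformations. Since the relevant tangent and Cartesian data are computed pointwise --- in particular the product-preservation coherences $\bar\T\top\cong\top$ and $\bar\T(\U\Box\U)\cong\bar\T\U\Box\bar\T\U$ restrict at each component to the canonical identifications $\T\*\cong\*$ and $\T(A\times A)\cong\T A\times\T A$ --- evaluating at each differential object reduces each such equality to precisely the corresponding diagram in the definition of $(A,\zeta_A,\sigma_A,\hat p_A)$ as a differential object, which commutes by hypothesis.

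For universality, I would show that the span $\U\xleftarrow{\bar p}\bar\T\U\xrightarrow{\Univ{\hat p}}\U$ is a pointwise product diagram. By Lemma~\ref{lemma:cartesian-hom-cats} the product $\U\Box\U$ equals $\times\o\<\U,\U\>$, which evaluates at $A$ to $A\times A$; hence the comparison morphism $\bar\T\U\to\U\Box\U$ induced by $\bar p$ and $\Univ{\hat p}$ is, at each component, the comparison $\T A\to A\times A$ induced by $p_A$ and $\hat p_A$. The latter is an isomorphism precisely because $A\xleftarrow{p_A}\T A\xrightarrow{\hat p_A}A$ is a product diagram, which is the universality condition defining $A$ as a differential object. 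Being a componentwise isomorphism, and since the structure of the Hom-tangent category is pointwise, the comparison is an isomorphism in $[\DO(\X,\TT)\|\X,\TT]$, so the span is indeed the product $\U\Box\U$; moreover, being a product at each component, it is a pointwise product diagram in the sense of Definition~\ref{definition:pointwise-differential-object}. This simultaneously yields the universality condition in the Hom-tangent category and its pointwise refinement.

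I do not expect a genuine obstacle here: every condition dissolves into a pointwise statement that holds by the definition of the individual differential objects. The only point demanding care is the bookkeeping confirming that products in $[\DO(\X,\TT)\|\X,\TT]$ are computed pointwise, so that the object-wise universality of each differential object genuinely assembles into a pointwise product diagram; this is ensured by the explicit description of the Cartesian structure in Lemma~\ref{lemma:cartesian-hom-cats}.
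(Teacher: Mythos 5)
Your proposal is correct and follows essentially the same route as the paper's proof: the commutative monoid structure comes from Lemma~\ref{lemma:universal-DO-cartesian-structure}, the equational axioms for $\Univ{\hat p}$ reduce componentwise to the axioms of each differential object $(A,\zeta_A,\sigma_A,\hat p_A)$, and the universality follows because limits in the target tangent category induce pointwise limits in the Hom-tangent category. Your explicit comparison-morphism argument for the product diagram is simply a more detailed spelling-out of the same final step.
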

\begin{proof}
In Lemma~\ref{lemma:universal-DO-cartesian-structure} we proved that $(\U,\UnivZ,\UnivS)$ is a commutative monoid in $[\DO(\X,\TT)\|\X,\TT]$. The equational axioms required for $\Univ{\hat p}$ to be a differential projection of $(\U,\UnivZ,\UnivS)$, are a consequence of $\Univ{\hat p}_{(A,\zeta_A,\sigma_A,\hat p_A)}$ being the differential projection of each differential object $(A,\zeta_A,\sigma_A,\hat p_A)$. Notice that the compatibility with the vertical lift is also a consequence of the compatibility between $l_A\colon\T A\to\T^2A$ and $\hat p_A$. Finally, to prove the universal property of $\Univ{\hat p}$, notice that limits on the target tangent category induce pointwise limits in the Hom-tangent category.
\end{proof}

To characterize the correct universal property enjoyed by the pointwise differential object of Proposition~\ref{proposition:universality-differential-object}, for starters, let us unwrap the definition of a differential object in each Hom-tangent category $[\X',\TT'\|\X'',\TT'']$, where $(\X',\TT')$ and $(\X'',\TT'')$ are Cartesian tangent categories:
\begin{description}
\item[Base object] The base object is a lax tangent morphism $(G,\beta)\colon(\X',\TT')\to(\X'',\TT'')$;

\item[Zero morphism] The zero morphism consists of a tangent natural transformation:
\begin{align*}
&\zeta\colon\top\to(G,\beta)
\end{align*}

\item[Sum morphism] The sum morphism consists of a tangent natural transformation:
\begin{align*}
&\sigma\colon(G,\beta)\:\Box\:(G,\beta)\to(G,\beta)
\end{align*}

\item[Differential projection] The differential projection consists of a tangent natural transformation:
\begin{align*}
&\hat p\colon\bar\T(G,\beta)\to(G,\beta)
\end{align*}
\end{description}
satisfying the axioms of a pointwise differential object. A morphism of differential objects in the Hom-tangent category $[\X,\TT\|\X',\TT']$ from a pointwise differential object $(G,\beta;\zeta,\sigma,\hat p)$ to another one $(G',\beta';\zeta',\sigma',\hat p')$ consists of a tangent natural transformation $\varphi\colon(G,\beta)\to(G',\beta')$ which commutes with the differential structures.
\par For each tangent category $(\X',\TT')$, a pointwise differential object $(G,\beta;\zeta,\sigma,\hat p)$ in the Hom-tangent category $[\X'',\TT''\|\X''',\TT''']$ induces a functor
\begin{align*}
&\Gamma_{(G,\beta;\zeta,\sigma,\hat p)}\colon[\X',\TT'\|\X'',\TT'']\to\DO[\X',\TT'\|\X''',\TT''']
\end{align*}
which sends a lax tangent morphism $(H,\gamma)\colon(\X',\TT')\to(\X'',\TT'')$ to the pointwise differential object
\begin{align*}
&\Gamma_{(G,\beta;\zeta,\sigma,\hat p)}(H,\gamma)\=((G,\beta)\o(H,\gamma);\zeta_{(H,\gamma)},\sigma_{(H,\gamma)},\hat p_{(H,\gamma)})
\end{align*}
in the Hom-tangent category $[\X',\TT'\|\X''',\TT''']$. Concretely, the base object of $\Gamma_{(G,\beta;\zeta,\sigma,\hat p)}(H,\gamma)$ is the lax tangent morphism
\begin{align*}
&(\X',\TT')\xrightarrow{(H,\gamma)}(\X'',\TT'')\xrightarrow{(G,\beta)}(\X''',\TT''')
\end{align*}
while the zero, the sum, and the differential projection of $\Gamma_{(G,\beta;\zeta,\sigma,\hat p)}(H,\gamma)$ are the natural transformations:
\begin{equation*}
\begin{tikzcd}
{(\X',\TT')} & {(\X'',\TT'')} && {(\X''',\TT''')}
\arrow["{(H,\gamma)}", from=1-1, to=1-2]
\arrow[""{name=0, anchor=center, inner sep=0}, "\top"', curve={height=18pt}, from=1-2, to=1-4]
\arrow[""{name=1, anchor=center, inner sep=0}, "{(G,\beta)}", curve={height=-18pt}, from=1-2, to=1-4]
\arrow["{\zeta}"', shorten <=5pt, shorten >=5pt, Rightarrow, from=0, to=1]
\end{tikzcd}
\end{equation*}
\begin{equation*}
\begin{tikzcd}
{(\X',\TT')} & {(\X'',\TT'')} && {(\X''',\TT''')}
\arrow["{(H,\gamma)}", from=1-1, to=1-2]
\arrow[""{name=0, anchor=center, inner sep=0}, "{(G,\beta)}"', curve={height=18pt}, from=1-2, to=1-4]
\arrow[""{name=1, anchor=center, inner sep=0}, "{(G,\beta)\Box(G,\beta)}", curve={height=-18pt}, from=1-2, to=1-4]
\arrow["{\sigma}", shorten <=5pt, shorten >=5pt, Rightarrow, from=1, to=0]
\end{tikzcd}
\end{equation*}
\begin{equation*}
\begin{tikzcd}
{(\X',\TT')} & {(\X'',\TT'')} && {(\X''',\TT''')}
\arrow["{(H,\gamma)}", from=1-1, to=1-2]
\arrow[""{name=0, anchor=center, inner sep=0}, "{(G,\beta)}"', curve={height=18pt}, from=1-2, to=1-4]
\arrow[""{name=1, anchor=center, inner sep=0}, "{\bar\T(G,\beta)}", curve={height=-18pt}, from=1-2, to=1-4]
\arrow["{\hat p}", shorten <=5pt, shorten >=5pt, Rightarrow, from=1, to=0]
\end{tikzcd}
\end{equation*}

As observed for vector fields~\cite[Section~3]{lanfranchi:tangentads-II}, this construction is functorial and extends to a tangent morphism.

\begin{lemma}
\label{lemma:induced-lax-tangent-morphism-from-differential-objects}
A differential object $(G,\beta;\zeta,\sigma,\hat p)$ in the Hom-tangent category $[\X'',\TT''\|\X''',\TT''']$ induces a lax tangent morphism
\begin{align*}
&\Gamma_{(G,\beta;\zeta,\sigma,\hat p)}\colon[\X',\TT'\|\X'',\TT'']\to\DO[\X',\TT'\|\X''',\TT''']
\end{align*}
for each tangentad $(\X',\TT')$. Furthermore, $\Gamma_{(G,\beta;\zeta,\sigma,\hat p)}$ is natural in $(\X',\TT')$ and it is strong (strict) when $(G,\beta)$ is strong (strict).
\end{lemma}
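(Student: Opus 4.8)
The plan is to exhibit $\Gamma\=\Gamma_{(G,\beta;\zeta,\sigma,\hat p)}$ as the lift, along the forgetful functor, of the postcomposition lax tangent morphism determined by $(G,\beta)$, so that its lax-tangent-morphism axioms reduce to facts already recorded in Proposition~\ref{proposition:hom-tangent-categories}, exactly as for vector fields~\cite[Section~3]{lanfranchi:tangentads-II}. Writing $(H,\gamma)^\*\=(-)\o(H,\gamma)$ for precomposition, the strict contravariance of the Hom-category $2$-functor in its first argument makes $(H,\gamma)^\*\colon[\X'',\TT''\|\X''',\TT''']\to[\X',\TT'\|\X''',\TT''']$ a strict tangent morphism, and by the explicit formulas for $\top$ and $\Box$ in Lemma~\ref{lemma:cartesian-hom-cats} it preserves the terminal object and binary products on the nose, hence is a strict finite-product-preserving tangent morphism. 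Consequently $\Gamma(H,\gamma)$ is just $\DO\big((H,\gamma)^\*\big)$ evaluated at the fixed pointwise differential object $(G,\beta;\zeta,\sigma,\hat p)$, and Lemma~\ref{lemma:DO-functoriality} guarantees the result is a differential object; since precomposition preserves pointwise products, the universality diagram remains a pointwise product, so $\Gamma(H,\gamma)$ is a pointwise differential object in the sense of Definition~\ref{definition:pointwise-differential-object}.

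On morphisms I would set $\Gamma\varphi\=(G,\beta)\varphi$, the left whiskering by $(G,\beta)$. That this is a tangent natural transformation is immediate from the lax covariance of the Hom-category $2$-functor in its second argument, which makes postcomposition $(G,\beta)_\ast$ a lax tangent morphism, hence a functor on the underlying tangent categories. The only remaining differential-object condition is linearity, i.e.\ compatibility with the differential projections, and this is precisely the interchange law of $\Tng(\CC)$ applied to the $2$-cells $\hat p$ and $\varphi$: computing the horizontal composite $\hat p\*\varphi$ in two ways yields $(G,\beta)\varphi\o\hat p_{(H,\gamma)}=\hat p_{(H',\gamma')}\o\bar\T\big((G,\beta)\varphi\big)$, which is exactly the statement that $\Gamma\varphi$ commutes with the differential projections. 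Preservation of identities and composites then follows from the functoriality of whiskering.

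For the lax structure, I would take the comparison $2$-cell $\lambda\colon\Gamma\o\bar\T\Rightarrow\bar\T^\DO\o\Gamma$ to be the right whiskering of the lax cell of $(G,\beta)$, namely $\lambda_{(H,\gamma)}\=\beta_{(H,\gamma)}$, which is the lax structure of $(G,\beta)_\ast$ from Proposition~\ref{proposition:hom-tangent-categories}. Naturality of $\lambda$ is again an instance of interchange, and the five coherence axioms relating $\lambda$ to $\bar p$, $\bar z$, $\bar s$, $\bar l$, and $\bar c$ are the whiskerings by $(H,\gamma)$ of the corresponding axioms witnessing that $(G,\beta)$ is a lax tangent morphism; since the forgetful functor $\U$ is a faithful strict tangent morphism, these equations may be verified in $[\X',\TT'\|\X''',\TT''']$, where they already hold. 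The step I expect to require the most care is checking that each $\lambda_{(H,\gamma)}$ actually lands in $\DO$, that is, that it is linear. Here the crux is that the $\DO$-tangent structure on the target assigns to $\bar\T^\DO\Gamma(H,\gamma)$ the differential projection $\bar\T\hat p_{(H,\gamma)}\o c$, and, once one unwinds $\bar\T$ on a Hom-tangent category as postcomposition with $(\T''',c''')$, the composite $\bar\T\hat p\o c$ is recognised as exactly the lax structure of $\bar\T(G,\beta)$; the required linearity square for $\lambda_{(H,\gamma)}$ is therefore nothing but the whiskering by $(H,\gamma)$ of the compatibility expressing that $\hat p\colon\bar\T(G,\beta)\to(G,\beta)$ is a tangent natural transformation in $[\X'',\TT''\|\X''',\TT''']$, which holds by hypothesis.

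Finally, naturality in $(\X',\TT')$ follows from associativity of composition: for a tangent morphism $(K,\delta)\colon(\Y,\TT_\Y)\to(\X',\TT')$ both composites $(K,\delta)^\*\o\Gamma$ and $\Gamma\o(K,\delta)^\*$ send $(H,\gamma)$ to the differential object with base $(G,\beta)\o(H,\gamma)\o(K,\delta)$ and all structure maps obtained by whiskering, so the comparison squares commute strictly. The strong and strict cases are read directly off $\lambda_{(H,\gamma)}=\beta_{(H,\gamma)}$: whiskering preserves invertible and identity $2$-cells, whence $\lambda$ is invertible precisely when $\beta$ is (the strong case) and is the identity precisely when $\beta$ is (the strict case).
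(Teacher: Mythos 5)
Your proposal is correct and follows essentially the same route as the paper: $\Gamma$ is defined by postcomposition/whiskering with $(G,\beta;\zeta,\sigma,\hat p)$, the lax comparison cell is the whiskering $\beta_{(H,\gamma)}$ of the distributive law (so strong/strict transfers immediately), and naturality in $(\X',\TT')$ is associativity of composition. The paper leaves almost all of this to the reader, so your additional detail — in particular the observation that precomposition is a strict product-preserving tangent morphism so that Lemma~\ref{lemma:DO-functoriality} gives object-wise well-definedness, and the interchange/whiskering verification that $\beta_{(H,\gamma)}$ is linear because $\hat p$ is a tangent natural transformation — is a faithful elaboration rather than a different argument.
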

\begin{proof}
To define the induced lax tangent morphism $\Gamma_{(G,\beta;\zeta,\sigma,\hat p)}$, one can readily extend the argument used in the discussion above for tangent categories by replacing the natural transformations with $2$-morphisms of $\CC$. We leave the details to the reader. Furthermore, the distributive law $\beta$ induces a distributive law on $\Gamma_{(G,\beta;\zeta,\sigma,\hat p)}$ such that when $(G,\beta)$ is strong, respectively strict, so is $\Gamma_{(G,\beta;\zeta,\sigma,\hat p)}$. To prove that $(G,\beta;\zeta,\sigma,\hat p)$ is natural in $(\X',\TT')$, consider two lax tangent morphisms $(F,\alpha)\colon(\X,\TT)\to(\X',\TT')$ and $(H,\gamma)\colon(\X',\TT')\to(\X'',\TT'')$. Thus:
\begin{align*}
&(\Gamma_{(G,\beta;\zeta,\sigma,\hat p)}\o[F,\alpha\|\X',\TT'])(H,\gamma))=\Gamma_{(G,\beta;\zeta,\sigma,\hat p)}((H,\gamma)\o(F,\alpha))\\
&\quad=\Gamma_{(G,\beta;\zeta,\sigma,\hat p)}((H,\gamma))\o(F,\alpha)=\DO[F,\alpha\|\X''',\TT''']\o\Gamma_{(G,\beta;\zeta,\sigma,\hat p)}(H,\gamma)
\end{align*}
Finally, using that each $[F,\alpha\|\X'',\TT'']$ is a strict tangent morphism, $\Gamma_{(G,\beta;\zeta,\sigma,\hat p)}$ becomes a tangent natural transformation.
\end{proof}

In particular, by Lemma~\ref{lemma:induced-lax-tangent-morphism-from-differential-objects}, the differential object $(\U;\UnivZ,\UnivS,\Univ{\hat p})$ of Proposition~\ref{proposition:universality-differential-object} induces a strict tangent natural transformation
\begin{align*}
&\Gamma_{(\UnivZ,\UnivS,\Univ{\hat p})}\colon[\X',\TT'\|\DO(\X,\TT)]\to\DO[\X',\TT'\|\X,\TT]
\end{align*}
natural in $(\X',\TT')$.
\par We want to prove that $\Gamma_{(\UnivZ,\UnivS,\Univ{\hat p})}$ is invertible. Consider another tangent category $(\X',\TT')$ together with a differential object $(G,\beta;\zeta,\sigma,\hat p)$ in the Hom-tangent category $[\X',\TT'\|\X,\TT]$. For every $A\in\X'$, the tuple
\begin{align*}
&\left(GA,\zeta_A'\colon\*\to GA,\sigma_A'\colon GA\times GA\to GA,\hat p_A'\colon GA\to\T GA\right)
\end{align*}
is a differential object in $(\X,\TT)$. Therefore, we can define a functor
\begin{align*}
\Lambda[G,\zeta,\sigma,\hat p]\colon(\X',\TT')\to\DO(\X,\TT)
\end{align*}
which sends an object $A$ of $\X'$ to the differential object $(GA,\zeta_A,\sigma_A,\hat p_A)$, and a morphism $f\colon A\to B$ to $Gf$. By the naturality of $\zeta$, $\sigma$, and $\hat p$,
\begin{align*}
&Gf\colon(GA,\zeta_A,\sigma_A,\hat p_A)\to(GB,\zeta_B,\sigma_B,\hat p_B)
\end{align*}
becomes a linear morphism of differential objects of $(\X,\TT)$. Furthermore, $\Lambda[G,\zeta,\sigma,\hat p]$ comes with a distributive law
\begin{align*}
&\Lambda[\beta]\colon\Lambda[G,\zeta,\sigma,\hat p](\T'A)=(G\T'A,\zeta_{\T'A},\sigma_{\T'A},\hat p_{\T'A})\xrightarrow{\beta_A}(\T GA,\T\zeta_A',\T\sigma_A,c_A\o\T\hat p_A)=\T^\DO(\Lambda[G,\zeta,\sigma,\hat p](A))
\end{align*}
which is a morphism of differential objects since each of the structural morphisms $\zeta$, $\sigma$, and $\hat p$ are tangent natural transformations and thus compatible with the distributive law $\beta$.

\begin{lemma}
\label{lemma:universality-differential-objects}
Given a lax tangent morphism $(G,\beta)\colon(\X',\TT')\to(\X,\TT)$ and a differential structure $(\zeta,\sigma,\hat p)$ for $(G,\beta)$ in the Hom-tangent category $[\X',\TT'\|\X,\TT]$, the functor $\Lambda[G,\zeta,\sigma,\hat p]$ together with the distributive law $\Lambda[\beta]$ defines a lax tangent morphism:
\begin{align*}
&\Lambda[G,\beta;\zeta,\sigma,\hat p]\colon(\Lambda[G,\zeta,\sigma,\hat p],\Lambda[\beta])\colon(\X',\TT')\to\DO(\X,\TT)
\end{align*}
\end{lemma}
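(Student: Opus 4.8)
The plan is to reduce the five coherence axioms of a lax tangent morphism for $(\Lambda[G,\zeta,\sigma,\hat p],\Lambda[\beta])$ to the already-known axioms for $(G,\beta)$, by exploiting the forgetful functor $\U\colon\DO(\X,\TT)\to(\X,\TT)$. By construction of the tangent structure on $\DO(\X,\TT)$, the functor $\U$ is a strict tangent morphism: it strictly preserves the tangent bundle functor, $\U\o\T^\DO=\T\o\U$, and sends each structural natural transformation $p,z,s,l,c$ of $\DO(\X,\TT)$ to the corresponding one of $(\X,\TT)$. Moreover $\U\o\Lambda[G,\zeta,\sigma,\hat p]=G$ and $\U(\Lambda[\beta]_A)=\beta_A$ for every $A\in\X'$. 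Since $\U$ is faithful, any equation between morphisms of $\DO(\X,\TT)$ holds precisely when it holds after applying $\U$.

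First I would record that $\Lambda[\beta]$ is a genuine natural transformation $\Lambda[G,\zeta,\sigma,\hat p]\o\T'\Rightarrow\T^\DO\o\Lambda[G,\zeta,\sigma,\hat p]$ in $\DO(\X,\TT)$: each component $\Lambda[\beta]_A$ is a linear morphism of differential objects, as already observed before the statement, and its naturality square is carried by the faithful functor $\U$ to the naturality square of $\beta$, hence holds. It then remains to verify the compatibility of $\Lambda[\beta]$ with the projection, the zero, the sum, the vertical lift, and the canonical flip. Each of these is an equation between morphisms of $\DO(\X,\TT)$; applying the faithful, strictly structure-preserving functor $\U$ turns it into the corresponding coherence equation for $\beta$, which holds because $(G,\beta)$ is, by hypothesis, a lax tangent morphism $(\X',\TT')\to(\X,\TT)$. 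By faithfulness the original equations hold in $\DO(\X,\TT)$, and therefore $(\Lambda[G,\zeta,\sigma,\hat p],\Lambda[\beta])$ is a lax tangent morphism.

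The one point requiring genuine care, and the step I expect to be the main obstacle, is the bookkeeping around the tangent bundle functor $\T^\DO$, whose differential projection on $\T A$ is the twisted morphism $c\o\T\hat p_A$ rather than $\hat p_A$ itself. This twist is exactly what makes the vertical lift and canonical flip axioms nontrivial, and one must confirm that the relevant iterated composites (for instance $\T\Lambda[\beta]\o\Lambda[\beta]_{\T'}$, viewed as a morphism into the iterate of $\T^\DO$) are themselves morphisms of differential objects before the faithfulness argument can be invoked. Once this is secured, which again follows from $\zeta,\sigma,\hat p$ being tangent natural transformations, hence compatible with $\beta$, together with the coassociativity of $l$ and the flip relations holding in $(\X,\TT)$, the reduction via $\U$ is immediate and the remaining verifications are routine.
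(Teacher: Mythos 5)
Your proposal is correct and follows essentially the same route as the paper's proof: the paper likewise reduces everything to the observations that each $\Lambda[\beta]_A=\beta_A$ is a linear morphism of differential objects (because $\zeta,\sigma,\hat p$ are tangent natural transformations, hence compatible with $\beta$) and that the lax-tangent coherence axioms are inherited from $(G,\beta)$. Your explicit appeal to the faithful, strict tangent forgetful functor $\U\colon\DO(\X,\TT)\to(\X,\TT)$ merely makes precise the transfer mechanism that the paper leaves implicit.
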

\begin{proof}
In the previous discussion, we defined $\Lambda[G,\beta;\zeta,\sigma,\hat p]$ as the lax tangent morphism which picks out the differential object $(GA;\zeta_A,\sigma_A,\hat p_A)$ for each $A\in\X'$ and whose distributive law is induced by $\beta$. In particular, the naturality of the structure morphisms $\zeta$, $\sigma$, and $\hat p$ makes the functor $\Lambda[G,\zeta,\sigma,\hat p]$ well-defined, the compatibility of the structure morphisms and $\beta$ makes $\Lambda[\beta]$ into a morphism of differential objects, and the compatibility of $\beta$ with the tangent structures makes $\Lambda[G,\zeta,\sigma,\hat p]$ into a lax tangent morphism.
\end{proof}

We can now prove the universal property of differential objects, which is the main result of this section.

\begin{theorem}
\label{theorem:universality-differential-objects}
The differential object $(\U,\UnivZ,\UnivS,\Univ{\hat p})$ of Proposition~\ref{proposition:universality-differential-object} is universal. Concretely, the induced strict tangent natural transformation
\begin{align*}
&\Gamma_{(\UnivZ,\UnivS,\Univ{\hat p})}\colon[\X',\TT'\|\DO(\X,\TT)]\to\DO[\X',\TT'\|\X,\TT]
\end{align*}
makes the functor
\begin{align*}
&\TngCat^\op\xrightarrow{[-\|\X,\TT]}\TngCat^\op\xrightarrow{\DO^\op}\TngCat^\op
\end{align*}
which sends a tangent category $(\X',\TT')$ to the tangent category $\DO[\X',\TT'\|\X,\TT]$, into a corepresentable functor. In particular, $\Gamma_{(\UnivZ,\UnivS,\Univ{\hat p})}$ is invertible.
\end{theorem}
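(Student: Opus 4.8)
The plan is to produce an explicit two-sided inverse to $\Gamma_{(\UnivZ,\UnivS,\Univ{\hat p})}$ from the assignment $\Lambda$ of Lemma~\ref{lemma:universality-differential-objects}, and then to deduce corepresentability from the naturality recorded in Lemma~\ref{lemma:induced-lax-tangent-morphism-from-differential-objects}. Recall that $\Gamma_{(\UnivZ,\UnivS,\Univ{\hat p})}$ is postcomposition with the universal pointwise differential object: it sends a lax tangent morphism $(H,\gamma)\colon(\X',\TT')\to\DO(\X,\TT)$ to the differential object $(\U\o H,\U\gamma;\UnivZ_{(H,\gamma)},\UnivS_{(H,\gamma)},\Univ{\hat p}_{(H,\gamma)})$ of $[\X',\TT'\|\X,\TT]$. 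For each $(\X',\TT')$ the candidate inverse is the functor $\Lambda$ sending a differential object $(G,\beta;\zeta,\sigma,\hat p)$ of $[\X',\TT'\|\X,\TT]$ to the lax tangent morphism $\Lambda[G,\beta;\zeta,\sigma,\hat p]$ of Lemma~\ref{lemma:universality-differential-objects}, which picks out the differential object $(GA,\zeta_A,\sigma_A,\hat p_A)$ of $(\X,\TT)$ at each $A\in\X'$ and carries the distributive law $\Lambda[\beta]$.

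First I would confirm that $\Lambda$ is a functor at each $(\X',\TT')$. On objects this is exactly Lemma~\ref{lemma:universality-differential-objects}. On morphisms, a morphism of differential objects $\varphi\colon(G,\beta;\zeta,\sigma,\hat p)\to(G',\beta';\zeta',\sigma',\hat p')$ is a tangent natural transformation commuting with the differential structure, so each component $\varphi_A$ is a morphism of $(\X,\TT)$ commuting with $\hat p_A$ and $\hat p'_A$, hence a linear morphism $(GA,\dots)\to(G'A,\dots)$ in $\DO(\X,\TT)$; the family $(\varphi_A)_A$ thus assembles into a tangent natural transformation $\Lambda[\varphi]$, and functoriality is immediate.

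Next I would check that $\Gamma_{(\UnivZ,\UnivS,\Univ{\hat p})}$ and $\Lambda$ are mutually inverse. Applying $\Lambda$ and then $\Gamma_{(\UnivZ,\UnivS,\Univ{\hat p})}$ to $(G,\beta;\zeta,\sigma,\hat p)$ recovers $G$, since $\U$ forgets exactly the differential structure $\Lambda$ has attached, together with the very $2$-morphisms $\zeta,\sigma,\hat p$, because $\UnivZ,\UnivS,\Univ{\hat p}$ evaluated at $\Lambda[G,\dots]$ return them by definition; this direction is pure bookkeeping. Applying $\Gamma_{(\UnivZ,\UnivS,\Univ{\hat p})}$ and then $\Lambda$ to $(H,\gamma)$ yields, at each $A\in\X'$, the differential object $(\U HA,\UnivZ_{HA},\UnivS_{HA},\Univ{\hat p}_{HA})$, which is precisely $HA$; so the underlying functors agree, and the only point demanding care is that the reconstructed distributive law equals $\gamma$.

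The main obstacle is therefore this distributive-law matching: one must verify that the linear morphism produced by $\Lambda$ from the underlying $2$-morphism $\U\gamma_A$ coincides with $\gamma_A\colon H\T'A\to\T^\DO HA$ itself. This rests on two facts. First, the forgetful functor $\U\colon\DO(\X,\TT)\to(\X,\TT)$ is faithful, a linear morphism of differential objects being just a morphism of underlying objects that commutes with the differential projections; hence $\gamma_A$ is determined by $\U\gamma_A$, which also settles the inverse correspondence at the level of morphisms. Second, the source and target differential objects agree on the nose with those used by $\Lambda$: the target $\T^\DO HA$ carries exactly the differential structure that $\Lambda[\beta]$ takes as codomain, by the definition of the tangent bundle functor on $\DO(\X,\TT)$, while the source $H\T'A$ coincides with $\Lambda[G,\dots](\T'A)$ since $G=\U H$. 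Hence the reconstructed distributive law is $\gamma$, giving $\Lambda\o\Gamma_{(\UnivZ,\UnivS,\Univ{\hat p})}=\id$. Finally, since $\Gamma_{(\UnivZ,\UnivS,\Univ{\hat p})}$ is natural in $(\X',\TT')$ by Lemma~\ref{lemma:induced-lax-tangent-morphism-from-differential-objects} and is now a pointwise isomorphism, it is a natural isomorphism $[\,-\,\|\DO(\X,\TT)]\cong\DO^\op\o[-\|\X,\TT]$; as the left-hand side is the Hom $2$-functor corepresented by $\DO(\X,\TT)$, the right-hand functor is corepresentable, corepresented by $\DO(\X,\TT)$, and in particular $\Gamma_{(\UnivZ,\UnivS,\Univ{\hat p})}$ is invertible.
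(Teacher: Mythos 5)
Your proposal is correct and follows essentially the same route as the paper: both construct the explicit inverse $\Lambda$ from Lemma~\ref{lemma:universality-differential-objects}, verify the two composites are identities objectwise and on morphisms, and deduce corepresentability from the naturality established in Lemma~\ref{lemma:induced-lax-tangent-morphism-from-differential-objects}. Your extra care on the distributive-law matching (via faithfulness of $\U$) fills in a step the paper leaves implicit, but it is the same argument.
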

\begin{proof}
Consider a tangent category $(\X',\TT')$ and a lax tangent morphism $(F,\alpha)\colon(\X',\TT')\to\DO(\X,\TT)$, which sends each $A\in\X'$ to the differential object $(FA,\zeta_{A},\sigma_{A},\hat p_{A})$. We want to compare $(F,\alpha)$ with $\Lambda[\Gamma_{(\UnivZ,\UnivS,\Univ{\hat p})}(F,\alpha)]$. Concretely, $\Lambda[\Gamma_{(\UnivZ,\UnivS,\Univ{\hat p})}(F,\alpha)]$ sends an object $A$ of $\X'$ to the differential objects $\Gamma_{(\UnivZ,\UnivS,\Univ{\hat p})}(F,\alpha)_A$. However, this corresponds to the differential object $(FA,\zeta_{A},\sigma_{A},\hat p_{A})$. Given a morphism $\varphi\colon(F,\alpha)\to(F',\alpha')$ of $[\X',\TT'\|\DO(\X,\TT)]$
\begin{align*}
&\varphi_A\colon(FA,\zeta_A,\sigma_A,\hat p_A)\to(F'A,\zeta'_A,\sigma'_A,\hat p'_A)
\end{align*}
which coincides with the morphism $\Lambda[\Gamma_{(\UnivZ,\UnivS,\Univ{\hat p})}(\varphi)]$, since:
\begin{align*}
&\Gamma_{(\UnivZ,\UnivS,\Univ{\hat p})}(\varphi)_A=FA\xrightarrow{\varphi}F'A
\end{align*}
Conversely, consider a differential object $(G,\beta;\zeta,\sigma,\hat p)$ of the Hom-tangent category $[\X',\TT'\|\X,\TT]$. Therefore, $\Lambda[G,\beta;\zeta,\sigma,\hat p]$ sends each $A$ to the differential object $(GA,\zeta_A,\sigma_A,\hat p_A)$. Thus, $\Gamma_{(\UnivZ,\UnivS,\Univ{\hat p})}(\Lambda[G,\beta;\zeta,\sigma,\hat p])$ coincides with $(G,\beta;\zeta,\sigma,\hat p)$. Similarly, for a given morphism $\varphi(G,\beta;\zeta,\sigma,\hat p)\to(G',\beta';\zeta',\sigma',\hat p')$ of differential objects, $\Gamma_{(\UnivZ,\UnivS,\Univ{\hat p})}(\Lambda[\varphi])$ coincides with $\varphi$.
\par Finally, it is not hard to see that the functor
\begin{align*}
&\Lambda\colon\DO[\X',\TT'\|\X,\TT]\to[\X',\TT'\|\DO(\X,\TT)]
\end{align*}
extends to a strict tangent morphism, since both $\Lambda[\T^\DO(G,\beta;\zeta,\sigma,\hat p)]$ and $\bar\T(\Lambda[G,\beta;\zeta,\sigma,\hat p])$ send each $A\in\X'$ to the differential object
\begin{align*}
&(\T GA,\T\zeta_A,\T\sigma_A,c_A\o\T\hat p_A)
\end{align*}
Therefore, $\Lambda$ inverts $\Gamma_{(\UnivZ,\UnivS,\Univ{\hat p})}$.
\end{proof}

Theorem~\ref{theorem:universality-differential-objects} establishes the correct universal property of the construction of differential objects. Thanks to this result, we can finally introduce the notion of differential objects in the formal context of tangentads.

\begin{definition}
\label{definition:construction-differential-objects}
A Cartesian tangentad $(\X,\TT)$ in a $2$-category $\CC$ \textbf{admits the construction of differential objects} if there exists a Cartesian tangentad $\DO(\X,\TT)$ of $\CC$ together with a (strict) tangent morphism $\U\colon\DO(\X,\TT)\to(\X,\TT)$ and a pointwise differential object $(\U;\UnivZ,\UnivS,\Univ{\hat p})$ in the Hom-tangent category $[\DO(\X,\TT)\|\X,\TT]$ such that the induced tangent natural transformation $\Gamma_{(\UnivZ,\UnivS,\Univ{\hat p})}$ of Lemma~\ref{lemma:induced-lax-tangent-morphism-from-differential-objects} is invertible. The pointwise differential object $(\U;\UnivZ,\UnivS,\Univ{\hat p})$ is called the \textbf{universal pointwise differential object} of $(\X,\TT)$ and $\DO(\X,\TT)$ is called the \textbf{tangentad of differential objects} of $(\X,\TT)$.
\end{definition}

\begin{definition}
\label{definition:construction-differential-objects-2-category}
A Cartesian $2$-category $\CC$ \textbf{admits the construction of differential objects} provided that every Cartesian tangentad of $\CC$ admits the construction of differential objects.
\end{definition}

We can now rephrase Theorem~\ref{theorem:universality-differential-objects} as follows.

\begin{corollary}
\label{corollary:universality-differential-objects}
The $2$-category $\Cat$ of categories admits the construction of differential objects and the tangentad of differential objects of a tangentad $(\X,\TT)$ of $\Cat$ is the tangent category $\DO(\X,\TT)$ of differential objects of $(\X,\TT)$.
\end{corollary}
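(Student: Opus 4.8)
The plan is to observe that this corollary is essentially a reformulation of Theorem~\ref{theorem:universality-differential-objects} in the language introduced by Definitions~\ref{definition:construction-differential-objects} and~\ref{definition:construction-differential-objects-2-category}, so the proof amounts to verifying that the data already produced in the previous results precisely fulfil every clause of what it means for a Cartesian tangentad to admit the construction of differential objects. First I would recall that, by Example~\ref{example:tangent-categories} together with Definition~\ref{definition:cartesian-tangentad}, the Cartesian tangentads of the $2$-category $\Cat$ are exactly the Cartesian tangent categories. Thus, to prove that $\Cat$ admits the construction of differential objects in the sense of Definition~\ref{definition:construction-differential-objects-2-category}, it suffices to show that every Cartesian tangent category $(\X,\TT)$ admits the construction of differential objects in the sense of Definition~\ref{definition:construction-differential-objects}, with $\DO(\X,\TT)$ given by the usual Cartesian tangent category of differential objects.

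Next I would assemble the three ingredients required by Definition~\ref{definition:construction-differential-objects}. By Lemma~\ref{lemma:DO-functoriality} the category $\DO(\X,\TT)$ of differential objects carries a Cartesian tangent structure, hence is a Cartesian tangentad of $\Cat$; this supplies the candidate tangentad of differential objects. The forgetful functor $\U\colon\DO(\X,\TT)\to(\X,\TT)$ is a \emph{strict} tangent morphism, since the structural natural transformations of $\DO(\X,\TT)$ are, by construction, those of $(\X,\TT)$ and $\U$ preserves finite products; this is the required tangent morphism. Finally, Proposition~\ref{proposition:universality-differential-object} exhibits $(\U;\UnivZ,\UnivS,\Univ{\hat p})$ as a pointwise differential object in the Cartesian Hom-tangent category $[\DO(\X,\TT)\|\X,\TT]$, which is itself a Cartesian tangent category by Proposition~\ref{proposition:cartesian-hom-tangent-cats}.

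It then remains to check the invertibility condition. The induced strict tangent natural transformation $\Gamma_{(\UnivZ,\UnivS,\Univ{\hat p})}$ of Lemma~\ref{lemma:induced-lax-tangent-morphism-from-differential-objects} is precisely the comparison map
\begin{align*}
&\Gamma_{(\UnivZ,\UnivS,\Univ{\hat p})}\colon[\X',\TT'\|\DO(\X,\TT)]\to\DO[\X',\TT'\|\X,\TT]
\end{align*}
appearing in Theorem~\ref{theorem:universality-differential-objects}, and that theorem establishes its invertibility, with explicit inverse $\Lambda$. This is exactly the invertibility of $\Gamma_{(\UnivZ,\UnivS,\Univ{\hat p})}$ demanded by Definition~\ref{definition:construction-differential-objects}, so $(\X,\TT)$ admits the construction of differential objects with universal pointwise differential object $(\U;\UnivZ,\UnivS,\Univ{\hat p})$ and tangentad of differential objects $\DO(\X,\TT)$.

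The main point requiring care, rather than a genuine obstacle, is the bookkeeping of the translation: confirming that the abstract invertibility clause of Definition~\ref{definition:construction-differential-objects} in the special case $\CC=\Cat$ coincides on the nose with the corepresentability statement of Theorem~\ref{theorem:universality-differential-objects}, and that the forgetful functor genuinely defines a strict (not merely lax or strong) tangent morphism preserving the Cartesian structure. Once these identifications are made, the corollary follows by instantiating Definition~\ref{definition:construction-differential-objects-2-category} at an arbitrary Cartesian tangent category, since the argument is uniform in $(\X,\TT)$.
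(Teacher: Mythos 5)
Your proposal is correct and follows exactly the route the paper intends: the corollary is presented as a direct rephrasing of Theorem~\ref{theorem:universality-differential-objects}, with the required data supplied by Lemma~\ref{lemma:DO-functoriality}, Proposition~\ref{proposition:universality-differential-object}, and the invertibility of $\Gamma_{(\UnivZ,\UnivS,\Univ{\hat p})}$ from the theorem. Your careful bookkeeping of how these ingredients instantiate Definitions~\ref{definition:construction-differential-objects} and~\ref{definition:construction-differential-objects-2-category} in the case $\CC=\Cat$ is precisely the (implicit) content of the paper's proof.
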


To ensure that Definitions~\ref{definition:construction-differential-objects} and~\ref{definition:construction-differential-objects-2-category} are well-posed, one requires the tangentad of differential objects of a given tangentad to be unique. The next proposition establishes that such a construction is defined uniquely up to a unique isomorphism.

\begin{proposition}
\label{proposition:uniqueness-differential-objects}
If a tangentad $(\X,\TT)$ admits the construction of differential objects, the tangentad of differential objects $\DO(\X,\TT)$ of $(\X,\TT)$ is unique up to a unique isomorphism which extends to an isomorphism of the corresponding universal differential objects of $(\X,\TT)$.
\end{proposition}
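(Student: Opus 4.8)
The plan is to read the defining condition of Definition~\ref{definition:construction-differential-objects} as a representability statement and to conclude by the $2$-categorical Yoneda lemma, exactly as one establishes the uniqueness of any universal construction. Suppose that $(\X,\TT)$ admits the construction of differential objects via two sets of data: a Cartesian tangentad $\DO_1$ with forgetful tangent morphism $\U_1\colon\DO_1\to(\X,\TT)$ and universal pointwise differential object $(\U_1;\UnivZ_1,\UnivS_1,\Univ{\hat p}_1)$, and a second Cartesian tangentad $\DO_2$ with $\U_2$ and $(\U_2;\UnivZ_2,\UnivS_2,\Univ{\hat p}_2)$. By hypothesis the induced strict tangent natural transformations
\begin{align*}
&\Gamma_i\colon[\X',\TT'\|\DO_i]\xrightarrow{\cong}\DO[\X',\TT'\|\X,\TT],\qquad i=1,2,
\end{align*}
are invertible and natural in $(\X',\TT')$, so each exhibits $\DO_i$ as a representing object of the $\TngCat$-valued functor $(\X',\TT')\mapsto\DO[\X',\TT'\|\X,\TT]$.

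First I would splice the two isomorphisms together into
\begin{align*}
&\Phi\=\Gamma_2^{-1}\o\Gamma_1\colon[\X',\TT'\|\DO_1]\xrightarrow{\cong}[\X',\TT'\|\DO_2],
\end{align*}
a strict tangent natural isomorphism, natural in $(\X',\TT')$. Since $[\X',\TT'\|\DO_i]$ is the Hom-tangent category $\Tng(\CC)((\X',\TT'),\DO_i)$, the family $\Phi$ is a $2$-natural isomorphism between the representable functors $\Tng(\CC)(-,\DO_1)$ and $\Tng(\CC)(-,\DO_2)$.

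Next I would extract the comparison morphism by Yoneda. Evaluating $\Phi$ at $(\X',\TT')=\DO_1$ and feeding it the identity yields $\theta\=\Phi_{\DO_1}(\id_{\DO_1})\colon\DO_1\to\DO_2$, and symmetrically $\Phi^{-1}$ produces $\theta'\=\Phi^{-1}_{\DO_2}(\id_{\DO_2})\colon\DO_2\to\DO_1$. The naturality of $\Phi$ forces $\Phi=\theta\o(-)$ on every Hom-tangent category, whence $\theta'\o\theta=\id_{\DO_1}$ and $\theta\o\theta'=\id_{\DO_2}$, so $\theta$ is an isomorphism; since $\DO_1,\DO_2$ are Cartesian objects and invertible $1$-morphisms automatically preserve the adjunctions defining the Cartesian structure, $\theta$ is an isomorphism in $\cTng(\CC)$. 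Uniqueness is built in: any isomorphism inducing $\Phi$ by post-composition must coincide with $\theta$ after evaluation at $\id_{\DO_1}$.

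Finally I would verify compatibility with the universal differential objects. By the defining formula of Lemma~\ref{lemma:induced-lax-tangent-morphism-from-differential-objects}, $\Gamma_1(\id_{\DO_1})$ is precisely $(\U_1;\UnivZ_1,\UnivS_1,\Univ{\hat p}_1)$, while $\Gamma_2(\theta)=\Gamma_2(\Phi_{\DO_1}(\id_{\DO_1}))=\Gamma_1(\id_{\DO_1})$; since $\Gamma_2(\theta)=(\U_2\o\theta;\dots)$, unwinding this identity gives $\U_2\o\theta=\U_1$ together with the statement that $\theta$ carries $(\UnivZ_2,\UnivS_2,\Univ{\hat p}_2)$ to $(\UnivZ_1,\UnivS_1,\Univ{\hat p}_1)$, i.e.\ $\theta$ extends to an isomorphism of universal differential objects. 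The main obstacle, and the only genuinely structural point, is confirming that the abstractly produced $\theta$ respects all of the data at once; this is guaranteed because each $\Gamma_i$ is a \emph{strict} tangent natural isomorphism and the Hom-functors are $\TngCat$-valued, so $\Phi$, and hence the induced $\theta$, automatically commutes with the tangent and Cartesian structure rather than merely with the underlying $1$-morphisms of $\CC$.
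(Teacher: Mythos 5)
Your proof is correct and follows essentially the same route as the paper's: the paper's own argument transports the identity of one candidate through the two universal properties to obtain mutually inverse comparison tangent morphisms, which is precisely the representability/Yoneda argument you spell out. Your version merely makes the $2$-categorical Yoneda mechanism and the compatibility with the universal differential objects explicit, which the paper leaves implicit.
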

\begin{proof}
This follows directly from the universal property of the construction. In particular, suppose that $\DO'(\X,\TT)$ is another tangentad of differential objects of $(\X,\TT)$. Using the universal property, the identity tangent morphism of $\DO'(\X,\TT)$ must correspond to a morphism of pointwise differential objects in the Hom-tangent category $[\DO'(\X,\TT)\|\X,\TT]$, which, by the universal property of $\DO(\X,\TT)$, corresponds to a tangent morphism $\DO'(\X,\TT)\to\DO(\X,\TT)$. Dually, we can construct a tangent morphism $\DO(\X,\TT)\to\DO'(\X,\TT)$ and, by the universal property, these two tangent morphisms must invert each other.
\end{proof}

\subsection{The formal structures of differential objects}
\label{subsection:structures-differential-objects}
In Section~\ref{subsection:tangent-category-differential-objects}, we recalled that (a) differential objects of a tangent category $(\X,\TT)$ form a new tangent category $\DO(\X,\TT)$; (b) the assigment which sends a tangent category $(\X,\TT)$ to $\DO(\X,\TT)$ is $2$-functorial.
\par In this section, we recover the $2$-functoriality of the construction of differential objects in the context of tangentads. For starters, notice that, in the context of tangentads, (a) is a built-in property of the definition of the tangentad of differential objects $\DO(\X,\TT)$ of a tangentad $(\X,\TT)$, since $\DO(\X,\TT)$ is, by definition, a tangentad.
\par In the following, $\CC$ denotes a $2$-category which admits the construction of differential objects.

\subsubsection*{The functoriality of the construction of differential objects}
\label{subsubsection:functoriality-differential-objects}
As proved in Lemma~\ref{lemma:DO-functoriality}, there is a $2$-functor
\begin{align*}
&\DO\colon\cTngCat_\cong\to\cTngCat_\cong
\end{align*}
which sends a Cartesian tangent category $(\X,\TT)$ to the Cartesian tangent category $\DO(\x,\TT)$ of differential objects of $(\X,\TT)$.
\par In this section, we show that the construction of differential objects is $2$-functorial in the $2$-category of tangentads of $\CC$. Consider a strong tangent morphism $(F,\alpha)\colon(\X,\TT)\to(\X',\TT')$ between two Cartesian tangentads of $\CC$ which preserves finite products. It is not hard to prove that
\begin{align*}
&[\DO(\X,\TT)\|\X,\TT]\xrightarrow{[\DO(\X,\TT)\|F,\alpha]}[\DO(\X,\TT)\|\X',\TT']
\end{align*}
is a strong tangent morphism which preserves finite products. Thus, $\DO$ sends $[\DO(\X,\TT)\|F,\alpha]$ to a strong tangent morphism
\begin{align*}
&\DO[\DO(\X,\TT)\|\X,\TT]\xrightarrow{\DO[\DO(\X,\TT)\|F,\alpha]}\DO[\DO(\X,\TT)\|\X',\TT']\cong[\DO(\X,\TT)\|\DO(\X',\TT')]
\end{align*}
where we used the universal property of differential objects to identify $\DO[\DO(\X,\TT)\|\X',\TT']$ with the Hom-tangent category $[\DO(\X,\TT)\|\DO(\X',\TT')]$. In particular, the universal differential object $\UnivZ,\UnivS,\Univ{\hat p}$ is sent by this tangent morphism to a tangent morphism
\begin{align*}
&\DO(F,\alpha)\colon\DO(\X,\TT)\to\DO(\X',\TT')
\end{align*}
Now, consider a $2$-morphism $\varphi\colon(F,\alpha)\Rightarrow(G,\beta)\colon(\X,\TT)\to(\X',\TT')$ and define the tangent natural transformation:
\begin{align*}
&\DO[\DO(\X,\TT)\|\varphi]\colon\DO[\DO(\X,\TT)\|F,\alpha]\Rightarrow\DO[\DO(\X,\TT)\|G,\beta]
\end{align*}
Thus, by evaluating such a transformation on the universal differential object and by employing the universal property of differential objects, we obtain a $2$-morphism:
\begin{align*}
&\DO(\varphi)\colon\DO(F,\alpha)\Rightarrow\DO(G,\beta)
\end{align*}

\begin{proposition}
\label{proposition:DO-functoriality}
There is a $2$-functor
\begin{align*}
&\DO\colon\cTng(\CC)_\cong\to\cTng(\CC)_\cong
\end{align*}
which sends a Cartesian tangentad $(\X,\TT)$ of $\CC$ to the tangentad of differential objects $\DO(\X,\TT)$ of $(\X,\TT)$. In particular, for each strong tangent morphism $(F,\alpha)\colon(\X,\TT)\to(\X',\TT')$, the tangent morphism $\DO(F,\alpha)\colon\DO(\X,\TT)\to\DO(\X',\TT')$ is also strong.
\end{proposition}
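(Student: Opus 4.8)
The plan is to verify the $2$-functor axioms by observing that the assignments $\DO(F,\alpha)$ and $\DO(\varphi)$ constructed in the preceding discussion are built entirely out of three operations that are already known to be (strictly) $2$-functorial or natural: the Hom-$2$-functor $[-\|-]$ of Proposition~\ref{proposition:hom-tangent-categories}, the $\Cat$-level $2$-functor $\DO$ of Lemma~\ref{lemma:DO-functoriality}, and the comparison $\Gamma_{(\UnivZ,\UnivS,\Univ{\hat p})}$, which by Theorem~\ref{theorem:universality-differential-objects} is invertible and, by Lemma~\ref{lemma:induced-lax-tangent-morphism-from-differential-objects}, natural in the base tangentad. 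Concretely, $\DO(F,\alpha)$ is obtained by applying $\DO[\DO(\X,\TT)\|F,\alpha]$ to the universal pointwise differential object and reading the result through the identification $\DO[\DO(\X,\TT)\|\X',\TT']\cong[\DO(\X,\TT)\|\DO(\X',\TT')]$. Since each of the three constituents respects composition and identities, so does their composite; the bulk of the work is therefore bookkeeping of the identifications involved.

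First I would check that $\DO(F,\alpha)$ is a strong tangent morphism, establishing that $\DO$ lands in $\cTng(\CC)_\cong$. The strongness propagates along the construction: a strong, product-preserving $(F,\alpha)$ induces a strong, product-preserving $[\DO(\X,\TT)\|F,\alpha]$, because the tangent and Cartesian structures on the Hom-tangent categories are computed pointwise from those of the codomain (Proposition~\ref{proposition:cartesian-hom-tangent-cats}), so strongness and preservation of finite products are detected pointwise; the $\Cat$-level $\DO$ of Lemma~\ref{lemma:DO-functoriality} sends strong product-preserving morphisms to strong ones; and the final identification is an isomorphism, hence strong. This yields the last sentence of the statement.

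Next I would verify preservation of identities and composites of $1$-morphisms. For identities, substituting $(F,\alpha)=\id_{(\X,\TT)}$ makes each of the three constituents the identity, so $\DO(\id)=\id$. For composition, given $(F,\alpha)\colon(\X,\TT)\to(\X',\TT')$ and $(G,\beta)\colon(\X',\TT')\to(\X'',\TT'')$, I would use that $[-\|-]$ is functorial in its second argument, so $[\DO(\X,\TT)\|(G,\beta)\o(F,\alpha)]=[\DO(\X,\TT)\|G,\beta]\o[\DO(\X,\TT)\|F,\alpha]$, and that the $\Cat$-level $\DO$ preserves this composite; evaluating at the universal differential object and transporting through the $\Gamma$-identifications then yields $\DO((G,\beta)\o(F,\alpha))=\DO(G,\beta)\o\DO(F,\alpha)$. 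The $2$-dimensional axioms — vertical composition, whiskering, and identity $2$-cells — follow by the identical argument applied to $\DO[\DO(\X,\TT)\|\varphi]$, using that both $[-\|-]$ and the $\Cat$-level $\DO$ are genuine $2$-functors.

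The main obstacle I anticipate is the compatibility of the $\Gamma$-identifications across different base tangentads. The morphism $\DO(F,\alpha)$ is extracted inside $[\DO(\X,\TT)\|\DO(\X',\TT')]$, whereas $\DO(G,\beta)$ lives in $[\DO(\X',\TT')\|\DO(\X'',\TT'')]$, so composing them requires that the universal pointwise differential objects over $\DO(\X,\TT)$ and over $\DO(\X',\TT')$ agree after base change. This is exactly the naturality of $\Gamma_{(\UnivZ,\UnivS,\Univ{\hat p})}$ in the domain tangentad recorded in Lemma~\ref{lemma:induced-lax-tangent-morphism-from-differential-objects}, so the only real task is to chase this naturality square; once it is in place, functoriality is automatic.
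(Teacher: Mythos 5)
Your proposal is correct and follows essentially the same route as the paper: define $\DO(F,\alpha)$ by applying $\DO[\DO(\X,\TT)\|F,\alpha]$ to the universal pointwise differential object and transporting through the identification $\DO[\DO(\X,\TT)\|\X',\TT']\cong[\DO(\X,\TT)\|\DO(\X',\TT')]$, and similarly for $2$-cells. In fact you spell out more than the paper does — the paper's proof only records the construction, leaving the verification of the $2$-functor axioms (in particular the naturality of the $\Gamma$-identifications across base change, which you correctly single out as the one point requiring a diagram chase) implicit.
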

\begin{proof}
Consider a strong tangent morphism $(F,\alpha)\colon(\X,\TT)\to(\X',\TT')$ between two Cartesian tangentads which preserves finite products. As previously discussed, let $\DO(F,\alpha)$ the tangent morphism which correspond to the universal differential object $\UnivZ,\UnivS,\Univ{\hat p}$ via the functor
\begin{align*}
&\DO[\DO(\X,\TT)\|\X,\TT]\xrightarrow{\DO[\DO(\X,\TT)\|F,\alpha]}\DO[\DO(\X,\TT)\|\X',\TT']\cong[\DO(\X,\TT)\|\DO(\X',\TT')]
\end{align*}
The underlying tangent morphism of $\DO[\DO(\X,\TT)\|F,\alpha](\UnivZ,\UnivS,\Univ{\hat p})$ is simply given by
\begin{align*}
&\DO(\X,\TT)\xrightarrow{\U}(\X,\TT)\xrightarrow{(F,\alpha)}(\X,\TT)
\end{align*}
Given a $2$-morphism $\varphi\colon(F,\alpha)\Rightarrow(G,\beta)$, $\DO(\varphi)$ is the $2$-morphism corresponding to the tangent natural transformation
\begin{align*}
&\DO[\DO(\X,\TT)\|\varphi]\colon\DO[\DO(\X,\TT)\|F,\alpha]\Rightarrow\DO[\DO(\X,\TT)\|G,\beta]
\end{align*}
evaluated at the universal differential object.
\end{proof}

\subsection{Applications}
\label{subsection:examples-differential-objects}
In Section~\ref{subsection:definition-tangentad}, we listed some examples of tangentads. In this section, we compute the construction of differential objects for each of these examples. Unfortunately, the construction of differential objects cannot be obtained by means of PIE limits as done for vector fields in~\cite[Theorem~4.8]{lanfranchi:tangentads-II}. The reason is that the universal property enjoyed by the differential projection of a differential object is a non-equational axiom, and PIE limits can only construct algebraic theories. Therefore, in this section, we follow a direct approach to construct differential objects for the different tangentads.

\subsubsection*{Tangent monads}
\label{subsubsection:differential-objects-tangent-monads}
In order to compute the construction of differential objects for tangent monads we need to first understand the notion of Cartesian tangent monads, which are Cartesian objects in the $2$-category of tangent monads.
\par A \textbf{Cartesian} tangent monad consists of a Cartesian tangent category $(\X,\TT)$ together with a tangent monad $(S,\alpha)$ on $(\X,\TT)$ which preserves finite products. In general, we do not expect $(S,\alpha)$ to lift to the categories of differential objects of $(\X,\TT)$ unless the distributive law $\alpha$ is invertible. We call a \textbf{strong} Cartesian tangent monad a Cartesian monad whose underlying tangent morphism $(S,\alpha)$ is strong.

\begin{lemma}
\label{lemma:differential-objects-tangent-monads}
A strong Cartesian tangent monad $(S,\alpha)$ on a Cartesian tangent category $(\X,\TT)$ lifts to a Cartesian tangent monad on the Cartesian tangent category $\DO(\X,\TT)$ of differential objects of $(\X,\TT)$.
\end{lemma}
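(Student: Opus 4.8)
The plan is to recognise a strong Cartesian tangent monad as a monad living in the $2$-category $\cTngCat_\cong$ and then to transport it along the $2$-endofunctor $\DO$ of Lemma~\ref{lemma:DO-functoriality}, exploiting the elementary fact that $2$-functors send monads to monads. Indeed, by definition a strong Cartesian tangent monad on $(\X,\TT)$ is exactly a monad in $\cTngCat_\cong$ whose underlying object is $(\X,\TT)$: the endomorphism is the strong, product-preserving tangent morphism $(S,\alpha)$, while the unit $\eta\colon\id\Rightarrow(S,\alpha)$ and the multiplication $\mu\colon(S,\alpha)\o(S,\alpha)\Rightarrow(S,\alpha)$ are $2$-morphisms of $\cTngCat_\cong$, that is, tangent natural transformations compatible with the Cartesian coherences and satisfying the monad axioms.

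First I would apply $\DO$ to this monad. Since $\DO$ is a $2$-endofunctor of $\cTngCat_\cong$, it preserves identities and composites of $1$-morphisms up to the canonical identifications, so the triple $\big(\DO(S,\alpha),\DO(\eta),\DO(\mu)\big)$ is again a monad, now on the Cartesian tangent category $\DO(\X,\TT)$. Unwinding the definition of $\DO$ from Lemma~\ref{lemma:DO-functoriality}, the lifted endofunctor sends a differential object $(A,\zeta_A,\sigma_A,\hat p_A)$ to $(SA,S\zeta_A,S\sigma_A,\hat p_{SA})$ with differential projection $\hat p_{SA}=S\hat p_A\o\alpha^{-1}\colon\T SA\to SA$; here the invertibility of $\alpha$, that is, the hypothesis that the tangent monad is strong, is exactly what makes this assignment, and hence $\DO(S,\alpha)$, well defined. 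The lifted unit and multiplication are the original $\eta$ and $\mu$, now regarded componentwise as morphisms of differential objects.

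The point that requires genuine verification, and which I expect to be the only substantive content beyond the formal transport of structure, is that $\eta$ and $\mu$ do lift to $\DO(\X,\TT)$, i.e.\ that each component $\eta_A\colon A\to SA$ and $\mu_A\colon S^2A\to SA$ is a \emph{linear} morphism of differential objects, commuting with the differential projections $\hat p_A$ and $\hat p_{SA}$. This is precisely the assertion that $\eta$ and $\mu$ are compatible with the distributive law $\alpha$, which holds because they are tangent natural transformations; the same compatibility is what guarantees that $\DO(\eta)$ and $\DO(\mu)$ are themselves tangent natural transformations. Once this is in place, the monad axioms for $\big(\DO(S,\alpha),\DO(\eta),\DO(\mu)\big)$, together with the facts that $\DO(S,\alpha)$ preserves finite products and remains strong, follow immediately from the $2$-functoriality of $\DO$ recorded in Lemma~\ref{lemma:DO-functoriality}, since $\DO$ preserves the relevant equalities of pasting diagrams. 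This yields the desired Cartesian tangent monad on $\DO(\X,\TT)$.
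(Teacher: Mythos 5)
Your proposal is correct and is essentially the paper's own argument: the paper likewise defines the lifted functor on objects by $(SA,S\zeta_A,S\sigma_A,S\hat p_A\circ\alpha^{-1})$ (using the invertibility of $\alpha$) and reduces the whole lemma to checking that $\eta$ and $\mu$ are morphisms of differential objects, which follows from naturality and compatibility with $\alpha$. Your packaging via ``$2$-functors preserve monads'' applied to $\DO\colon\cTngCat_\cong\to\cTngCat_\cong$ is a slightly more formal organisation of the same verification, and you correctly isolate the one substantive check.
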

\begin{proof}
Strong tangent morphisms send differential objects to differential objects. In particular, $S(A,\zeta,\sigma,\hat p)$ is the differential object $(SA,S\zeta,S\sigma,S\hat p\o\alpha^{-1})$. Furthermore, by naturality and thanks to the compatibility with $\alpha$, the unit $\eta$ and the multiplication $\mu$ of $S$ are morphisms of differential objects.
\end{proof}

\begin{theorem}
\label{theorem:differential-objects-tangent-monads}
In the $2$-category $\TngMnd$ of tangent monads, each strong Cartesian tangent monad $(S,\alpha)$ on a Cartesian tangent category admits the construction of differential objects. Moreover, the tangentad of differential objects of $(S,\alpha)$ is the Cartesian tangent monad $\DO(S,\alpha)$ on $\DO(\X,\TT)$.
\end{theorem}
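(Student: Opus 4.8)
The plan is to verify the conditions of Definition~\ref{definition:construction-differential-objects} directly in the ambient $2$-category $\Mnd$ (recall that $\TngMnd\cong\Tng(\Mnd)$, so a Cartesian tangent monad is a Cartesian tangentad of $\Mnd$), taking as candidate tangentad of differential objects the Cartesian tangent monad $\DO(S,\alpha)$ on $\DO(\X,\TT)$ produced by Lemma~\ref{lemma:differential-objects-tangent-monads}, and then transporting the universal property already established for $\Cat$ in Theorem~\ref{theorem:universality-differential-objects}. First I would check that the forgetful functor $\U\colon\DO(\X,\TT)\to(\X,\TT)$ promotes to a strict tangent morphism of tangent monads $\U\colon\DO(S,\alpha)\to(S,\alpha)$. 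Since $\DO(S,\alpha)$ is, by construction, the lift of $S$ along $\U$, the functor $\U$ strictly commutes with both monads and with their units and multiplications, so it acquires the required monad-morphism $2$-cell (the identity) and the tangent compatibility descends from the underlying tangent-category level.

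Next I would verify that the universal pointwise differential object $(\U;\UnivZ,\UnivS,\Univ{\hat p})$ of Proposition~\ref{proposition:universality-differential-object} already lives in the Hom-tangent category $[\DO(S,\alpha)\|S,\alpha]$ computed in $\Tng(\Mnd)$. This reduces to checking that $\UnivZ$, $\UnivS$, and $\Univ{\hat p}$ are $2$-cells of $\Mnd$, that is, that they are compatible with the monad-morphism data. Because the lifted monad $\DO(S,\alpha)$ acts on a differential object exactly as $S$ acts on the underlying object, and because the unit $\eta$ and multiplication $\mu$ of $S$ are morphisms of differential objects by Lemma~\ref{lemma:differential-objects-tangent-monads}, these compatibilities are precisely the monad compatibilities already holding over $(\X,\TT)$; no new equation arises.

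The main step, and the only genuine obstacle, is to show that the induced tangent natural transformation $\Gamma_{(\UnivZ,\UnivS,\Univ{\hat p})}\colon[\X',\TT'\|\DO(S,\alpha)]\to\DO[\X',\TT'\|S,\alpha]$ is invertible for every tangent monad $(\X',\TT',S',\alpha')$. The strategy is to lift, layer by layer, the inverse $\Lambda$ built in the proof of Theorem~\ref{theorem:universality-differential-objects}. A lax tangent morphism of tangent monads into $\DO(S,\alpha)$ decomposes as an underlying lax tangent morphism $(\X',\TT')\to\DO(\X,\TT)$ together with a monad-distributivity $2$-cell; the $\Cat$-level bijection sends the underlying part to a differential object in $[\X',\TT'\|\X,\TT]$, and I would check that the monad-distributivity datum corresponds bijectively and coherently to the requirement that this differential object be compatible with the monads, that is, that it be a differential object in the Hom-tangent category of $\Tng(\Mnd)$. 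Since $\U$ strictly intertwines the two monads, this correspondence is forced, so the $\Cat$-level isomorphism and its inverse $\Lambda$ lift verbatim to $\Mnd$, yielding the desired invertibility; naturality in the base tangent monad is inherited from Lemma~\ref{lemma:induced-lax-tangent-morphism-from-differential-objects}.

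Finally, these verifications establish that $(S,\alpha)$ admits the construction of differential objects with tangentad of differential objects $\DO(S,\alpha)$; by the uniqueness statement of Proposition~\ref{proposition:uniqueness-differential-objects} this identification is canonical.
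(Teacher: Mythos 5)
Your proposal is correct and follows essentially the same route as the paper: take the lifted monad $\DO(S,\alpha)$ from Lemma~\ref{lemma:differential-objects-tangent-monads} as the candidate, unwrap a pointwise differential object in the Hom-tangent category of $\Tng(\Mnd)$ as a $\Cat$-level differential object together with a monad-distributivity $2$-cell compatible with the differential structure, and observe that this data corresponds bijectively to a lax tangent morphism of tangent monads into $\DO(S,\alpha)$, so the $\Cat$-level inverse $\Lambda$ lifts. The paper carries out exactly this correspondence (constructing the lifted distributive law $\theta$ explicitly and leaving uniqueness to the reader), so no further comparison is needed.
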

\begin{proof}
In Lemma~\ref{lemma:differential-objects-tangent-monads}, we already proved that $\DO(S,\alpha)$ defines a tangent monad on the tangent category $\DO(\X,\TT)$ of differential objects of $(\X,\TT)$. Now, we want to prove that $(\DO(\X,\TT),\DO(S,\alpha))$ represents the tangentad of differential objects of the tangent monad $(S,\alpha)$. Consider a tangent category $(\X',\TT')$ equipped with a tangent monad $(S',\alpha')$. A differential object in the Hom-tangent category $[(\X',\TT';S',\alpha')\|(\X,\TT;S,\alpha)]$ is a lax tangent morphism $(G,\beta)\colon(\X',\TT')\to(\X,\TT)$ equipped with a distributive law $\tau\colon S\o G\to G\o S'$, compatible with the monad structures and the distributive laws $\alpha$ and $\alpha'$. Furthermore, $(G,\beta;\tau)$ comes equipped with tangent natural transformations
\begin{align*}
&\zeta_A\colon\*\to GA\\
&\sigma_A\colon GA\times GA\to GA\\
&\hat p_A\colon\T GA\to GA
\end{align*}
which make each $GA$ into a differential object of $(\X,\TT)$. Moreover, $\zeta$, $\sigma$, $\hat p$ must also be natural transformations of monads, that is, they must be compatible with $\tau$. Thus, the lax tangent morphism
\begin{align*}
(H,\gamma)\colon&(\X',\TT')\to\DO(\X,\TT)
\end{align*}
which sends each $A\in(\X',\TT')$ to $(GA,\zeta_A,\sigma_A,\hat p_A)$ comes equipped with a distributive law $\theta$
\begin{align*}
&\theta_A\colon\DO(S,\alpha)(HA)=(SGA,S\zeta_A,S\sigma_A,\alpha^{-1}\o S\hat p_A)\xrightarrow{\gamma_A}(GS'A,GS'\zeta_A,GS'\sigma_A,{\alpha'}^{-1}\o GS'\hat p_A)=H(S'A)
\end{align*}
which makes $(H,\gamma)$ into a morphisms of tangent monads. We let the reader prove that $(H,\gamma;\tau)$ is the unique such morphism.
\end{proof}

\subsubsection*{Tangent fibrations}
\label{subsubsection:differential-objects-tangent-fibrations}
In this section, we consider the construction of differential objects for tangent fibrations. First, let us unwrap the definition of a Cartesian tangent fibration, that is, a Cartesian object in the $2$-category of tangent fibrations.
\par A \textbf{Cartesian} tangent fibration consists of a tangent fibration $\Pi\colon(\X',\TT')\to(\X,\TT)$ between two Cartesian tangent categories $(\X',\TT')$ and $(\X,\TT)$ whose underlying functor strictly preserves finite products, that is, $\Pi$ sends the terminal object of $\X'$ to the (chosen) terminal object of $\X$ and sends each Cartesian product $E\times F$ of $\X'$ to the (chosen) product $\Pi(E)\times\Pi(F)$ of $\X$. Moreover, for each pair of morphisms $f\colon A\to C$ and $g\colon B\to D$ of $\X$, the Cartesian product
\begin{align*}
&\varphi_f\times\varphi_g\colon f^\*E\times g^\*F\to E\times F
\end{align*}
of the (cloven) Cartesian lifts $\varphi_f\colon f^\*E\to E$ and $\varphi_g\colon g^\*F\to F$ is the (cloven) Cartesian lift $\varphi_{f\times g}$ of $f\times g$ onto $E\times F$.

\begin{lemma}
\label{lemma:differential-objects-tangent-fibration}
Consider a (cloven) Cartesian tangent fibration $\Pi\colon(\X',\TT')\to(\X,\TT)$ between two Cartesian tangent categories. The strict tangent morphism
\begin{align*}
&\DO(\Pi)\colon\DO(\X',\TT')\to\DO(\X,\TT)
\end{align*}
which sends a differential object $(E,\zeta_E,\sigma_E,\hat p_E)$ to the differential object $(\Pi(E),\Pi(\zeta_E),\Pi(\sigma_E),\Pi(\hat p_E))$ is a (cloven) Cartesian tangent fibration.
\end{lemma}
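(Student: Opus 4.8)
The plan is to build the cloven structure of $\DO(\Pi)$ by lifting that of $\Pi$. Since $\Pi$ strictly preserves the tangent structures, $\DO(\Pi)$ is a strict tangent morphism (it is $\DO$ applied to a strict tangent morphism, cf.\ Proposition~\ref{proposition:DO-functoriality}), and since the underlying functor of $\Pi$ strictly preserves finite products so does $\DO(\Pi)$. These observations reduce the statement to three claims: that $\DO(\Pi)$ is a cloven fibration, that its tangent bundle functor preserves Cartesian morphisms, and that its cleavage is compatible with products. Throughout I fix the cleavage of $\Pi$ and write $\varphi_f\colon f^\*E\to E$ for the chosen Cartesian lift of a morphism $f$ of $\X$ at an object $E$ of $\X'$.

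First I construct Cartesian lifts in $\DO(\X',\TT')$. Let $f\colon A\to B$ be a linear morphism of differential objects of $(\X,\TT)$ and let $E$ be a differential object of $(\X',\TT')$ over $B$, so that $\DO(\Pi)(E)=B$. Take the underlying Cartesian lift $\varphi_f\colon f^\*E\to E$ in $\X'$. Using that $f$ is additive and linear, the morphisms $\zeta_E$, $\sigma_E\o(\varphi_f\times\varphi_f)$, and $\hat p_E\o\T'\varphi_f$ lie over $f\o\zeta_A$, $f\o\sigma_A$, and $f\o\hat p_A$ respectively; here $\T'\varphi_f$ is $\Pi$-Cartesian because $\Pi$ is a tangent fibration, so its tangent bundle functor preserves Cartesian morphisms. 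The universal property of $\varphi_f$ then produces unique lifts $\zeta_{f^\*E}\colon\*\to f^\*E$, $\sigma_{f^\*E}\colon f^\*E\times f^\*E\to f^\*E$, and $\hat p_{f^\*E}\colon\T' f^\*E\to f^\*E$ over $\zeta_A$, $\sigma_A$, $\hat p_A$ making $\varphi_f$ commute with all three structure morphisms. All the equational axioms of a differential object (the commutative monoid laws, Additivity~1, Additivity~2, and Linearity) transport from $E$ to $f^\*E$: each is an equality of morphisms lying over an equality that holds in $A$, and hence follows from uniqueness of factorizations through $\varphi_f$. The same uniqueness shows that $\varphi_f$ is Cartesian in $\DO(\X',\TT')$, since any competitor factors uniquely underneath and the factorization is automatically linear because $\varphi_f$ reflects the linearity equation.

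The one non-equational axiom, the universality of the differential projection, is the crux. I must show that $\<p_{f^\*E},\hat p_{f^\*E}\>\colon\T' f^\*E\to f^\*E\times f^\*E$ is invertible. By naturality of $p$ and the defining property of $\hat p_{f^\*E}$, the square
\begin{equation*}
\begin{tikzcd}
{\T' f^\*E} & {f^\*E\times f^\*E} \\
{\T' E} & {E\times E}
\arrow["{\<p_{f^\*E},\hat p_{f^\*E}\>}", from=1-1, to=1-2]
\arrow["{\T'\varphi_f}"', from=1-1, to=2-1]
\arrow["{\varphi_f\times\varphi_f}", from=1-2, to=2-2]
\arrow["{\<p_E,\hat p_E\>}"', from=2-1, to=2-2]
\end{tikzcd}
\end{equation*}
commutes. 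Its bottom edge is an isomorphism because $E$ is a differential object, and $\T'\varphi_f$ is Cartesian, so the composite $\<p_E,\hat p_E\>\o\T'\varphi_f=(\varphi_f\times\varphi_f)\o\<p_{f^\*E},\hat p_{f^\*E}\>$ is Cartesian. Since $\varphi_f\times\varphi_f=\varphi_{f\times f}$ is itself Cartesian ($\Pi$ being a Cartesian fibration), the standard cancellation property of Cartesian morphisms forces $\<p_{f^\*E},\hat p_{f^\*E}\>$ to be Cartesian; and it lies over $\<p_A,\hat p_A\>$, which is an isomorphism because $A$ is a differential object. As a Cartesian morphism over an isomorphism is an isomorphism, $\<p_{f^\*E},\hat p_{f^\*E}\>$ is invertible, establishing universality. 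This completes the verification that $f^\*E$ is a differential object and that $\DO(\Pi)$ is a cloven fibration.

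It remains to assemble the Cartesian tangent fibration structure. The tangent bundle functor of $\DO(\X',\TT')$ acts as $\T'$ on underlying objects and morphisms; since a morphism of $\DO(\X',\TT')$ is Cartesian for $\DO(\Pi)$ exactly when its underlying morphism is Cartesian for $\Pi$ (by the reflection argument above), and $\T'$ preserves $\Pi$-Cartesian morphisms, the tangent bundle functors define a Cartesian morphism of fibrations, which is precisely the condition making $\DO(\Pi)$ a tangent fibration. Finally, $\DO(\Pi)$ is Cartesian as an object of the $2$-category of tangent fibrations: its source and target are Cartesian tangent categories, $\DO(\Pi)$ strictly preserves finite products, and for $f\colon A\to C$ and $g\colon B\to D$ the induced differential structures on $f^\*E\times g^\*F$ and on $(f\times g)^\*(E\times F)$ agree by uniqueness of the lifted structure, so $\varphi_f\times\varphi_g=\varphi_{f\times g}$ in $\DO(\X',\TT')$. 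The main obstacle is the universality step above; the remaining verifications are transport-of-structure arguments along Cartesian lifts.
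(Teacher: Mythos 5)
Your proposal is correct, and it constructs the lifted differential structure on $f^\*E$ exactly as the paper does; the divergence is in how you verify the one non-equational axiom, the universality of $\hat p_{f^\*E}$. The paper proves universality directly: given a test object $X$ with a pair of maps into $f^\*E$, it first invokes the universal property of $\hat p_E$ downstairs in $\X'$, then projects the whole configuration along $\Pi$, applies the universal property of $\hat p_A$ in $\X$, and finally uses the Cartesianness of $\T'\varphi_f$ to assemble the required unique comparison map $X\to\T'f^\*E$. You instead reformulate universality as invertibility of $\<p_{f^\*E},\hat p_{f^\*E}\>\colon\T'f^\*E\to f^\*E\times f^\*E$ and deduce it from two standard fibration lemmas: left cancellation of Cartesian morphisms (applied to the commuting square whose other three edges are $\T'\varphi_f$, the isomorphism $\<p_E,\hat p_E\>$, and the Cartesian morphism $\varphi_f\times\varphi_f=\varphi_{f\times f}$) together with the fact that a Cartesian morphism over an isomorphism is an isomorphism. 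This is a genuinely different and arguably cleaner route --- it avoids any explicit chase with test objects --- though it leans on the strict preservation of finite products by $\Pi$ (so that $\<p_{f^\*E},\hat p_{f^\*E}\>$ really lies over $\<p_A,\hat p_A\>$) and on the Cartesian-fibration axiom identifying $\varphi_f\times\varphi_f$ with $\varphi_{f\times f}$, whereas the paper's argument would work for a plain (non-Cartesian) tangent fibration restricted to this bundle. One small overstatement: you assert that a morphism of $\DO(\X',\TT')$ is $\DO(\Pi)$-Cartesian \emph{exactly} when its underlying morphism is $\Pi$-Cartesian; only the ``if'' direction is justified by your reflection argument, and only that direction is needed, so this does not affect the proof.
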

\begin{proof}
Consider a (linear) morphism $f\colon(A,\zeta_A,\sigma_A,\hat p_A)\to(B,\zeta_B,\sigma_B,\hat p_B)$ of differential objects in $(\X,\TT)$ and let $(E,\zeta_E,\sigma_E,\hat p_E)$ be a differential object of $(\X',\TT')$ such that $\Pi(E,\zeta_E,\sigma_E,\hat p_E)=(B,\zeta_B,\sigma_B,\hat p_B)$. Using the universal property of the Cartesian lift $\varphi_f\colon f^\*E\to E$ we can define the morphisms
\begin{align*}
&f^\*\zeta_E\colon\*\to f^\*E\\
&f^\sigma_E\colon f^\*E\times f^\*E\to f^\*E\\
&f^\*\hat p_E\colon\T'f^\*E\to f^\*E
\end{align*}
uniquely defined as the lifts of the structure morphisms of the differential object $A$ which commute with $\varphi_f$ is the obvious way. Since $f^\*\colon\Pi^{-1}(B)\to\Pi^{-1}(A)$ is functorial, the equational axioms which make $f^\*E\=(f^\*E,f^\*\zeta_E,f^\*\sigma_E,f^\*\hat p_E)$ into a differential object hold. Thus, we only need to prove the universal property of $f^\*\hat p_E$. Consider the following diagram:
\begin{equation*}
\begin{tikzcd}
&& X \\
{f^\*E} && {\T'f^\*E} && {f^\*E} \\
E && {\T'E} && E
\arrow["\alpha"', from=1-3, to=2-1]
\arrow["\beta", from=1-3, to=2-5]
\arrow["{\varphi_f}"', from=2-1, to=3-1]
\arrow["{p'_{f^\*E}}", from=2-3, to=2-1]
\arrow["{f^\*\hat p_E}"', from=2-3, to=2-5]
\arrow["{\T'\varphi_f}", from=2-3, to=3-3]
\arrow["{\varphi_f}", from=2-5, to=3-5]
\arrow["{p'_E}", from=3-3, to=3-1]
\arrow["{\hat p_E}"', from=3-3, to=3-5]
\end{tikzcd}
\end{equation*}
By the universal property of $\hat p_E$, there exists a unique morphism $\<\alpha,\beta\>_E\colon X\to\T'E$ making the following diagram commute:
\begin{equation*}
\begin{tikzcd}
&& X \\
{f^\*E} && {\T'f^\*E} && {f^\*E} \\
E && {\T'E} && E
\arrow["\alpha"', from=1-3, to=2-1]
\arrow["\beta", from=1-3, to=2-5]
\arrow["{\<\alpha,\beta\>_E}"'{pos=0.3}, curve={height=18pt}, dashed, from=1-3, to=3-3]
\arrow["{\varphi_f}"', from=2-1, to=3-1]
\arrow["{p'_{f^\*E}}", from=2-3, to=2-1]
\arrow["{f^\*\hat p_E}"', from=2-3, to=2-5]
\arrow["{\T'\varphi_f}", from=2-3, to=3-3]
\arrow["{\varphi_f}", from=2-5, to=3-5]
\arrow["{p'_E}", from=3-3, to=3-1]
\arrow["{\hat p_E}"', from=3-3, to=3-5]
\end{tikzcd}
\end{equation*}
However, this diagram lifts the diagram in $(\X,\TT)$:
\begin{equation*}
\begin{tikzcd}
&& {\Pi(X)} \\
A && {\T A} && A \\
B && {\T B} && B
\arrow["{\Pi(\alpha)}"', from=1-3, to=2-1]
\arrow["{\Pi(\beta)}", from=1-3, to=2-5]
\arrow["{\<\Pi(\alpha),\Pi(\beta)\>_B}"{description, pos=0.3}, curve={height=18pt}, dashed, from=1-3, to=3-3]
\arrow["f"', from=2-1, to=3-1]
\arrow["{p_A}", from=2-3, to=2-1]
\arrow["{\hat p_A}"', from=2-3, to=2-5]
\arrow["{\T f}", from=2-3, to=3-3]
\arrow["f", from=2-5, to=3-5]
\arrow["{p_B}", from=3-3, to=3-1]
\arrow["{\hat p_B}"', from=3-3, to=3-5]
\end{tikzcd}
\end{equation*}
Thus, by the universal property of $\hat p_A$ and by the universal property of $\T'\varphi_f$, there exists a unique morphism $\<\alpha,\beta\>_{f^\*E}\colon X\to\T'f^\*E$ such that $\Pi(\<\alpha,\beta\>_{f^\*E})=\<\Pi(\alpha),\Pi(\beta)\>_A\colon\Pi(X)\to\T A$, making the following diagram commute:
\begin{equation*}
\begin{tikzcd}
&& X \\
{f^\*E} && {\T'f^\*E} && {f^\*E} \\
E && {\T'E} && E
\arrow["\alpha"', from=1-3, to=2-1]
\arrow["{\<\alpha,\beta\>_{f^\*E}}", dashed, from=1-3, to=2-3]
\arrow["\beta", from=1-3, to=2-5]
\arrow["{\<\alpha,\beta\>_E}"'{pos=0.3}, curve={height=18pt}, from=1-3, to=3-3]
\arrow["{\varphi_f}"', from=2-1, to=3-1]
\arrow["{p'_{f^\*E}}", from=2-3, to=2-1]
\arrow["{f^\*\hat p_E}"', from=2-3, to=2-5]
\arrow["{\T'\varphi_f}", from=2-3, to=3-3]
\arrow["{\varphi_f}", from=2-5, to=3-5]
\arrow["{p'_E}", from=3-3, to=3-1]
\arrow["{\hat p_E}"', from=3-3, to=3-5]
\end{tikzcd}
\end{equation*}
Thus, $f^\*\hat p_E$ is universal. We leave it to the reader to show that $\varphi_f$ is Cartesian in $\DO(\X',\TT')$.
\end{proof}

\begin{theorem}
\label{theorem:differential-objects-tangent-fibrations}
The $2$-category $\Fib$ of (cloven) fibrations admits the construction of differential objects. In particular, the tangentad of differential objects of a (cloven) Cartesian tangent fibration $\Pi$ is the Cartesian tangent fibration $\DO(\Pi)\colon\DO(\X',\TT')\to\DO(\X,\TT)$.
\end{theorem}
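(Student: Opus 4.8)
The plan is to mirror the proof of Theorem~\ref{theorem:differential-objects-tangent-monads} for tangent monads, reducing the universal property in $\Fib$ to the already-established universal property in $\Cat$ (Corollary~\ref{corollary:universality-differential-objects}) applied separately to the base and total tangent categories, and then checking compatibility with the fibred structure. Lemma~\ref{lemma:differential-objects-tangent-fibration} has already carried out the essential geometric work: it exhibits $\DO(\Pi)\colon\DO(\X',\TT')\to\DO(\X,\TT)$ as a (cloven) Cartesian tangent fibration, hence a Cartesian tangentad of $\Fib$, and in doing so shows that the Cartesian lifts of $\DO(\X',\TT')$ are precisely the lifts $\varphi_f$ of $\X'$ equipped with the induced differential structure on $f^\*E$. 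The forgetful tangent morphism $\U\colon\DO(\Pi)\to\Pi$ is the pair formed by the two forgetful functors $\DO(\X',\TT')\to(\X',\TT')$ and $\DO(\X,\TT)\to(\X,\TT)$, which commute strictly with the fibrations and preserve Cartesian lifts; the universal pointwise differential object $(\U;\UnivZ,\UnivS,\Univ{\hat p})$ is assembled componentwise from the universal pointwise differential objects of the base and total tangent categories, which agree over the fibration. It thus remains only to verify that $\Gamma_{(\UnivZ,\UnivS,\Univ{\hat p})}$ is invertible.

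To this end I would fix another Cartesian tangent fibration $\Pi'$ and unwrap a differential object in the Hom-tangent category $[\Pi'\|\Pi]$. A lax tangent morphism $(G,\beta)\colon\Pi'\to\Pi$ in $\Fib$ is, by the description of $\Fib$ recalled in Example~\ref{example:tangent-fibrations}, a pair of lax tangent morphisms---one on the base, one on the total tangent categories---commuting with the fibrations and preserving Cartesian lifts; a pointwise differential structure $(\zeta,\sigma,\hat p)$ on it amounts, by the definition of the $2$-morphisms of $\Fib$, to a pair of pointwise differential structures whose base and total components are compatible over the fibrations $\Pi'$ and $\Pi$. Corollary~\ref{corollary:universality-differential-objects} now applies to the base and total components separately: through $\Lambda$ it produces lax tangent morphisms into $\DO(\X,\TT)$ and into $\DO(\X',\TT')$, each sending an object to the corresponding differential object determined by $(\zeta,\sigma,\hat p)$.

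The crux is that these two lax tangent morphisms assemble into a single lax tangent morphism $\Pi'\to\DO(\Pi)$ of $\Fib$, which furnishes the inverse to $\Gamma_{(\UnivZ,\UnivS,\Univ{\hat p})}$. Commutation with the fibrations is immediate, since the base and total differential structures are compatible over $\Pi'$ and $\Pi$, so that the total component lies over the base component. Preservation of Cartesian lifts is where Lemma~\ref{lemma:differential-objects-tangent-fibration} is used decisively: because the Cartesian lifts of $\DO(\X',\TT')$ are exactly the $\varphi_f$ carrying the restricted differential structure on $f^\*E$, and because the total component of $(G,\beta)$ already preserves Cartesian lifts in $\X'$ and is compatible with the differential structures, its image under $\Lambda$ sends each $\varphi_f$ to a Cartesian lift in $\DO(\X',\TT')$. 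Bijectivity on objects and morphisms, together with functoriality and naturality in $\Pi'$, then follow componentwise from Corollary~\ref{corollary:universality-differential-objects}, since commuting with the fibrations and preserving Cartesian lifts are properties preserved throughout rather than extra data to be matched.

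The main obstacle I anticipate is precisely the verification that applying $\Lambda$ componentwise does not leave the sub-$2$-category $\Fib$, i.e.\ the preservation of Cartesian lifts. Concretely, this reduces to confirming that the unique lifts $f^\*\zeta_E$, $f^\*\sigma_E$, and $f^\*\hat p_E$ of the differential structure constructed in Lemma~\ref{lemma:differential-objects-tangent-fibration} are exactly those selected by $\Lambda$ on the total component; equivalently, that the cleavage chosen for $\DO(\X',\TT')$ is compatible with the universal property of differential objects. Once this compatibility is secured, the remaining verifications are purely equational and reduce directly to Corollary~\ref{corollary:universality-differential-objects}.
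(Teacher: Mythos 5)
Your overall strategy coincides with the paper's: build the inverse of $\Gamma_{(\UnivZ,\UnivS,\Univ{\hat p})}$ by applying $\Lambda$ componentwise to the base and total parts of a pointwise differential object in $[\Pi_\b\|\Pi]$, observe that commutation with the fibrations is automatic from the compatibility conditions, and isolate the preservation of Cartesian lifts as the one nontrivial verification. You have correctly located the crux.

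Where your proposal falls short is in the resolution of that crux. You argue that because $G'$ preserves Cartesian lifts in $\X'$ and is compatible with the differential structures, its image under $\Lambda$ automatically sends $\varphi_f$ to a Cartesian lift in $\DO(\X',\TT')$; and you reduce the remaining doubt to whether ``the cleavage chosen for $\DO(\X',\TT')$ is compatible with the universal property of differential objects.'' That is not the right reduction, and the inference is not automatic. Being Cartesian in $\DO(\X',\TT')$ over $\DO(\X,\TT)$ is a universal property whose test data are \emph{linear morphisms of differential objects}: given a triangle in $\DO(\X,\TT)$ factoring through $Hf$ and a compatible morphism upstairs, Cartesianness of $G'\varphi_f$ in $\X'$ produces a unique filler $\xi_h$ in $\X'$, but nothing yet guarantees that $\xi_h$ commutes with $\zeta'$, $\sigma'$, and $\hat p'$, i.e.\ that it is a morphism of $\DO(\X',\TT')$ at all. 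This is the actual content of the step, and it is not a statement about the cleavage. The paper proves it by a diagram chase: one shows that $G'\varphi_f\o\xi_h\o g^\*\hat p_{G'E}$ and $G'\varphi_f\o G'f^\*\hat p_E\o\T'\xi_h$ agree, checks that their images under $\Pi$ agree because $h$ is linear downstairs, and then cancels $G'\varphi_f$ using its universal property as a Cartesian lift. Your proof needs this argument (and its analogues for $\zeta'$ and $\sigma'$); without it the claim that $\Lambda$ lands in $\Fib$ is unsupported. Once that is supplied, the rest of your outline---componentwise bijectivity, naturality in $\Pi'$, and uniqueness---goes through exactly as you describe.
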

\begin{proof}
Consider a tangent fibration $\Pi_\b\colon(\X'_\b,\TT_\b)\to(\X_\b,\TT_\b)$ and a morphism $(G,\beta;G',\beta')\colon\Pi_\b\to\Pi$ of tangent fibrations, together with a differential object structure in the Hom-tangent category $[\Pi_\b,\Pi]$. Concretely, this consists of a list of natural transformations
\begin{align*}
\zeta_A&\colon\*\to GA              &\zeta'_E&\colon\*\to G'E\\
\sigma_A&\colon GA\times GA\to GA   &\sigma'_E&\colon G'E\times G'E\to G'E\\
\hat p_A&\colon\T GA\to GA          &\hat p'_E&\colon\T'G'E\to G'E
\end{align*}
for each $A\in(\X_\b,\TT_\b)$ and $E\in(\X_\b',\TT_\b')$, such that $(GA,\zeta_A,\sigma_A,\hat p_A)$ and $(G'E,\zeta_E',\sigma_E',\hat p_E')$ are differential objects in $(\X,\TT)$ and $(\X',\TT')$, respectively and satisfying the following equations:
\begin{align*}
&\zeta_{\Pi_\b(E)}=\Pi(\zeta'_E)\\
&\sigma_{\Pi_\b(E)}=\Pi(\sigma'_E)\\
&\hat p_{\Pi_\b(E)}=\Pi(\hat p'_E)
\end{align*}
for every $E\in(\X',\TT')$. In particular, for each $E\in(\X',\TT')$,
\begin{align*}
&\DO(\Pi)(G'E,\zeta'_E,\sigma'_E,\hat p'_E)=(\Pi(G'E),\Pi(\zeta'_E),\Pi(\sigma'_E),\Pi(\hat p'_E)=(G\Pi_\b(E),\zeta_{\Pi_\b(E)},\sigma_{\Pi_\b(E)},\hat p_{\Pi_\b(E)})
\end{align*}
Define the following tangent morphisms:
\begin{align*}
(H,\gamma)&\colon(\X_\b,\TT_\b)\to\DO(\X,\TT)\\
(H',\gamma')&\colon(\X_\b',\TT_\b')\to\DO(\X',\TT')
\end{align*}
which send each $A\in(\X_\b,\TT_\b)$ and each $E\in(\X_\b',\TT_\b')$ to $(GA,\zeta_A,\sigma_A,\hat p_A)$ and $(G'E,\zeta'_E,\sigma'_E,\hat p'_E)$, respectively. Thus:
\begin{align*}
&\DO(\Pi)(H'(E))=\DO(\Pi)(G'E,\zeta'_E,\sigma'_E,\hat p'_E)=(G\Pi_\b(E),\zeta_{\Pi_\b(E)},\sigma_{\Pi_\b(E)},\hat p_{\Pi_\b(E)})=H'_{\Pi_\b(E)}
\end{align*}
In particular, $(H,\gamma)$ and $(H',\gamma')$ commute with the tangent fibrations $\Pi_\b$ and $\DO(\Pi)$. To prove that $(H,\gamma;H',\gamma')$ is a morphism of tangent fibrations, we need to show that $(H,H')$ preserves Cartesian lifts. Consider a morphism $f\colon A\to B$ of $(\X_\b,\TT_\b)$ and let $\varphi_f\colon f^\*E\to E$ be a Cartesian lift of $f$ along $\Pi_\b$. Since $(G,G')\colon\Pi_\b\to\Pi$ is a morphism of fibrations, $G'\varphi_f\colon G'F^\*E\to G'E$ is a Cartesian lift of $Gf\colon GA\to GB$. Furthermore, by the naturality of the structure morphisms $(\zeta,\zeta')$, $(\sigma',\sigma')$, and $(\hat p,\hat p')$, $H'\varphi_f=G'\varphi_f\colon H'f^\*E\to H'E$ is a morphism of differential objects. We need to prove that $H'\varphi_f=G'\varphi_f$ is Cartesian in $\DO(\X',\TT')$. Consider a commutative diagram
\begin{equation*}
\begin{tikzcd}
{(GC,\zeta_C,\sigma_C,\hat p_C)} \\
{(GA,\zeta_A,\sigma_A,\hat p_A)} & {(GB,\zeta_B,\sigma_B,\hat p_B)}
\arrow["h"', from=1-1, to=2-1]
\arrow["g", from=1-1, to=2-2]
\arrow["Hf"', from=2-1, to=2-2]
\end{tikzcd}
\end{equation*}
in $\DO(\X,\TT)$. Since $G'\varphi_f$ is Cartesian in $(\X',\TT')$, there exists a unique morphism $\xi_h\colon G'f^\*E\to G'E$ in $(\X',\TT')$ such that $\Pi(\xi_h)=h$ and making the following diagram commutative:
\begin{equation*}
\begin{tikzcd}
{g^\*G'E} \\
{G'f^\*E} & {G'E}
\arrow["{\xi_h}"', from=1-1, to=2-1]
\arrow["{\varphi_g}", from=1-1, to=2-2]
\arrow["{G'\varphi_f}"', from=2-1, to=2-2]
\end{tikzcd}
\end{equation*}
We want to prove that $\xi_h$ is a morphism of differential objects. For the sake of brevity, we show how to prove the compatibility between $\xi_h$ and the differential projections and leave it to the reader to apply the same technique for the other compatibilities. Consider the diagram:
\begin{equation*}
\begin{tikzcd}
{\T'g^\*G'E} & {g^\*G'E} \\
&& {\T'G'E} && {G'E} \\
{\T'G'f^\*E} & {G'f^\*E}
\arrow["{g^\*\hat p_{G'E}}", from=1-1, to=1-2]
\arrow["{\T'\varphi_g}"'{pos=0.2}, from=1-1, to=2-3]
\arrow["{\T'\xi_h}"', from=1-1, to=3-1]
\arrow["{\varphi_g}", from=1-2, to=2-5]
\arrow["{\xi_h}", from=1-2, to=3-2]
\arrow["{\hat p_{G'E}}"{description}, from=2-3, to=2-5]
\arrow["{\T'G'\varphi_f}"{pos=0.2}, from=3-1, to=2-3]
\arrow["{G'f^\*\hat p_E}"', from=3-1, to=3-2]
\arrow["{G'\varphi_f}"', from=3-2, to=2-5]
\end{tikzcd}
\end{equation*}
The right and the left triangles commute since $G'\varphi_f\o\xi_h=\varphi_g$ and so do the top and bottom parallelogram-shaped diagrams since $G'\varphi_f$ and $\varphi_g$ are morphisms of differential objects. Therefore:
\begin{align*}
&G'\varphi_f\o\xi_h\o g^\*\hat p_{G'E}=G'\varphi_f\o G'f^\*\hat p_E\o\T'\xi_h
\end{align*}
However, since $h$ is a morphism of differential objects in $(\X,\TT)$, we can also compute:
\begin{align*}
&\Pi(\xi_h\o g^\*\hat p_{G'E})=h\o\hat p_C=\hat p_B\o\T h=\Pi(G'f^\*\hat p_E\o\T'\xi_h)
\end{align*}
Thus, by the universal property of $G'\varphi_f$, we conclude that:
\begin{align*}
&\xi_h\o g^\*\hat p_{G'E}=G'f^\*\hat p_E\o\T'\xi_h
\end{align*}
proving that $\xi_h$ is compatible with the differential projections. We leave it to the reader to show that $(H,\gamma;H',\gamma')$ is the unique morphism of tangent fibrations which corresponds to $(G,\beta;G',\beta';\zeta,\zeta';\sigma,\sigma';\hat p,\hat p')$ in the Hom-tangent category $[\Pi_\b,\Pi]$.
\end{proof}

\subsubsection*{Tangent indexed categories}
\label{subsubsection:differential-objects-tangent-categories}
In this section, we employ the Grothendieck $2$-equivalence $\TngFib\simeq\TngIndx$ to compute the tangentad of differential objects of a tangent indexed category. Let us begin by defining a Cartesian tangent indexed category, that is, a Cartesian object in the $2$-category of tangent indexed category. A \textbf{Cartesian} tangent indexed category consists of a tangent indexed category $(\X,\TT;\IND,\TT')$ whose base tangent category $(\X,\TT)$ is Cartesian. Moreover, there is an object $\*$ in $\X^\*\=\IND(\*)$ such that, for any object $E\in\X^M$ for each $M\in(\X,\TT)$, there exists a unique morphism:
\begin{align*}
&!^E\colon E\to {!^M}^\*\*
\end{align*}
where $!^M\colon M\to\*$ is the terminal morphism. Furthermore, for each pairs of objects $E\in\X^M$ and $F\in\X^N$, there exists an object $E\times F\in\X^{M\times N}$ together with two morphisms
\begin{align*}
&\pi_1\colon E\times F\to{\pi_1^{M\times N}}^\*E\\
&\pi_2\colon\colon E\times F\to{\pi_2^{M\times N}}^\*F\\
\end{align*}
where, $\pi_1^{M\times N}\colon M\times N\to M$ and $\pi_2^{M\times N}\colon M\times N\to N$ are the projections, such that, for any pair of morphisms $f\colon L\to M$ and $g\colon L\to N$ of $(\X,\TT)$ and any pair of morphisms $\varphi\colon X\to f^\*E$ and $\psi\colon X\to g^\*F$ of $\X^L$, there exists a unique morphism 
\begin{align*}
&\<\varphi,\psi\>\colon X\to\<f,g\>^\*(E\times F)
\end{align*}
of $\X^L$ which makes the following diagram commutative:
\begin{equation*}
\begin{tikzcd}
{f^\*E} & X & {g^\*F} \\
{\<f,g\>^\*{\pi_1^{M\times N}}^\*(E)} & {\<f,g\>^\*(E\times F)} & {\<f,g\>^\*{\pi_2^{M\times N}}^\*(F)}
\arrow["{\IND_2^{-1}}"', from=1-1, to=2-1]
\arrow["\varphi"', from=1-2, to=1-1]
\arrow["\psi", from=1-2, to=1-3]
\arrow["{\<\varphi,\psi\>}", dashed, from=1-2, to=2-2]
\arrow["{\IND_2^{-1}}", from=1-3, to=2-3]
\arrow["{\<f,g\>^\*\pi_1}", from=2-2, to=2-1]
\arrow["{\<f,g\>^\*\pi_2}"', from=2-2, to=2-3]
\end{tikzcd}
\end{equation*}
where we used the inverse of the compositor $\IND_2$ of $\IND$:
\begin{align*}
&\IND_2\colon\<f,g\>^\*{\pi_1^{M\times N}}^\*\cong(\pi_1^{M\times N}\o\<f,g\>)^\*=f^\*\\
&\IND_2\colon\<f,g\>^\*{\pi_2^{M\times N}}^\*\cong(\pi_2^{M\times N}\o\<f,g\>)^\*=g^\*
\end{align*}
The indexed tangent functor $\T^M\colon\X^M\to\T^{\T M}$ preserves the indexed terminal object $\*\in\X^\*$ and the indexed Cartesian products $E\times F\in\X^{M\times N}$, that is:
\begin{align*}
&\T^{\*}\*=\*\\
&\T^{M\times N}(E\times F)=\T^ME\times\T^NF\\
&\T^M(!^E\colon E\to{!^M}^\*\*)=(!_{\T^ME}\colon\T^ME\to !^{\T M}\*)\\
&\T^{M\times N}(\pi_1\colon E\times F\to{\pi_1^{M\times N}}^\*E)=(\pi_1\colon\T^ME\times\T^NF\to{\pi_1^{\T M\times\T N}}^\*\T^ME\\
&\T^{M\times N}(\pi_2\colon E\times F\to{\pi_2^{M\times N}}^\*F)=(\pi_2\colon\T^ME\times\T^NF\to{\pi_2^{\T M\times\T N}}^\*\T^NF
\end{align*}

Let us start by proving that $2$-equivalences preserve the construction of differential objects.

\begin{proposition}
\label{proposition:equivalence-differential-objects}
Let $\CC$ and $\CC'$ be two $2$-categories and suppose there is a $2$-equivalence $\Xi\colon\Tng(\CC)\simeq\Tng(\CC')$ of the $2$-categories of tangentads of $\CC$ and $\CC'$. If a Cartesian tangentad $(\X,\TT)$ of $\CC$ admits the construction of differential objects and $\DO(\X,\TT)$ denotes the tangentad of differential objects of $(\X,\TT)$, so does $\Xi(\X,\TT)$ and the tangentad of differential objects of $\Xi(\X,\TT)$ is $\Xi(\DO(\X,\TT))$.
\end{proposition}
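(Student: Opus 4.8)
The plan is to exploit the fact that, by Theorem~\ref{theorem:universality-differential-objects} together with Definition~\ref{definition:construction-differential-objects}, the construction of differential objects is characterized by a purely $2$-categorical universal property: the tangentad $\DO(\X,\TT)$ corepresents the assignment $(\X',\TT')\mapsto\DO[\X',\TT'\|\X,\TT]$. Since a $2$-equivalence transports corepresentable functors together with their corepresenting objects, it suffices to show that $\Xi$ intertwines this assignment for $(\X,\TT)$ with the corresponding one for $\Xi(\X,\TT)$. I would therefore set $\DO(\Xi(\X,\TT))\=\Xi(\DO(\X,\TT))$ and $\U'\=\Xi(\U)$, and produce the candidate universal pointwise differential object by applying $\Xi$ to $(\U;\UnivZ,\UnivS,\Univ{\hat p})$.

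First I would record that $\Xi$ preserves Cartesian tangentads: Cartesian objects are defined by the adjunctions $\*\dashv{!}$ and $\times\dashv\Delta$ of Definition~\ref{definition:cartesian-object}, and a $2$-equivalence preserves adjunctions, so $\Xi(\X,\TT)$ is again a Cartesian tangentad with Cartesian structure $\Xi(\*),\Xi(\times)$. Next, $\Xi$ induces for every pair of tangentads an equivalence of Hom-categories
\begin{align*}
&[\X',\TT'\|\X,\TT]\ \simeq\ [\Xi(\X',\TT')\|\Xi(\X,\TT)].
\end{align*}
The crucial step is to upgrade this to an equivalence of \emph{Cartesian tangent} categories. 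Compatibility with the Cartesian structure of Lemma~\ref{lemma:cartesian-hom-cats} is immediate, since that structure is built from the adjunctions of the target, which $\Xi$ preserves. Compatibility with the pointwise tangent structure of Proposition~\ref{proposition:hom-tangent-categories} amounts to checking that $\Xi$ carries the tangent-bundle endomorphism $(\T,c)\colon(\X,\TT)\to(\X,\TT)$ to the tangent-bundle endomorphism of $\Xi(\X,\TT)$, so that post-composition with it --- which is precisely $\bar\T$ --- is respected by the induced Hom-category equivalence.

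Once these Hom-category equivalences are known to be equivalences of Cartesian tangent categories, I would invoke the $2$-functoriality of $\DO$ on Cartesian tangent categories (Lemma~\ref{lemma:DO-functoriality}) to obtain an equivalence
\begin{align*}
&\DO[\X',\TT'\|\X,\TT]\ \simeq\ \DO[\Xi(\X',\TT')\|\Xi(\X,\TT)]
\end{align*}
natural in $(\X',\TT')$ via $\Xi$. Chasing the universal pointwise differential object of $(\X,\TT)$ through this equivalence shows that $\Xi$ sends $(\U;\UnivZ,\UnivS,\Univ{\hat p})$ to a pointwise differential object on $\U'=\Xi(\U)$, and, by essential surjectivity of $\Xi$, that the induced comparison $\Gamma$ for $\Xi(\X,\TT)$ fits into a square with $\Gamma_{(\UnivZ,\UnivS,\Univ{\hat p})}$ whose two vertical edges are the equivalences above (one at the level of source tangentads, one at the level of differential-object Hom-categories). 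Since $\Gamma_{(\UnivZ,\UnivS,\Univ{\hat p})}$ is invertible by hypothesis and the verticals are equivalences, the comparison for $\Xi(\X,\TT)$ is invertible as well, which is exactly the condition of Definition~\ref{definition:construction-differential-objects}; this simultaneously identifies $\Xi(\DO(\X,\TT))$ as the tangentad of differential objects of $\Xi(\X,\TT)$.

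I expect the main obstacle to be the tangent-compatibility half of the crucial step --- verifying that $\Xi$ respects $\bar\T$, i.e. that it sends the distinguished endomorphism $(\T,c)$ of a tangentad to that of its image. This is the one place where being a $2$-equivalence of the $2$-categories of tangentads, rather than an arbitrary $2$-equivalence, is essential, since it is what guarantees that the induced Hom-category equivalences are tangent and not merely Cartesian; with that in hand, every remaining verification reduces to the fact that equivalences transport commutative monoids, differential projections, and the universality pullback of Definition~\ref{definition:pointwise-differential-object}.
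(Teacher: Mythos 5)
Your overall strategy --- transporting the corepresentability of Theorem~\ref{theorem:universality-differential-objects} along the $2$-equivalence and taking $\Xi(\DO(\X,\TT))$, $\Xi(\U)$, and the image of $(\U;\UnivZ,\UnivS,\Univ{\hat p})$ as the candidate data --- is the natural one and is surely what the paper intends; its own proof is a one-line deferral to the analogous statement for vector fields in the companion paper, so there is nothing more detailed to compare against. Your reduction to showing that $\Xi$ intertwines $(\X',\TT')\mapsto\DO[\X',\TT'\|\X,\TT]$ with the corresponding assignment for $\Xi(\X,\TT)$, and your use of Lemma~\ref{lemma:DO-functoriality} to push the Hom-category equivalences through $\DO$, are both correct in outline.

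The one step you lean on without justification is exactly the one you flag as the ``main obstacle'': that $\Xi$ carries the distinguished endo-$1$-morphism $(\T,c)$ of $(\X,\TT)$ to the tangent bundle $1$-morphism of $\Xi(\X,\TT)$, so that the induced Hom-category equivalences commute with $\bar\T$. You assert that this is ``guaranteed'' by $\Xi$ being a $2$-equivalence of the $2$-categories of tangentads rather than of the underlying $2$-categories, but that does not follow: an object of $\Tng(\CC')$ carries its tangent structure as extra data, and a bare $2$-equivalence $\Tng(\CC)\simeq\Tng(\CC')$ is under no obligation to match up the tangent bundle $1$-morphism of $(\X,\TT)$ with that of its image --- the tangent structure is not recoverable from the $2$-category $\Tng(\CC)$ alone. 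What is actually needed (and what holds in the intended application, the Grothendieck equivalence $\TngFib\simeq\TngIndx$) is that $\Xi$ comes with coherent comparison $2$-isomorphisms $\Xi(\T,c)\cong(\T',c')$ compatible with the structural $2$-morphisms $p,z,s,l,c$; in other words, that $\Xi$ is an equivalence of tangentad-structured objects and not merely of the ambient $2$-category. You should either add this as a hypothesis or explain why it holds for the equivalences you intend to apply the proposition to. Two smaller points in the same vein: if the $2$-equivalence is only a biequivalence, your final square gives that the comparison for $\Xi(\X,\TT)$ is an equivalence rather than strictly invertible, which does not literally meet Definition~\ref{definition:construction-differential-objects}; and you should say a word about why the \emph{pointwise} universality condition of Definition~\ref{definition:pointwise-differential-object} (preservation under all precompositions $\CC[F\|\X]$) transfers, which follows from essential surjectivity of $\Xi$ on $1$-morphisms but deserves a sentence.
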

\begin{proof}
The proof is similar to the one of~\cite[Proposition~5.5]{lanfranchi:tangentads-II}.
\end{proof}

Proposition~\ref{proposition:equivalence-differential-objects} together with the Grothendieck $2$-equivalence $\TngFib\cong\TngIndx$ allows us to compute the differential objects of tangent indexed categories. Consider a tangent indexed category $(\X,\TT;\IND,\TT')$ whose underlying indexed category $\IND\colon\X^\op\to\Cat$ sends an object $M$ of $\X$ to the category $\X^M$ and a morphism $f\colon M\to N$ to the functor $f^\*\colon\X^N\to\X^M$. Define $\DO(\X,\TT;\IND,\TT')$ as follows:
\begin{description}
\item[Base tangent category] The base tangent category is $\DO(\X,\TT)$;

\item[Indexed category] The indexed category $\DO(\IND)\colon\DO(\X,\TT)^\op\to\Cat$ sends each $(A,\zeta_A,\sigma_A,\hat p_A)$ to the category $\DO^{A}$ whose objects are four-tuples $(E,\zeta_E,\sigma_E,\hat p_E)$ formed by an object $E\in\X^M$ and three morphisms
\begin{align*}
\zeta_E&\colon\*\to\zeta_A^\*E\\
\sigma_E&\colon E\times E\to\sigma_A^\*E\\
\hat p_E&\colon\T'^AE\to\hat p^\*E
\end{align*}
such that $(A,E)$ equipped with 
\begin{align*}
(\zeta_A,\zeta_E)&\colon(\*,\*)\to(A,E)\\
(\sigma_A,\sigma_E)&\colon(A,E)\times(A,E)\to(A,E)\\
(\hat p_A,\hat p_E)&\colon\T(A,E)\to(A,E)
\end{align*}
becomes a differential object in the tangent category of elements of the indexed tangent category.\\
Furthermore, $\DO(\IND)$ sends a morphism $f\colon(A,\zeta_A,\sigma_A,\hat p_A)\to(B,\zeta_B,\sigma_B,\hat p_B)$ of differential objects to the functor
\begin{align*}
f^\*&\colon\DO^{(A,\zeta_A,\sigma_A,\hat p_A)}\to\DO^{(B,\zeta_B,\sigma_B,\hat p_B)}
\end{align*}
which sends a tuple $(E,\zeta_E,\sigma_E,\hat p_E)$ to the tuple $(f^\*E,\zeta_{f^\*E},\sigma_{f^\*E},\hat p_{f^\*E})$, where:
\begin{align*}
\zeta_{f^\*E}&\colon\*\xrightarrow{\zeta_E}\zeta_A^\*E\xrightarrow{\IND_2}\zeta_B^\*f^\*E\\
\sigma_{f^\*E}&\colon f^\*E\times f^\*E\xrightarrow{\cong}(f\times f)^\*(E\times E)\xrightarrow{(f\times f)^\*\sigma_E}(f\times f)^\*\sigma_A^\*E\xrightarrow{\IND_2}\sigma_Bf^\*E\\
\hat p_{f^\*E}&\colon\T^Bf^\*E\xrightarrow{\cong}(\T f)^\*\T'^AE\xrightarrow{(\T f)^\*\hat p_E}(\T f)^\*(\hat p_A)^\*E\xrightarrow{\IND_2}\hat p_A^\*f^\*E
\end{align*}

\item[Indexed tangent bundle functor] The indexed tangent bundle functor $\DO(\T')$ consists of the list of functors
\begin{align*}
&\T'^{(A,\zeta_A,\sigma_A,\hat p_A)}\colon\DO^{(A,\zeta_A,\sigma_A,\hat p_A)}\to\DO^{(\T A,\T\zeta_A,\T\sigma_A,\hat p_{\T A})}
\end{align*}
which send a tuple $(E,\zeta_E,\sigma_E,\hat p_E)$ to the tuple $(\T'^AE,\T'^A\zeta_E,\T'^A\sigma_E,\hat p_{\T'^AE})$ where $w_{\T'^AE})$ is defined as follows:
\begin{align*}
&\hat p_{\T'^AE}\colon\T'^{\T A}\T'^AE\xrightarrow{c'^A}c_A^\*\T'^{\T A}\T'^AE\xrightarrow{c^\*_A\T'^{\T A}\hat p_E}c_A^\*\T'^{\T A}\hat p_A^\*E\xrightarrow{c_A^\*\xi^{\hat p_A}}c_A^\*(\T\hat p_A)^\*\T'^AE\xrightarrow{\IND_2}\hat p_{\T A}^\*\T'^AE
\end{align*}
Moreover, $\T'^{(A,\zeta_A,\sigma_A,\hat p_A)}$ sends a morphism $\varphi\colon(E,\zeta_E,\sigma_E,\hat p_E)\to(E',\zeta_E',\sigma_E',\hat p_E')$ to $\T'^A\varphi$. The distributors of $\T'^{(A,\zeta_A,\sigma_A,\hat p_A)}$ coincide with the distributors $\xi^f$ of $\T'^A$;

\item[Indexed natural transformations] The structural indexed natural transformations of $\DO(\IND)$ are the same as for $\IND$.
\end{description}

\begin{theorem}
\label{theorem:differential-objects-tangent-indexed-categories}
The $2$-category $\Indx$ of indexed categories admits the construction of differential objects. In particular, given a tangent indexed category $(\X,\TT;\IND,\TT')$, the tangent indexed category of differential objects of $(\X,\TT;\IND,\TT')$ is the tangent indexed category $\DO(\X,\TT;\IND,\TT')=(\DO(\X,\TT),\DO(\IND),\DO(\TT'))$.
\end{theorem}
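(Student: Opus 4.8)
The plan is to deduce this result from the corresponding statement for tangent fibrations, Theorem~\ref{theorem:differential-objects-tangent-fibrations}, by transporting it along the Grothendieck $2$-equivalence $\TngFib\simeq\TngIndx$ recalled in Example~\ref{example:tangent-indexed-categories}. Writing $\TngFib=\Tng(\Fib)$ and $\TngIndx=\Tng(\Indx)$, this correspondence is precisely a $2$-equivalence $\Xi\colon\Tng(\Fib)\simeq\Tng(\Indx)$ of the shape required by Proposition~\ref{proposition:equivalence-differential-objects}. Since any $2$-equivalence preserves $2$-products and hence Cartesian objects, $\Xi$ restricts to an equivalence of Cartesian tangentads; in particular, the explicit description of a Cartesian tangent indexed category unwound above is exactly the image under $\Xi$ of a Cartesian tangent fibration.

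First I would take a Cartesian tangent indexed category $(\X,\TT;\IND,\TT')$ and let $\Pi=\Xi^{-1}(\X,\TT;\IND,\TT')$ be the Cartesian tangent fibration corresponding to it. By Theorem~\ref{theorem:differential-objects-tangent-fibrations}, $\Pi$ admits the construction of differential objects, with tangentad of differential objects $\DO(\Pi)\colon\DO(\X',\TT')\to\DO(\X,\TT)$. Applying Proposition~\ref{proposition:equivalence-differential-objects} to $\Xi$ then yields at once that $(\X,\TT;\IND,\TT')$ admits the construction of differential objects and that its tangentad of differential objects is $\Xi(\DO(\Pi))$.

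It then remains to identify $\Xi(\DO(\Pi))$ with the explicit tangent indexed category $\DO(\X,\TT;\IND,\TT')$ built before the statement, which is the main step. I would unwind the Grothendieck construction: the base of $\Xi(\DO(\Pi))$ is $\DO(\X,\TT)$, and its fibre over a differential object $(A,\zeta_A,\sigma_A,\hat p_A)$ is the fibre of $\DO(\Pi)$ over it, namely the differential objects $(E,\zeta_E,\sigma_E,\hat p_E)$ of the total tangent category $(\X',\TT')$ lying over $(A,\zeta_A,\sigma_A,\hat p_A)$. Under the identification of $(\X',\TT')$ with the tangent category of elements of $\IND$, such a differential object is exactly a tuple $(E,\zeta_E,\sigma_E,\hat p_E)$ with $E\in\X^M$ and the structure maps $\zeta_E,\sigma_E,\hat p_E$ listed above, making $(A,E)$ a differential object in the category of elements, i.e.\ an object of $\DO^{(A,\zeta_A,\sigma_A,\hat p_A)}$. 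I would then check that the reindexing functors $f^\*$ and the indexed tangent bundle functor $\DO(\T')$ produced by the Grothendieck construction from $\DO(\Pi)$ agree with the explicit formulas given for $\zeta_{f^\*E}$, $\sigma_{f^\*E}$, $\hat p_{f^\*E}$ and $\hat p_{\T'^AE}$; this amounts to tracking how the Cartesian lifts $f^\*\zeta_E$, $f^\*\hat p_E$ built in Lemma~\ref{lemma:differential-objects-tangent-fibration}, together with the lifted differential projection $\hat p_{\T A}$ of the tangent bundle functor on $\DO(\X',\TT')$, translate into the distributors $\IND_2$ and $\xi^f$ under the fibration–indexed dictionary.

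The main obstacle is exactly this last verification. It is bookkeeping rather than new mathematics, since both $\Xi$ and the construction $\DO$ are already established, but one must carefully match the universal lifts used in the fibrational picture with their indexed counterparts expressed through $\IND_2$ and $\xi^f$, in particular confirming that the composite defining $\hat p_{\T'^AE}$ (which threads $c'^A$, $\xi^{\hat p_A}$ and $\IND_2$ together) is the image under the Grothendieck construction of the differential projection $c_A\o\T'\hat p_E$ of $\T'\DO(\Pi)$. Once these identifications are in place, $\Xi(\DO(\Pi))$ and $\DO(\X,\TT;\IND,\TT')$ coincide on objects, reindexing functors, the indexed tangent functor, and the structural indexed transformations, completing the proof.
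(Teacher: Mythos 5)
Your proposal is correct and follows essentially the same route as the paper: both transport the result from tangent fibrations (Theorem~\ref{theorem:differential-objects-tangent-fibrations}) along the Grothendieck $2$-equivalence via Proposition~\ref{proposition:equivalence-differential-objects}, and then identify the transported tangentad with the explicit $\DO(\X,\TT;\IND,\TT')$ by unwinding the fibration--indexed dictionary. Your write-up is in fact more explicit than the paper's about which universal lifts must be matched against the distributors $\IND_2$ and $\xi^f$ in that final identification.
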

\begin{proof}
Thanks to Proposition~\ref{proposition:equivalence-differential-objects}, the Grothendieck $2$-equivalence $\Fib\simeq\Indx$ of~\cite[Theorem~5.5]{lanfranchi:grothendieck-tangent-cats} implies that $\Indx$ admits the construction of differential objects. Furthermore, $\DO(\X,\TT;\IND,\TT')$ corresponds via the Grothendieck equivalence to $\DO(\Pi_{(\IND,\TT')})$, where $\Pi_{(\IND,\TT')}$ is the tangent fibration associated to $(\IND,\TT')$. By spelling out the details, one finds out that this corresponds to the tangent indexed category $(\DO(\X,\TT),\DO(\IND),\DO(\TT'))$.
\end{proof}

\subsubsection*{Tangent split restriction categories}
\label{subsubsection:differential-objects-tangent-split-restriction-categories}
In this section, we consider the construction of differential objects for tangent split restriction categories. First, notice that a Cartesian tangent split restriction category, that is, a Cartesian object in the $2$-category of tangent split restriction categories, is precisely a tangent split restriction category with restriction finite products which are preserved by the tangent bundle functor.

\begin{lemma}
\label{lemma:differential-objects-tangent-split-restriction-categories}
Consider a Cartesian tangent split restriction category $(\X,\TT)$ and let $\DO(\X,\TT)$ be the category whose objects are differential objects $(A,\zeta_A,\sigma_A,\hat p_A)$ in the Cartesian tangent subcategory $\Tot(\X,\TT)$ and morphisms $f\colon(A,\zeta_A,\sigma_A,\hat p_A)\to(B,\zeta_B,\sigma_B,\hat p_B)$ are morphisms $f$ of differential objects such that also their restriction idempotents $\bar f$ are morphisms of differential objects. The tangent split restriction structure on $(\X,\TT)$ lifts to $\DO(\X,\TT)$. In particular, the tangent bundle functor sends each differential object $(A,\zeta_A,\sigma_A,\hat p_A)$ to $(\T A,\T\zeta_A,\T\sigma_A,\hat p_{\T A})$, where
\begin{align*}
&\hat p_{\T A}\colon\T^2A\xrightarrow{c}\T^2A\xrightarrow{\T\hat p_A}\T A
\end{align*}
\end{lemma}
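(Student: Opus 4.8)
The plan is to verify that the entire tangent split restriction structure of $(\X,\TT)$ restricts to the subcategory $\DO(\X,\TT)$. The strategy is to check, component by component, that each structural total natural transformation and each restriction limit descends to differential objects, closely following the definition of $\DO(\X,\TT)$ in the ordinary tangent category case (Section~\ref{subsection:tangent-category-differential-objects}) but tracking the extra restriction-theoretic data.

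First I would verify that $\DO(\X,\TT)$ is a well-defined split restriction category. The restriction of a morphism $f$ of differential objects is $\bar f$, which by hypothesis is again a morphism of differential objects, so $\DO(\X,\TT)$ inherits a restriction structure from $\Tot(\X,\TT) \subseteq (\X,\TT)$; the restriction axioms hold because they hold in $(\X,\TT)$ and the inclusion is a restriction functor. Splitting of idempotents transfers because a restriction idempotent $e$ on $(A,\zeta_A,\sigma_A,\hat p_A)$ that is a morphism of differential objects splits in $(\X,\TT)$ through some object $A'$, and one equips $A'$ with the induced differential structure via the (total) split mono and split epi; here I would use that the splitting maps commute with $\zeta_A$, $\sigma_A$, and $\hat p_A$, which follows from $e$ being a morphism of differential objects.

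Next I would check that the tangent bundle functor lifts. On objects it sends $(A,\zeta_A,\sigma_A,\hat p_A)$ to $(\T A,\T\zeta_A,\T\sigma_A,\hat p_{\T A})$ with $\hat p_{\T A} = \T\hat p_A \o c$, exactly as in the ordinary case; the verification that this tuple is again a differential object is formally identical to the tangent category argument, using $c^2 = \id$, the coassociativity of $l$, and the linearity axiom of $\hat p_A$. On morphisms, since $\T$ preserves restriction idempotents and is a restriction functor, $\T f$ is total iff $f$ is total, and $\overline{\T f} = \T\bar f$; thus if both $f$ and $\bar f$ are morphisms of differential objects, so are $\T f$ and $\T \bar f = \overline{\T f}$, making $\T f$ a morphism in $\DO(\X,\TT)$. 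The structural transformations $p$, $z$, $s$, $l$, $c$ (and $n$ if present) are total natural transformations whose components are already morphisms of differential objects by the same verifications as in the tangent category setting, and their restriction idempotents are identities, hence trivially morphisms of differential objects.

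The main obstacle will be the universality and local-linearity axioms, since these are stated with \emph{restriction} pullbacks rather than genuine pullbacks. I would need to confirm that the restriction $n$-fold pullbacks $\T_n$ and the restriction pullback witnessing local linearity are computed in $\DO(\X,\TT)$ as they are in $(\X,\TT)$: because the relevant projections and comparison maps ($p$, $\pi_1p$, $z_\T \times l$, $\T s$, etc.) are total morphisms of differential objects, the restriction pullback formed in $(\X,\TT)$ carries a canonical differential object structure, and its universal property relative to total cones is inherited. Concretely I would equip $\T_n$ with the componentwise zero, sum, and differential projection induced from those of $\T$, check these assemble into a differential object using that restriction pullbacks commute with the (product-preserving) functor $\T$, and verify the comparison maps are morphisms of differential objects by the universal property of the restriction pullback applied to total cones. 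Once these restriction limits are seen to lift, the axioms of a tangent split restriction structure on $\DO(\X,\TT)$ follow directly from the corresponding axioms in $(\X,\TT)$.
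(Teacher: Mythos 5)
Your proposal is correct in substance but takes a more laborious, component-by-component route than the paper. The paper's proof is essentially two sentences: (i) the idempotent splittings lift because restriction idempotents commute with the structural morphisms of differential objects, so $\DO(\X,\TT)$ is a split restriction category; (ii) a tangent restriction structure on a split restriction category is equivalent to a tangent structure on its subcategory of total maps, because restriction limits in a split restriction category coincide with ordinary limits among total maps --- and $\Tot(\DO(\X,\TT))$ carries the tangent structure $\DO(\Tot(\X,\TT))$ by the ordinary tangent-category theory of Section~\ref{subsection:tangent-category-differential-objects}. This reduction disposes in one stroke of everything you verify by hand: the lifting of $\T$, of the structural transformations, and especially of the restriction pullbacks, which you correctly identify as the main obstacle. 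Your direct treatment of that obstacle is the one place where your argument is slightly under-powered: you claim the universal property of the restriction pullbacks is ``inherited relative to total cones,'' but a restriction pullback must be universal with respect to arbitrary (not necessarily total) cones subject to the restriction-compatibility condition, and in $\DO(\X,\TT)$ one must additionally check that the induced comparison map has its restriction idempotent a morphism of differential objects. Both points are resolved precisely by the fact you are implicitly circling around --- that in a split restriction category the restriction limits are determined by the total-map subcategory --- so you would do better to invoke that equivalence up front, as the paper does, rather than re-prove its consequences case by case. With that adjustment your argument closes; as written it proves a slightly weaker universal property than the one required.
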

\begin{proof}
It is not hard to show that, since the restriction idempotents commute with the structural morphisms of differential objects, the splittings of the restriction idempotents in $(\X,\TT)$ lift to $\DO(\X,\TT)$. Thus, $\DO(\X,\TT)$ is a split restriction category. Moreover, $\TT^\DO$ defines a tangent restriction structure on $\DO(\X,\TT)$ if and only if it restricts to a tangent structure on $\Tot(\DO(\X,\TT))$, since restriction products in a split restriction category coincide with total products in the subcategory of total maps. However, by definition, $\TT^\DO$ restricts to a tangent structure on $\Tot(\DO(\X,\TT))$. Thus, $\DO(\X,\TT)$ is a tangent split restriction category.
\end{proof}

\begin{definition}
\label{definition:restriction-differential-object}
A \textbf{restriction differential object} in a Cartesian tangent split restriction category $(\X,\TT)$ is an object of $\DO(\X,\TT)$.
\end{definition}

Concretely, a restriction differential object consists of an object $A$ of $(\X,\TT)$ together with three total maps
\begin{align*}
\zeta_A&\colon\*\to A       &\sigma_A&\colon A\times A\to A     &\hat p_A&\colon\T A\to A
\end{align*}
where $A\times A$ is the restriction product of $A$ with itself. The total maps $\zeta_A$, $\sigma_A$, and $\hat p_A$ satisfy the same equational axioms of the structure morphisms of a differential object in ordinary tangent category theory. However, the universality of the differential projection $\hat p_A$ is replaced with the following axiom: the following is a restriction pullback diagram:
\begin{equation*}
\begin{tikzcd}
A & {\T A} & A
\arrow["{p_A}"', from=1-2, to=1-1]
\arrow["{\hat p_A}", from=1-2, to=1-3]
\end{tikzcd}
\end{equation*}

\begin{theorem}
\label{theorem:differential-objects-tangent-split-restriction-categories}
The $2$-category $\sRestrCat$ of split restriction categories admits the construction of differential objects. In particular, the tangentad of differential objects of a Cartesian tangent split restriction category $(\X,\TT)$ is the Cartesian tangent split restriction category $\DO(\X,\TT)$.
\end{theorem}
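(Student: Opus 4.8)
The plan is to follow the same strategy as in the proofs of Theorem~\ref{theorem:differential-objects-tangent-monads} and Theorem~\ref{theorem:differential-objects-tangent-fibrations}: exhibit the candidate tangentad of differential objects together with its universal pointwise differential object, and then establish the bijection between differential objects in the relevant Hom-tangent category and lax tangent morphisms into $\DO(\X,\TT)$. First I would take $\DO(\X,\TT)$ to be the Cartesian tangent split restriction category produced by Lemma~\ref{lemma:differential-objects-tangent-split-restriction-categories}, together with the evident forgetful restriction functor $\U\colon\DO(\X,\TT)\to(\X,\TT)$, which strictly preserves both the tangent and the restriction structures. The universal pointwise differential object $(\U;\UnivZ,\UnivS,\Univ{\hat p})$ is defined exactly as in the ordinary case, with $\UnivZ_{(A,\zeta_A,\sigma_A,\hat p_A)}=\zeta_A$, $\UnivS_{(A,\zeta_A,\sigma_A,\hat p_A)}=\sigma_A$, and $\Univ{\hat p}_{(A,\zeta_A,\sigma_A,\hat p_A)}=\hat p_A$; these assemble into total tangent natural transformations precisely because the structure maps of each restriction differential object are total.

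I would then verify that $(\U;\UnivZ,\UnivS,\Univ{\hat p})$ is indeed a pointwise differential object. The equational axioms hold objectwise, while the restriction-pullback universality required by Definition~\ref{definition:restriction-differential-object} for each object lifts to the pointwise restriction product needed in $[\DO(\X,\TT)\|\X,\TT]$, since restriction limits in $(\X,\TT)$ induce pointwise restriction limits in the Hom-tangent category, exactly as in the unrestricted case treated in Proposition~\ref{proposition:universality-differential-object}. This supplies the data required by Definition~\ref{definition:construction-differential-objects}, so it only remains to prove that the induced $\Gamma_{(\UnivZ,\UnivS,\Univ{\hat p})}$ is invertible.

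To do this I would construct the inverse explicitly, as in Theorem~\ref{theorem:universality-differential-objects}. Given a tangent split restriction category $(\X',\TT')$ and a differential object $(G,\beta;\zeta,\sigma,\hat p)$ in $[\X',\TT'\|\X,\TT]$, I would define $\Lambda[G,\beta;\zeta,\sigma,\hat p]\colon(\X',\TT')\to\DO(\X,\TT)$ sending $A$ to the restriction differential object $(GA,\zeta_A,\sigma_A,\hat p_A)$ and $f\colon A\to B$ to $Gf$, with distributive law induced by $\beta$. The naturality of the total transformation $\hat p$, together with the automatic additivity of linear morphisms invoked in Lemma~\ref{lemma:universal-DO-cartesian-structure}, makes each $Gf$ a linear morphism of differential objects, and each $(GA,\zeta_A,\sigma_A,\hat p_A)$ is a restriction differential object because its axioms and restriction-pullback universality are inherited pointwise from $(G,\beta;\zeta,\sigma,\hat p)$. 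The two inverse identities $\Lambda\circ\Gamma_{(\UnivZ,\UnivS,\Univ{\hat p})}=\id$ and $\Gamma_{(\UnivZ,\UnivS,\Univ{\hat p})}\circ\Lambda=\id$, together with the compatibility of $\Lambda$ with the tangent structures, follow by the same computation as in Theorem~\ref{theorem:universality-differential-objects}.

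The step that genuinely uses the restriction hypotheses, and which I expect to be the crux, is checking that $\Lambda[G,\beta;\zeta,\sigma,\hat p]$ actually lands in $\DO(\X,\TT)$ as a restriction functor. By the definition of morphisms in Lemma~\ref{lemma:differential-objects-tangent-split-restriction-categories}, I must verify not only that $Gf$ is a morphism of differential objects but also that its restriction idempotent $\bar{Gf}=G\bar f$ is one. Since $\bar f\colon A\to A$ is itself a morphism of $\X'$, applying the naturality of $\hat p$ to $\bar f$ shows that $G\bar f$ commutes with the differential projection, hence is a linear morphism of differential objects, so $Gf$ is a morphism of $\DO(\X,\TT)$. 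Preservation of restriction idempotents by $\Lambda[G,\beta;\zeta,\sigma,\hat p]$ then reduces to $G$ being a restriction functor. I would leave the uniqueness of the morphism corresponding to $(G,\beta;\zeta,\sigma,\hat p)$ and the routine verification of the remaining axioms to the reader, as in the preceding theorems.
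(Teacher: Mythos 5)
Your proposal is correct and follows essentially the same route as the paper: it exhibits $\DO(\X,\TT)$ from Lemma~\ref{lemma:differential-objects-tangent-split-restriction-categories} with the pointwise universal differential object, and inverts $\Gamma$ by sending a differential object over $(G,\beta)$ in the Hom-tangent category to the lax tangent morphism picking out $(GA,\zeta_A,\sigma_A,\hat p_A)$, using that $G$ is a restriction functor to see that restriction idempotents are preserved. Your explicit check that $\bar{Gf}=G\bar f$ is itself a morphism of differential objects (via naturality of $\hat p$ at $\bar f$) is a detail the paper leaves implicit, but the argument is the same.
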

\begin{proof}
Consider a tangent split restriction category $(\X',\TT')$ with a lax tangent morphism $(F,\alpha)\colon(\X',\TT')\to(\X,\TT)$, that is, a restriction functor $F\colon\X\to\X'$ together with a total natural transformation $\alpha_A\colon F\T'A\to\T FA$, compatible with the tangent structures. Furthermore, consider also a differential object $(\zeta,\sigma,\hat p)$ in the Hom-tangent category $[\X',\TT'\|\X,\TT]$ over $(F,\alpha)$. Thus, $(FA,\zeta_A,\*\to FA,\sigma_A\colon FA\times FA\to FA,\hat p_A\colon\T A\to A)$ is a restriction differential object. Thus, we can define a tangent morphism $(H,\gamma)\colon(\X',\TT')\to\DO(\X,\TT)$ which sends each object $A\in\X'$ to the tuple $(FA,\zeta_A,\sigma_A,\hat p_A)$ and each morphism $f\colon A\to B$ to $Ff$. Since $F$ is a restriction functor, $H$ also preserves the restriction idempotents, that is, $H\bar f=F\bar f=\bar{Ff}=\bar{Hf}$. It is easy to prove that $(H,\gamma)\colon(\X',\TT')\to\DO(\X,\TT)$ is the unique lax tangent morphism which corresponds to the differential object $(F,\alpha;\zeta,\sigma,\hat p)$.
\end{proof}

\subsubsection*{Tangent restriction categories: a general approach}
\label{subsubsection:differential-objects-tangent-restriction-categories}
Mimicking the work done in~\cite[Section~5.5]{lanfranchi:tangentads-II} to extend the construction of vector fields to general tangent (non-necessarily split) restriction categories, now, we introduce a general procedure for extending the formal theory of differential objects to tangent-like concepts which might fail to be tangentads. Thus, we apply this general construction to the case of tangent restriction categories.
\par Consider a \textbf{pullback-extension context}, that is, two $2$-categories $\CC$ and $\DD$ together with two $2$-functors
\begin{align*}
&\Xi\colon\DD\leftrightarrows\Tng(\CC)\colon\Inc
\end{align*}
together with a natural $2$-transformation
\begin{align*}
\eta_\X&\colon\X\to\Inc(\Xi(\X))
\end{align*}
natural in $\X\in\DD$. Let us also consider an object $\X$ of $\DD$ such that the Cartesian tangentad $\Xi(\X)$ of $\CC$ admits the construction of differential objects. Finally, let us assume the existence of the following $2$-pullback in $\DD$:
\begin{equation}
\label{equation:pullback-extension-differential-objects}
\begin{tikzcd}
{\DO(\X)} & {\Inc(\DO(\Xi(\X))} \\
\X & {\Inc(\Xi(\X))}
\arrow["{\DO(\eta)}", from=1-1, to=1-2]
\arrow["{\U_\X}"', from=1-1, to=2-1]
\arrow["\lrcorner"{anchor=center, pos=0.125}, draw=none, from=1-1, to=2-2]
\arrow["{\Inc(\U_{\Xi(\X)})}", from=1-2, to=2-2]
\arrow["\eta"', from=2-1, to=2-2]
\end{tikzcd}
\end{equation}

\begin{definition}
\label{definition:differential-objects-extension}
Let $(\CC,\DD;\Inc,\Xi;\eta)$ be a pullback-extension context. An object $\X$ of $\DD$ admits the \textbf{extended construction of differential objects} (w.r.t. to the pullback-extension context) if $\Xi(\X)$ is a Cartesian tangentad of $\CC$, admits the construction of differential objects in $\CC$, and the $2$-pullback diagram of Equation~\eqref{equation:pullback-extension-differential-objects} exists in $\DD$. In this scenario, the \textbf{object of differential objects} (w.r.t. to the pullback-extension context) of $\X$ is the object $\DO(\X)$ of $\DD$, that is, the $2$-pullback of $\Inc(\U_{\Xi(\X)})$ along $\eta$.
\end{definition}

In the previous section, we showed that the tangentad of differential objects of a tangent split restriction tangent category $(\X,\TT)$ is the tangent split restriction category $\DO(\X,\TT)$ whose objects are tuples $(A,\zeta_A,\sigma_A,\hat p_A)$ which form differential objects in the tangent subcategory of total maps of $(\X,\TT)$.
\par Now, consider a generic tangent restriction category $(\X,\TT)$ and let us unwrap the definition of $\DO(\Split_R(\X,\TT))$. The objects of $\DO(\Split_R(\X,\TT))$ are tuples $(A,e;\zeta_A,\sigma_A,\hat p_A)$ formed by an object $A$ of $(\X,\TT)$ together with a restriction idempotent $e=\bar e\colon A\to A$ of $(\X,\TT)$ and three morphisms
\begin{align*}
\zeta_A&\colon\*\to A &\sigma_A&\colon A\times A\to A &\hat p_A&\colon\T A\to A
\end{align*}
such that:
\begin{align*}
&\bar{\zeta_A}=\id_\*       &&e\o\zeta_A=\zeta_A\\
&\bar{\sigma_A}=e\times e   &&e\o\sigma_A=\sigma_A\\
&\bar{\hat p_A}=\T e        &&e\o\hat p_A=\hat p_A 
\end{align*}
Moreover, $\zeta_A$, $\sigma_A$, $\hat p_A$ satisfy the usual equational axioms of a differential object, and for each restriction idempotent $e'=\bar{e'}\colon X\to X$ and each pair of morphisms $f,g\colon X\to A$ such that $e\o f=f=f\o e'$ and $e\o g=g=g\o e'$, there exists a unique morphism $\<f,g\>\colon X\to\T A$ such that $\T e\o\<f,g\>=\<f,g\>\o e'$ and $p_A\o\<f,g\>=f$ and $\hat p_A\o\<f,g\>=g$.
\par Notice that the equation $\bar{\hat p_A}=\T e$ fully determines $e$ as follows:
\begin{align*}
&e=p\o z\o e=p\o\T e\o z=p\o\bar{\hat p_A}\o z
\end{align*}
In the following, we omit $e$ in the notation: $(A,\zeta_A,\sigma_A,\hat p_A)=(A,e;\zeta_A,\sigma_A,\hat p_A)$.
\par A morphism $f\colon(A,\zeta_A,\sigma_A,\hat p_A)\to(B,\zeta_B,\sigma_B,\hat p_B)$ of $\DO(\Split_R(\X,\TT))$ consists of a morphism $f\colon A\to B$ of $(\X,\TT)$ which commutes with the structural morphism of the differential objects and with their restriction idempotents. The tangent bundle functor sends each $(A,\zeta_A,\sigma_A,\hat p_A)$ to $(\T A,\T\zeta_A,\T\sigma_A,\T\hat p_A\o c_A)$.
\par Now, consider a tangent (non-necessarily split) restriction category $(\X,\TT)$ and define $\DO(\X,\TT)$ to be the full subcategory of $\DO(\Split_R(\X,\TT))$ spanned by the objects $(A,\zeta_A,\sigma_A,\hat p_A)$ where $\zeta_A$, $\sigma_A$, and $\hat p_A$ are total maps in $(\X,\TT)$, which is precisely when $\hat p_A$ is total.

\begin{lemma}
\label{lemma:differential-objects-tangent-restriction-categories}
Let $(\X,\TT)$ be a tangent restriction category. The subcategory $\DO(\X,\TT)$ of $\DO(\Split_R(\X,\TT))$ spanned by the objects $(A,\zeta_A,\sigma_A,\hat p_A)$ where $\hat p_A$ is total in $(\X,\TT)$ is a tangent restriction category.
\end{lemma}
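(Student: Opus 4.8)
The plan is to realize $\DO(\X,\TT)$ as a restriction-closed full subcategory of the tangent split restriction category $\DO(\Split_R(\X,\TT))$ produced by Lemma~\ref{lemma:differential-objects-tangent-split-restriction-categories}, and to show that both the restriction structure and the tangent structure descend to it. First I would check that $\DO(\X,\TT)$ is a restriction category: it is a full subcategory of $\DO(\Split_R(\X,\TT))$, and for a morphism $f$ of $\DO(\X,\TT)$ the restriction $\bar f$ has the same source and target as $f$, so by fullness $\bar f$ again lies in $\DO(\X,\TT)$; thus the restriction operator descends. I would also record the simplification noted before the statement: when $\hat p_A$ is total one has $\bar{\hat p_A}=\id_{\T A}$, and hence $e=p\o\bar{\hat p_A}\o z=p\o z=\id_A$, so the objects of $\DO(\X,\TT)$ are genuine differential objects of $(\X,\TT)$ with total structure maps.

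Next I would verify that the tangent bundle functor of $\DO(\Split_R(\X,\TT))$ preserves $\DO(\X,\TT)$. On an object it sends $(A,\zeta_A,\sigma_A,\hat p_A)$ to $(\T A,\T\zeta_A,\T\sigma_A,\T\hat p_A\o c_A)$; since $\T$ is a restriction functor it preserves totality, so $\T\hat p_A$ is total, and $c_A$ is total, whence the new differential projection $\T\hat p_A\o c_A$ is total and the image lies in $\DO(\X,\TT)$. The structural transformations $p,z,s,l,c$ (and the negation $n$ when present) are total natural transformations whose components are morphisms of differential objects, so they restrict to total natural transformations on $\DO(\X,\TT)$, and every equational axiom of a tangent restriction structure is inherited verbatim from $\DO(\Split_R(\X,\TT))$.

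The substance of the argument, and the main obstacle, lies in the non-equational part: that $p$ admits the required $n$-fold restriction pullbacks, preserved by the iterates $\T^m$, and that the universality diagram of the vertical lift is a restriction pullback, all computed inside $\DO(\X,\TT)$. Here I would exploit fullness: each such restriction limit already exists in $\DO(\Split_R(\X,\TT))$, since the latter is a tangentad, so it suffices to prove that the apex of each limit lands in $\DO(\X,\TT)$, that is, that its induced differential projection is total. Indeed, once the apex and its legs lie in the full subcategory, the ambient universal property restricts automatically, because every mediating morphism is a map between objects of $\DO(\X,\TT)$ and hence belongs to the full subcategory.

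It therefore remains to verify totality of the differential projections on the apex objects $\T_n(A)$ and on the vertical-lift universality object $\T\o\T_2(A)$. The hard part will be exactly this totality check: the induced differential structure on each apex is built from the total differential structure of $\T A$ by means of restriction products and the universal property of restriction pullbacks along the total legs $\pi_i$. Since restriction products of total-structured differential objects are again total, and a map induced by a restriction-pullback universal property out of total data is total, the induced projection $\hat p_{\T_n A}$, and likewise the one on $\T\o\T_2(A)$, is total; concretely one computes its restriction and uses totality of the projections $\pi_i$ and of $\hat p_{\T A}$ to conclude that it equals the identity. With the apices shown to lie in $\DO(\X,\TT)$, fullness delivers the restriction pullbacks and their preservation by $\T^m$, completing the verification that $\DO(\X,\TT)$ is a tangent restriction category.
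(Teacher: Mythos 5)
The paper offers no proof of this lemma beyond declaring it tedious but straightforward, so there is nothing to compare against; your outline is a correct execution of precisely the verification being left to the reader. You correctly isolate the only point with real content --- that the restriction pullbacks $\T_nA$ (and the local-linearity square) inherited from $\DO(\Split_R(\X,\TT))$ have apexes whose induced differential projections are total, so that fullness transports the restriction universal properties and their preservation by $\T^m$ into the subcategory --- and the totality argument, via the restriction idempotent of a mediating map out of total cone data being the identity, is the standard one. Two cosmetic slips: the restriction $\bar f$ of $f\colon A\to B$ is an endomorphism of $A$ rather than a map with ``the same source and target as $f$'' (the conclusion that it lies in the full subcategory is unaffected), and the apex of the vertical-lift universality square is $\T_2$ rather than $\T\o\T_2$, so that case is already subsumed by the $n$-fold pullbacks.
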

\begin{proof}
We leave it to the reader to complete this tedious but straightforward proof.
\end{proof}

We can finally prove the main theorem of this section.

\begin{theorem}
\label{theorem:differential-objects-tangent-restriction-categories}
Consider the pullback-extension context $(\RestrCat,\TngRestrCat;\Inc,\Split_R;\eta)$ of tangent restriction categories. The $2$-category $\TngRestrCat$ admits the extended construction of differential objects with respect to this pullback-extension context. Moreover, the object of differential objects of a tangent restriction category is the tangent restriction category $\DO(\X,\TT)$.
\end{theorem}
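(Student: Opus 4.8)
\section*{Proof proposal}

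The plan is to reduce everything to the split case handled in Theorem~\ref{theorem:differential-objects-tangent-split-restriction-categories} and then to compute the required $2$-pullback explicitly, showing that it recovers the tangent restriction category $\DO(\X,\TT)$ described just before the statement. First I would record that the data genuinely form a pullback-extension context in the sense of Definition~\ref{definition:differential-objects-extension}: $\Split_R$ and $\Inc$ are $2$-functors and $\eta$ is the natural embedding of a Cartesian tangent restriction category into its idempotent splitting $\Split_R(\X,\TT)$, which is a Cartesian tangent split restriction category, hence a Cartesian tangentad in $\sRestrCat$ by Example~\ref{example:tangent-restriction-categories}. By Theorem~\ref{theorem:differential-objects-tangent-split-restriction-categories}, $\Split_R(\X,\TT)$ admits the construction of differential objects, with tangentad of differential objects $\DO(\Split_R(\X,\TT))$ and forgetful tangent morphism $\U_{\Split_R(\X,\TT)}$. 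This verifies the first two clauses of Definition~\ref{definition:differential-objects-extension}; what remains is the existence and identification of the $2$-pullback of Equation~\eqref{equation:pullback-extension-differential-objects}.

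For the pullback, I would compute it on underlying data. An object of $\DO(\Split_R(\X,\TT))$ is a tuple $(A,e;\zeta_A,\sigma_A,\hat p_A)$ whose underlying split object under $\U_{\Split_R(\X,\TT)}$ is $(A,e)$, and the condition that it lie over the image of $\eta$ forces $(A,e)=\eta(A)=(A,\id_A)$, that is, $e=\id_A$. Using the identity $e=p\o\bar{\hat p_A}\o z$ together with $\T e=\bar{\hat p_A}$ established just before the statement, the constraint $e=\id_A$ is equivalent to $\bar{\hat p_A}=\id_{\T A}$, i.e.\ to $\hat p_A$ being total. Hence the objects of the $2$-pullback are exactly the objects of $\DO(\Split_R(\X,\TT))$ with $\hat p_A$ total, and its morphisms are those of $\DO(\Split_R(\X,\TT))$ between such objects; this is precisely the full subcategory $\DO(\X,\TT)$ defined before the statement. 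By Lemma~\ref{lemma:differential-objects-tangent-restriction-categories} this subcategory is a tangent restriction category, so the candidate pullback lives in $\TngRestrCat$, and the forgetful restriction functor $\U_\X\colon\DO(\X,\TT)\to(\X,\TT)$ together with the inclusion $\DO(\eta)$ into $\Inc(\DO(\Split_R(\X,\TT)))$ form a commuting cone over the cospan, since both legs send $(A,\zeta_A,\sigma_A,\hat p_A)$ to the split object $(A,\id_A)$.

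It then remains to verify the universal property of the $2$-pullback in $\TngRestrCat$. Given a tangent restriction category $(\X',\TT')$ with a lax tangent morphism into $(\X,\TT)$ and a compatible lax tangent morphism into $\DO(\Split_R(\X,\TT))$ whose composite with $\U_{\Split_R(\X,\TT)}$ agrees with $\eta$ postcomposed with the first, the agreement forces every object in the image to have trivial idempotent, hence total differential projection, so the morphism factors uniquely through $\DO(\X,\TT)$; the analogous bookkeeping at the level of $2$-cells (total natural transformations) supplies the $2$-dimensional part. I expect the main obstacle to be precisely this last verification: one must confirm that $2$-pullbacks in $\TngRestrCat$ are computed on underlying restriction categories in the expected way, and that the restriction-idempotent conditions defining the morphisms of $\DO(\Split_R(\X,\TT))$ interact correctly with the factorization, so that the induced functor is a genuine restriction functor compatible with all the tangent data. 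Once this compatibility is confirmed, uniqueness of the factorization follows from the corresponding uniqueness clause of Theorem~\ref{theorem:differential-objects-tangent-split-restriction-categories}, completing the proof that $\TngRestrCat$ admits the extended construction of differential objects with object of differential objects $\DO(\X,\TT)$.
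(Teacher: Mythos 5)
Your proposal is correct and follows essentially the same route as the paper: reduce to the split case via Theorem~\ref{theorem:differential-objects-tangent-split-restriction-categories}, compute the $2$-pullback of Equation~\eqref{equation:pullback-extension-differential-objects} explicitly, and observe that lying over the image of $\eta$ forces the idempotent $e=p\o\bar{\hat p_A}\o z$ to be trivial, i.e.\ $\hat p_A$ to be total, so the pullback is exactly $\DO(\X,\TT)$. The paper compresses the universal-property verification into a citation of the analogous argument for vector fields, whereas you spell it out and correctly flag it as the main point needing care; the content is the same.
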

\begin{proof}
We need to compute the $2$-pullback of Equation~\eqref{equation:pullback-extension-differential-objects}, that is, the $2$-pullback of the forgetful functor $\Inc(\U)\colon\Inc(\DO(\Split_R(\X,\TT)))\to\Inc(\Split_R(\X,\TT))$ along $\eta_{(\X,\TT)}\colon(\X,\TT)\to\Inc(\Split_R(\X,\TT))$ for each tangent restriction category $(\X,\TT)$. However, using a similar argument to~\cite[Theorem~5.11]{lanfranchi:tangentads-II}, we can show that the objects of such a pullback must be differential objects $(A,\zeta_A,\sigma_A,\hat p_A)\in\DO(\Split_R(\X,\TT))$ whose idempotent $e=p\o\bar{\hat p_A}\o z$ is trivial, which is equivalent to $\hat p_A$ be total in $(\X,\TT)$. Thus, by the universal property of the pullback, $\DO(\X,\TT)$ must be the largest subcategory of $\DO(\Split_R(\X,\TT))$ spanned by those differential objects.
\end{proof}


\section{The formal theory of differential bundles}
\label{section:differential-bundles}
In every tangent category, one can define a suitable class of bundles, called (display) differential bundles, which generalize vector bundles in differential geometry. This section aims to formalize this notion in the context of tangentads.

\subsection{Differential bundles in tangent category theory}
\label{subsection:tangent-category-differential-bundles}
For starters, recall the notion of tangent display maps, first introduced in~\cite{cruttwell:tangent-display-maps}.

\begin{definition}[{\cite[Definition~2.5]{cruttwell:tangent-display-maps}}]
\label{definition:tangent-display-map}
A \textbf{tangent display map} in a tangent category $(\X,\TT)$ is a morphism $q\colon E\to M$ such that, for every $n\geq0$ and every morphism $f\colon M'\to\T^nM$, the pullback of $\T^nq\colon\T^nE\to\T^nM$ along $f$
\begin{equation*}
\begin{tikzcd}
{E'} & {\T^nE} \\
{M'} & {\T^nM}
\arrow[from=1-1, to=1-2]
\arrow[from=1-1, to=2-1]
\arrow["\lrcorner"{anchor=center, pos=0.125}, draw=none, from=1-1, to=2-2]
\arrow["{\T^nq}", from=1-2, to=2-2]
\arrow["f"', from=2-1, to=2-2]
\end{tikzcd}
\end{equation*}
exists and is preserved by all iterates $\T^m$ of the tangent bundle functor.
\end{definition}

Let us also introduce tangent morphisms which preserve tangent display maps.

\begin{definition}
\label{definition:display-preserving}
A \textbf{display tangent morphism} is a strong tangent morphism $(F,\alpha)\colon(\X,\TT)\to(\X',\TT')$ whose underlying functor $F\colon\X\to\X'$ sends each tangent display map $q\colon E\to M$ of $(\X,\TT)$ to a tangent display map $Fq\colon FE\to FM$ and that, for each morphism $f\colon N\to M$ preserves the tangent pullback of $q$ along $f$.
\end{definition}

Tangent categories, display tangent morphisms, and tangent natural transformations form a $2$-category denoted by $\TngCat_\Dsply$.

\begin{definition}[{\cite[Definition~2.3]{cockett:differential-bundles}}]
\label{definition:differential-bundles}
A \textbf{display differential bundle} in a tangent category $(\X,\TT)$ is a tuple $\q\=(q,z_q,s_q,l_q)$ consisting of:
\begin{description}
\item[Projection] A tangent display map $q\colon E\to M$ of $\X$;

\item[Zero morphism] A morphism $z_q\colon M\to E$;

\item[Sum morphism] A morphism $s_q\colon E\times_ME\to E$;
\end{description}
for which the triple $(q\colon E\to M,z_q,s_q)$ is a commutative monoid in the slice category $\X/M$ over $M$ with respect to the pullback of two maps $q\colon E\to M$ and $q'\colon E'\to M$. A differential bundle also comprises:
\begin{description}
\item[Vertical lift] A morphism $l_q\colon E\to\T E$;
\end{description}
which satisfies the following conditions:
\begin{enumerate}
\item Additivity 1. The morphism $(z,l_q)\colon(q,z_q,s_q)\to(\T q,\T z_q,\T s_q)$ is a morphism of additive bundles. Concretely, this corresponds to the commutativity of the following diagrams:
\begin{equation*}
\begin{tikzcd}
E & {\T E} \\
M & {\T M}
\arrow["{l_q}", from=1-1, to=1-2]
\arrow["q"', from=1-1, to=2-1]
\arrow["{\T q}", from=1-2, to=2-2]
\arrow["z"', from=2-1, to=2-2]
\end{tikzcd}\hfill\quad
\begin{tikzcd}
E & {\T E} \\
M & {\T M}
\arrow["{l_q}", from=1-1, to=1-2]
\arrow["{z_q}", from=2-1, to=1-1]
\arrow["z"', from=2-1, to=2-2]
\arrow["{\T z_q}"', from=2-2, to=1-2]
\end{tikzcd}\hfill\quad
\begin{tikzcd}
{E_2} && {\T E_2} \\
E && {\T E}
\arrow["{\<\pi_1l_q,\pi_2 l_q\>}", from=1-1, to=1-3]
\arrow["{s_q}"', from=1-1, to=2-1]
\arrow["{\T s_q}", from=1-3, to=2-3]
\arrow["{l_q}"', from=2-1, to=2-3]
\end{tikzcd}
\end{equation*}

\item Additivity 2. The morphism $(z_q,l)\colon(q,z_q,s_q)\to(p,z,s)$ is morphism of additive bundles. Concretely, this corresponds to the commutativity of the following diagrams:
\begin{equation*}
\begin{tikzcd}
E & {\T E} \\
M & E
\arrow["{l_q}", from=1-1, to=1-2]
\arrow["q"', from=1-1, to=2-1]
\arrow["p", from=1-2, to=2-2]
\arrow["{z_q}"', from=2-1, to=2-2]
\end{tikzcd}\hfill\quad
\begin{tikzcd}
E & {\T E} \\
M & E
\arrow["{l_q}", from=1-1, to=1-2]
\arrow["{z_q}", from=2-1, to=1-1]
\arrow["{z_q}"', from=2-1, to=2-2]
\arrow["z"', from=2-2, to=1-2]
\end{tikzcd}\hfill\quad
\begin{tikzcd}
{E_2} && {\T_2E} \\
E && {\T E}
\arrow["{\<\pi_1l_q,\pi_2 l_q\>}", from=1-1, to=1-3]
\arrow["{s_q}"', from=1-1, to=2-1]
\arrow["s", from=1-3, to=2-3]
\arrow["{l_q}"', from=2-1, to=2-3]
\end{tikzcd}
\end{equation*}

\item Linearity. The following diagram commutes:
\begin{equation*}
\begin{tikzcd}
E & {\T E} \\
{\T E} & {\T^2E}
\arrow["{l_q}", from=1-1, to=1-2]
\arrow["{l_q}"', from=1-1, to=2-1]
\arrow["{\T l_q}", from=1-2, to=2-2]
\arrow["l"', from=2-1, to=2-2]
\end{tikzcd}
\end{equation*}

\item Universality. The following is a pullback diagram:
\begin{equation*}
\begin{tikzcd}
{E_2} && {\T E} \\
M && {\T M}
\arrow["{\<\pi_1l_q,\pi_2z\>}", from=1-1, to=1-3]
\arrow["{\pi_1q}"', from=1-1, to=2-1]
\arrow["\lrcorner"{anchor=center, pos=0.125}, draw=none, from=1-1, to=2-3]
\arrow["{\T q}", from=1-3, to=2-3]
\arrow["z"', from=2-1, to=2-3]
\end{tikzcd}
\end{equation*}
\end{enumerate}
\end{definition}

\begin{remark}
\label{remark:display-differential-bundles}
In the literature, there are a few variations on the definition of a differential bundle. In particular, the original definition due to Cockett and Cruttwell~\cite{cockett:differential-bundles} did not assume the projection to be display (tangent display maps were introduced by Cruttwell and by the author only more recently in~\cite{cruttwell:tangent-display-maps}). Instead, the existence of only some pullbacks was required. As shown in~\cite{cruttwell:tangent-display-maps}, requiring the projection to be display is, however, a very natural and desirable property, especially when dealing with connections. In the following, we refer to differential bundles as display differential bundles.
\end{remark}

Differential bundles generalize vector bundles in the context of tangent categories~\cite{macadam:vector-bundles}. In the following, we denote by $\q\colon E\to M$ a differential bundle with projection, zero morphism, sum morphism, and vertical lift the morphisms $q\colon E\to M$, $z_q\colon M\to E$, $s_q\colon E_2\to E$, and $l_q\colon E\to\T E$, respectively.
\par For each tangent category $(\X,\TT)$ there is a tangent category denoted by $\DB(\X,\TT)$ whose objects are differential bundles $\q\colon E\to M$ of $(\X,\TT)$ and morphisms are \textbf{linear} morphisms of differential bundles, that is, a pair $(f,g)\colon\q\to\q'$ of morphisms of $(\X,\TT)$ that commutes with the projections, $f\o q=q'\o g$, and the vertical lifts, $\T g\o l_q=l_{q'}\o g$. The tangent bundle functor sends a differential bundle $\q\colon E\to M$ to the differential bundle $\T^\DB\q\=(\T q,\T z_q,\T s_q,l_{\T q})$, where we identified $\T E_2$ with $\T_2E$, and where the vertical lift is defined as follows:
\begin{align*}
&l_{\T q}\colon\T E\xrightarrow{\T l_q}\T^2E\xrightarrow{c}\T^2E
\end{align*}
Finally, the structural natural transformations of $\DB(\X,\TT)$ are the same as the ones of $(\X,\TT)$.

\begin{lemma}
\label{lemma:DB-functoriality}
There is a $2$-endofunctor
\begin{align*}
&\DB\colon\TngCat_\Dsply\to\TngCat_\cong
\end{align*}
on the $2$-category of tangent categories and display tangent morphisms, which sends a tangent category $(\X,\TT)$ to the tangent category $\DB(\X,\TT)$ of display differential bundles of $(\X,\TT)$ and that sends a display tangent morphism $(F,\alpha)\colon(\X',\TT')\to(\X,\TT)$ to the display tangent morphism which sends a display differential bundle $\q\colon E\to M$ to the display differential bundle $(Fq,Fz_q,Fs_q,l_{Fq})$, whose vertical lift is defined as follows:
\begin{align*}
&l_{Fq}\colon FE\xrightarrow{Fl_q}F\T E\xrightarrow{\alpha}\T'FE
\end{align*}
\end{lemma}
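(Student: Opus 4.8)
The plan is to treat the tangent-category structure on $\DB(\X,\TT)$ as already established (this is the content of~\cite{cockett:differential-bundles}, recalled in the discussion preceding the statement); the real work of the lemma lies in the behaviour of $\DB$ on $1$- and $2$-morphisms and in its functoriality. So first I would check that, for a display tangent morphism $(F,\alpha)\colon(\X',\TT')\to(\X,\TT)$, the assignment $\q\mapsto(Fq,Fz_q,Fs_q,l_{Fq})$ with $l_{Fq}=\alpha_E\o Fl_q$ genuinely lands in $\DB(\X,\TT)$. The projection $Fq$ is a tangent display map precisely because $(F,\alpha)$ is display in the sense of Definition~\ref{definition:display-preserving}; the commutative-monoid structure $(Fq,Fz_q,Fs_q)$ in $\X/FM$ is obtained by applying $F$ to the monoid $(q,z_q,s_q)$ in $\X'/M$, using that $F$ preserves the defining pullback $E_2=E\times_M E$ (again by display-preservation, so that $FE_2\cong(Fq)_2$).

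Next I would verify the four conditions of Definition~\ref{definition:differential-bundles} for $(Fq,Fz_q,Fs_q,l_{Fq})$. The two additivity conditions and linearity are equational: each follows by applying $F$ to the corresponding diagram for $\q$, inserting $\alpha$, and using the naturality of $\alpha$ together with its coherence with $z$, $s$, $p$, $l$, and $c$ (the tangent-morphism axioms). The universality condition is the one non-equational axiom, and it is the crux of the argument. Here I would use that $\T q$ is again a tangent display map, so that the universality pullback for $\q$ is preserved by $F$; then invertibility of $\alpha$ — available because $(F,\alpha)$ is strong — lets me transport $F\T'$ to $\T F$ and identify the resulting square with the universality pullback for $Fq$. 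This is the main obstacle, and it is exactly where the two defining properties of a display tangent morphism, preservation of display maps and of their tangent pullbacks, are indispensable; on an arbitrary strong tangent morphism the argument would fail, which is why the source $2$-category is $\TngCat_\Dsply$ rather than merely strong tangent morphisms.

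With $\DB(F,\alpha)$ well-defined on objects, I would check it on linear morphisms: for $(f,g)\colon\q\to\q'$ the pair $(Ff,Fg)$ commutes with the projections trivially and with the vertical lifts by the short computation $l_{Fq'}\o Fg=\alpha\o F(l_{q'}\o g)=\alpha\o F(\T'g\o l_q)=\T Fg\o\alpha\o Fl_q=\T Fg\o l_{Fq}$, the middle step being the naturality square of the distributor $\alpha$. I would then promote $\DB(F,\alpha)$ to a strong tangent morphism by taking its distributor to be $\alpha$ componentwise; it is invertible because $(F,\alpha)$ is strong, and one checks that each component $(\alpha_M,\alpha_E)$ is a linear morphism of differential bundles, the compatibility with the vertical lift of $\T^\DB\q$ (which involves $c$ and $l$) following from the coherence of $\alpha$ with the canonical flip and the vertical lift.

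Finally, I would establish $2$-functoriality. Preservation of identities is immediate, and composites of display tangent morphisms are again display, so $\DB$ is defined on composites; checking that the induced vertical lift and distributor agree with those of $\DB(F,\alpha)\o\DB(F',\alpha')$ reduces to the standard rule for composing distributors. On $2$-morphisms, a tangent natural transformation $\varphi\colon(F,\alpha)\Rightarrow(G,\beta)$ is sent to the family $\DB(\varphi)$ whose component at $\q$ is $(\varphi_M,\varphi_E)$; that this is a linear morphism of differential bundles, i.e.\ compatible with the vertical lifts, is precisely the condition that $\varphi$ is compatible with the distributors $\alpha$ and $\beta$. Functoriality of $\DB$ on $2$-morphisms and the interchange law then follow directly from functoriality of $F$ and naturality of the structure maps, completing the verification that $\DB\colon\TngCat_\Dsply\to\TngCat_\cong$ is a $2$-functor.
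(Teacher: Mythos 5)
Your proposal is correct, but it takes a more self-contained route than the paper. The paper's own proof outsources the object-level work to \cite[Proposition~4.22]{cockett:differential-bundles}, which shows that a \emph{Cartesian} tangent morphism (in the sense of \cite[Definition~4.16]{cockett:differential-bundles}, i.e.\ one preserving the relevant tangent pullbacks) lifts to a functor between the categories of differential bundles; it then simply observes that display tangent morphisms are Cartesian in that sense, and deals with the distributive law and the $2$-morphisms in a sentence each. You instead re-derive that proposition by hand: verifying the monoid structure, the equational axioms via naturality and coherence of $\alpha$, and isolating the universality axiom as the one place where preservation of tangent display maps and of their pullbacks is genuinely needed. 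That diagnosis is exactly the point the paper is implicitly relying on when it invokes ``display implies Cartesian,'' and it is also the honest explanation of why the source $2$-category is $\TngCat_\Dsply$ rather than all strong tangent morphisms. Your treatment of linear morphisms, of the strong distributor $(\alpha_M,\alpha_E)$ on $\DB(F,\alpha)$, and of $2$-functoriality matches what the paper leaves to the reader. The trade-off is the usual one: the citation keeps the proof short and places the burden on an external result, while your direct verification is longer but makes visible where each hypothesis is used. One cosmetic remark: the lemma's statement mixes up the direction of the tangent functors ($l_{Fq}$ lands in $\T'FE$ although $(F,\alpha)$ goes from $(\X',\TT')$ to $(\X,\TT)$); your computation silently uses the consistent convention, which is fine.
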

\begin{proof}
\cite[Proposition~4.22]{cockett:differential-bundles} establishes that a Cartesian tangent morphism~\cite[Definition~4.16]{cockett:differential-bundles} $(F,\alpha)$ lifts to a functor $\DB(F)\colon\DB(\X,\TT)\to\DB(\X',\TT')$. However, display tangent morphisms are Cartesian; thus, every display tangent morphism $(F,\alpha)$ lifts to a functor $\DB(F,\alpha)$. It is not hard to see that the strong distributive law $\alpha$ lifts to a strong distributive law $\DB(\alpha)$ for the functor $\DB(F)$. Finally, $\DB$ sends tangent natural transformations to tangent natural transformations in an obvious way.
\end{proof}

\subsection{The universal property of differential bundles}
\label{subsection:universal-property-differential-bundles}
This section aims to characterize the correct universal property of the tangent category $\DB(\X,\TT)$ of differential bundles and linear morphisms of a tangent category $(\X,\TT)$. For starters, consider the functors
\begin{align*}
&\Tot\colon\DB(\X,\TT)\to(\X,\TT)
&\Base\colon\DB(\X,\TT)\to(\X,\TT)
\end{align*}
which send a differential bundle $\q\colon E\to M$ to its total object $E$ and its base object $M$, respectively.

The next step is to show that the functors $\Base$ and $\Tot$ come equipped with the structure of a display differential bundle in the Hom-tangent category $[\DB(\X,\TT)\|\X,\TT]$. Define:
\begin{align*}
&\Univ q_{\q\colon E\to M}\colon\Tot(\q)=E\xrightarrow{q}M=\Base(\q)
\end{align*}

\begin{lemma}
\label{lemma:display-universal-differential-bundle}
The tangent natural transformation $\Univ q\colon\Tot\Rightarrow\Base$ is a tangent display map.
\end{lemma}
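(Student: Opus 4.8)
The plan is to exploit the fact that both the tangent structure and the limits of the Hom-tangent category $[\DB(\X,\TT)\|\X,\TT]$ are computed pointwise, thereby reducing the statement to the observation that the projection of each display differential bundle is, by definition, a tangent display map of $(\X,\TT)$.

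First I would recall from Proposition~\ref{proposition:hom-tangent-categories} that the tangent bundle functor $\bar\T$ of $[\DB(\X,\TT)\|\X,\TT]$ is post-composition with $(\T,c)$, so that for any lax tangent morphism $(F,\alpha)$ and any differential bundle $\q\colon E\to M$ one has $(\bar\T^m F)_\q=\T^m(F\q)$, and the component of $\bar\T^m$ applied to a $2$-cell is $\T^m$ of the original component. In particular $(\bar\T^n\Univ q)_\q=\T^n q$ and $(\bar\T^n\Base)_\q=\T^n M$. Thus, to verify the defining condition of a tangent display map (Definition~\ref{definition:tangent-display-map}) for $\Univ q$, I must show that for every $n\geq 0$ and every tangent natural transformation $f\colon(G,\gamma)\Rightarrow\bar\T^n\Base$ the pullback of $\bar\T^n\Univ q$ along $f$ exists in $[\DB(\X,\TT)\|\X,\TT]$ and is preserved by every iterate $\bar\T^m$.

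Next I would pass to components. For each differential bundle $\q\colon E\to M$ the component $(\Univ q)_\q$ is precisely the projection $q\colon E\to M$, which is a tangent display map of $(\X,\TT)$ by Definition~\ref{definition:differential-bundles}. Hence the pullback $P_\q$ of $\T^n q$ along $f_\q\colon G\q\to\T^n M$ exists in $(\X,\TT)$ and is preserved by all iterates $\T^m$. Using the universal property of these pullbacks, the assignment $\q\mapsto P_\q$ extends to a functor, and the tangent-naturality of $f$ together with the distributive law $\gamma$ of $G$ and the fact that $\T$ preserves each display pullback supplies a distributive law making $\q\mapsto P_\q$ into a lax tangent morphism $P$ equipped with tangent natural projections to $(G,\gamma)$ and to $\bar\T^n\Tot$. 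Because limits in the target tangent category induce pointwise limits in the Hom-tangent category (as already used in Proposition~\ref{proposition:universality-differential-object}), $P$ with these projections is the pullback of $\bar\T^n\Univ q$ along $f$ in $[\DB(\X,\TT)\|\X,\TT]$. Finally, since $\bar\T^m$ acts as $\T^m$ on each component and each $P_\q$ is preserved by $\T^m$, the pullback $P$ is preserved by every $\bar\T^m$, completing the verification.

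The main obstacle is the coherence of the pointwise data, that is, checking that the pointwise display pullbacks $P_\q$ genuinely assemble into a lax tangent morphism whose structural projections are tangent natural. This amounts to producing the distributive law for $P$ and verifying its axioms, which hinges entirely on $\T$ preserving the display pullbacks of $q$ — exactly the preservation clause in the definition of a tangent display map — combined with the tangent-naturality of $f$. Once this coherence is established, everything else is the standard fact that limits, and their preservation by the pointwise tangent bundle functor, are detected componentwise.
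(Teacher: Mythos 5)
Your proposal is correct and follows essentially the same route as the paper's proof: both compute the pullback of $\bar\T^n\Univ q$ along a tangent natural transformation pointwise using that each component $q$ is a tangent display map, extend the assignment to a functor, obtain the distributive law from the universal property of the ($\T$-preserved) pullback together with the distributive law of the cone's domain, and conclude that preservation by $\bar\T^m$ is detected componentwise. The coherence issue you flag at the end is precisely the step the paper handles with its second diagram, so nothing is missing.
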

\begin{proof}
By using that for every $\q\colon E\to M$ in $\DB(\X,\TT)$, $\T^nq$ admits all tangent pullbacks, it is easy to compute the pointwise pullback of ${\bar\T}^n\Univ q$ along $\varphi\colon F\to{\bar\T}^n\Base$ as the unique functor $G\colon\DB(\X,\TT)\to\X$ such that, for every $\q\colon E\to M$, the following is a pullback diagram:
\begin{equation*}
\begin{tikzcd}
{G\q} & {{\bar\T}^n\Tot(\q)} \\
{F\q} & {{\bar\T}^n\Base(\q)}
\arrow["{\psi_\q}", from=1-1, to=1-2]
\arrow["{\xi_\q}"', from=1-1, to=2-1]
\arrow["\lrcorner"{anchor=center, pos=0.125}, draw=none, from=1-1, to=2-2]
\arrow["{{\bar\T}^n\Univ q_\q}", from=1-2, to=2-2]
\arrow["{\varphi_\q}"', from=2-1, to=2-2]
\end{tikzcd}
\end{equation*}
Furthermore, using that the distributive law $\alpha$ of $(F,\alpha)$ is a morphism in $(\X,\TT)$, we obtain a unique morphism $\beta\colon G\T^\DB\q\to\T G\q$ as follows:
\begin{equation*}
\begin{tikzcd}
{G\T^\DB\q} &&& {{\bar\T}^{n+1}\Tot(\q)} \\
& {\T G\q} & {{\bar\T}^{n+1}\Tot(\q)} \\
& {\T F\q} & {{\bar\T}^{n+1}\Base(\q)} \\
{F\T^\DB\q} &&& {{\bar\T}^{n+1}\Base(\q)}
\arrow["{\psi_{\T^\DB\q}}", from=1-1, to=1-4]
\arrow["{\beta_\q}", dashed, from=1-1, to=2-2]
\arrow["{\xi_{\T^\DB\q}}"', from=1-1, to=4-1]
\arrow[equals, from=1-4, to=2-3]
\arrow["{{\bar\T}^{n+1}\Univ q_\q}", from=1-4, to=4-4]
\arrow["{\T\psi_\q}", from=2-2, to=2-3]
\arrow["{\T\xi_\q}"', from=2-2, to=3-2]
\arrow["\lrcorner"{anchor=center, pos=0.125}, draw=none, from=2-2, to=3-3]
\arrow["{{\bar\T}^{n+1}\Univ q_\q}", from=2-3, to=3-3]
\arrow["{\T\varphi_\q}"', from=3-2, to=3-3]
\arrow["{\alpha_\q}", from=4-1, to=3-2]
\arrow["{\varphi_{\T^\DB\q}}"', from=4-1, to=4-4]
\arrow[equals, from=4-4, to=3-3]
\end{tikzcd}
\end{equation*}
It is not hard to prove that $(G,\beta)\colon\DB(\X,\TT)\to(\X,\TT)$ becomes a lax tangent morphism which is the pointwise pullback of ${\bar\T}^nq$ along $\varphi$ in the Hom-tangent category $[\DB(\X,\TT)\|\X,\TT]$.
\end{proof}

Notice that each pullback of $\Univ q$ along any tangent natural transformation $\varphi$ not only exists but is also pointwise, that is, it is preserved by all functors $\CC[F\|\X]$ for any $1$-morphism $F\colon\X'\to\DB(\X,\TT)$ of $\CC$. Since this plays an important role in our discussion, let us formally introduce this concept.

\begin{definition}
\label{definition:pointwise-tangent-display-map}
Given two tangentads $(\X,\TT)$ and $(\X',\TT')$ of a $2$-category $\CC$, a \textbf{pointwise tangent display map} in the Hom-tangent category $[\X,\TT\|\X',\TT']$ consists of a tangent display map $q$ in this tangent category for which each pullback of $q$ along any morphism of $[\X,\TT\|\X',\TT']$ is pointwise, that is, preserved by all functors of type $\CC[F\|\X']$ for each $F\colon\X''\to\X'$. Similarly, we call a \textbf{pointwise display differential bundle} a differential bundle in the Hom-tangent category $[\X,\TT\|\X',\TT']$ whose underlying projection is a pointwise tangent display map.
\end{definition}

We want to show that the tangent display map $\Univ q\colon\Tot\to\Base$ in the Hom-tangent category $[\DB(\X,\TT)\|\X,\TT]$ comes equipped with the structure of a display differential bundle. For starters, let us consider a display differential bundle $\q\colon E\to M$ of $(\X,\TT)$ and define the following morphisms:
\begin{align*}
&\Univ{z_q}_{\q}\colon\Base(\q)=M\xrightarrow{z_q}E=\Tot(\q)\\
&\Univ{s_q}_{\q}\colon\Tot_2(\q)=E_2\xrightarrow{s_q}E=\Tot(\q)\\
&\Univ{l_q}_{\q}\colon\Tot(\q)=E\xrightarrow{l_q}\T E=\bar\T\Tot(\q)
\end{align*}
In particular, $\Univ{z_q}$, $\Univ{s_q}$, and $\Univ{l_q}$ are the tangent natural transformations which pick out the zero morphism $z_q$, the sum morphism $s_q$, and the vertical lift $l_q$ of each of the display differential bundles $\q$ of $(\X,\TT)$, respectively.

\begin{proposition}
\label{proposition:universality-differential-bundle}
The strict tangent morphisms $\Base\colon\DB(\X,\TT)\to(\X,\TT)$ and $\Tot\colon\DB(\X,\TT)\to(\X,\TT)$ equipped with $\Univ q$, $\Univ{z_q}$, $\Univ{s_q}$, and $\Univ{l_q}$ form a pointwise display differential bundle in the Hom-tangent category $[\DB(\X,\TT)\|\X,\TT]$.
\end{proposition}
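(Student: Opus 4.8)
The plan is to verify directly that the data $(\Univ q,\Univ{z_q},\Univ{s_q},\Univ{l_q})$ satisfies each clause of Definition~\ref{definition:differential-bundles}, exploiting the fact that every component is, by construction, the corresponding structural morphism of a genuine display differential bundle $\q$ of $(\X,\TT)$. Since the tangent structure of the Hom-tangent category $[\DB(\X,\TT)\|\X,\TT]$ is computed pointwise (Proposition~\ref{proposition:hom-tangent-categories}) and, by Lemma~\ref{lemma:display-universal-differential-bundle}, the relevant pullbacks along $\Univ q$ exist and are pointwise, every equation and every pullback condition of the definition reduces to the corresponding condition at each $\q$, which holds by the differential-bundle axioms of $\q$.

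First I would check that $\Univ{z_q}$, $\Univ{s_q}$, and $\Univ{l_q}$ are tangent natural transformations. Naturality with respect to linear morphisms of differential bundles is immediate for $\Univ{l_q}$ and $\Univ q$, since such morphisms commute with the vertical lift and the projection by definition; for $\Univ{z_q}$ and $\Univ{s_q}$ one invokes that linear morphisms of differential bundles are additive, the analogue for bundles of the fact used in Lemma~\ref{lemma:universal-DO-cartesian-structure}, which follows from~\cite{cockett:differential-bundles}. Compatibility with the tangent structure is read off the definition of $\T^\DB$: evaluating at $\T^\DB\q$ gives $\Univ{z_q}_{\T^\DB\q}=\T z_q=\bar\T(\Univ{z_q}_\q)$ and $\Univ{s_q}_{\T^\DB\q}=\T s_q=\bar\T(\Univ{s_q}_\q)$, while for the vertical lift $\Univ{l_q}_{\T^\DB\q}=l_{\T q}=c\o\T l_q$, which is precisely what the $c$-twisted tangent bundle $\bar\T$ on the Hom-tangent category demands, exactly as in the differential-objects case.

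Next I would verify the four numbered axioms. The commutative-monoid-in-the-slice condition, Additivity~1, Additivity~2, and Linearity are all equational; each is an identity between tangent natural transformations, and testing it componentwise at $\q$ yields precisely the corresponding axiom of the differential bundle $\q$ in $(\X,\TT)$, which holds by hypothesis. For the fibred products $\Tot_2(\q)=E_2$ appearing in these diagrams I use Lemma~\ref{lemma:display-universal-differential-bundle}, which guarantees that $\Univ q$ is a pointwise tangent display map, so the needed pullbacks exist in the Hom-tangent category and are computed pointwise. For the Universality axiom I would observe that the diagram is a pointwise pullback: again by Lemma~\ref{lemma:display-universal-differential-bundle}, pullbacks along $\Univ q$ in the Hom-tangent category are computed and preserved pointwise, so the diagram is a pullback there if and only if each component is one, which is exactly the universality of the vertical lift of $\q$.

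The main obstacle I anticipate is bookkeeping rather than conceptual: one must keep careful track of the $c$-twist in the tangent bundle $\bar\T$ of the Hom-tangent category when checking that $\Univ{l_q}$ is a tangent natural transformation and when matching the vertical-lift axioms, since the definition of $\T^\DB$ already incorporates the flip via $l_{\T q}=c\o\T l_q$. Beyond that, the only genuinely nontrivial input is the pointwiseness of the limits involved, supplied by Lemma~\ref{lemma:display-universal-differential-bundle}, so that the Universality clause, the sole non-equational one, transfers correctly to the Hom-tangent category.
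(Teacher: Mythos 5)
Your proposal is correct and follows essentially the same route as the paper: cite Lemma~\ref{lemma:display-universal-differential-bundle} for the pointwise display-map property (which supplies the non-equational Universality axiom and the needed fibred products), and reduce every remaining equational axiom to the corresponding axiom of each differential bundle $\q$, checked componentwise. The paper compresses this into two sentences; your version merely spells out the naturality and tangent-compatibility checks (including the $c$-twist on $l_{\T q}$) that the paper leaves implicit.
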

\begin{proof}
In Lemma~\ref{lemma:display-universal-differential-bundle} we already proved that $\q\colon\Tot\to\Base$ is a tangent display map of the Hom-tangent category $[\DB(\X,\TT)\|\X,\TT]$. The axioms required for $\UnivQ\=(\Univ q,\Univ{z_q},\Univ{s_q},\Univ{l_q})$ to be a pointwise display differential bundle are a consequence of $\UnivQ_{\q}=(q,z_q,s_q,l_q)$ being a display differential bundle for each $\q\in\DB(\X,\TT)$.
\end{proof}

To characterize the correct universal property enjoyed by the differential bundle of Proposition~\ref{proposition:universality-differential-bundle}, let us first unwrap the definition of a pointwise display differential bundle in each Hom-tangent category $[\X',\TT'\|\X'',\TT'']$, where $(\X',\TT')$ and $(\X'',\TT'')$ are tangent categories:
\begin{description}
\item[Base object] The base object is a lax tangent morphism $\Base\colon(\X',\TT')\to(\X'',\TT'')$;

\item[Total object] The total object is a lax tangent morphism $\Tot\colon(\X',\TT')\to(\X'',\TT'')$;

\item[Projection] The projection consists of a pointwise tangent display map:
\begin{align*}
&q\colon\Tot\to\Base
\end{align*}

\item[Zero morphism] The zero morphism consists of a tangent natural transformation:
\begin{align*}
&z_q\colon\Base\to\Tot
\end{align*}

\item[Sum morphism] The sum morphism consists of a tangent natural transformation:
\begin{align*}
&s_q\colon\Tot_2\to\Tot
\end{align*}

\item[Vertical lift] The vertical lift consists of a tangent natural transformation:
\begin{align*}
&l_q\colon\Tot\to\bar\T\Tot
\end{align*}
\end{description}
satisfying the axioms of a pointwise display differential bundle. A morphism of pointwise tangent differential bundles in the Hom-tangent category $[\X,\TT\|\X',\TT']$ from a differential bundle $\q\colon(K,\theta)\to(G,\beta)$ to a differential bundle $\q'\colon(K',\theta')\to(G',\beta')$ consists of a pair $(\varphi,\psi)$ of tangent natural transformations
\begin{align*}
&\varphi\colon(G,\beta)\Rightarrow(G',\beta')\\
&\psi\colon(K,\theta)\Rightarrow(K',\theta')
\end{align*}
which commutes with the projections $q$ and $q'$ and such that $\psi$ commutes with the vertical lifts $l_q$ and $l_q'$.
\par For each tangent category $(\X',\TT')$, a pointwise display differential bundle $\q\colon(K,\theta)\to(G,\beta)$ in the Hom-tangent category $[\X'',\TT''\|\X''',\TT''']$ induces a functor
\begin{align*}
&\Gamma_{\q}\colon[\X',\TT'\|\X'',\TT'']\to\DB_\pt[\X',\TT'\|\X''',\TT''']
\end{align*}
which sends a lax tangent morphism $(H,\gamma)\colon(\X',\TT')\to(\X'',\TT'')$ to the pointwise display differential bundle
\begin{align*}
&\Gamma_{\q}(H,\gamma)\=((G,\beta)\o(H,\gamma),(K,\theta)\o(H,\gamma);q_{(H,\gamma)},{z_q}_{(H,\gamma)},{s_q}_{(H,\gamma)},{l_q}_{(H,\gamma)})
\end{align*}
in the Hom-tangent category $[\X',\TT'\|\X''',\TT''']$. Concretely, the base and the total objects of $\Gamma_{\q}(H,\gamma)$ are the lax tangent morphisms
\begin{align*}
&(\X',\TT')\xrightarrow{(H,\gamma)}(\X'',\TT'')\xrightarrow{(G,\beta)}(\X''',\TT''')\\
&(\X',\TT')\xrightarrow{(H,\gamma)}(\X'',\TT'')\xrightarrow{(K,\theta)}(\X''',\TT''')
\end{align*}
respectively, while the zero, the sum, and the vertical lift of $\Gamma_{\q}(H,\gamma)$ are the natural transformations:
\begin{equation*}
\begin{tikzcd}
{(\X',\TT')} & {(\X'',\TT'')} && {(\X''',\TT''')}
\arrow["{(H,\gamma)}", from=1-1, to=1-2]
\arrow[""{name=0, anchor=center, inner sep=0}, "{(K,\theta)}"', curve={height=18pt}, from=1-2, to=1-4]
\arrow[""{name=1, anchor=center, inner sep=0}, "{(G,\beta)}", curve={height=-18pt}, from=1-2, to=1-4]
\arrow["q", shorten <=5pt, shorten >=5pt, Rightarrow, from=1, to=0]
\end{tikzcd}
\end{equation*}
\begin{equation*}
\begin{tikzcd}
{(\X',\TT')} & {(\X'',\TT'')} && {(\X''',\TT''')}
\arrow["{(H,\gamma)}", from=1-1, to=1-2]
\arrow[""{name=0, anchor=center, inner sep=0}, "{(G,\beta)}"', curve={height=18pt}, from=1-2, to=1-4]
\arrow[""{name=1, anchor=center, inner sep=0}, "{(K,\theta)}", curve={height=-18pt}, from=1-2, to=1-4]
\arrow["{z_q}"', shorten <=5pt, shorten >=5pt, Rightarrow, from=0, to=1]
\end{tikzcd}
\end{equation*}
\begin{equation*}
\begin{tikzcd}
{(\X',\TT')} & {(\X'',\TT'')} && {(\X''',\TT''')}
\arrow["{(H,\gamma)}", from=1-1, to=1-2]
\arrow[""{name=0, anchor=center, inner sep=0}, "{(K,\theta)}"', curve={height=18pt}, from=1-2, to=1-4]
\arrow[""{name=1, anchor=center, inner sep=0}, "{(K,\theta)_2}", curve={height=-18pt}, from=1-2, to=1-4]
\arrow["{s_q}", shorten <=5pt, shorten >=5pt, Rightarrow, from=1, to=0]
\end{tikzcd}
\end{equation*}
\begin{equation*}
\begin{tikzcd}
{(\X',\TT')} & {(\X'',\TT'')} && {(\X''',\TT''')}
\arrow["{(H,\gamma)}", from=1-1, to=1-2]
\arrow[""{name=0, anchor=center, inner sep=0}, "{(K,\theta)}"', curve={height=18pt}, from=1-2, to=1-4]
\arrow[""{name=1, anchor=center, inner sep=0}, "{\bar\T(K,\theta)}", curve={height=-18pt}, from=1-2, to=1-4]
\arrow["{l_q}"', shorten <=5pt, shorten >=5pt, Rightarrow, from=0, to=1]
\end{tikzcd}
\end{equation*}
As observed for vector fields and differential bundles, this construction is functorial and extends to a tangent morphism.

\begin{lemma}
\label{lemma:induced-lax-tangent-morphism-from-differential-bundles}
A differential bundle $\q\colon(K,\theta)\to(G,\beta)$ in the Hom-tangent category $[\X'',\TT''\|\X''',\TT''']$ induces a lax tangent morphism
\begin{align*}
&\Gamma_{\q}\colon[\X',\TT'\|\X'',\TT'']\to\DB_\pt[\X',\TT'\|\X''',\TT''']
\end{align*}
for each tangentad $(\X',\TT')$. Furthermore, $\Gamma_\q$ is natural in $(\X',\TT')$ and strong (strict) when both $(G,\beta)$ and $(K,\theta)$ are strong (strict).
\end{lemma}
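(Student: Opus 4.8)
The plan is to transcribe the proof of Lemma~\ref{lemma:induced-lax-tangent-morphism-from-differential-objects} into the two-object setting of differential bundles, replacing natural transformations by $2$-morphisms of $\CC$. On objects, $\Gamma_\q$ sends a lax tangent morphism $(H,\gamma)\colon(\X',\TT')\to(\X'',\TT'')$ to the image of the differential bundle $\q$ under the strict restriction tangent morphism $[H,\gamma\|\X''',\TT''']$ of Proposition~\ref{proposition:hom-tangent-categories}, obtained concretely by whiskering the structural $2$-morphisms $q,z_q,s_q,l_q$ on the right with $(H,\gamma)$; this yields exactly the tuple $((G,\beta)\o(H,\gamma),(K,\theta)\o(H,\gamma);q_{(H,\gamma)},{z_q}_{(H,\gamma)},{s_q}_{(H,\gamma)},{l_q}_{(H,\gamma)})$ described above. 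On a $2$-morphism $\varphi\colon(H,\gamma)\Rightarrow(H',\gamma')$, I would let $\Gamma_\q(\varphi)$ be the pair $\bigl((G,\beta)\varphi,\,(K,\theta)\varphi\bigr)$ of left-whiskered $2$-morphisms; these commute with the whiskered projections and vertical lifts precisely because $\varphi$ is a genuine $2$-morphism, and functoriality in $\varphi$ is then immediate from the interchange law.

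The chief point to verify — and the main departure from the differential-object case — is that $\Gamma_\q(H,\gamma)$ is again a \emph{pointwise} display differential bundle, i.e. that $q_{(H,\gamma)}\colon(K,\theta)\o(H,\gamma)\Rightarrow(G,\beta)\o(H,\gamma)$ is a pointwise tangent display map in the sense of Definition~\ref{definition:pointwise-tangent-display-map}. The hard part is therefore entirely about pullbacks. Here I would observe that right-whiskering with $(H,\gamma)$ commutes with the restriction $2$-functors witnessing pointwiseness, so that every pullback of $\bar\T^nq$ guaranteed by $\q$ being a pointwise display map descends to a pullback of $\bar\T^nq_{(H,\gamma)}$ which is again preserved by all iterates $\bar\T^m$ and remains pointwise — this is exactly the pointwise pullback computation already carried out in Lemma~\ref{lemma:display-universal-differential-bundle}, now performed after precomposition. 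The equational differential-bundle axioms (Additivity~1, Additivity~2, Linearity) pass to $\Gamma_\q(H,\gamma)$ automatically, since whiskering is functorial on $2$-cells and strict tangent morphisms preserve them.

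For the lax tangent structure, the distributive laws $\beta$ and $\theta$ of $(G,\beta)$ and $(K,\theta)$ induce, via the formula of Proposition~\ref{proposition:hom-tangent-categories}, a distributive law on $\Gamma_\q$; the fact that $q,z_q,s_q,l_q$ are tangent $2$-morphisms (hence compatible with $\beta$ and $\theta$) guarantees this distributive law is a morphism of pointwise differential bundles, and makes $\Gamma_\q$ strong, respectively strict, exactly when both $(G,\beta)$ and $(K,\theta)$ are. Finally, naturality in $(\X',\TT')$ follows verbatim as in Lemma~\ref{lemma:induced-lax-tangent-morphism-from-differential-objects}: for lax tangent morphisms $(F,\alpha)\colon(\X,\TT)\to(\X',\TT')$ and $(H,\gamma)\colon(\X',\TT')\to(\X'',\TT'')$, associativity of whiskering gives
\begin{align*}
&\bigl(\Gamma_\q\o[F,\alpha\|\X'',\TT'']\bigr)(H,\gamma)=\Gamma_\q\bigl((H,\gamma)\o(F,\alpha)\bigr)\\
&\quad=\Gamma_\q(H,\gamma)\o(F,\alpha)=\DB_\pt[F,\alpha\|\X''',\TT''']\bigl(\Gamma_\q(H,\gamma)\bigr),
\end{align*}
and since each restriction morphism $[F,\alpha\|\X'',\TT'']$ is strict, $\Gamma_\q$ is a tangent natural transformation. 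The only genuinely non-formal step is the pullback bookkeeping of the second paragraph; everything else is a routine transcription of the differential-object argument.
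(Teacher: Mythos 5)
Your proposal is correct and follows the same route as the paper, whose own proof simply defers to the differential-objects case (Lemma~\ref{lemma:induced-lax-tangent-morphism-from-differential-objects}) and leaves the details to the reader. You in fact supply the one genuinely new detail the paper omits — that the whiskered projection $q_{(H,\gamma)}$ remains a pointwise tangent display map — and your reduction of this to the componentwise pullback construction of Lemma~\ref{lemma:display-universal-differential-bundle} (rather than mere descent of the pullbacks already present upstairs, which would only cover pullbacks along whiskered morphisms) is the right justification.
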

\begin{proof}
The proof of this lemma is fairly similar to that of Lemma~\ref{lemma:induced-lax-tangent-morphism-from-differential-objects}. We leave it to the reader to complete the details.
\end{proof}

In particular, by Lemma~\ref{lemma:induced-lax-tangent-morphism-from-differential-bundles}, the differential bundle $\UnivQ\colon\Tot\to\Base$ of Proposition~\ref{proposition:universality-differential-bundle} induces a strict tangent natural transformation
\begin{align*}
&\Gamma_{\UnivQ}\colon[\X',\TT'\|\DB(\X,\TT)]\to\DB_\pt[\X',\TT'\|\X,\TT]
\end{align*}
natural in $(\X',\TT')$.
\par We want to prove that $\Gamma_{\UnivQ}$ is invertible. Consider another tangent category $(\X',\TT')$ together with a pointwise display differential bundle $\q\colon(K,\theta)\to(G,\beta)$ in the Hom-tangent category $[\X',\TT'\|\X,\TT]$. For every $A\in\X'$, the tuple
\begin{align*}
&(q_A\colon KA\to GA,{z_q}_A\colon GA\to KA,{s_q}_A\colon K_2A\to KA,{l_q}_A\colon KA\to\T KA)
\end{align*}
is a display differential bundle in $(\X,\TT)$. Therefore, we can define a functor
\begin{align*}
\Lambda[G,K;q,z_q,s_q,l_q]\colon(\X',\TT')\to\DB(\X,\TT)
\end{align*}
which sends an object $A$ of $\X'$ to the differential bundle $\q_A\colon KA\to GA$, and a morphism $f\colon A\to B$ to $(Gf,Kf)\colon\q_A\to\q_B$. By the naturality of $q$, $z_q$, $s_q$, and $l_q$,
\begin{align*}
&(Gf,Kf)\colon(q_A,{z_q}_A,{s_q}_A,{l_q}_A)\to(q_B,{z_q}_B,{s_q}_B,{l_q}_B)
\end{align*}
becomes a linear morphism of differential bundles of $(\X,\TT)$. Furthermore, $\Lambda[\q]$ comes with a distributive law
\begin{align*}
&\Lambda[\beta,\theta]\colon\Lambda[G,K;q,z_q,s_q,l_q](\T'A)=(q_{\T'A},{z_q}_{\T'A},{s_q}_{\T'A},{l_q}_{\T'A})\xrightarrow{(\beta_A,\theta_A)}(\T q_A,\T{z_q}_A,\T{s_q}_A,\T{l_q}_A)=\\
&\qquad=\T^\DB(\Lambda[G,K;q,z_q,s_q,l_q](A))
\end{align*}
which is a morphism of differential bundles since each of the structural morphisms $q$, $z_q$, $s_q$, and $l_q$ are tangent natural transformations and thus compatible with the distributive laws $\beta$ and $\theta$.

\begin{lemma}
\label{lemma:universality-differential-bundles}
Given two lax tangent morphisms $(G,\beta),(K,\theta)\colon(\X',\TT')\to(\X,\TT)$ and a pointwise display differential bundle structure $\q\colon(K,\theta)\to(G,\beta)$ in the Hom-tangent category $[\X',\TT'\|\X,\TT]$, the functor $\Lambda[G,K;q,z_q,s_q,l_q]$ together with the distributive law $\Lambda[\beta,\kappa]$ defines a lax tangent morphism:
\begin{align*}
&\Lambda[\q]\colon(\Lambda[G,K;q,z_q,s_q,l_q],\Lambda[\beta,\theta])\colon(\X',\TT')\to\DB(\X,\TT)
\end{align*}
\end{lemma}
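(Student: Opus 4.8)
The plan is to mirror the proof of Lemma~\ref{lemma:universality-differential-objects} and to decompose the claim into three verifications: that $\Lambda[G,K;q,z_q,s_q,l_q]$ is a well-defined functor landing in $\DB(\X,\TT)$; that $\Lambda[\beta,\theta]$ is a well-defined linear morphism of differential bundles; and that the resulting data satisfy the coherence axioms of a lax tangent morphism.

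First I would check well-definedness of the underlying functor. By hypothesis, for every object $A$ of $\X'$ the tuple $(q_A,{z_q}_A,{s_q}_A,{l_q}_A)$ is a display differential bundle of $(\X,\TT)$, this being exactly the pointwise content of $\q$ being a display differential bundle in the Hom-tangent category $[\X',\TT'\|\X,\TT]$. For a morphism $f\colon A\to B$, the naturality squares of $q$, $z_q$, $s_q$, and $l_q$ say precisely that the pair $(Gf,Kf)$ commutes with the projections and the vertical lifts, so $(Gf,Kf)\colon\q_A\to\q_B$ is a linear morphism of differential bundles; functoriality is then inherited from that of $G$ and $K$.

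Next I would verify that $\Lambda[\beta,\theta]$ is a legitimate distributive law. Its component at $A$ is the pair $(\beta_A,\theta_A)$, regarded as a candidate morphism $\q_{\T'A}\to\T^\DB\q_A$ in $\DB(\X,\TT)$. That this pair is a linear morphism of differential bundles amounts to two equations: compatibility with the projection, namely $\beta_A\circ q_{\T'A}=\T q_A\circ\theta_A$, and compatibility with the vertical lift. The first is exactly the assertion that $q$ is a tangent natural transformation. For the second one must unfold the definition of the tangent bundle functor on $\DB(\X,\TT)$, whose vertical lift carries the canonical-flip twist $l_{\T q}=c\circ\T l_q$; the required identity then reduces to the tangent-naturality of $l_q$ together with the description of $\bar\T$ on the Hom-tangent category from Proposition~\ref{proposition:hom-tangent-categories}. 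This vertical-lift compatibility, with its $c$-twist, is the one place where genuine care is needed and is where I expect the main obstacle to lie; the other structural morphisms present no such subtlety.

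Finally, the lax-tangent-morphism axioms for $\Lambda[\beta,\theta]$, that is, compatibility with $p$, $z$, $s$, $l$, and $c$, follow componentwise from the fact that $\beta$ and $\theta$ already make $(G,\beta)$ and $(K,\theta)$ into lax tangent morphisms, since both the structural $2$-morphisms and the tangent structure of $\DB(\X,\TT)$ are computed separately on base and total objects. I would close by remarking that the same componentwise reasoning would also yield the strong (respectively strict) refinement when $(G,\beta)$ and $(K,\theta)$ are strong (respectively strict), as recorded in Lemma~\ref{lemma:induced-lax-tangent-morphism-from-differential-bundles}.
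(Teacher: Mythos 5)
Your proposal is correct and follows essentially the same route as the paper's proof: naturality of the structure $2$-morphisms gives well-definedness of the functor, their compatibility with $\beta$ and $\theta$ makes $\Lambda[\beta,\theta]$ a linear morphism of differential bundles, and the compatibility of $\beta$ and $\theta$ with the tangent structures yields the lax tangent morphism axioms. Your extra attention to the $c$-twist in $l_{\T q}$, resolved via the tangent-naturality of $l_q$ with respect to $\bar\T$, is a correct elaboration of a point the paper leaves implicit.
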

\begin{proof}
In the previous discussion, we defined $\Lambda[\q]$ as the lax tangent morphism which picks out the differential bundle $(q_A\colon KA\to GA,{z_q}_A,{s_q}_A,{l_q}_A)$ for each $A\in\X'$ and whose distributive law is induced by $\beta$ and $\theta$. In particular, the naturality of the structure morphisms $q$, $z_q$ $s_q$, and $l_q$ makes the functor $\Lambda[G,K;q,z_q,s_q,l_q]$ well-defined, the compatibility of the structure morphisms, $\beta$ and $\theta$ makes $\Lambda[\beta,\theta]$ into a morphism of differential bundles, and the compatibility of $\beta$ and $\theta$ with the tangent structures makes $\Lambda[\q]$ into a lax tangent morphism.
\end{proof}

We can now prove the universal property of differential bundles, which is the main result of this section.

\begin{theorem}
\label{theorem:universality-differential-bundles}
The pointwise display differential bundle $\UnivQ\colon\Tot\to\Base$ of Proposition~\ref{proposition:universality-differential-bundle} is universal. Concretely, the induced strict tangent natural transformation
\begin{align*}
&\Gamma_{\UnivQ}\colon[\X',\TT'\|\DB(\X,\TT)]\to\DB_\pt[\X',\TT'\|\X,\TT]
\end{align*}
makes the functor
\begin{align*}
&\TngCat^\op\xrightarrow{[-\|\X,\TT]}\TngCat^\op\xrightarrow{\DB_\pt^\op}\TngCat^\op
\end{align*}
which sends a tangent category $(\X',\TT')$ to the tangent category $\DB_\pt[\X',\TT'\|\X,\TT]$, into a corepresentable functor. In particular, $(\Gamma_{\UnivQ}$ is invertible.
\end{theorem}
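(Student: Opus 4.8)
The plan is to follow the blueprint of Theorem~\ref{theorem:universality-differential-objects}, constructing an explicit two-sided inverse to $\Gamma_{\UnivQ}$ out of the functor $\Lambda$ furnished by Lemma~\ref{lemma:universality-differential-bundles}. Concretely, I would promote the assignment $\q\mapsto\Lambda[\q]$ to a functor
\begin{align*}
&\Lambda\colon\DB_\pt[\X',\TT'\|\X,\TT]\to[\X',\TT'\|\DB(\X,\TT)]
\end{align*}
and then show that $\Lambda$ and $\Gamma_{\UnivQ}$ invert one another, first as functors and then as tangent morphisms.

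First I would check $\Lambda\o\Gamma_{\UnivQ}=\id$. Starting from a lax tangent morphism $(F,\alpha)\colon(\X',\TT')\to\DB(\X,\TT)$ which picks out a display differential bundle $\q_A\colon KA\to GA$ at each $A\in\X'$, whiskering with the universal bundle $\UnivQ=(\Univ q,\Univ{z_q},\Univ{s_q},\Univ{l_q})$ yields the pointwise display differential bundle $\Gamma_{\UnivQ}(F,\alpha)$ whose component at $A$ is precisely $(q_A,{z_q}_A,{s_q}_A,{l_q}_A)$; re-bundling via $\Lambda$ then returns $(F,\alpha)$. On a morphism $\varphi\colon(F,\alpha)\Rightarrow(F',\alpha')$ of $[\X',\TT'\|\DB(\X,\TT)]$, each component $\varphi_A$ is a linear morphism of differential bundles, and $\Lambda[\Gamma_{\UnivQ}(\varphi)]$ agrees with $\varphi$ componentwise.

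Next I would check $\Gamma_{\UnivQ}\o\Lambda=\id$. For a pointwise display differential bundle $\q\colon(K,\theta)\to(G,\beta)$ of $[\X',\TT'\|\X,\TT]$, the lax tangent morphism $\Lambda[\q]$ sends $A$ to $\q_A\colon KA\to GA$, so whiskering the universal bundle along $\Lambda[\q]$ reconstructs $\q$ together with its structure maps. The pointwise hypothesis on $\Univ q$ established in Lemma~\ref{lemma:display-universal-differential-bundle} is exactly what guarantees that this whiskering preserves the universality pullback of Definition~\ref{definition:differential-bundles}, so that $\Gamma_{\UnivQ}(\Lambda[\q])$ is again a genuine pointwise display differential bundle, equal to $\q$; the corresponding statement on morphisms is immediate.

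Finally I would upgrade $\Lambda$ to a strict tangent morphism, which then forces $\Gamma_{\UnivQ}$ to be invertible in $\TngCat$. As in the differential-object case, this reduces to an identity between tangent bundle functors: both $\Lambda[\T^\DB\q]$ and $\bar\T(\Lambda[\q])$ send each $A\in\X'$ to the display differential bundle $(\T q_A,\T{z_q}_A,\T{s_q}_A,c_A\o\T{l_q}_A)$, using the definition $l_{\T q}=c\o\T l_q$ of the tangent bundle of a differential bundle. I expect the only genuine obstacle to be bookkeeping the pointwise pullbacks rather than any new algebra: one must confirm that $\Lambda[\q]$ really lands in $\DB(\X,\TT)$, that is, that each $\q_A$ is display and the distributive law $\Lambda[\beta,\theta]$ respects the universality square, and that these pullbacks, being pointwise by hypothesis, are stable under the whiskering that defines $\Gamma_{\UnivQ}$. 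With this in hand, naturality in $(\X',\TT')$ follows exactly as in Lemma~\ref{lemma:induced-lax-tangent-morphism-from-differential-bundles}, and corepresentability of the composite $\DB_\pt^\op\o[-\|\X,\TT]$ is established.
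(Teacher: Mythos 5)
Your proposal is correct and follows exactly the route the paper intends: the paper's own proof of this theorem simply defers to the argument of Theorem~\ref{theorem:universality-differential-objects}, and your write-up is that argument transported to differential bundles, with the one genuinely new point (that pointwise-ness of the universal pullbacks is what makes the whiskering land back in $\DB_\pt$) correctly identified and handled.
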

\begin{proof}
The proof of this theorem is fairly similar to that of Theorem~\ref{theorem:universality-differential-objects}.
\end{proof}

Theorem~\ref{theorem:universality-differential-bundles} establishes the correct universal property of the differential bundles construction. Thanks to this result, we can finally introduce the notion of differential bundles in the formal context of tangentads.

\begin{definition}
\label{definition:construction-differential-bundles}
A tangentad $(\X,\TT)$ in a $2$-category $\CC$ \textbf{admits the construction of display differential bundles} if there exists a tangentad $\DB(\X,\TT)$ of $\CC$ together with two strict tangent morphisms $\Base,\Tot\colon\DB(\X,\TT)\to(\X,\TT)$ and a pointwise display differential bundle $\UnivQ\colon\Tot\to\Base$ in the Hom-tangent category $[\DB(\X,\TT)\|\X,\TT]$ such that the induced tangent natural transformation $\Gamma_{\UnivQ}$ of Lemma~\ref{lemma:induced-lax-tangent-morphism-from-differential-bundles} is invertible. The pointwise display differential bundle $\UnivQ\colon\Tot\to\Base$ is called the \textbf{universal } (\textbf{pointwise}) \textbf{display differential bundle} of $(\X,\TT)$ and $\DB(\X,\TT)$ is called the \textbf{tangentad of display differential bundles} of $(\X,\TT)$.
\end{definition}

\begin{definition}
\label{definition:construction-differential-bundles-2-category}
A $2$-category $\CC$ \textbf{admits the construction of display differential bundles} provided that every tangentad of $\CC$ admits the construction of display differential bundles.
\end{definition}

We can now rephrase Theorem~\ref{theorem:universality-differential-bundles} as follows.

\begin{corollary}
\label{corollary:universality-differential-bundles}
The $2$-category $\Cat$ of categories admits the construction of display differential bundles and the tangentad of display differential bundles of a tangentad $(\X,\TT)$ of $\Cat$ is the tangent category $\DB(\X,\TT)$ of display differential bundles of $(\X,\TT)$.
\end{corollary}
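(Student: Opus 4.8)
The plan is to observe that this corollary is a direct consequence of Theorem~\ref{theorem:universality-differential-bundles}, obtained by matching its statement against the definitional criteria of Definitions~\ref{definition:construction-differential-bundles} and~\ref{definition:construction-differential-bundles-2-category}. First I would recall that, by Example~\ref{example:tangent-categories}, a tangentad in $\Cat$ is precisely a tangent category $(\X,\TT)$. For such a tangent category, Section~\ref{subsection:tangent-category-differential-bundles} produces the tangent category $\DB(\X,\TT)$ of display differential bundles, and the opening of Section~\ref{subsection:universal-property-differential-bundles} equips it with the two strict tangent morphisms $\Base,\Tot\colon\DB(\X,\TT)\to(\X,\TT)$. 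Proposition~\ref{proposition:universality-differential-bundle} then exhibits the pointwise display differential bundle $\UnivQ\colon\Tot\to\Base$ in the Hom-tangent category $[\DB(\X,\TT)\|\X,\TT]$, so that all the data demanded by Definition~\ref{definition:construction-differential-bundles} are already in place.

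The only remaining hypothesis of Definition~\ref{definition:construction-differential-bundles} is that the induced tangent natural transformation $\Gamma_{\UnivQ}$ be invertible, and this is exactly the content of Theorem~\ref{theorem:universality-differential-bundles}. Hence each tangent category $(\X,\TT)$ admits the construction of display differential bundles, with $\DB(\X,\TT)$ serving as its tangentad of display differential bundles. Since every tangentad of $\Cat$ is such a tangent category, Definition~\ref{definition:construction-differential-bundles-2-category} yields that $\Cat$ itself admits the construction of display differential bundles, which is the assertion to be proved.

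I expect no genuine obstacle here: all the substantive work has been carried out in Theorem~\ref{theorem:universality-differential-bundles}, and this corollary merely repackages that universal property in the terminology of the two definitions. The only point that warrants a sentence of care is the identification of tangentads of $\Cat$ with tangent categories (Example~\ref{example:tangent-categories}), which guarantees that the universal property verified for an arbitrary tangent category covers \emph{every} tangentad of $\Cat$, as required by Definition~\ref{definition:construction-differential-bundles-2-category}.
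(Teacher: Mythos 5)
Your proposal is correct and follows exactly the paper's route: the corollary is stated there as an immediate rephrasing of Theorem~\ref{theorem:universality-differential-bundles} in the language of Definitions~\ref{definition:construction-differential-bundles} and~\ref{definition:construction-differential-bundles-2-category}, with the identification of tangentads in $\Cat$ with tangent categories (Example~\ref{example:tangent-categories}) supplying the quantification over all tangentads. Nothing is missing.
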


To ensure that Definitions~\ref{definition:construction-differential-bundles} and~\ref{definition:construction-differential-bundles-2-category} are well-posed, one requires the tangentad of display differential bundles of a given tangentad to be unique. The next proposition establishes that such a construction is defined uniquely up to a unique isomorphism.

\begin{proposition}
\label{proposition:uniqueness-differential-bundles}
If a tangentad $(\X,\TT)$ admits the construction of display differential bundles, the tangentad of display differential bundles $\DB(\X,\TT)$ of $(\X,\TT)$ is unique up to a unique isomorphism which extends to an isomorphism of the corresponding universal display differential bundles of $(\X,\TT)$.
\end{proposition}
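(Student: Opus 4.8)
The plan is to mimic the argument used for differential objects in Proposition~\ref{proposition:uniqueness-differential-objects}, since the uniqueness is a formal consequence of the corepresentability established in Theorem~\ref{theorem:universality-differential-bundles}. Suppose $\DB'(\X,\TT)$ is a second tangentad of display differential bundles, equipped with strict tangent morphisms $\Base',\Tot'\colon\DB'(\X,\TT)\to(\X,\TT)$ and a universal pointwise display differential bundle $\UnivQ'\colon\Tot'\to\Base'$ in $[\DB'(\X,\TT)\|\X,\TT]$, such that the induced comparison $\Gamma_{\UnivQ'}$ is invertible.

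First I would apply the universal property of $\DB'(\X,\TT)$ to the identity. Since $\Gamma_{\UnivQ'}$ is an isomorphism, the identity tangent morphism on $\DB'(\X,\TT)$ corresponds, under $\Gamma_{\UnivQ'}^{-1}$, to a pointwise display differential bundle in the Hom-tangent category $[\DB'(\X,\TT)\|\X,\TT]$. By the universal property of $\DB(\X,\TT)$ --- that is, by applying $\Gamma_{\UnivQ}^{-1}$ with $(\X',\TT')=\DB'(\X,\TT)$ --- this pointwise display differential bundle corresponds to a unique tangent morphism $\Phi\colon\DB'(\X,\TT)\to\DB(\X,\TT)$. Exchanging the roles of the two tangentads produces, symmetrically, a unique tangent morphism $\Psi\colon\DB(\X,\TT)\to\DB'(\X,\TT)$.

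Next I would show that $\Phi$ and $\Psi$ are mutually inverse. The composites $\Psi\circ\Phi$ and $\Phi\circ\Psi$ both correspond, through $\Gamma_{\UnivQ'}$ and $\Gamma_{\UnivQ}$ respectively, to the universal display differential bundles $\UnivQ'$ and $\UnivQ$; but so do the respective identities, by the very construction of $\Phi$ and $\Psi$. Since $\Gamma_{\UnivQ}$ and $\Gamma_{\UnivQ'}$ are invertible, uniqueness forces $\Psi\circ\Phi=\id$ and $\Phi\circ\Psi=\id$. The same correspondence shows that $\Phi$ transports $\UnivQ'$ to $\UnivQ$, so the isomorphism extends to an isomorphism of the corresponding universal display differential bundles, as claimed.

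I do not expect any substantive obstacle: the argument is purely formal, being the standard Yoneda-type uniqueness of corepresenting objects, and the presence of two underlying morphisms $\Base$ and $\Tot$ (rather than a single forgetful functor $\U$ as for differential objects) is handled transparently, because the universal property is phrased entirely through the induced comparison $\Gamma_{\UnivQ}$. The only point requiring minor care is the compatibility of the isomorphism with the full differential-bundle structure, including the pointwise pullback witnessing universality of $\UnivQ$; but this is automatic, since that structure is precisely what $\Gamma_{\UnivQ}$ records.
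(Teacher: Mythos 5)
Your proposal is correct and follows essentially the same route as the paper, which simply transports the Yoneda-type uniqueness argument from Proposition~\ref{proposition:uniqueness-differential-objects}: use the two universal properties to produce comparison tangent morphisms in both directions and deduce from uniqueness that they invert each other. (Only a trivial slip: the identity on $\DB'(\X,\TT)$ corresponds to a pointwise display differential bundle under $\Gamma_{\UnivQ'}$ itself, not its inverse; the inverse is what you then apply on the $\DB(\X,\TT)$ side.)
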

\begin{proof}
The proof of this proposition is fairly similar to that of Proposition~\ref{proposition:uniqueness-differential-objects}. We leave it to the reader to complete the details.
\end{proof}

\subsection{The formal structures of differential bundles}
\label{subsection:structures-differential-bundles}
As proved in Lemma~\ref{lemma:DB-functoriality}, there is a $2$-functor
\begin{align*}
&\DB\colon\TngCat_\Dsply\to\TngCat_\cong
\end{align*}
which sends a tangent category $(\X,\TT)$ to the tangent category $\DB(\x,\TT)$ of display differential bundles of $(\X,\TT)$.
\par In this section, we show that the construction of display differential bundles is $2$-functorial in the $2$-category of tangentads of $\CC$.

\subsubsection*{The functoriality of the construction of differential bundles}
\label{subsubsection:functoriality-differential-bundles}
Consider a strong tangent morphism $(F,\alpha)\colon(\X,\TT)\to(\X',\TT')$ between two tangentads of $\CC$. It is not hard to see that the induced functor
\begin{align*}
&[\DB(\X,\TT)\|\X,\TT]\xrightarrow{[\DB(\X,\TT)\|F,\alpha]}[\DB(\X,\TT)\|\X',\TT']
\end{align*}
is a strong tangent morphism. In order to apply the functor $\DB_\pt$ to $[\DB(\X,\TT)\|F,\alpha]$, $[\DB(\X,\TT)\|F,\alpha]$ must preserve pointwise tangent display maps. Let us introduce the classes of tangent morphisms between tangentads which satisfy this condition.

\begin{definition}
\label{definition:display-tangent-morphisms-formal}
A \textbf{display tangent morphism} from a tangentad $(\X,\TT)$ to another tangentad $(\X',\TT')$ consists of a strong tangent morphism $(F,\alpha)\colon(\X,\TT)\to(\X',\TT')$ such that, for every tangentad $(\X'',\TT'')$, the induced strong tangent morphism
\begin{align*}
&[\X'',\TT''\|\X,\TT]\xrightarrow{[\X'',\TT''\|F,\alpha]}[\X'',\TT''\|\X',\TT']
\end{align*}
is a display tangent morphism.
\end{definition}

Tangentads, display tangent morphisms, and tangent $2$-morphisms of a $2$-category $\CC$ form a $2$-category denoted by $\Tng_\Dsply(\CC)$.

\begin{proposition}
\label{proposition:DB-functoriality}
There is a $2$-functor
\begin{align*}
&\DB\colon\Tng_\Dsply(\CC)\to\Tng_\cong(\CC)
\end{align*}
which sends a tangentad $(\X,\TT)$ of $\CC$ to the tangentad of display differential bundles $\DB(\X,\TT)$ of $(\X,\TT)$. In particular, for each display tangent morphism $(F,\alpha)\colon(\X,\TT)\to(\X',\TT')$, the tangent morphism $\DB(F,\alpha)\colon\DB(\X,\TT)\to\DB(\X',\TT')$ is strong.
\end{proposition}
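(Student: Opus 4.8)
The plan is to follow the proof of Proposition~\ref{proposition:DO-functoriality} almost verbatim, substituting the construction of display differential bundles for that of differential objects and invoking the universal property of Theorem~\ref{theorem:universality-differential-bundles} together with the tangent-categorical functoriality of $\DB$ recorded in Lemma~\ref{lemma:DB-functoriality}. On objects, $\DB$ is defined by hypothesis, since $\CC$ admits the construction of display differential bundles; the content is therefore to define $\DB$ on $1$- and $2$-morphisms and to check functoriality and strongness.

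I would first define $\DB$ on a display tangent morphism $(F,\alpha)\colon(\X,\TT)\to(\X',\TT')$. By Definition~\ref{definition:display-tangent-morphisms-formal}, the induced strong tangent morphism $[\DB(\X,\TT)\|F,\alpha]\colon[\DB(\X,\TT)\|\X,\TT]\to[\DB(\X,\TT)\|\X',\TT']$ is a display tangent morphism of Hom-tangent categories in the sense of Definition~\ref{definition:display-preserving}. Hence Lemma~\ref{lemma:DB-functoriality} applies and, as discussed below, restricts to pointwise display differential bundles, yielding a strong tangent morphism
\begin{equation*}
\DB_\pt[\DB(\X,\TT)\|\X,\TT]\xrightarrow{\DB_\pt[\DB(\X,\TT)\|F,\alpha]}\DB_\pt[\DB(\X,\TT)\|\X',\TT']\cong[\DB(\X,\TT)\|\DB(\X',\TT')],
\end{equation*}
where the last identification is the universal property $\Gamma_{\UnivQ}$ of Theorem~\ref{theorem:universality-differential-bundles}. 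I then define $\DB(F,\alpha)\colon\DB(\X,\TT)\to\DB(\X',\TT')$ to be the image of the universal pointwise display differential bundle $\UnivQ\colon\Tot\to\Base$ of Proposition~\ref{proposition:universality-differential-bundle} under this composite. Unwinding the definitions, the base and total objects of $\DB_\pt[\DB(\X,\TT)\|F,\alpha](\UnivQ)$ are $(F,\alpha)\o\Base$ and $(F,\alpha)\o\Tot$, so $\DB(F,\alpha)$ recovers pointwise the formula of Lemma~\ref{lemma:DB-functoriality}.

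For a tangent $2$-morphism $\varphi\colon(F,\alpha)\Rightarrow(G,\beta)$, I would form $[\DB(\X,\TT)\|\varphi]$, apply $\DB_\pt$ to obtain a tangent natural transformation $\DB_\pt[\DB(\X,\TT)\|\varphi]\colon\DB_\pt[\DB(\X,\TT)\|F,\alpha]\Rightarrow\DB_\pt[\DB(\X,\TT)\|G,\beta]$, and evaluate it at $\UnivQ$ to produce $\DB(\varphi)\colon\DB(F,\alpha)\Rightarrow\DB(G,\beta)$. Preservation of identities and of horizontal and vertical composition then follows formally, since every ingredient---the $2$-functor $[\DB(\X,\TT)\|-]$, the $2$-functor $\DB_\pt$ of Lemma~\ref{lemma:DB-functoriality}, evaluation at $\UnivQ$, and the naturality of $\Gamma_{\UnivQ}$ in its tangentad argument from Lemma~\ref{lemma:induced-lax-tangent-morphism-from-differential-bundles}---is itself $2$-functorial or natural. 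Strongness of $\DB(F,\alpha)$ is immediate: the base and total objects $(F,\alpha)\o\Base$ and $(F,\alpha)\o\Tot$ of the corresponding pointwise display differential bundle are strong, being composites of the strong $(F,\alpha)$ (display morphisms are strong) with the strict $\Base,\Tot$; since under $\Lambda$ the distributive law of $\DB(F,\alpha)$ is assembled from those of its base and total objects (Lemma~\ref{lemma:universality-differential-bundles}), it is invertible.

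The main obstacle is justifying that $\DB_\pt$---and not merely the full $\DB$---can be applied, i.e.\ that the display tangent morphism $[\DB(\X,\TT)\|F,\alpha]$ sends pointwise tangent display maps to pointwise tangent display maps and preserves their pullbacks pointwise. This is precisely what Definition~\ref{definition:display-tangent-morphisms-formal} was engineered to supply, as it requires $[\X'',\TT''\|F,\alpha]$ to be display for every $(\X'',\TT'')$; the preservation of pointwiseness then reduces to the commutation of whiskering by $\CC[H\|\X']$ with the defining pullbacks of the universal bundle, exactly as in the construction of Lemma~\ref{lemma:display-universal-differential-bundle}. Once this compatibility is established, the remaining bookkeeping is routine and parallels Proposition~\ref{proposition:DO-functoriality}.
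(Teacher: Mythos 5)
Your proposal is correct and follows exactly the route the paper intends: the paper's own proof simply defers to the argument of Proposition~\ref{proposition:DO-functoriality}, and you have spelled out that transposition faithfully, including the one genuinely new point (that Definition~\ref{definition:display-tangent-morphisms-formal} is what licenses applying $\DB_\pt$ to $[\DB(\X,\TT)\|F,\alpha]$) and a correct strongness argument via Lemma~\ref{lemma:universality-differential-bundles}. No gaps.
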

\begin{proof}
The proof is fairly similar to the proofs of Proposition~\ref{proposition:DO-functoriality}. Thus, we leave it to the reader to spell out the details.
\end{proof}

\subsection{Differential objects vs differential bundles}
\label{subsection:differential-bundles-vs-bundles}
\cite[Proposition~3.4]{cockett:differential-bundles} establishes an equivalence between differential objects in a Cartesian tangent category and differential bundles over the terminal object. In this section, we extend this result to the formal theory of tangentads.
\par For starters, consider a Cartesian tangentad $(\X,\TT)$ of a $2$-category $\CC$ with finite $2$-products and suppose $(\X,\TT)$ admits the construction of display differential bundles. Let $\Univ q\colon\Tot\to\Base$ denote the universal pointwise display differential bundle of $(\X,\TT)$ and $\DB(\X,\TT)$ the tangentad of display differential bundles of $(\X,\TT)$. To \textit{restrict} $\DB(\X,\TT)$ to those differential bundles whose base object is the terminal object, we need to consider the $2$-pullback of $\Base\colon\DB(\X,\TT)\to(\X,\TT)$ along $\*\colon\1\to(\X,\TT)$ in $\CC$:
\begin{equation}
\label{equation:restriction-differential-bundles-to-terminal}
\begin{tikzcd}
{\DB|_\1(\X,\TT)} & {\DB(\X,\TT)} \\
\1 & {(\X,\TT)}
\arrow["\Inc", from=1-1, to=1-2]
\arrow[from=1-1, to=2-1]
\arrow["\lrcorner"{anchor=center, pos=0.125}, draw=none, from=1-1, to=2-2]
\arrow["\Base", from=1-2, to=2-2]
\arrow["{\*}"', from=2-1, to=2-2]
\end{tikzcd}
\end{equation}
Notice that, by~\cite[Proposition~4.7]{lanfranchi:tangentads-II}, the diagram of Equation~\eqref{equation:restriction-differential-bundles-to-terminal}, becomes a $2$-pullback in $\Tng(\CC)$.
Let $\Inc\colon\DB|_\1(\X,\TT)\to\DB(\X,\TT)$ denote the inclusion of the $2$-pullback $\DB|_\1(\X,\TT)$ of $\Base$ along $\*$ to $\DB(\X,\TT)$. We want to prove that $\DB|_\1(\X,\TT)$ carries the structure of the universal differential object of $(\X,\TT)$. Firstly, consider the tangent morphism
\begin{align*}
&[\Inc\|\X,\TT]\colon[\DB(\X,\TT)\|\X,\TT]\to[\DB|_\1(\X,\TT)\|\X,\TT]
\end{align*}
which precomposes a lax tangent morphism from $\DB(\X,\TT)$ to $(\X,\TT)$ with $\Inc$. By Proposition~\ref{proposition:hom-tangent-categories}, this is a strict tangent morphism thus, by Lemma~\ref{lemma:DB-functoriality}, we can apply the $2$-functor $\DB_\pt$ to it:
\begin{align*}
&\DB_\pt[\Inc\|\X,\TT]\colon\DB_\pt[\DB(\X,\TT)\|\X,\TT]\to\DB_\pt[\DB|_\1(\X,\TT)\|\X,\TT]
\end{align*}
In particular, this functor sends the universal pointwise differential bundle $\Univ q$ to a pointwise display differential bundle $\Univ q_\Inc$. However, the base object of $\Univ q_\Inc$ corresponds to the tangent morphism $\Base\o\Inc=\*\:\o\:!=\top$, which is the terminal object of $[\DB|_\1(\X,\TT)\|\X,\TT]$. Thus, $\Univ q_\Inc$ is a differential bundle on the terminal object. By \cite[Proposition~3.4]{cockett:differential-bundles}, $\Univ q_\Inc$ is then a differential object in the Hom-tangent category $[\DB|_\1(\X,\TT)\|\X,\TT]$. The goal is to prove that $\Univ q_\Inc$ is the universal differential object of $[\DB|_\1(\X,\TT)\|\X,\TT]$.
\par Recall that every differential object $(G,\beta;\zeta,\sigma,\hat p)$ of $[\DB|_\1(\X,\TT)\|\X,\TT]$ induces a functor
\begin{align*}
&\Gamma_{(G,\beta;\zeta,\sigma,\hat p)}\colon[\X',\TT'\|\DB|_\1(\X,\TT)]\to\DO[\X',\TT'\|\X,\TT]
\end{align*}
which simply postcomposes by $(G,\beta;\zeta,\sigma,\hat p)$. So, in particular, the differential object $\Univ q_\Inc$ of $[\DB|_\1(\X,\TT)\|\X,\TT]$ induces a functor
\begin{align*}
&\Gamma_{\Univ q_\Inc}\colon[\X',\TT'\|\DB|_\1(\X,\TT)]\to\DO[\X',\TT'\|\X,\TT]\cong\DB_\pt|_\1[\X',\TT'\|\X,\TT]
\end{align*}
where $\DB|_\1[\X',\TT'\|\X,\TT]$ denotes the tangent category of pointwise display differential bundles of $[\X',\TT'\|\X,\TT]$ over the terminal object $\top$. Consider the following commutative diagram of tangent categories
\begin{equation}
\label{equation:differential-objects-from-bundles}
\begin{tikzcd}
{[\X',\TT'\|\DB|_\1(\X,\TT)]} & {\DB|_\1[\X',\TT'\|\X,\TT]} \\
{[\X',\TT'\|\DB(\X,\TT)]} & {\DB[\X',\TT'\|\X,\TT]}
\arrow["{\Gamma_{\Univ q_\Inc}}", from=1-1, to=1-2]
\arrow["{[\X',\TT'\|\Inc]}"', from=1-1, to=2-1]
\arrow["\Inc", from=1-2, to=2-2]
\arrow["{\Gamma_{\Univ q}}"', from=2-1, to=2-2]
\end{tikzcd}
\end{equation}
The next lemma shows that this diagram is a pullback in the category $\Cat$ of categories.

\begin{lemma}
\label{lemma:differential-objects-from-bundles}
The diagram of Equation~\eqref{equation:differential-objects-from-bundles} is a pullback in $\Cat$.
\end{lemma}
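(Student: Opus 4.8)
The plan is to exhibit both the top-left corner $[\X',\TT'\|\DB|_\1(\X,\TT)]$ and the pullback of the cospan $[\X',\TT'\|\DB(\X,\TT)]\xrightarrow{\Gamma_{\Univ q}}\DB[\X',\TT'\|\X,\TT]\xleftarrow{\Inc}\DB|_\1[\X',\TT'\|\X,\TT]$ as one and the same subcategory of $[\X',\TT'\|\DB(\X,\TT)]$, after which the lemma reduces to a routine check of the universal property of the pullback.

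First I would identify the left-hand column. Since $\DB|_\1(\X,\TT)$ is, by Equation~\eqref{equation:restriction-differential-bundles-to-terminal} together with~\cite[Proposition~4.7]{lanfranchi:tangentads-II}, the $2$-pullback of $\Base$ along $\*$ inside $\Tng(\CC)$, its universal property applied to $1$- and $2$-cells out of $(\X',\TT')$ shows that postcomposition with $\Inc$, i.e.\ the functor $[\X',\TT'\|\Inc]$, identifies $[\X',\TT'\|\DB|_\1(\X,\TT)]$ with the subcategory $\mathcal S\subseteq[\X',\TT'\|\DB(\X,\TT)]$ whose objects are the lax tangent morphisms $(H,\gamma)$ with $\Base\o(H,\gamma)=\top$ and whose morphisms are those tangent natural transformations whose whiskering by $\Base$ is $\id_\top$ (using $\*\o{!}=\top$ and that $[\X',\TT'\|\1]$ is terminal). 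This functor is injective on objects and faithful.

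Next I would treat the right-hand column. By construction $\DB|_\1[\X',\TT'\|\X,\TT]$ is the subcategory of $\DB[\X',\TT'\|\X,\TT]$ of those pointwise display differential bundles whose base object is the terminal object $\top$ (with morphisms covering the identity on the base), and $\Inc$ is precisely this inclusion, which is again injective on objects and faithful. Now Theorem~\ref{theorem:universality-differential-bundles} gives that $\Gamma_{\Univ q}$ is an isomorphism, and its explicit description shows that the base object of $\Gamma_{\Univ q}(H,\gamma)$ is exactly $\Base\o(H,\gamma)$; that is, $\Gamma_{\Univ q}$ intertwines the two base-object functors, both on objects and on morphisms. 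Consequently $\Gamma_{\Univ q}$ restricts to an isomorphism between $\mathcal S$ and $\DB|_\1[\X',\TT'\|\X,\TT]$, and this restriction is precisely $\Gamma_{\Univ q_\Inc}$.

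Finally I would verify the universal property. Given a cone $(U,V)$ with $U\colon\mathcal W\to[\X',\TT'\|\DB(\X,\TT)]$, $V\colon\mathcal W\to\DB|_\1[\X',\TT'\|\X,\TT]$ and $\Gamma_{\Univ q}\o U=\Inc\o V$, for each object and morphism $w$ of $\mathcal W$ the bundle $\Gamma_{\Univ q}(U(w))=\Inc(V(w))$ lies over $\top$; by the intertwining property this forces $U(w)\in\mathcal S$, so $U$ factors (uniquely, since $[\X',\TT'\|\Inc]$ is injective on objects and faithful) as $U=[\X',\TT'\|\Inc]\o W$ for a unique $W\colon\mathcal W\to[\X',\TT'\|\DB|_\1(\X,\TT)]$. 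Commutativity of Equation~\eqref{equation:differential-objects-from-bundles} then gives $\Inc\o\Gamma_{\Univ q_\Inc}\o W=\Gamma_{\Univ q}\o[\X',\TT'\|\Inc]\o W=\Gamma_{\Univ q}\o U=\Inc\o V$, whence $\Gamma_{\Univ q_\Inc}\o W=V$ by faithfulness and injectivity of $\Inc$, establishing the pullback. The step I expect to demand the most care is the compatibility asserted in the third paragraph---that $\Gamma_{\Univ q}$ intertwines the base-object functors and hence carries $\mathcal S$ onto the bundles over $\top$---since it is this identification, rather than any new computation, that makes the two pullback descriptions coincide; verifying it amounts to unwinding the explicit formula for $\Gamma_{\Univ q}$ coming from Lemma~\ref{lemma:induced-lax-tangent-morphism-from-differential-bundles} and checking it on both objects and morphisms.
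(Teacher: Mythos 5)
Your proof is correct and follows essentially the same route as the paper's: both arguments hinge on the universal property of the $2$-pullback of Equation~\eqref{equation:restriction-differential-bundles-to-terminal} applied to cones out of $(\X',\TT')$, together with the observation that the base object of $\Gamma_{\Univ q}(H,\gamma)$ is $\Base\o(H,\gamma)$, so that landing over $\top$ forces factorization through $\DB|_\1(\X,\TT)$. Your repackaging of the mediating-functor construction as an identification of two subcategories of $[\X',\TT'\|\DB(\X,\TT)]$ is only an organizational difference, not a different argument.
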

\begin{proof}
Consider a category $\X''$ and two functors $A\colon\X''\to[\X',\TT'\|\DB(\X,\TT)]$ and $B\colon\DB|_\1[\X',\TT'\|\X,\TT]$ such that the following diagram commutes:
\begin{equation*}
\begin{tikzcd}
{\X''} \\
& {[\X',\TT'\|\DB|_\1(\X,\TT)]} & {\DB|_\1[\X',\TT'\|\X,\TT]} \\
& {[\X',\TT'\|\DB(\X,\TT)]} & {\DB[\X',\TT'\|\X,\TT]}
\arrow["B", curve={height=-24pt}, from=1-1, to=2-3]
\arrow["A"', curve={height=30pt}, from=1-1, to=3-2]
\arrow["{\Gamma_{\Univ q_\Inc}}", from=2-2, to=2-3]
\arrow["{[\X',\TT'\|\Inc]}"', from=2-2, to=3-2]
\arrow["\Inc", from=2-3, to=3-3]
\arrow["{\Gamma_{\Univ q}}"', from=3-2, to=3-3]
\end{tikzcd}
\end{equation*}
Consider an object $X$ of $\X''$. The functor $A$ sends $X$ to a lax tangent morphism
\begin{align*}
&A(X)\=(H_X,\gamma_X)\colon(\X',\TT')\to\DB(\X,\TT)
\end{align*}
such that, when postcomposed by the structural $2$-morphisms of the universal pointwise display differential bundle $\Univ q$, that is, by applying $\Gamma_{\Univ q}$ to $(H_X,\gamma_X)$, coincides with the display differential bundle $B(X)$, whose base object is the terminal object $\top$ of $[\X',\TT'\|\X,\TT]$. In particular, $\Base\o(H_X,\gamma_X)=\top=\*\:\o\:!$. However, from the universal property of the $2$-pullback of Equation~\eqref{equation:restriction-differential-bundles-to-terminal}, we obtain a unique tangent morphism $C(X)\colon(\X',\TT')\to\DB|_\1(\X,\TT)$ which makes the following diagram to commute:
\begin{equation*}
\begin{tikzcd}
{(\X',\TT')} \\
& {\DB|_\1(\X,\TT)} & {\DB(\X,\TT)} \\
& \1 & {(\X,\TT)}
\arrow["{C(X)}", dashed, from=1-1, to=2-2]
\arrow["{(H_X,\gamma_X)}", curve={height=-18pt}, from=1-1, to=2-3]
\arrow["{!}"', curve={height=18pt}, from=1-1, to=3-2]
\arrow["\Inc", from=2-2, to=2-3]
\arrow[from=2-2, to=3-2]
\arrow["\lrcorner"{anchor=center, pos=0.125}, draw=none, from=2-2, to=3-3]
\arrow["\Base", from=2-3, to=3-3]
\arrow["{\*}"', from=3-2, to=3-3]
\end{tikzcd}
\end{equation*}
Consider now a morphism $f\colon X\to Y$ of $\X''$ and the corresponding tangent $2$-morphism
\begin{align*}
&A(f)\=\varphi_f\colon(H_X,\gamma_X)\Rightarrow(H_Y,\gamma_Y)
\end{align*}
By the universal property of the $2$-pullback of Equation~\eqref{equation:restriction-differential-bundles-to-terminal}, we obtain a unique $2$-morphism $C(f)\colon C(X)\Rightarrow C(Y)$, satisfying the following equation:
\begin{align*}
&\Inc(C(f))=\varphi_f\colon\Inc(C(X))=(H_X,\gamma_X)\to(H_Y,\gamma_Y)=\Inc(C(Y))
\end{align*}
Let us define a functor
\begin{align*}
&C\colon\X''\to[\X',\TT'\|\DB|_\1(\X,\TT)]
\end{align*}
which sends each $X$ to $C(X)$ and each $f\colon X\to Y$ to $C(f)$. By using the universal property of the $2$-pullback of Equation~\eqref{equation:restriction-differential-bundles-to-terminal} one can show that $C$ is a functor. Moreover, by postcomposing each $C(X)$ by $\Inc$, we obtain back $(H_X,\gamma_X)$, that is, $A(X)$, and since $\Gamma_{\Univ q_\Inc}(H_X,\gamma_X)$ must coincide with $\Inc(B(X))$, it is easy to prove that $\Gamma_{\Univ q_\Inc}(C(X))$ is equal to $B(X)$. Thus, $C$ is the unique functor which makes the following diagram commute:
\begin{equation*}
\begin{tikzcd}
{\X''} \\
& {[\X',\TT'\|\DB|_\1(\X,\TT)]} & {\DB|_\1[\X',\TT'\|\X,\TT]} \\
& {[\X',\TT'\|\DB(\X,\TT)]} & {\DB[\X',\TT'\|\X,\TT]}
\arrow["C", dashed, from=1-1, to=2-2]
\arrow["B", curve={height=-24pt}, from=1-1, to=2-3]
\arrow["A"', curve={height=30pt}, from=1-1, to=3-2]
\arrow["{\Gamma_{\Univ q_\Inc}}", from=2-2, to=2-3]
\arrow["{[\X',\TT'\|\Inc]}"', from=2-2, to=3-2]
\arrow["\Inc", from=2-3, to=3-3]
\arrow["{\Gamma_{\Univ q}}"', from=3-2, to=3-3]
\end{tikzcd}
\end{equation*}
This proves the statement of the lemma.
\end{proof}

We can now prove the main result of this section.

\begin{theorem}
\label{theorem:differential-objects-from-differential-bundles}
Let $\CC$ be a Cartesian $2$-category and $(\X,\TT)$ be a Cartesian tangentad of $\CC$. If $(\X,\TT)$ admits the construction of display differential bundles and the $2$-pullback of Equation~\eqref{equation:restriction-differential-bundles-to-terminal} exists in $\Tng(\CC)$, $(\X,\TT)$ admits the construction of differential objects and the tangentad of differential objects of $(\X,\TT)$ is $\DB|_\1(\X,\TT)$.
\end{theorem}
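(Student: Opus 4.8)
The plan is to exhibit $\DB|_\1(\X,\TT)$, together with the strict tangent morphism $\U\=\Tot\o\Inc\colon\DB|_\1(\X,\TT)\to(\X,\TT)$ and the pointwise differential object $\Univ q_\Inc$, as the tangentad of differential objects of $(\X,\TT)$, by checking the three requirements of Definition~\ref{definition:construction-differential-objects}. Two of them are already in hand from the discussion preceding Lemma~\ref{lemma:differential-objects-from-bundles}: the morphism $\U$ is strict since both $\Tot$ and $\Inc$ are, and $\Univ q_\Inc$ is a pointwise differential object, because its base object is the terminal object $\top$ of $[\DB|_\1(\X,\TT)\|\X,\TT]$, so that~\cite[Proposition~3.4]{cockett:differential-bundles}, applied inside this Cartesian Hom-tangent category, turns it into a differential object, whose universality axiom is pointwise precisely because $\Univ q_\Inc$ is a pointwise display differential bundle.

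For the remaining requirement, the invertibility of $\Gamma_{\Univ q_\Inc}$, which is the mathematical core, I would argue as follows. For each tangentad $(\X',\TT')$, Lemma~\ref{lemma:differential-objects-from-bundles} presents the square of Equation~\eqref{equation:differential-objects-from-bundles} as a pullback in $\Cat$, with top edge $\Gamma_{\Univ q_\Inc}$ and bottom edge $\Gamma_{\Univ q}$; in this square $\Gamma_{\Univ q_\Inc}$ is, by definition of a pullback, the pullback of $\Gamma_{\Univ q}$ along $\Inc$. Since $(\X,\TT)$ admits the construction of display differential bundles, Definition~\ref{definition:construction-differential-bundles} gives that $\Gamma_{\Univ q}$ is invertible; as pullbacks of isomorphisms are isomorphisms, $\Gamma_{\Univ q_\Inc}$ is invertible, naturally in $(\X',\TT')$. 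Transporting along the pointwise equivalence $\DB|_\1[\X',\TT'\|\X,\TT]\cong\DO[\X',\TT'\|\X,\TT]$ of~\cite[Proposition~3.4]{cockett:differential-bundles} then identifies $\Gamma_{\Univ q_\Inc}$ with the transformation $\Gamma_{(\UnivZ,\UnivS,\Univ{\hat p})}$ demanded by Definition~\ref{definition:construction-differential-objects}, which is therefore invertible.

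It remains to equip $\DB|_\1(\X,\TT)$ with a Cartesian tangentad structure. The naturality and invertibility just established show that $\DB|_\1(\X,\TT)$ corepresents the $2$-functor $(\X',\TT')\mapsto\DO[\X',\TT'\|\X,\TT]$. By Proposition~\ref{proposition:cartesian-hom-tangent-cats} each $[\X',\TT'\|\X,\TT]$ is a Cartesian tangent category, hence so is $\DO[\X',\TT'\|\X,\TT]$ by Lemma~\ref{lemma:DO-functoriality}, naturally in $(\X',\TT')$, and I would use a $2$-Yoneda argument to transport the right adjoints to the terminal and diagonal $1$-morphisms witnessing the Cartesian object structure from the represented functor to $\DB|_\1(\X,\TT)$, making it a Cartesian object of $\Tng(\CC)$, that is, a Cartesian tangentad. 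I expect the main obstacle to lie not in the pullback-of-an-isomorphism step, which is immediate, but in the bookkeeping surrounding the two invocations of~\cite[Proposition~3.4]{cockett:differential-bundles}: verifying that the differential-bundle-over-$\top$ transformation $\Gamma_{\Univ q_\Inc}$ genuinely corresponds to the differential-object transformation $\Gamma_{(\UnivZ,\UnivS,\Univ{\hat p})}$ under the pointwise equivalence, and in phrasing cleanly the $2$-Yoneda transport of the Cartesian structure.
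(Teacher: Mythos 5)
Your proposal is correct and follows essentially the same route as the paper: the key step in both is that the square of Lemma~\ref{lemma:differential-objects-from-bundles} exhibits $\Gamma_{\Univ q_\Inc}$ as a pullback of the isomorphism $\Gamma_{\Univ q}$, hence an isomorphism, which is then identified with the differential-object transformation via~\cite[Proposition~3.4]{cockett:differential-bundles}. Your additional care in verifying the remaining clauses of Definition~\ref{definition:construction-differential-objects} --- strictness of $\U=\Tot\o\Inc$ and, in particular, the transport of the Cartesian object structure onto $\DB|_\1(\X,\TT)$ by a $2$-Yoneda argument --- addresses points the paper's proof leaves implicit, but does not change the argument.
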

\begin{proof}
Since isomorphisms are stable under pullbacks and $\Gamma_{\Univ q}$ is an isomorphism of tangent categories, by Lemma~\ref{lemma:differential-objects-from-bundles}, also $\Gamma_{\Univ q_\Inc}$ is an isomorphism of categories. However, since $\Gamma_{\Univ q_\Inc}$ is a strict tangent morphism, its inverse is also a strict tangent morphism. Thus, $\Gamma_{\Univ q_\Inc}$ becomes an isomorphism of tangent categories. By~\cite[Proposition~3.4]{cockett:differential-bundles}, every differential bundle on the terminal object is a differential object, thus, $\DB|_\1[\X',\TT'\|\X,\TT]$ is isomorphic to $\DO[\X',\TT'\|\X,\TT]$. Thus, we obtain an isomorphism of tangent categories:
\begin{align*}
&\Gamma_{\Univ q_\Inc}\colon[\X',\TT'\|\DB|_\1(\X,\TT)]\cong\DO[\X',\TT'\|\X,\TT]
\end{align*}
Therefore, $\DB|_\1(\X,\TT)$ is the tangentad of differential objects of $(\X,\TT)$.
\end{proof}

\subsection{Applications}
\label{subsection:examples-differential-bundles}
In Section~\ref{subsection:definition-tangentad}, we listed some examples of tangentads, and in Section~\ref{subsection:examples-differential-objects}, we computed the constructions of differential objects for tangent monads, tangent fibrations, tangent indexed categories, tangent split restriction categories, and showed how to extend these constructions to tangent restriction categories. In this section, we compute the construction of differential bundles for the same examples of tangentads and show how to extend it to tangent restriction categories. Since the proofs are fairly similar to those for differential objects, we provide the final constructions without giving the details of the proofs.

\subsubsection*{Tangent monads}
\label{subsubsection:differential-bundles-tangent-monads}
Let us start by considering tangent monads. We previously discussed that strong tangent morphisms lift to the tangent categories of differential bundles. It is not hard to convince ourselves that every strong tangent monad $(S,\alpha)$ on a tangent category $(\X,\TT)$, that is, a tangent monad whose underlying tangent morphism is strong, lifts to $\DB(\X,\TT)$ as a tangent monad $\DB(S,\alpha)$.

\begin{theorem}
\label{theorem:differential-bundles-tangent-monads}
In the $2$-category $\TngMnd$ of tangent monads, each strong tangent monad $(S,\alpha)$ on a tangent category admits the construction of differential bundles. Moreover, the tangentad of differential bundles of $(S,\alpha)$ is the tangent monad $\DB(S,\alpha)$ on $\DB(\X,\TT)$.
\end{theorem}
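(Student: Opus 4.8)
The plan is to follow the exact same template established for differential objects in Theorem~\ref{theorem:differential-objects-tangent-monads}, since the structural setup for tangent monads is identical and only the internal data (commutative monoid in the slice versus commutative monoid in the product category) changes. First I would verify the lifting claim asserted in the paragraph preceding the statement, namely that a strong tangent monad $(S,\alpha)$ lifts to a tangent monad $\DB(S,\alpha)$ on $\DB(\X,\TT)$. By Lemma~\ref{lemma:DB-functoriality}, a strong tangent morphism lifts to a display tangent morphism and hence $\DB(S,\alpha)$ is a well-defined functor on $\DB(\X,\TT)$; concretely $S$ sends a display differential bundle $\q\colon E\to M$ to the bundle with projection $Sq\colon SE\to SM$, zero $Sz_q$, sum $Ss_q$, and vertical lift $\alpha\o Sl_q$. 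The monad unit $\eta$ and multiplication $\mu$, being tangent natural transformations compatible with $\alpha$, yield linear morphisms of differential bundles by naturality, so $(\DB(S,\alpha),\DB(\eta),\DB(\mu))$ is genuinely a tangent monad.

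Next I would establish the universal property, which is the heart of the theorem. Consider a tangent category $(\X',\TT')$ with a tangent monad $(S',\alpha')$. A pointwise display differential bundle in the Hom-tangent category $[(\X',\TT';S',\alpha')\|(\X,\TT;S,\alpha)]$ unwinds to two lax tangent morphisms $(K,\theta),(G,\beta)\colon(\X',\TT')\to(\X,\TT)$, each carrying a distributive law with $S$ relative to $S'$, compatible with the monad structures, together with tangent natural transformations $q,z_q,s_q,l_q$ making each $\q_A\colon KA\to GA$ a display differential bundle of $(\X,\TT)$, and crucially with $q,z_q,s_q,l_q$ also compatible with the monad distributive laws. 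The plan is then to invoke the universal property of $\DB(\X,\TT)$ from Theorem~\ref{theorem:universality-differential-bundles}: the morphisms $q,z_q,s_q,l_q$ determine a lax tangent morphism $(H,\gamma)\colon(\X',\TT')\to\DB(\X,\TT)$ sending each $A$ to $\q_A$, and the compatibility of the bundle structure with the monad distributive laws upgrades $\gamma$ to a distributive law $\theta$ making $(H,\gamma)$ a morphism of tangent monads into $(\DB(\X,\TT),\DB(S,\alpha))$. Uniqueness follows because the underlying data of $(H,\gamma)$ is forced object-by-object by $q,z_q,s_q,l_q$, exactly as in the differential-objects case.

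The main obstacle, and the only point where the argument genuinely differs from Theorem~\ref{theorem:differential-objects-tangent-monads}, is handling the \emph{display} condition inside the monad context. For differential objects the universal property was purely about commutative monoids, but here one must ensure that $(S,\alpha)$ being strong guarantees the lifted monad interacts correctly with pointwise tangent display maps, i.e.\ that $\DB(S,\alpha)$ is a display tangent morphism in the sense of Definition~\ref{definition:display-tangent-morphisms-formal} so that $\DB_\pt$ can be applied and Proposition~\ref{proposition:DB-functoriality} invoked. Since strong tangent morphisms are Cartesian (as noted in the proof of Lemma~\ref{lemma:DB-functoriality}) and preserve the tangent pullbacks defining display maps, this holds, but it is the step requiring care. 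Beyond this, the verification that the distributive law $\theta$ satisfies the pentagon and unit coherences of a monad morphism reduces to the corresponding coherences for $\gamma$ together with the monoid-morphism compatibilities, which are routine. Thus the proof will state the lifted monad, spell out the correspondence of morphisms through the universal property, and leave the detailed diagram chases to the reader, mirroring the proof of Theorem~\ref{theorem:differential-objects-tangent-monads}.
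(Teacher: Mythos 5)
Your proposal follows exactly the route the paper intends: the paper itself gives no proof of Theorem~\ref{theorem:differential-bundles-tangent-monads}, explicitly deferring to the template of Theorem~\ref{theorem:differential-objects-tangent-monads}, and your two-step plan (lift $(S,\alpha)$ to a tangent monad $\DB(S,\alpha)$ on $\DB(\X,\TT)$, then unwind a pointwise display differential bundle in the Hom-tangent category of tangent monads and produce the corresponding unique monad morphism $(H,\gamma)$ into $(\DB(\X,\TT),\DB(S,\alpha))$ via Theorem~\ref{theorem:universality-differential-bundles}) is precisely that template. The transported vertical lift $\alpha\o Sl_q$ and the treatment of $\eta$, $\mu$ as linear morphisms by naturality are also as intended.

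The one place where your justification is off is the display condition, which you correctly single out as the only genuinely new ingredient but then discharge with a wrong claim. You write that ``strong tangent morphisms are Cartesian (as noted in the proof of Lemma~\ref{lemma:DB-functoriality}) and preserve the tangent pullbacks defining display maps.'' What the proof of Lemma~\ref{lemma:DB-functoriality} actually says is that \emph{display} tangent morphisms are Cartesian; being strong (i.e.\ having an invertible distributive law) does not by itself imply preservation of the pullbacks underlying the universality of the vertical lift, nor of the pullbacks required for $Sq$ to be a tangent display map. That preservation is exactly the extra content of Definitions~\ref{definition:display-preserving} and~\ref{definition:display-tangent-morphisms-formal}, and it is a hypothesis, not a consequence of strongness. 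The paper's own statement shares this looseness by saying only ``strong tangent monad,'' so your proof is no worse than the intended one, but if you want the lifting step to be airtight you should either assume $(S,\alpha)$ is a display tangent morphism (so that Lemma~\ref{lemma:DB-functoriality} and Proposition~\ref{proposition:DB-functoriality} apply verbatim) or supply a separate argument that the underlying functor $S$ preserves the relevant tangent pullbacks. Everything else in your proposal is routine and correct.
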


\subsubsection*{Tangent fibrations}
\label{subsubsection:differential-bundles-tangent-fibrations}
Since the underlying functor $\Pi\colon(\X',\TT')\to(\X,\TT)$ of a tangent fibration is a strict tangent morphism, hence strong, it lifts to a strict tangent morphism $\DB(\Pi)\colon\DB(\X',\TT')\to\DB(\X,\TT)$. However, since $\Pi$ is a tangent fibration, one can show that $\DB(\Pi)$ is also a tangent fibration.

\begin{lemma}
\label{lemma:differential-bundles-tangent-fibration}
Consider a (cloven) tangent fibration $\Pi\colon(\X',\TT')\to(\X,\TT)$ between two tangent categories. The strict tangent morphism
\begin{align*}
&\DB(\Pi)\colon\DB(\X',\TT')\to\DB(\X,\TT)
\end{align*}
which sends a differential bundle $\q'\colon E'\to M'$ to the differential bundle $(\Pi(E),\Pi(\zeta_E),\Pi(\sigma_E),\Pi(\hat p_E))$ is a (cloven) tangent fibration.
\end{lemma}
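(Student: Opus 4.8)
The plan is to follow closely the strategy used to prove Lemma~\ref{lemma:differential-objects-tangent-fibration} for differential objects, replacing the role played there by the differential projection with that of the vertical lift. The preceding discussion already records, via Lemma~\ref{lemma:DB-functoriality}, that the underlying strict tangent morphism $\Pi$ gives rise to a well-defined strict tangent morphism $\DB(\Pi)\colon\DB(\X',\TT')\to\DB(\X,\TT)$; hence the strict preservation of the tangent structures, one of the defining conditions of a tangent fibration in Example~\ref{example:tangent-fibrations}, is automatic. It therefore remains to exhibit a cleavage for $\DB(\Pi)$, that is, to construct Cartesian lifts in $\DB(\X',\TT')$, and to check that the tangent bundle functors assemble into a Cartesian morphism of fibrations.

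For the cleavage I would fix a linear morphism $(f,g)\colon\q_A\to\q_B$ of differential bundles in $(\X,\TT)$ and a differential bundle $\q'\colon E\to M$ of $(\X',\TT')$ lying over $\q_B$, so that $\Pi(q')=q_B$ and similarly for the remaining structure. Using the chosen cleavage of $\Pi$, form the Cartesian lifts $\varphi_f\colon f^\*M\to M$ and $\varphi_g\colon g^\*E\to E$ over $f$ and $g$. I would then equip the pair $(g^\*E,f^\*M)$ with the structure of a differential bundle over $\q_A$ by defining the projection $\hat q$, the zero, the sum, and the vertical lift as the unique morphisms lifting the corresponding structure of $\q_A$ and commuting with $\varphi_f$ and $\varphi_g$ in the evident way, exactly as the morphisms $f^\*\zeta_E$, $f^\*\sigma_E$, $f^\*\hat p_E$ were produced in the differential-objects case. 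The equational axioms of Definition~\ref{definition:differential-bundles}, namely the additivity and linearity conditions, then hold because the reindexing functor $g^\*$ is functorial and the relevant morphisms are Cartesian, mirroring the argument establishing that $f^\*E$ is a differential object.

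The main obstacle is to verify the two \emph{non-equational} conditions: that the lifted projection $\hat q\colon g^\*E\to f^\*M$ is a tangent display map, and that the universality of the vertical lift, Definition~\ref{definition:differential-bundles}(d), holds. For the universality I would repeat the three-step lifting argument of Lemma~\ref{lemma:differential-objects-tangent-fibration}. Given a test cone into the candidate pullback over $f^\*M$, apply $\Pi$ to obtain a cone in $(\X,\TT)$ over the base of $\q_A$; invoke the universality of the vertical lift of $\q_A$ downstairs to obtain a unique mediating morphism; and finally lift this morphism back along the appropriate Cartesian map induced by $\varphi_g$ and $\T'\varphi_g$, using crucially that $\Pi$ is a \emph{tangent} fibration, so that $\T'$ preserves Cartesian lifts and the required tangent pullbacks. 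The display-ness of $\hat q$ is settled by the same principle: since $q_A=\Pi(\hat q)$ is display in the base and $\Pi$ lifts tangent pullbacks Cartesianly, the pullbacks of the iterates $(\T')^n\hat q$ exist and are preserved in $(\X',\TT')$. This is precisely the step in which the tangent-fibration hypothesis, rather than a bare fibration, is indispensable, and I expect it to be the most delicate part of the argument.

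Finally, I would check that $(\varphi_f,\varphi_g)$ is Cartesian in $\DB(\X',\TT')$: given a competing linear morphism of differential bundles whose image under $\DB(\Pi)$ factors through $(f,g)$, I factor its total and base components through $\varphi_g$ and $\varphi_f$ downstairs, and then verify that the resulting mediating pair is again a \emph{linear} morphism of bundles, that is, that it commutes with the projections and the vertical lifts. This is the analogue of the computation showing that the mediating map $\xi_h$ is compatible with the differential projection in the proof of Lemma~\ref{lemma:differential-objects-tangent-fibration}, and it relies on the universal property of $\varphi_g$ together with the fact that the given morphism sits over a linear morphism downstairs. Since $\T^\DB$ is defined pointwise from $\T'$ and $\T$, and the tangent functors of $\Pi$ already form a Cartesian morphism of fibrations by hypothesis, the same conclusion transfers to $\DB(\Pi)$, completing the verification that $\DB(\Pi)$ is a (cloven) tangent fibration.
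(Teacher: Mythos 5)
Your proposal follows exactly the route the paper intends: the paper gives no proof of this lemma, stating only that the arguments in this section are ``fairly similar to those for differential objects,'' and your proof is precisely that analogy carried out --- lifting the structure morphisms along the cleavage, transporting the universality of the vertical lift through $\Pi$ by the same three-step argument used for $\hat p$ in Lemma~\ref{lemma:differential-objects-tangent-fibration}, and checking Cartesianness of $(\varphi_f,\varphi_g)$ in $\DB(\X',\TT')$ by the same mediating-map computation. The one point where the analogy does not carry you for free is the display-ness of the reindexed projection $\hat q\colon g^\*E\to f^\*M$ (there is no counterpart in the differential-objects case); you rightly flag it as the delicate step, but note that the argument should lean on $q'$ being a tangent display map \emph{upstairs} together with the Cartesianness of $\T'^n\varphi_f$ and $\T'^n\varphi_g$, rather than only on $q_A$ being display in the base, since pullbacks of $\T'^n\hat q$ along arbitrary (in particular vertical) morphisms of $\X'$ are required.
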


\begin{theorem}
\label{theorem:differential-bundles-tangent-fibrations}
The $2$-category $\Fib$ of (cloven) fibrations admits the construction of differential bundles. Moreover, the tangentad of differential bundles of a (cloven) tangent fibration $\Pi$ is $\DB(\Pi)\colon\DB(\X',\TT')\to\DB(\X,\TT)$.
\end{theorem}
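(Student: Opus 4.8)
The plan is to mirror the proof of Theorem~\ref{theorem:differential-objects-tangent-fibrations}, replacing differential objects by display differential bundles throughout. By Lemma~\ref{lemma:differential-bundles-tangent-fibration}, the strict tangent morphism $\DB(\Pi)\colon\DB(\X',\TT')\to\DB(\X,\TT)$ is already known to be a (cloven) tangent fibration, hence a tangentad in $\Fib$; this supplies the candidate tangentad of differential bundles. The strict tangent morphisms $\Base,\Tot\colon\DB(\Pi)\to\Pi$ and the universal pointwise display differential bundle $\UnivQ$ are obtained fibrewise from Proposition~\ref{proposition:universality-differential-bundle} applied to the base and total tangent categories, together with the fibration structure of $\DB(\Pi)$. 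In view of Definition~\ref{definition:construction-differential-bundles} it then remains to verify that the induced strict tangent natural transformation $\Gamma_{\UnivQ}$ is invertible, that is, that pointwise display differential bundles in the Hom-tangent category $[\Pi_\bullet,\Pi]$ correspond bijectively to morphisms of tangent fibrations $\Pi_\bullet\to\DB(\Pi)$.

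First I would unwrap a pointwise display differential bundle structure in $[\Pi_\bullet,\Pi]$ for an arbitrary tangent fibration $\Pi_\bullet\colon(\X_\bullet',\TT_\bullet')\to(\X_\bullet,\TT_\bullet)$. This consists of two morphisms of tangent fibrations $(G,\beta;G',\beta')$ and $(K,\theta;K',\theta')$ from $\Pi_\bullet$ to $\Pi$, together with four pairs of tangent natural transformations $(q,q')$, $(z_q,z_q')$, $(s_q,s_q')$, $(l_q,l_q')$ commuting with the fibrations, such that $(q_A,{z_q}_A,{s_q}_A,{l_q}_A)$ is a display differential bundle in $(\X,\TT)$ for each $A\in(\X_\bullet,\TT_\bullet)$, $(q'_E,\dots)$ is one in $(\X',\TT')$ for each $E\in(\X_\bullet',\TT_\bullet')$, and the compatibilities $\Pi(q'_E)=q_{\Pi_\bullet(E)}$, and likewise for the remaining structural transformations, hold. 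From these data I would define tangent morphisms $(H,\gamma)\colon(\X_\bullet,\TT_\bullet)\to\DB(\X,\TT)$ and $(H',\gamma')\colon(\X_\bullet',\TT_\bullet')\to\DB(\X',\TT')$ sending each object to the corresponding differential bundle; the compatibility relations force $\DB(\Pi)\circ H'=H\circ\Pi_\bullet$, so that $(H,H')$ commutes with the fibrations.

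The main obstacle, exactly as in the differential objects case, is to show that $(H,H')$ preserves Cartesian lifts. Given a Cartesian lift $\varphi_f\colon f^\*E\to E$ in $\X_\bullet'$, the morphisms $G'\varphi_f$ and $K'\varphi_f$ are Cartesian in $(\X',\TT')$ because $(G,G')$ and $(K,K')$ are morphisms of fibrations, and $H'\varphi_f=(G'\varphi_f,K'\varphi_f)$ is a linear morphism of differential bundles by naturality of the structural transformations. To prove it is Cartesian in $\DB(\X',\TT')$, I would take an arbitrary linear morphism into $H'E$ whose image under $\DB(\Pi)$ factors through $Hf$, use the Cartesianness of $G'\varphi_f$ and $K'\varphi_f$ in $(\X',\TT')$ to obtain unique comparison morphisms, and then show that the resulting pair $\xi$ is linear, i.e. commutes with the projections and with the vertical lifts. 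Commutation with the projections is immediate; commutation with the vertical lifts is the delicate point, and I would establish it by the same device used for the differential projection in Lemma~\ref{lemma:differential-objects-tangent-fibration} and Theorem~\ref{theorem:differential-objects-tangent-fibrations}: the two candidate composites agree after postcomposition with $K'\varphi_f$ (by the parallelogram diagrams expressing that $K'\varphi_f$ and $\T'K'\varphi_f$ are linear), while their images under $\Pi$ coincide because the base comparison is already a linear morphism of differential bundles in $(\X,\TT)$; the universal property of the Cartesian lift $\T'K'\varphi_f$ — which is Cartesian since $\T'$ preserves Cartesian lifts in a tangent fibration — then forces the two composites to be equal.

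Finally, I would check that the universality pullback of the lifted differential bundle is preserved, which is precisely where the hypothesis that $\Pi$ is a tangent fibration is used, since as a Cartesian morphism of fibrations it preserves the relevant tangent pullbacks and $\DB(\Pi)$ is cloven; and I would deduce uniqueness of $(H,H')$ from the universal property of the Cartesian lifts together with the uniqueness of the induced comparison morphisms, exactly as in the differential objects argument. This shows that $\Gamma_{\UnivQ}$ is invertible, hence by Theorem~\ref{theorem:universality-differential-bundles} that $\DB(\Pi)$ is the tangentad of display differential bundles of $\Pi$; since $\Pi$ was arbitrary, $\Fib$ admits the construction of display differential bundles.
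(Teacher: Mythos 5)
Your proposal is correct and follows exactly the route the paper intends: the paper omits the proof of Theorem~\ref{theorem:differential-bundles-tangent-fibrations}, stating only that it is ``fairly similar'' to the differential objects case, and your argument is a faithful adaptation of Lemma~\ref{lemma:differential-objects-tangent-fibration} and Theorem~\ref{theorem:differential-objects-tangent-fibrations}, correctly accounting for the two underlying tangent morphisms $(G,G')$ and $(K,K')$ and isolating the compatibility with the vertical lift as the delicate step handled via the Cartesianness of $\T'K'\varphi_f$.
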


\subsubsection*{Tangent indexed categories}
\label{subsubsection:differential-bundles-tangent-categories}
In this section, we employ the Grothendieck $2$-equivalence $\TngFib\simeq\TngIndx$ to compute the tangentad of differential bundles of a tangent indexed category. Let us start by proving that $2$-equivalences preserve the construction of differential bundles.

\begin{proposition}
\label{proposition:equivalence-differential-bundles}
Let $\CC$ and $\CC'$ be two $2$-categories and suppose there is a $2$-equivalence $\Xi\colon\Tng(\CC)\simeq\Tng(\CC')$ of the $2$-categories of tangentads of $\CC$ and $\CC'$. If a tangentad $(\X,\TT)$ of $\CC$ admits the construction of differential bundles and $\DB(\X,\TT)$ denotes the tangentad of differential bundles of $(\X,\TT)$, so does $\Xi(\X,\TT)$ and the tangentad of differential bundles of $\Xi(\X,\TT)$ is $\Xi(\DB(\X,\TT))$.
\end{proposition}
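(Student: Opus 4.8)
The plan is to transport the defining universal property of the construction of differential bundles across the $2$-equivalence $\Xi$, exactly as in the analogous argument for vector fields~\cite[Proposition~5.5]{lanfranchi:tangentads-II} and for differential objects (Proposition~\ref{proposition:equivalence-differential-objects}). Recall that, by Definition~\ref{definition:construction-differential-bundles} together with Theorem~\ref{theorem:universality-differential-bundles}, a tangentad admits the construction of differential bundles precisely when the functor $\DB_\pt^\op[-\|\X,\TT]$ built from the Hom-tangent categories is corepresentable, the corepresenting object being $\DB(\X,\TT)$ and the comparison being $\Gamma_{\UnivQ}$. Since corepresentability is a purely $2$-categorical property, it ought to be reflected by any $2$-equivalence; the bulk of the work is to verify that all the ingredients of that universal property are preserved by $\Xi$.

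Concretely, I would first fix a pseudo-inverse $\Psi$ of $\Xi$, with pseudonatural equivalences $\Psi\Xi\simeq\id$ and $\Xi\Psi\simeq\id$. Being a $2$-functor, $\Xi$ induces for every pair of tangentads $\mathbb{A},\mathbb{B}$ an equivalence of Hom-categories $\Tng(\CC)[\mathbb{A},\mathbb{B}]\simeq\Tng(\CC')[\Xi\mathbb{A},\Xi\mathbb{B}]$; because the pointwise tangent bundle $\bar\T$ of Proposition~\ref{proposition:hom-tangent-categories} is defined by postcomposition with the tangent bundle $1$-morphism, and $2$-functors preserve $1$-morphism composition and whiskering, these equivalences are compatible with the pointwise tangent structures and thus lift to equivalences of Hom-tangent categories $[\mathbb{A}\|\mathbb{B}]\simeq[\Xi\mathbb{A}\|\Xi\mathbb{B}]$, pseudonatural in both variables.

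The second step is to show that these equivalences preserve pointwise display differential bundles. The monoid-like equational data of a differential bundle (Definition~\ref{definition:differential-bundles}) are preserved by any equivalence of tangent categories, so the only genuine point is the display/universality condition: by Definition~\ref{definition:pointwise-tangent-display-map}, a pullback in a Hom-tangent category is pointwise when it is preserved by all postcomposition functors $\CC[F\|\mathbb{B}]$, and $\Xi$ carries such postcomposition functors to the corresponding ones over $\CC'$ while preserving the pullbacks themselves. Hence $\Xi$ sends the universal pointwise display differential bundle $\UnivQ\colon\Tot\to\Base$ of $(\X,\TT)$ to a pointwise display differential bundle $\Xi\UnivQ\colon\Xi\Tot\to\Xi\Base$ over $\Xi(\X,\TT)$ in $[\Xi\DB(\X,\TT)\|\Xi\X,\Xi\TT]$. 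Chaining the Hom-equivalences with the isomorphism $\Gamma_{\UnivQ}$ and with the equivalence $[\Psi(-)\|\X,\TT]\simeq[-\|\Xi\X,\Xi\TT]$ coming from $\Psi$, I would then identify the comparison functor of Lemma~\ref{lemma:induced-lax-tangent-morphism-from-differential-bundles}, $\Gamma_{\Xi\UnivQ}\colon[-\|\Xi\DB(\X,\TT)]\to\DB_\pt[-\|\Xi\X,\Xi\TT]$, with a functor conjugate to $\Xi$ applied to $\Gamma_{\UnivQ}$, concluding that $\DB_\pt^\op[-\|\Xi\X,\Xi\TT]$ is corepresented by $\Xi\DB(\X,\TT)$.

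The main obstacle is the gap between \emph{equivalence} and \emph{isomorphism}: the transport above directly yields that $\Gamma_{\Xi\UnivQ}$ is an equivalence of tangent categories, whereas Definition~\ref{definition:construction-differential-bundles} demands that it be invertible on the nose. I expect to close this gap by observing that the inverse correspondence $\Lambda$ of Lemma~\ref{lemma:universality-differential-bundles} is defined objectwise on the transported differential-bundle data, so that the equivalences involved in fact restrict to \emph{isomorphisms} on the (pointwise) display differential bundle data; failing a direct check, one may instead appeal to the uniqueness up to unique isomorphism of the construction (Proposition~\ref{proposition:uniqueness-differential-bundles}), whereby any object corepresenting this functor is canonically identified with \emph{the} tangentad of differential bundles, so that $\Xi\DB(\X,\TT)$ is identified with it. A secondary point demanding care is checking that $\Xi$ preserves the pointwiseness of the display pullbacks and not merely the pullbacks, since it is exactly pointwiseness that distinguishes $\DB_\pt$ from $\DB$ and governs the behaviour of the comparison $\Gamma$.
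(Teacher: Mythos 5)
Your proposal follows the same route the paper intends: the paper offers no argument for this proposition beyond deferring to the analogous transport-of-universal-property argument used for vector fields and for differential objects, and your elaboration --- inducing equivalences of Hom-tangent categories compatible with the pointwise tangent structures, checking that pointwise display differential bundles are preserved, and conjugating $\Gamma_{\UnivQ}$ across the equivalence --- is exactly that argument made explicit. One caution on your closing step: of the two fixes you offer for the equivalence-versus-isomorphism gap, the appeal to Proposition~\ref{proposition:uniqueness-differential-bundles} is circular (uniqueness up to isomorphism presupposes that some tangentad already strictly corepresents the functor), so the direct objectwise verification via $\Lambda$ is the one you must carry out.
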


Proposition~\ref{proposition:equivalence-differential-bundles} together with the Grothendieck $2$-equivalence $\TngFib\cong\TngIndx$ allows us to compute the differential bundles of tangent indexed categories. Consider a tangent indexed category $(\X,\TT;\IND,\TT')$ whose underlying indexed category $\IND\colon\X^\op\to\Cat$ sends an object $M$ of $\X$ to the category $\X^M$ and a morphism $f\colon M\to N$ to the functor $f^\*\colon\X^N\to\X^M$. Define $\DB(\X,\TT;\IND,\TT')$ as follows:
\begin{description}
\item[Base tangent category] The base tangent category is $\DB(\X,\TT)$;

\item[Indexed category] The indexed category $\DB(\IND)\colon\DB(\X,\TT)^\op\to\Cat$ sends a differential bundle $\q\colon E\to M$ of $(\X,\TT)$ to the category $\DB^\q$ whose objects are tuples $\q'\=(q',z_q',s_q',l_q')$ formed by an $M'\in\X^M$, an object $E'\in\X^E$, and four morphisms
\begin{align*}
q'&\colon E'\to q^\*M'                  &z_q'&\colon M'\to z_q^\*E'\\
s_q'&\colon E'_2\to s_q^\*E             &l_q'&\colon E'\to l_q^\*\T'E'
\end{align*}
such that $(\q,\q')\colon(E,E')\to(M,M')$ becomes a display differential bundle in the tangent category of elements of the indexed tangent category.\\
Furthermore, $\DB(\IND)$ sends a morphism $(f,g)\colon(\q_1\colon E_1\to M_1)\to(\q_2\colon E_2\to M_2)$ of differential bundles to the functor
\begin{align*}
(f,g)^\*&\colon\DB^{\q_1}\to\DB^{\q_2}
\end{align*}
which sends a tuple $\q'\colon E'\to M'$ to the tuple $((f,g)^\*q',(f,g)^\*z_q',(f,g)^\*s_q',(f,g)^\*l_q')$, where:
\begin{align*}
(f,g)^\*q'&\colon g^\*E'\xrightarrow{g^\*q'}g^\*q_2^\*M'\xrightarrow{q_2\o g=f\o q_1}q_1^\*f^\*M'\\
(f,g)^\*z_q'&\colon f^\*M'\xrightarrow{f^\*z_q'}f^\*z_{q_2}^\*E'\xrightarrow{z_{q_2}\o f=g\o z_{q_1}}z_{q_1}^\*g^\*E'\\
(f,g)^\*s_q'&\colon g^\*E_2'\xrightarrow{g^\*s'_q}g^\*s_{q_2}^\*E'\xrightarrow{s_{q_2}\o g=g_2\o s_{q_1}}s_{q_1}^\*g_2^\*E'\cong s_{q_1}^\*(g^\*E')_2\\
(f,g)^\*l_q'&\colon g^\*E'\xrightarrow{g^\*l_q'}g^\*l_{q_2}^\*\T'E'\xrightarrow{l_{q_2}\o g=\T g\o l_{q_1}}l_{q_1}^\*(\T g)^\*\T'E'\xrightarrow{{\xi^g}^{-1}}l_{q_1}^\*\T'(g^\*E')
\end{align*}

\item[Indexed tangent bundle functor] The indexed tangent bundle functor $\DB(\T')$ consists of the list of functors
\begin{align*}
&\T'^\q\colon\DB^\q\to\DB^{\T^\DB\q}
\end{align*}
which send a tuple $\q'\colon E'\to M'$ to the tuple $\T'^\q\q'$ so defined:
\begin{align*}
\T'q'&\colon\T'^EE'\xrightarrow{\T'^Eq'}\T'^Eq^\*M'\xrightarrow{\xi_q}(\T q)^\*\T^MM'\\
\T'z_q'&\colon\T^MM'\xrightarrow{\T^Mz_q'}\T^Mz_q^\*E'\xrightarrow{\xi_{z_q}}(\T z_q)^\*\T^EE'\\
\T's_q'&\colon\T^{E_2}E_2'\xrightarrow{\T^{E_2}s_q'}\T^{E_2}s_q^\*E'\xrightarrow{\xi_{s_q}}(\T s_q)\T^EE'\\
l_{\T q}'&\colon\T^EE'\xrightarrow{\T^El_q'}\T^El_q^\*\T^EE'\xrightarrow{\xi_{l_q}}(\T l_q)^\*\T^{\T E}\T^EE'\xrightarrow{(\T l_q)^\*c'}(\T l_q)^\*c^\*\T^{\T E}\T^EE'\xrightarrow{\IND_2}l_{\T q}^\*\T^{\T E}\T^EE'
\end{align*}
Moreover, $\T'^\q$ sends a morphism $(\varphi,\psi)\colon(\q'_1\colon E'_1\to M'_1)\to(\q'_2\colon E_2'\to M_2')$ to $(\T'^M\varphi,\T'^E\psi)$. The distributors of $\T'^\q$ are pairs $(\xi^f,\xi^g)$ of distributors of $\T'^M$ and $\T'^E$;

\item[Indexed natural transformations] The structural indexed natural transformations of $\DB(\IND)$ are the same as for $\IND$.
\end{description}

\begin{theorem}
\label{theorem:differential-bundles-tangent-indexed-categories}
The $2$-category $\Indx$ of indexed categories admits the construction of differential bundles. Moreover, given a tangent indexed category $(\X,\TT;\IND,\TT')$, the tangent indexed category of differential bundles of $(\X,\TT;\IND,\TT')$ is the tangent indexed category $\DB(\X,\TT;\IND,\TT')=(\DB(\X,\TT),\DB(\IND),\DB(\TT'))$.
\end{theorem}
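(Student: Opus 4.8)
The plan is to mirror the proof of Theorem~\ref{theorem:differential-objects-tangent-indexed-categories} and reduce everything to the fibrational case via the Grothendieck correspondence. First I would invoke Proposition~\ref{proposition:equivalence-differential-bundles}, which states that a $2$-equivalence between $2$-categories of tangentads preserves the construction of differential bundles. Applying this to the Grothendieck $2$-equivalence $\TngFib\simeq\TngIndx$ of~\cite[Theorem~5.5]{lanfranchi:grothendieck-tangent-cats}, together with Theorem~\ref{theorem:differential-bundles-tangent-fibrations} (which establishes that $\Fib$ admits the construction of differential bundles), immediately yields that $\Indx$ admits the construction of differential bundles as well. This handles the first assertion of the theorem essentially formally.

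The substantive part is identifying the transported construction with the explicit tangent indexed category $(\DB(\X,\TT),\DB(\IND),\DB(\TT'))$ described just before the statement. To do this, let $\Pi_{(\IND,\TT')}$ denote the tangent fibration associated with $(\X,\TT;\IND,\TT')$ under the Grothendieck construction, with total tangent category the category of elements of $(\IND,\TT')$. By Lemma~\ref{lemma:differential-bundles-tangent-fibration} and Theorem~\ref{theorem:differential-bundles-tangent-fibrations}, the tangentad of differential bundles of $\Pi_{(\IND,\TT')}$ is the tangent fibration $\DB(\Pi_{(\IND,\TT')})$. I would then transport this fibration back across the inverse of the Grothendieck equivalence and verify that the resulting indexed structure agrees, component by component, with the construction spelled out above.

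Concretely, the verification proceeds by unwinding the fibre of $\DB(\Pi_{(\IND,\TT')})$ over a differential bundle $\q\colon E\to M$ of $\DB(\X,\TT)$. A differential bundle in the category of elements lying over $\q$ is precisely a choice of objects $M'\in\X^M$ and $E'\in\X^E$ together with reindexing morphisms $q'\colon E'\to q^\*M'$, $z_q'\colon M'\to z_q^\*E'$, $s_q'\colon E_2'\to s_q^\*E'$, and $l_q'\colon E'\to l_q^\*\T'E'$ satisfying the differential-bundle axioms internally — that is, exactly an object of $\DB^{\q}$. One then checks that the reindexing functors $(f,g)^\*$ and the indexed tangent bundle functor $\T'^{\q}$ obtained from the Grothendieck data coincide with the formulas written for $\DB(\IND)$ and $\DB(\TT')$, with the distributors and compositors $\IND_2$, $\xi^f$ appearing exactly where the coherence isomorphisms of the fibration get rewritten as reindexing coherences.

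The main obstacle I expect is this last bookkeeping step: tracking how the display condition and the universality of the vertical lift for the differential bundle $(\q,\q')$ in the total category unwind into the indexed data, and verifying that the various pullbacks along $q$, $z_q$, $s_q$, and $l_q$ translate correctly through the distributors $\xi^f$ and compositors $\IND_2$. In particular, the formula defining $l_{\T q}'$ via the chain involving $c'$ and $\IND_2$ must be shown to match the vertical lift of the transported bundle. Since this is routine but lengthy diagram-chasing of the same flavour as the differential-objects computation, I would state the correspondence and leave the detailed verification to the reader, exactly as in Theorem~\ref{theorem:differential-objects-tangent-indexed-categories}.
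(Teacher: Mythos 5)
Your proposal matches the paper's approach exactly: the paper proves the analogous statement for differential objects by combining the equivalence-preservation result (here Proposition~\ref{proposition:equivalence-differential-bundles}) with the Grothendieck $2$-equivalence and the fibrational theorem (here Theorem~\ref{theorem:differential-bundles-tangent-fibrations}), and then identifies the transported construction with the explicit indexed data by unwinding fibres, leaving the bookkeeping to the reader. Your plan, including the component-by-component verification against $\DB(\IND)$ and $\DB(\TT')$ via the compositors $\IND_2$ and distributors $\xi^f$, is precisely what the paper intends.
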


\subsubsection*{Tangent split restriction categories}
\label{subsubsection:differential-bundles-tangent-split-restriction-categories}
In this section, we consider the construction of differential bundles for tangent split restriction categories.

\begin{lemma}
\label{lemma:differential-bundles-tangent-split-restriction-categories}
Consider a tangent split restriction category $(\X,\TT)$ and let $\DB(\X,\TT)$ be the category whose objects are differential bundles $\q\colon E\to M$ in the tangent subcategory $\Tot(\X,\TT)$ and morphisms are morphisms $(f,g)$ of differential bundles such that also their restriction idempotents $(\bar f,\bar g)$ are also morphisms of differential bundles. The tangent split restriction structure on $(\X,\TT)$ lifts to $\DB(\X,\TT)$. In particular, the tangent bundle functor sends each differential bundle $\q\colon E\to M$ to $\T^\DB\q$, where
\begin{align*}
&l_{\T q}\colon\T E\xrightarrow{l_q}\T^2E\xrightarrow{c}\T^2E
\end{align*}
\end{lemma}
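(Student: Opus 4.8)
The plan is to mirror the strategy of Lemma~\ref{lemma:differential-objects-tangent-split-restriction-categories}, transporting it from differential objects to differential bundles. First I would equip $\DB(\X,\TT)$ with a restriction operation defined componentwise: the restriction of a linear morphism $(f,g)\colon\q\to\q'$ is the pair $(\bar f,\bar g)$. By the very definition of the morphisms of $\DB(\X,\TT)$—linear morphisms whose restriction idempotents are again morphisms of differential bundles—this pair is again a morphism of differential bundles, so the operation is well defined. The four restriction axioms are then inherited directly from $(\X,\TT)$, since composition and restriction in $\DB(\X,\TT)$ are computed componentwise on the total objects $E$ and the base objects $M$. This establishes that $\DB(\X,\TT)$ is a restriction category.

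Next I would verify that the restriction idempotents split. A restriction idempotent on $\q\colon E\to M$ is a morphism $(e_E,e_M)\colon\q\to\q$ with $e_E=\bar{e_E}$ and $e_M=\bar{e_M}$, each a restriction idempotent of $(\X,\TT)$, and together forming a linear morphism of differential bundles. Since $(\X,\TT)$ is split, $e_E$ and $e_M$ split through objects $E'$ and $M'$ with their associated sections and retractions. Because $(e_E,e_M)$ commutes with the projection $q$, the zero $z_q$, the sum $s_q$, and the vertical lift $l_q$, these structural morphisms descend along the splittings to endow $E'\to M'$ with a differential bundle structure $\q'$, and the section/retraction pairs become linear morphisms splitting $(e_E,e_M)$ in $\DB(\X,\TT)$.

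Finally, to lift the tangent structure I would invoke the same reduction as in the differential objects case: a tangent restriction structure on a split restriction category is equivalent to a tangent structure on its subcategory of total maps, because restriction products there coincide with total products and the restriction-pullback limits (in particular the one witnessing universality of the vertical lift) become genuine limits. The total maps of $\DB(\X,\TT)$ are precisely the differential bundles of the total tangent category $\Tot(\X,\TT)$ together with total linear morphisms, i.e.\ the tangent category $\DB(\Tot(\X,\TT))$. By Lemma~\ref{lemma:DB-functoriality} the tangent structure of $\Tot(\X,\TT)$ lifts to $\DB(\Tot(\X,\TT))$, with the tangent bundle functor acting by the formula displayed in the statement. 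Hence $\TT^\DB$ restricts to a genuine tangent structure on $\Tot(\DB(\X,\TT))$ and therefore defines a tangent restriction structure on $\DB(\X,\TT)$, making it a tangent split restriction category.

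I expect the main obstacle to lie in the second paragraph—confirming that the restriction splittings assemble coherently into a differential bundle. In particular one must check that the fibred pullback $E'_2$ carrying the domain of the descended sum is the splitting of $E_2$ along the idempotent induced by $(e_E,e_M)$, and that the (restriction) universality pullback of the vertical lift is preserved by the splitting; both should follow from the compatibility of restriction splittings with the restriction tangent pullbacks of $(\X,\TT)$, but this is the point requiring genuine verification rather than a purely componentwise argument.
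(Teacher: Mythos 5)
Your proposal is correct and takes essentially the same route the paper intends: the paper gives no separate argument for this lemma, deferring to the proof of Lemma~\ref{lemma:differential-objects-tangent-split-restriction-categories}, which proceeds exactly as you do --- the splittings lift because the restriction idempotents commute with the structural morphisms, and the tangent restriction structure is recovered from the tangent structure on the subcategory of total maps, which is $\DB(\Tot(\X,\TT))$. Your second and final paragraphs merely spell out the splitting verification (in particular the compatibility with the restriction pullbacks $E_2$ and the universality of the vertical lift) that the paper leaves implicit.
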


\begin{definition}
\label{definition:restriction-differential-bundle}
A \textbf{restriction differential bundle} in a Cartesian tangent split restriction category $(\X,\TT)$ is an object of $\DB(\X,\TT)$.
\end{definition}

Concretely, a restriction differential bundle consists of two objects $M$ and $E$ and four total maps
\begin{align*}
q&\colon E\to M                     &z_q&\colon M\to E\\
s_q&\colon E_2\to E                 &l_q&\colon E\to\T E
\end{align*}
where $E_n$ is the restriction $n$-fold pullback of $q$ along itself:
\begin{equation*}
\begin{tikzcd}
{E_n} & E \\
E & M
\arrow["{\pi_n}", from=1-1, to=1-2]
\arrow["{\pi_1}"', from=1-1, to=2-1]
\arrow["\lrcorner"{anchor=center, pos=0.125}, draw=none, from=1-1, to=2-2]
\arrow["\dots"{marking, allow upside down}, shift right=3, draw=none, from=1-2, to=2-1]
\arrow["q", from=1-2, to=2-2]
\arrow["q"', from=2-1, to=2-2]
\end{tikzcd}
\end{equation*}
The total maps $q$, $z_q$, $s_q$, $l_q$ satisfy the same equational axioms of a differential bundle in ordinary tangent category theory. However, the universality of the vertical lift is replaced with the following axiom: the following is a restriction pullback diagram
\begin{equation*}
\begin{tikzcd}
{E_2} & {\T E} \\
M & {\T M}
\arrow["{\xi_q}", from=1-1, to=1-2]
\arrow["{\pi_1q}"', from=1-1, to=2-1]
\arrow["\lrcorner"{anchor=center, pos=0.125}, draw=none, from=1-1, to=2-2]
\arrow["{\T q}", from=1-2, to=2-2]
\arrow["z"', from=2-1, to=2-2]
\end{tikzcd}
\end{equation*}
whose universal property is preserved by the tangent bundle functor, and where $\xi_q$ is the morphism:
\begin{align*}
\xi_q&\colon E_2\xrightarrow{\<\pi_1l_q,\pi_2z\>}\T E_2\xrightarrow{\T s_q}\T E
\end{align*}

\begin{theorem}
\label{theorem:differential-bundles-tangent-split-restriction-categories}
The $2$-category $\sRestrCat$ of split restriction categories admits the construction of differential bundles. Moreover, the tangentad of differential bundles of a Cartesian tangent split restriction category $(\X,\TT)$ is the Cartesian tangent split restriction category $\DB(\X,\TT)$.
\end{theorem}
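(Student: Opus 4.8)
The plan is to follow the argument of Theorem~\ref{theorem:differential-objects-tangent-split-restriction-categories}, adapting it from differential objects to differential bundles. Lemma~\ref{lemma:differential-bundles-tangent-split-restriction-categories} already shows that the tangent split restriction structure of $(\X,\TT)$ lifts to $\DB(\X,\TT)$, and the two projections $\Base,\Tot\colon\DB(\X,\TT)\to(\X,\TT)$ onto the base and total objects are evidently strict restriction tangent morphisms carrying the universal pointwise display differential bundle $\UnivQ\colon\Tot\to\Base$ of Proposition~\ref{proposition:universality-differential-bundle}. Thus, by Definition~\ref{definition:construction-differential-bundles}, the only thing left to verify is that the induced tangent natural transformation $\Gamma_{\UnivQ}$ is invertible for every tangent split restriction category $(\X',\TT')$.

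For invertibility I would start from a pointwise display differential bundle $\q\colon(K,\theta)\to(G,\beta)$ in the Hom-tangent category $[\X',\TT'\|\X,\TT]$: this consists of two lax tangent morphisms $(G,\beta),(K,\theta)\colon(\X',\TT')\to(\X,\TT)$, i.e.\ restriction functors $G,K\colon\X'\to\X$ compatible with the tangent structures, together with total tangent natural transformations $q,z_q,s_q,l_q$ whose components at each $A\in\X'$ assemble into a restriction differential bundle $(q_A\colon KA\to GA,{z_q}_A,{s_q}_A,{l_q}_A)$ of $(\X,\TT)$ in the sense of Definition~\ref{definition:restriction-differential-bundle}. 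Exactly as in Lemma~\ref{lemma:universality-differential-bundles}, I would define a lax tangent morphism $(H,\gamma)\colon(\X',\TT')\to\DB(\X,\TT)$ sending each object $A$ to this restriction differential bundle, each morphism $f\colon A\to B$ to the pair $(Gf,Kf)$, with distributive law $\gamma$ induced by $\beta$ and $\theta$.

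The genuinely restriction-theoretic content lies in checking that $(H,\gamma)$ is well-defined into $\DB(\X,\TT)$. Naturality of $q,z_q,s_q,l_q$ makes $(Gf,Kf)$ a linear morphism of differential bundles; but, by Lemma~\ref{lemma:differential-bundles-tangent-split-restriction-categories}, membership in $\DB(\X,\TT)$ further demands that the restriction idempotents $(\overline{Gf},\overline{Kf})$ also form a morphism of differential bundles. Since $G$ and $K$ are restriction functors, $\overline{Gf}=G\bar f$ and $\overline{Kf}=K\bar f$, and applying naturality of $q$ and of $l_q$ to the idempotent $\bar f\colon A\to A$ of $(\X',\TT')$ shows precisely that $(G\bar f,K\bar f)$ commutes with the projections and the vertical lifts. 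The same identity yields $H\bar f=\overline{Hf}$, so $H$ is a restriction functor. In the opposite direction, postcomposing any lax tangent morphism $(\X',\TT')\to\DB(\X,\TT)$ with $\Base$ and $\Tot$ recovers $G$ and $K$, while evaluating $\UnivQ$ recovers the structure maps; I would verify these two assignments are mutually inverse and that the inverse extends strictly to the tangent structure, precisely as in the final paragraph of the proof of Theorem~\ref{theorem:universality-differential-bundles}.

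The step I expect to be the main obstacle is confirming that the \emph{pointwise} universality axiom satisfied by $\q$ in the Hom-tangent category is witnessed objectwise by the restriction pullback square of Definition~\ref{definition:restriction-differential-bundle}: one must check that the pointwise limits computed in $[\X',\TT'\|\X,\TT]$ coincide with the restriction pullbacks of the target $(\X,\TT)$, so that the universality of $\Univ{l_q}$ transports to the objectwise universality of each ${l_q}_A$. Granting this compatibility between pointwise and restriction limits, uniqueness of $(H,\gamma)$ follows formally from the universal property, completing the verification that $\Gamma_{\UnivQ}$ is invertible and hence that $\DB(\X,\TT)$ is the tangentad of display differential bundles of $(\X,\TT)$.
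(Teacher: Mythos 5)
Your proposal is correct and follows essentially the route the paper intends: the paper omits the proof, deferring to the argument of Theorem~\ref{theorem:differential-objects-tangent-split-restriction-categories}, and your adaptation (building $(H,\gamma)$ from a pointwise display differential bundle, checking that restriction idempotents yield morphisms of differential bundles via the restriction-functor identities $\overline{Gf}=G\bar f$, $\overline{Kf}=K\bar f$, and invoking the coincidence of restriction pullbacks with total pullbacks in the split case to transport pointwise universality) is exactly that argument. You in fact supply more detail than the paper does on the two genuinely restriction-theoretic points.
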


\subsubsection*{Tangent restriction categories: a general approach}
\label{subsubsection:differential-bundles-tangent-restriction-categories}
To extend the construction of differential bundles to tangent restriction categories, we adopt a similar technique to the one used to extend the constructions of vector fields in~\cite[Section~5]{lanfranchi:tangentads-II} and of differential objects in Section~\ref{subsection:examples-differential-objects}, that is, via a suitable $2$-pullback. The main difference with Section~\ref{subsection:examples-differential-objects} is the ambient $2$-category where we need to consider the $2$-pullback.
\par The issue is that the construction of differential bundles involves not one but two forgetful $1$-morphisms $\Base,\Tot\colon\DB(\X,\TT)\to(\X,\TT)$ which project a differential bundle in $(\X,\TT)$ to its base and its total spaces, respectively. The purpose of the $2$-pullbacks of Equation~\eqref{equation:pullback-extension-differential-objects} was to restrict to those restriction vector fields and restriction differential objects in $\Split_R(\X,\TT)$ whose underlying object was coming directly from $(\X,\TT)$ via the unit $\eta$. For differential bundles, we need to restrict to those restriction differential bundles $\q\colon  E\to M$ of $\Split_R(\X,\TT)$ whose base and total objects $M$ and $E$ come directly from objects of $(\X,\TT)$ via $\eta$.
\par To solve this problem, we consider the $2$-category of parallel $1$-morphisms.

\begin{definition}
\label{definition:parallel-1-morphisms}
The \textbf{$2$-category of $n$ parallel morphisms} of a $2$-category $\DD$ is the $2$-category $\Parall_n(\DD)$ whose objects are the objects of $\DD$, a $1$-morphism from an object $\X$ to another object $\X'$ consists of a $n$-tuple $(f_1\,f_n)$ of parallel $1$-morphisms $f_1\,f_n\colon\X\to\X'$ of $\DD$, and a $2$-morphism $(\varphi_1\,\varphi_n)$ from a $1$-morphism $(f_1\,f_n)$ to a morphism $(g_1\,g_n)$ consists of a $n$-tuple $\varphi_1\colon f_1\Rightarrow g_1\,\varphi_n\colon f_n\Rightarrow g_n$ of $2$-morphisms of $\DD$.
\end{definition}

\par Consider a pullback-extension context, that is, two $2$-categories $\CC$ and $\DD$ together with two $2$-functors
\begin{align*}
&\Xi\colon\DD\leftrightarrows\Tng(\CC)\colon\Inc
\end{align*}
together with a natural $2$-transformation
\begin{align*}
\eta_\X&\colon\X\to\Inc(\Xi(\X))
\end{align*}
natural in $\X\in\DD$. Let us also consider an object $\X$ of $\DD$ such that the tangentad $\Xi(\X)$ of $\CC$ admits the construction of differential bundles. Finally, let us assume the existence of the following $2$-pullback in $\Parall_2(\DD)$:
\begin{equation}
\label{equation:pullback-extension-differential-bundles}
\begin{tikzcd}
{\DB(\X)} & {\Inc(\DB(\Xi(\X))} \\
\X & {\Inc(\Xi(\X))}
\arrow["{\DB(\eta,\eta)}", from=1-1, to=1-2]
\arrow["{(\Base_\X,\Tot_\X)}"', from=1-1, to=2-1]
\arrow["\lrcorner"{anchor=center, pos=0.125}, draw=none, from=1-1, to=2-2]
\arrow["{(\Inc(\Base_{\Xi(\X)}),\Inc(\Tot_{\Xi(\X)}))}", from=1-2, to=2-2]
\arrow["{(\eta,\eta)}"', from=2-1, to=2-2]
\end{tikzcd}
\end{equation}

\begin{definition}
\label{definition:differential-bundles-extension}
Let $(\CC,\DD;\Inc,\Xi;\eta)$ be a pullback-extension context. An object $\X$ of $\DD$ admits the \textbf{extended construction of differential bundles} (w.r.t. to the pullback-extension context) if $\Xi(\X)$ admits the construction of differential bundles in $\CC$, and the $2$-pullback diagram of Equation~\eqref{equation:pullback-extension-differential-bundles} exists in $\Parall_2(\DD)$. In this scenario, the \textbf{object of differential bundles} (w.r.t. to the pullback-extension context) of $\X$ is the object $\DB(\X)$ of $\DD$, that is, the $2$-pullback of $(\Inc(\Base_{\Xi(\X)}),\Inc(\Tot_{\Xi(\X)}))$ along $(\eta,\eta)$.
\end{definition}

Consider a generic tangent restriction category $(\X,\TT)$ and let us unwrap the definition of $\DB(\Split_R(\X,\TT))$. The objects of $\DB(\Split_R(\X,\TT))$ are tuples $(E,M,q,z_q,s_q,l_q)$ formed by two objects $M$ and $E$ of $(\X,\TT)$ together with four morphisms
\begin{align*}
q&\colon E\to M         &z_q&\colon M\to E\\
s_q&\colon E_2\to E     &l_q&\colon E\to\T E
\end{align*}
such that:
\begin{align*}
&\bar z_q\o q=q                     &&\bar q\o z_q=z_q\\
&\bar{s_q}=\bar q\times_M\bar q     &&\bar q\o s_q=s_q\\
&\bar{l_q}=\bar q                   &&\T\bar q\o l_q=l_q
\end{align*}
Moreover, $\zeta_A$, $\sigma_A$, $\hat p_A$ satisfy the axioms of a restriction differential bundle.
\par A morphism $(f,g)\colon(\q_1\colon E_1\to M_1)\to(\q_2\colon E_2\to M_2)$ of $\DB(\Split_R(\X,\TT))$ consists of a pair of morphisms $f\colon M_1\to M_2$ and $g\colon E_1\to E_2$ of $(\X,\TT)$ which commutes with the structural morphism of the differential bundles and with their restriction idempotents. The tangent bundle functor sends each $\q\colon E\to M$ to $\T^\DB\q\colon\T E\to\T M$.
\par Now, consider a tangent (non-necessarily split) restriction category $(\X,\TT)$ and define $\DB(\X,\TT)$ to be the full subcategory of $\DB(\Split_R(\X,\TT))$ spanned by the objects $\q\colon E\to M$ where $q$, $z_q$, $s_q$, and $l_q$ are total maps in $(\X,\TT)$, which is precisely when $q$ and $z_q$ are total.

\begin{lemma}
\label{lemma:differential-bundles-tangent-restriction-categories}
Let $(\X,\TT)$ be a tangent restriction category. The subcategory $\DB(\X,\TT)$ of $\DB(\Split_R(\X,\TT))$ spanned by the objects $\q\colon E\to M$ where $q$ and $z_q$ are total in $(\X,\TT)$ is a tangent restriction category.
\end{lemma}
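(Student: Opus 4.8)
The plan is to realise $\DB(\X,\TT)$ as a full subcategory of the tangent split restriction category $\DB(\Split_R(\X,\TT))$ from Theorem~\ref{theorem:differential-bundles-tangent-split-restriction-categories} that is closed under all the relevant structure, so that the tangent restriction structure is obtained simply by restriction. First I would note that $\DB(\X,\TT)$ inherits a restriction structure: it is full, and the restriction idempotent $\bar f$ of a morphism $f\colon\q_1\to\q_2$ is an endomorphism of $\q_1$, hence already a morphism of $\DB(\X,\TT)$; the restriction axioms then hold because they hold in the ambient category.

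Next I would verify that the tangent bundle functor $\T^\DB$ of $\DB(\Split_R(\X,\TT))$ restricts to $\DB(\X,\TT)$. In a tangent restriction category the endofunctor $\T$ preserves restriction idempotents, and therefore total maps, since $\bar{\T f}=\T\bar f$; thus if $q$ and $z_q$ are total then so are $\T q$ and $\T z_q$, and $\T^\DB\q$ again lies in $\DB(\X,\TT)$. Consequently every functor built from $\T^\DB$, in particular $(\T^\DB)^n$ and $\T^\DB_n$, preserves the subcategory, and the structural total natural transformations $p$, $z$, $s$, $l$, $c$ of $\DB(\Split_R(\X,\TT))$ have, on objects of $\DB(\X,\TT)$, components with source and target again in $\DB(\X,\TT)$; by fullness these components lie in $\DB(\X,\TT)$, so the entire tangent structure restricts.

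The substantive step, exactly as in the deferred proof of Lemma~\ref{lemma:differential-objects-tangent-restriction-categories}, is to check the two restriction-pullback axioms: the $n$-fold restriction pullback of the projection along itself, and the restriction pullback expressing universality of the vertical lift (the square built from $\xi_q$). Here I would invoke the general principle that a restriction pullback in $\DB(\Split_R(\X,\TT))$ whose apex happens to lie in $\DB(\X,\TT)$ is automatically a restriction pullback in the full subcategory: any test cone in $\DB(\X,\TT)$ is a cone in the ambient category, its unique factorisation exists there, and fullness returns it to $\DB(\X,\TT)$. It therefore suffices to prove that the apex of each of these restriction pullbacks, as computed in $\DB(\Split_R(\X,\TT))$, is again a differential bundle with total projection and total zero, and that $\T^\DB$ preserves it; the latter is inherited from its preservation in $\DB(\Split_R(\X,\TT))$.

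The genuine obstacle is the bookkeeping in this last step. For a differential bundle $\q\colon E\to M$ with $q$ and $z_q$ total, one must unwind precisely which restriction fibre products of the prolongations $\T^k E$ and $\T^k M$ form the apices of the two pullbacks and confirm that the totality of $q$ and $z_q$ propagates through each induced structure map. I would organise this by first treating the projection pullback, using that totality is stable under restriction fibre products formed along total maps, and then the vertical-lift universality, using the explicit morphism $\xi_q$ from the restriction differential bundle axiom; the remaining diagram chases are routine and may be left to the reader.
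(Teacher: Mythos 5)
Your outline is sound and follows the route the paper intends: the paper itself gives no proof of this lemma (just as the analogous Lemma~\ref{lemma:differential-objects-tangent-restriction-categories} is explicitly left to the reader), and your reduction --- fullness plus closure under restriction idempotents, preservation of totality by $\T$ (via $\overline{\T f}=\T\bar f$), and the observation that a restriction pullback of the ambient category whose apex lies in a full restriction subcategory is a restriction pullback there --- is exactly the expected argument, with the remaining totality bookkeeping being the ``tedious but straightforward'' part. The one small imprecision is that the square built from $\xi_q$ is the universality axiom of the individual differential bundles (a condition on objects, already guaranteed by membership in $\DB(\Split_R(\X,\TT))$) rather than part of the tangent restriction structure on $\DB(\X,\TT)$; the restriction pullback you actually need to verify there, besides the $n$-fold pullbacks of the projection, is the local-linearity square of Definition~\ref{definition:tangentad}, but your apex-in-subcategory argument applies to it verbatim.
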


We can finally prove the main theorem of this section.

\begin{theorem}
\label{theorem:differential-bundles-tangent-restriction-categories}
Consider the pullback-extension context $(\RestrCat,\TngRestrCat;\Inc,\Split_R;\eta)$ of tangent restriction categories. The $2$-category $\TngRestrCat$ admits the extended construction of differential bundles with respect to this pullback-extension context. Moreover, the object of differential bundles of a tangent restriction category is the tangent restriction category $\DB(\X,\TT)$.
\end{theorem}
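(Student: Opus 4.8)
The plan is to verify the two requirements of Definition~\ref{definition:differential-bundles-extension} for the pullback-extension context $(\RestrCat,\TngRestrCat;\Inc,\Split_R;\eta)$ and then to identify the resulting $2$-pullback with the tangent restriction category $\DB(\X,\TT)$ constructed above. The first requirement, that $\Split_R(\X,\TT)$ admit the construction of differential bundles, is immediate: since $\Split_R(\X,\TT)$ is a tangent split restriction category, hence a tangentad in $\sRestrCat$, Theorem~\ref{theorem:differential-bundles-tangent-split-restriction-categories} supplies the tangentad of differential bundles $\DB(\Split_R(\X,\TT))$ together with its forgetful $1$-morphisms $\Base$ and $\Tot$. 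It therefore remains to compute the $2$-pullback of Equation~\eqref{equation:pullback-extension-differential-bundles} inside $\Parall_2(\TngRestrCat)$, to show it exists, and to show it equals $\DB(\X,\TT)$.

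The reason this $2$-pullback is taken in $\Parall_2(\TngRestrCat)$ rather than in $\TngRestrCat$ itself is that the construction of differential bundles carries two forgetful $1$-morphisms $\Base,\Tot\colon\DB(\Split_R(\X,\TT))\to\Split_R(\X,\TT)$, and we must simultaneously constrain both the base and the total objects to come from $(\X,\TT)$. Packaging $(\Base,\Tot)$ as a single $1$-morphism of $\Parall_2(\TngRestrCat)$ and pulling back along $(\eta,\eta)$ achieves exactly this. Following the pattern of the proof of Theorem~\ref{theorem:differential-objects-tangent-restriction-categories}, I would compute the pullback objectwise: since $\eta$ embeds $(\X,\TT)$ into $\Split_R(\X,\TT)$ as those objects carrying the trivial restriction idempotent, an object of the $2$-pullback is a differential bundle $\q\colon E\to M$ of $\DB(\Split_R(\X,\TT))$ whose base object $M$ (seen through $\Base$) and total object $E$ (seen through $\Tot$) both lie in the image of $\eta$, that is, both carry trivial idempotents.

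The technical heart, and the step I expect to be the main obstacle, is to translate the triviality of these two idempotents into an intrinsic condition on the structure maps of $\q$. Following the idempotent computation of~\cite[Theorem~5.11]{lanfranchi:tangentads-II} and of the proof of Theorem~\ref{theorem:differential-objects-tangent-restriction-categories}, the idempotents carried by the base and total objects are recovered from the projection $q$ and the zero section $z_q$ (concretely from $q\o z_q$ and from $\bar q$), so that both idempotents are trivial precisely when $q$ and $z_q$ are total in $(\X,\TT)$. Since the defining relations of $\DB(\Split_R(\X,\TT))$ give $\bar{l_q}=\bar q$ and $\bar{s_q}=\bar q\times_M\bar q$, totality of $q$ forces $l_q$ and $s_q$ to be total as well; hence the objects of the $2$-pullback are exactly those restriction differential bundles all of whose structure maps are total, which is the definition of $\DB(\X,\TT)$. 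Matching the morphisms is then routine: a morphism of the $2$-pullback is a compatible pair $(f,g)$ of maps of differential bundles whose restriction idempotents are again such maps, precisely a morphism of $\DB(\X,\TT)$. Invoking the universal property of the $2$-pullback together with Lemma~\ref{lemma:differential-bundles-tangent-restriction-categories}, which guarantees that $\DB(\X,\TT)$ is itself a tangent restriction category, I would conclude that $\DB(\X,\TT)$ is the object of differential bundles of $(\X,\TT)$; as $(\X,\TT)$ is arbitrary, $\TngRestrCat$ admits the extended construction of differential bundles.
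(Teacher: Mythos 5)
Your proposal is correct and follows essentially the same route the paper intends: the paper omits the proof of this theorem but indicates it is obtained from the differential-objects argument (Theorem~\ref{theorem:differential-objects-tangent-restriction-categories} and~\cite[Theorem~5.11]{lanfranchi:tangentads-II}) by computing the $2$-pullback of Equation~\eqref{equation:pullback-extension-differential-bundles} in $\Parall_2(\TngRestrCat)$ and characterizing its objects as the restriction differential bundles with $q$ and $z_q$ total, exactly as you do. One cosmetic remark: the base and total idempotents are read off as $\bar{z_q}$ and $\bar q$ respectively (per the defining relations $\bar{z_q}\o q=q$ and $\bar q\o z_q=z_q$), not from the composite $q\o z_q$ as your parenthetical suggests, but this does not affect your conclusion.
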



\section{The formal theory of connections}
\label{section:connections}
The notion of connections on differential bundles, introduced in~\cite{cockett:connections}, extends the well-studied theory of connections on vector bundles, which is a key geometric concept of differential geometry and Riemannian geometry. Notions like covariant derivative, curvature, torsion, and parallel transport of a connection have been successfully extended to tangent category theory. In this section, we formalize this notion in the context of tangentads.
\par For starters, we recall the definition of a connection on a differential bundle in a tangent category and recall the construction of the tangent categories of connections. In Sections~\ref{subsection:universal-property-linear-connections} and~\ref{subsection:universal-property-affine-connections}, we identify the correct universal property enjoyed by the tangent categories of connections. In order to extend the notion of a covariant derivative of a connection, in Section~\ref{subsection:universal-property-linear-sections}, we first identify the correct universal property that classifies sections of differential bundles equipped with a connection. Finally, in Section~\ref{subsection:structures-connections}, we discuss the functoriality of the constructions and formally construct the covariant derivative, the curvature tensor, and the torsion tensor of connections.

\subsection{Connections in tangent category theory}
\label{subsection:tangent-category-connections}
A connection on a differential bundle is composed of two parts: a vertical connection and a horizontal connection. Let us first recall the notion of a vertical connection

\begin{definition}[{\cite[Definition~3.2]{cockett:connections}}]
\label{definition:linear-vertical-connection}
Given a display differential bundle $\q\colon E\to M$ of a tangent category $(\X,\TT)$, a \textbf{vertical linear connection} on a differential bundle consists of a morphism $k\colon\T E\to E$, satisfying the following conditions:
\begin{enumerate}
\item $k$ is a retract of the vertical lift $l_q\colon E\to\T E$, that is:
\begin{align*}
&l_qk=\id_E
\end{align*}

\item Linearity (1):
\begin{align*}
&(p,k)\colon\T^\LC\q\to\q
\end{align*}
is a linear morphism of differential bundles;

\item Linearity (2):
\begin{align*}
(q,k)\colon\p_E\to\q
\end{align*}
is a linear morphism of differential bundles.
\end{enumerate}
When the differential bundle $\q$ is the tangent bundle $\p_M$, $k$ is called an \textbf{vertical affine connection} on $M$.
\end{definition}

To introduce horizontal connections, recall that the horizontal bundle of a tangent display map $q\colon E\to M$ is the pullback of $q$ along the tangent bundle projection $p\colon\T M\to M$, that is:
\begin{equation*}
\begin{tikzcd}
{\H q} & E \\
{\T M} & M
\arrow[from=1-1, to=1-2]
\arrow[from=1-1, to=2-1]
\arrow["\lrcorner"{anchor=center, pos=0.125}, draw=none, from=1-1, to=2-2]
\arrow["q", from=1-2, to=2-2]
\arrow["p"', from=2-1, to=2-2]
\end{tikzcd}
\end{equation*}
Since $\pi_2\colon\H q\to E$ is the pullback of a differential bundle (the tangent bundle), $\pi_2$ becomes a differential bundle, which is called the \textbf{horizontal bundle} of $q$.

\begin{definition}[{\cite[Definition~4.5]{cockett:connections}}]
\label{definition:linear-horizontal-connection}
Given a display differential bundle $\q\colon E\to M$ of a tangent category $(\X,\TT)$, a \textbf{horizontal linear connection} on a differential bundle consists of a morphism $h\colon\T E\to\H q$, satisfying the following conditions:
\begin{enumerate}
\item $h$ is a section of $\<\T q,p\>\colon\T E\to \T M\times_M E=\H q$:
\begin{align*}
&h\<\T q,p\>=\id_{\H q}
\end{align*}

\item Linearity (1):
\begin{align*}
&(\id_E,h)\colon\pi_2\to\p_E
\end{align*}
is a linear morphism of differential bundles, where $\pi_2\colon\H q\to E$ denotes the horizontal bundle of $q$;

\item Linearity (2):
\begin{align*}
(\id_{\T M},h)\colon\pi_2\to\T^\LC\q
\end{align*}
is a linear morphism of differential bundles, where $\pi_2\colon\H q\to E$ denotes the horizontal bundle of $q$.
\end{enumerate}
When the differential bundle $\q$ is the tangent bundle $\p_M$, $h$ is called an \textbf{horizontal affine connection} on $M$.
\end{definition}

\begin{definition}[{\cite[Definition~5.2]{cockett:connections}}]
\label{definition:connection}
A \textbf{linear connection} on a display differential bundle $\q\colon E\to M$ consists of a vertical linear connection $k\colon E\to\T E$ together with a horizontal linear connection $h\colon\T E\to \H q$ satisfying the following two conditions:
\begin{itemize}
\item The vertical and the horizontal components are orthogonal:
\begin{equation*}
\begin{tikzcd}
\H q && {\T E} \\
E & M & E
\arrow["h", from=1-1, to=1-3]
\arrow["{\pi_2}"', from=1-1, to=2-1]
\arrow["k", from=1-3, to=2-3]
\arrow["q"', from=2-1, to=2-2]
\arrow["{z_q}"', from=2-2, to=2-3]
\end{tikzcd}
\end{equation*}

\item The vertical and the horizontal components are in direct sum to each other:
\begin{align*}
&\<k,p\>(z\times l)\T s_q+\<\T q,p\>h=\id_{\T E}
\end{align*}
\end{itemize}
When the differential bundle $\q$ is the tangent bundle $\p_M$, a linear connection $(k,h)$ is called an \textbf{affine connection}.
\end{definition}

As shown by Cockett and Cruttwell, a (vertical) linear connection on a differential bundle comes with a notion of a covariant derivative.

\begin{definition}[{\cite[Definition~3.15]{cockett:connections}}]
\label{definition:covariant-derivative}
The \textbf{covariant derivative} of a vertical linear connection $k$ of a differential bundle $\q\colon E\to M$ consists of a combinator $\nabla^k$ which sends a vector field $v\colon M\to\T M$ of $M$ and a section $s\colon M\to E$ of $q$ to a section $\nabla^k_vs$ of $q$ so defined:
\begin{align*}
&\nabla^k_vs\colon M\xrightarrow{v}\T M\xrightarrow{\T s}\T E\xrightarrow{k}E
\end{align*}
\end{definition}

Furthermore, when the tangent category has negatives, a (vertical) linear connection on a differential bundle comes with a notion of curvature.

\begin{definition}[{\cite[Definitions~3.20,~3.21]{cockett:connections}}]
\label{definition:curvature}
The \textbf{curvature} $\Curv^k$ of a vertical linear connection $k$ of a differential bundle $\q\colon E\to M$ is the morphism so defined:
\begin{align*}
&\Curv^k\=k\o\T k-k\o\T k\o c\colon\T^2E\to E
\end{align*}
Furthermore, the \textbf{curvature tensor} of $k$ is the combinator $\Riem^k$ which sends two vector fields $u,v\colon M\to\T M$ of $M$ and a section $s\colon M\to E$ of $q$ to the section $\Riem^k_{u,v}s$ of $q$ so defined:
\begin{align*}
&\Riem^k_{u,v}s\colon M\xrightarrow{v}\T M\xrightarrow{\T u}\T^2M\xrightarrow{\T^2s}\T^2E\xrightarrow{\Curv^k}E
\end{align*}
\end{definition}

When the ambient tangent category has negatives, (vertical) affine connections carry a notion of torsion.

\begin{definition}[{\cite[Definitions~3.29,~3.30]{cockett:connections}}]
\label{definition:torsion}
The \textbf{torsion} of a vertical affine connection $k$ on an object $M$ is the morphism so defined:
\begin{align*}
&\Tors^k\=ck-k\colon\T^2M\to\T M
\end{align*}
Furthermore, the \textbf{torsion tensor} of $k$ is the combinator $\TorsTens^k$ which sends two vector fields $u,v\colon M\to\T M$ of $M$ to the vector field $\TorsTens^k_uv$ of $M$ so defined:
\begin{align*}
&\TorsTens^k_uv\colon M\xrightarrow{v}\T M\xrightarrow{\T u}\T^2M\xrightarrow{\Tors^k}\T M
\end{align*}
\end{definition}

For each tangent category $(\X,\TT)$, there is a tangent category denoted by $\VLC(\X,\TT)$ whose objects are pairs $(\q,k)$ formed by a display differential bundle $\q\colon E\to M$ together with a vertical linear connection $k\colon\T E\to E$ on $\q$ and morphisms $(f,g)\colon(\q,k)\to(\q',k')$ are linear morphisms $(f,g)\colon\q\to\q'$ of differential bundles that commute with the vertical connections, $g\o k=k'\o\T g$. The tangent bundle functor of $\VLC(\X,\TT)$ sends a pair $(\q,k)$ to $\T^\LC\q,k_\T)$, where
\begin{align*}
&k_\T\colon\T^2E\xrightarrow{c}\T^2E\xrightarrow{\T k}\T E
\end{align*}
and a morphism $(f,g)\colon(\q,k)\to(\q',k')$ to $(\T f,\T g)$. Finally, the structural natural transformations of $\VLC(\X,\TT)$ are the same as the ones of $(\X,\TT)$.
\par For each tangent category $(\X,\TT)$, there is also a tangent category denoted by $\HLC(\X,\TT)$, whose objects are pairs $(\q,h)$ formed by a display differential bundle $\q\colon E\to M$ together with a horizontal linear connection $h\colon\H\q\to\T E$ on $\q$ and morphisms $(f,g)\colon(\q,h)\to(\q',h')$ are linear morphisms $(f,g)\colon\q\to\q'$ of differential bundles, that commute with the horizontal connections, $\H g\o h=h'\o\T g$, where $\H g$ is the obvious morphism $\H g\colon\H q\to\H q'$ defined by the universal property of the horizontal bundle. The tangent bundle functor of $\HLC(\X,\TT)$ sends a pair $(\q,h)$ to $\T^\LC\q,h_\T)$ where:
\begin{align*}
&h_\T\colon H\T E=\T^2M\times_{\T M}\T E\xrightarrow{\<\pi_1c,\pi_2\>}\T(\T M\times_ME)=\T\H q\xrightarrow{\T h}\T^2E
\end{align*}
and a morphism $(f,g)\colon(\q,k)\to(\q',k')$ to $(\T f,\T g)$. Finally, the structural natural transformations of $\HLC(\X,\TT)$ are the same as the ones of $(\X,\TT)$.

\par Linear connections form also a tangent category denoted by $\LC(\X,\TT)$, whose objects are triples $(\q,k,h)$ formed by a display differential bundle $\q$ together with a linear connection $(k,h)$ on $\q$, and whose tangent structure acts on the vertical and the horizontal parts as in $\VLC(\X,\TT)$ and $\HLC(\X,\TT)$, respectively.

\par Similarly, vertical/horizontal affine connections form tangent categories, denoted by $\VAC(\X,\TT)$, $\HAC(\X,\TT)$, and $\AC(\X,\TT)$, respectively~\cite[Theorem~5.16]{blute:affine-spaces}.

\begin{lemma}
\label{lemma:LC-functoriality}
There are six $2$-functors
\begin{align*}
\VLC&\colon\TngCat_\Dsply\to\TngCat_\cong &\VAC&\colon\TngCat_\cong\to\TngCat_\cong\\
\HLC&\colon\TngCat_\Dsply\to\TngCat_\cong &\HAC&\colon\TngCat_\cong\to\TngCat_\cong\\
\LC&\colon\TngCat_\Dsply\to\TngCat_\cong  &\AC&\colon\TngCat_\cong\to\TngCat_\cong
\end{align*}
on the $2$-category of tangent categories and display-preserving strong tangent morphisms, which send a tangent category $(\X,\TT)$ to the tangent categories of (vertical/horizontal) linear/affine connections of $(\X,\TT)$.
\end{lemma}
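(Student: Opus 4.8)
The plan is to establish all six statements in parallel, since they share the same structure, and to reduce them to the already-established functoriality results for the constituent pieces. The key observation is that Lemma~\ref{lemma:DB-functoriality} already provides the $2$-functor $\DB\colon\TngCat_\Dsply\to\TngCat_\cong$, and the connection tangent categories are all built on top of $\DB(\X,\TT)$ by adjoining extra structure (the vertical connection $k$, the horizontal connection $h$, or both) satisfying equational and naturality conditions. So the first thing I would do is record that on objects each functor is well-defined: this is exactly the content of the constructions recalled just before the lemma, where the tangent bundle functor of $\VLC(\X,\TT)$ sends $(\q,k)$ to $(\T^\LC\q,k_\T)$ with $k_\T=\T k\o c$, and dually for $\HLC$ using $h_\T$, while $\LC$ combines the two and the affine versions $\VAC,\HAC,\AC$ are the restrictions to the case $\q=\p_M$.

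The main work is on $1$-morphisms. Given a display tangent morphism $(F,\alpha)\colon(\X,\TT)\to(\X',\TT')$, I would define its image on a vertical linear connection by transporting $k\colon\T E\to E$ along $\alpha$, setting the new connection to be the composite $\T'FE\xrightarrow{\alpha^{-1}}F\T E\xrightarrow{Fk}FE$, using that display tangent morphisms are strong so $\alpha$ is invertible. The three defining conditions of a vertical linear connection (retract of the vertical lift, and the two linearity conditions expressed as linear morphisms of differential bundles) are all equational, so their preservation follows from the functoriality of $F$ together with the compatibility of $\alpha$ with $l$, $p$, $c$, $z$, $s$, and the fact that $\DB(F,\alpha)$ from Lemma~\ref{lemma:DB-functoriality} already sends linear morphisms of differential bundles to linear morphisms of differential bundles. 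For the horizontal case I would use that $F$, being display, preserves the relevant tangent pullbacks, so $F$ sends the horizontal bundle $\H q$ to the horizontal bundle $\H(Fq)$ up to the canonical comparison, and then transport $h$ accordingly. On $2$-morphisms each functor simply applies $\bar\T$ in the evident way, exactly as $\DB$ does, and the tangent-naturality conditions are inherited.

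The compatibility with the tangent structure — i.e. checking that the assignment is genuinely a tangent morphism, not merely a functor — is the step I expect to require the most care. Concretely, one must verify that the transported connection $Fk\o\alpha^{-1}$ is compatible with $k_\T$ under the distributive law, which amounts to a diagram chase combining the definition $k_\T=\T k\o c$, the canonical-flip compatibility of $\alpha$ (i.e. $\alpha$ intertwines $c$ and $c'$), and the naturality of $\alpha$ in $E$. This is the analogue of the verification that makes $\DB(\alpha)$ a distributive law in Lemma~\ref{lemma:DB-functoriality}, and I would cite that lemma to supply the underlying compatibility, then add the one extra square involving $k$.

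Finally, functoriality of the six assignments (preservation of identities and composites, and the interchange law for $2$-morphisms) is formal and follows the pattern of Lemma~\ref{lemma:DB-functoriality}: identities and composites of display tangent morphisms transport the connection strictly, since $\alpha=\id$ for the identity and $(\beta\o F\alpha)^{-1}$ factors correctly for composites. The domain categories are $\TngCat_\Dsply$ for the linear versions (because the horizontal bundle and the universality of the vertical lift require the projection to be display and preserved) and $\TngCat_\cong$ for the affine versions (where $\q=\p_M$ is always display, so only strongness is needed). I would present the vertical linear case in full and remark that the remaining five are entirely analogous, leaving the routine diagram chases to the reader.
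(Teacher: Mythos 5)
Your proposal matches the paper's proof in substance: the paper likewise transports the vertical connection along the inverse distributive law, defining $k_{(F,\alpha)}\=Fk\o\alpha^{-1}$ on the bundle $\DB(F,\alpha)(\q)$ supplied by Lemma~\ref{lemma:DB-functoriality}, acts componentwise on $2$-morphisms via $(\varphi_M,\varphi_E)$, and treats the affine cases separately (by citing~\cite{blute:affine-spaces}) while leaving the remaining verifications implicit. One small caution on your justification for the affine domain: $\p_M$ need not literally be a tangent display map in the sense of Definition~\ref{definition:tangent-display-map}; the reason $\TngCat_\cong$ suffices there is rather that the only pullbacks the affine axioms invoke ($\T_2$ and the universality square of $l$) exist and are preserved by every strong tangent morphism by the tangent-morphism axioms themselves.
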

\begin{proof}
In~\cite[Sections~5 and~6]{blute:affine-spaces}, the $2$-functoriality of $\AC$ is discussed; the proof of the $2$-functoriality of $\VAC$ and $\HAC$ is similar. To show the $2$-functoriality of $\VLC$, $\HLC$, and $\LC$, first, recall that, thanks to Lemma~\ref{lemma:LC-functoriality}, $\LC\colon\TngCat_\Dsply\to\TngCat_\Dsply$ is $2$-functorial. Consider now a display-preserving strong tangent morphism $(F,\alpha)\colon(\X,\TT)\to(\X',\TT')$. Thus, $\VLC(F,\alpha)$ sends a vertical linear connection $(\q\colon E\to M,k)$ to the vertical connection on the display differential bundle $\LC(F,\alpha)(\q)$ so defined:
\begin{align*}
&k_{(F,\alpha)}\T'FE\xrightarrow{\alpha^{-1}}F\T E\xrightarrow{Fk}FE
\end{align*}
Moreover, given a tangent natural transformation $\varphi\colon(F,\alpha)\to(F',\alpha')$, $\VLC(\varphi)$ is the tangent natural transformation:
\begin{align*}
&\VLC(\varphi)_{(\q,k)}\colon(\LC(F,\alpha),k_{(F,\alpha)})\xrightarrow{(\varphi_M\colon FM\to F'M,\varphi_E\colon FE\to F'E)}(\LC(F',\alpha'),k_{(F',\alpha')})\qedhere
\end{align*}
\end{proof}

\subsection{The universal property of linear connections}
\label{subsection:universal-property-linear-connections}
This section aims to characterize the correct universal property of the tangent categories $\VLC(\X,\TT)$, $\HLC(\X,\TT)$, and $\LC(\X,\TT)$ of the various flavours of linear connections on differential bundles of a tangent category $(\X,\TT)$. For starters, notice that, for each of these three tangent categories, there is a strict tangent morphism which sends each object to its underlying display differential bundle:
\begin{align*}
\U&\colon\VLC(\X,\TT)\to\DB(\X,\TT)\\
\U&\colon\HLC(\X,\TT)\to\DB(\X,\TT)\\
\U&\colon\LC(\X,\TT)\to\DB(\X,\TT)
\end{align*}
Thus, each of these strict tangent morphisms induces a functor from $\DB_\pt[\LC(\X,\TT)\|\X,\TT]$ to the respective tangent categories of pointwise display differential bundles $\DB_\pt[\VLC(\X,\TT)\|\X,\TT]$, $\DB_\pt[\HLC(\X,\TT)\|\X,\TT]$, and $\DB_\pt[\LC(\X,\TT)\|\X,\TT]$. In particular, the universal pointwise display differential bundle $\UnivQ\colon\Tot\to\Base$ of $(\X,\TT)$ is sent to a display differential bundle $\UnivQ_\U$ on each of the Hom-tangent categories $[\VLC(\X,\TT)\|\X,\TT]$, $[\HLC(\X,\TT)\|\X,\TT]$, and $[\LC(\X,\TT)\|\X,\TT]$, denoted by
\begin{align*}
&\UnivQ^\VLC\colon\Tot^\VLC\to\Base^\VLC\\
&\UnivQ^\HLC\colon\Tot^\HLC\to\Base^\HLC\\
&\UnivQ^\LC\colon\Tot^\LC\to\Base^\LC
\end{align*}
respectively.
\par Now, define the following natural transformations
\begin{align}
\label{equation:universal-linear-connections}
\begin{split}
\Univ{k^\VLC}_{(\q,k)}&\colon\bar\T\Tot^\VLC(\q,k)=\T E\xrightarrow{k}E=\Tot^\VLC(\q,k)\\
\Univ{h^\HLC}_{(\q,h)}&\colon(\bar\H\UnivQ^\HLC)(\q,h)=\H q\xrightarrow{h}\T E=\Tot^\HLC(\q,h)\\
\Univ{k^\LC}_{(\q;k,h)}&\colon\bar\T\Tot^\LC(\q;k,h)=\T E\xrightarrow{k}E=\Tot^\LC(\q;k,h)\\
\Univ{h^\LC}_{(\q;k,h)}&\colon(\bar\H\UnivQ^\LC)(\q;k,h)=\H q\xrightarrow{h}\T E=\Tot^\LC(\q;k,h)
\end{split}
\end{align}
where we denoted by $\bar\H\Univ{q^\HLC}$ and $\bar\H\Univ{q^\LC}$ the pointwise horizontal bundles of $\Univ{q^\HLC}$ and $\Univ{q^\LC}$, respectively, in the corresponding Hom-tangent categories, that is, the pointwise pullbacks of $\Univ{q^\HLC}$ and $\Univ{q^\LC}$ along the projection.

\begin{proposition}
\label{proposition:universality-linear-connections}
The natural transformations of Equation~\eqref{equation:universal-linear-connections} define a vertical linear connection $\Univ{k^\VLC}$ on $\UnivQ^\VLC$, a horizontal linear connection $\Univ{h^\HLC}$ on $\UnivQ^\HLC$, and a linear connection $(\Univ{k^\LC},\Univ{h^\LC}$ on $\UnivQ^\LC$, respectively.
\end{proposition}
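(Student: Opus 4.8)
The plan is to follow the same pointwise strategy used in Proposition~\ref{proposition:universality-differential-bundle} and Proposition~\ref{proposition:universality-differential-object}: every structural $2$-morphism involved is defined objectwise by the corresponding component of the given connection, and both the additive and the tangent structures of the relevant Hom-tangent categories are computed pointwise, so that each defining axiom of a connection will reduce to the corresponding axiom for an arbitrary connection $(\q,k)$, $(\q,h)$, or $(\q;k,h)$ in $(\X,\TT)$. First I would record that, since each of $\UnivQ^\VLC$, $\UnivQ^\HLC$, $\UnivQ^\LC$ is the image of the universal pointwise display differential bundle $\UnivQ$ under the strict tangent morphism induced by $\U$, Proposition~\ref{proposition:universality-differential-bundle} guarantees that these are again pointwise display differential bundles. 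In particular the pointwise horizontal bundles $\bar\H\UnivQ^\HLC$ and $\bar\H\UnivQ^\LC$ exist and are computed objectwise, so that $(\bar\H\UnivQ^\HLC)(\q,h)=\H q$; this is what makes the domain of $\Univ{h^\HLC}$ genuinely the pointwise horizontal bundle, so that the section and linearity conditions of Definition~\ref{definition:linear-horizontal-connection} can be interpreted in the Hom-tangent category.

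Next I would check that the four families of Equation~\eqref{equation:universal-linear-connections} are tangent natural transformations. Naturality is immediate from the morphisms of the connection categories: a morphism of $\VLC(\X,\TT)$ satisfies $g\o k=k'\o\T g$, which is precisely naturality of $\Univ{k^\VLC}$, while a morphism of $\HLC(\X,\TT)$ satisfies $\H g\o h=h'\o\T g$, which is naturality of $\Univ{h^\HLC}$; the two $\LC$-families are handled in the same way. For compatibility with the tangent structure I would argue as for $\Univ{\hat p}$ in Proposition~\ref{proposition:universality-differential-object}: the tangent bundle functor of $\VLC(\X,\TT)$ sends $(\q,k)$ to $(\T^\LC\q,k_\T)$ with $k_\T=\T k\o c$, and the coherence $c^2=\id_{\T^2}$ supplies the required compatibility square; the horizontal case uses instead the formula $h_\T=\T h\o\<\pi_1c,\pi_2\>$ for the tangent bundle functor of $\HLC(\X,\TT)$, and the $\LC$ case simply combines the two computations.

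Finally I would verify the connection axioms in each Hom-tangent category. Every condition in Definitions~\ref{definition:linear-vertical-connection}, \ref{definition:linear-horizontal-connection}, and~\ref{definition:connection} — the retract identity $l_qk=\id$, the section identity $h\<\T q,p\>=\id$, the pairs of linearity conditions, and the orthogonality square together with the direct-sum identity — is equational once the horizontal bundle is available, and each involves only the structural morphisms of $\UnivQ$ together with $\Univ{k^\VLC}$, $\Univ{h^\HLC}$, $\Univ{k^\LC}$, $\Univ{h^\LC}$, all of which are objectwise. Since the additive and tangent structures of $[\VLC(\X,\TT)\|\X,\TT]$ and its analogues are pointwise, each axiom reduces to the identical axiom evaluated at a single connection, where it holds by hypothesis; hence all the axioms hold in the respective Hom-tangent categories.

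The step I expect to be the main obstacle is the tangent-naturality of the horizontal families, since there the pointwise horizontal bundle and the tangent bundle functor $h_\T=\T h\o\<\pi_1c,\pi_2\>$ interact through a pullback rather than through a plain iterate of $\T$; one must confirm that the comparison $\<\pi_1c,\pi_2\>$ is itself the pointwise instance of the corresponding comparison in the Hom-tangent category, and that the pullback defining $\H q$ objectwise really agrees with the pointwise pullback $\bar\H\UnivQ^\HLC$ appearing in the orthogonality and direct-sum conditions. Once the pointwise-ness of the display map is used to secure this agreement, every remaining verification is a routine transport of a pointwise identity.
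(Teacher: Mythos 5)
Your proposal is correct and follows essentially the same route as the paper's proof: the underlying bundles are pointwise display differential bundles because they arise from the universal one, naturality is the defining compatibility of morphisms in $\VLC$, $\HLC$, and $\LC$, tangent-compatibility comes from the definition of the tangent bundle functors on these categories, and the connection axioms hold because they reduce pointwise to the axioms for each individual connection. Your extra attention to the agreement between the objectwise horizontal bundle $\H q$ and the pointwise pullback $\bar\H\UnivQ^\HLC$ is a useful elaboration of a point the paper's proof passes over silently, but it does not change the argument.
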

\begin{proof}
We already mentioned that $\UnivQ^\VLC$, $\UnivQ^\HLC$, and $\UnivQ^\LC$ are pointwise display differential bundles since they are constructed from the universal pointwise display differential bundle. The naturality of $\Univ{k^\VLC}$, $\Univ{h^\HLC}$, $\Univ{k^\LC}$, and $\Univ{h^\LC}$ is equivalent to the compatibilities between the morphisms of $\VLC(\X,\TT)$, $\HLC(\X,\TT)$, and $\LC(\X,\TT)$ with the connections. Furthermore, these natural transformations are compatible with the tangent structures by definition of the tangent bundle functors in each respective tangent category. Finally, the axioms required for these transformations to define connections are a direct consequence of each $(\q,k)\in\VLC(\X,\TT)$, $(\q,h)\in\HLC(\X,\TT)$, and $(\q;k,h)\in\LC(\X,\TT)$ being vertical, horizontal, and linear connections in $(\X,\TT)$.
\end{proof}

To characterize the correct universal property enjoyed by the connections of Proposition~\ref{proposition:universality-linear-connections}, let us first unwrap the definition of a (vertical/horizontal) linear connection in each Hom-tangent category $[\X',\TT'\|\X'',\TT'']$, where $(\X',\TT')$ and $(\X'',\TT'')$ are tangent categories. Let us start with a vertical linear connection.
\begin{description}
\item[Differential bundle] A pointwise display differential bundle $\q\colon(K,\theta)\to(G,\beta)$;

\item[Vertical connection] The vertical linear connection consists of a tangent natural transformation:
\begin{align*}
&k\colon\bar\T(K,\theta)\to(K,\theta)
\end{align*}
\end{description}
satisfying the axioms of a vertical linear connection. A morphism of vertical linear connections in the Hom-tangent category $[\X,\TT\|\X',\TT']$ from a vertical linear connection $(\q\colon(K,\theta)\to(G,\beta),k)$ to a vertical linear connection $(\q'\colon(K',\theta')\to(G',\beta'),k')$ consists of a linear morphism of differential bundles $(\varphi,\psi)\colon\q\to\q'$ which commutes with the vertical linear connections.
\par Similarly, a horizontal linear connection consists of:
\begin{description}
\item[Differential bundle] A pointwise display differential bundle $\q\colon(K,\theta)\to(G,\beta)$;

\item[Horizontal connection] The horizontal linear connection consists of a tangent natural transformation:
\begin{align*}
&h\colon\bar\H\q\to\bar\T(K,\theta)
\end{align*}
\end{description}
satisfying the axioms of a horizontal linear connection. Notice that $\bar\H\q\colon\HLC(\X,\TT)\to(\X,\TT)$ is the strict tangent morphism which sends a horizontal linear connection $(\q,h)$ to the total space of the horizontal bundle $\H\q$ of $\q$.
\par A morphism of horizontal linear connections in the Hom-tangent category $[\X,\TT\|\X',\TT']$ from a horizontal linear connection $(\q\colon(K,\theta)\to(G,\beta),h)$ to a horizontal linear connection $(\q'\colon(K',\theta')\to(G',\beta'),h')$ consists of a linear morphism of differential bundles $(\varphi,\psi)\colon\q\to\q'$ which commutes with the horizontal linear connections.
\par Finally, a linear connection consists of:
\begin{description}
\item[Differential bundle] A pointwise display differential bundle $\q\colon(K,\theta)\to(G,\beta)$;

\item[Linear connection] The linear connection consists of a pair of tangent natural transformations:
\begin{align*}
&k\colon\bar\T(K,\theta)\to(K,\theta)\\
&h\colon\bar\H\q\to\bar\T(K,\theta)
\end{align*}
such that $(\q,k)$ and $(\q,h)$ are a vertical and a horizontal linear connections, respectively;
\end{description}
satisfying the axioms of a linear connection.
\par A morphism of linear connections in the Hom-tangent category $[\X,\TT\|\X',\TT']$ from a linear connection $(\q\colon(K,\theta)\to(G,\beta);k,h)$ to a linear connection $(\q'\colon(K',\theta')\to(G',\beta');k',h')$ consists of a linear morphism of differential bundles $(\varphi,\psi)\colon\q\to\q'$ which commutes with the vertical and the horizontal linear connections.
\par For each tangent category $(\X',\TT')$, a vertical linear connection $(\q,k)$ in $[\X'',\TT''\|\X''',\TT''']$ induces a functor
\begin{align*}
&\Gamma_{(\q,k)}\colon[\X',\TT'\|\X'',\TT'']\to\VLC[\X',\TT'\|\X''',\TT''']
\end{align*}
which sends a lax tangent morphism $(H,\gamma)\colon(\X',\TT')\to(\X'',\TT'')$ to the vertical linear connection
\begin{align*}
&\Gamma_{(\q,k)}(H,\gamma)\=(\Gamma_{\q}(H,\gamma),k_{(H,\gamma)})
\end{align*}
in the Hom-tangent category $[\X',\TT'\|\X''',\TT''']$. Concretely, the base pointwise display differential bundle is induced by the functor $\Gamma_\q$ of Lemma~\ref{lemma:induced-lax-tangent-morphism-from-differential-bundles} and $k_{(H,\gamma)}$ is the natural transformation:
\begin{equation*}
\begin{tikzcd}
{(\X',\TT')} & {(\X'',\TT'')} && {(\X''',\TT''')}
\arrow["{(H,\gamma)}", from=1-1, to=1-2]
\arrow[""{name=0, anchor=center, inner sep=0}, "{(K,\theta)}"', curve={height=18pt}, from=1-2, to=1-4]
\arrow[""{name=1, anchor=center, inner sep=0}, "{\bar\T(K,\theta)}", curve={height=-18pt}, from=1-2, to=1-4]
\arrow["k",shorten <=5pt, shorten >=5pt, Rightarrow, from=1, to=0]
\end{tikzcd}
\end{equation*}

Similarly, a horizontal linear connection $(\q,h)$ in the Hom-tangent category $[\X'',\TT''\|\X''',\TT''']$ induces a functor
\begin{align*}
&\Gamma_{(\q,h)}\colon[\X',\TT'\|\X'',\TT'']\to\HLC[\X',\TT'\|\X''',\TT''']
\end{align*}
which sends a lax tangent morphism $(H,\gamma)\colon(\X',\TT')\to(\X'',\TT'')$ to the horizontal linear connection
\begin{align*}
&\Gamma_{(\q,h)}(H,\gamma)\=(\Gamma_{\q}(H,\gamma),h_{(H,\gamma)})
\end{align*}
in the Hom-tangent category $[\X',\TT'\|\X''',\TT''']$. Concretely, the base pointwise display differential bundle is induced by the functor $\Gamma_\q$ of Lemma~\ref{lemma:induced-lax-tangent-morphism-from-differential-bundles} and $h_{(H,\gamma)}$ is the natural transformation:
\begin{equation*}
\begin{tikzcd}
{(\X',\TT')} & {(\X'',\TT'')} && {(\X''',\TT''')}
\arrow["{(H,\gamma)}", from=1-1, to=1-2]
\arrow[""{name=0, anchor=center, inner sep=0}, "{\bar\H\q}"', curve={height=18pt}, from=1-2, to=1-4]
\arrow[""{name=1, anchor=center, inner sep=0}, "{\bar\T(K,\theta)}", curve={height=-18pt}, from=1-2, to=1-4]
\arrow["h"',shorten <=5pt, shorten >=5pt, Rightarrow, from=0, to=1]
\end{tikzcd}
\end{equation*}
Furthermore, a  linear connection $(\q;k,h)$ in the Hom-tangent category $[\X'',\TT''\|\X''',\TT''']$ induces a functor
\begin{align*}
&\Gamma_{(\q;k,h)}\colon[\X',\TT'\|\X'',\TT'']\to\LC[\X',\TT'\|\X''',\TT''']
\end{align*}
which sends a lax tangent morphism $(H,\gamma)\colon(\X',\TT')\to(\X'',\TT'')$ to the horizontal linear connection
\begin{align*}
&\Gamma_{(\q;k,h)}(H,\gamma)\=(\Gamma_{\q}(H,\gamma);k_{H,\gamma},h_{(H,\gamma)})
\end{align*}
in the Hom-tangent category $[\X',\TT'\|\X''',\TT''']$. These constructions are functorial, as shown by the next lemma.

\begin{lemma}
\label{lemma:induced-lax-tangent-morphism-from-linear-connections}
Given a pointwise display differential bundle $\q\colon(K,\theta)\to(G,\beta)$ in the Hom-tangent category $[\X'',\TT''\|\X''',\TT''']$, a vertical linear connection $k$ on $\q$, a horizontal linear connection $h$ on $\q$, and a linear connection $(k,h)$ on $\q$ induce lax tangent morphisms
\begin{align*}
\Gamma_{(\q,k)}&\colon[\X',\TT'\|\X'',\TT'']\to\VLC[\X',\TT'\|\X''',\TT''']\\
\Gamma_{(\q,h)}&\colon[\X',\TT'\|\X'',\TT'']\to\HLC[\X',\TT'\|\X''',\TT''']\\
\Gamma_{(\q;k,h)}&\colon[\X',\TT'\|\X'',\TT'']\to\LC[\X',\TT'\|\X''',\TT''']
\end{align*}
for each tangentad $(\X',\TT')$, which are natural in $(\X',\TT')$ and strong (strict) when both $(G,\beta)$ and $(K,\theta)$ are strong (strict).
\end{lemma}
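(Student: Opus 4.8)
The plan is to treat the three induced morphisms in parallel, following the template of Lemma~\ref{lemma:induced-lax-tangent-morphism-from-differential-objects} and Lemma~\ref{lemma:induced-lax-tangent-morphism-from-differential-bundles}, so that only the features specific to connections require genuine work. The underlying differential-bundle part is handled by the functor $\Gamma_\q$ of Lemma~\ref{lemma:induced-lax-tangent-morphism-from-differential-bundles}, which already carries a distributive law induced by $\beta$ and $\theta$; the task is to equip its values with the whiskered connections and to check that this assignment is functorial, defines a lax tangent morphism, and is natural in $(\X',\TT')$. Concretely, I would define $\Gamma_{(\q,k)}$ on a lax tangent morphism $(H,\gamma)\colon(\X',\TT')\to(\X'',\TT'')$ as the pair $(\Gamma_\q(H,\gamma),k_{(H,\gamma)})$, where $k_{(H,\gamma)}$ is the right whiskering of the $2$-morphism $k$ by $(H,\gamma)$, and analogously for $\Gamma_{(\q,h)}$ and $\Gamma_{(\q;k,h)}$; on $2$-morphisms all three act by whiskering.

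First I would verify that $k_{(H,\gamma)}$ is again a vertical linear connection on $\Gamma_\q(H,\gamma)$. The defining conditions of Definition~\ref{definition:linear-vertical-connection}---that $k$ retracts the vertical lift and is linear in the two required senses---are equations between $2$-morphisms of $\CC$, and such equations are stable under right whiskering by the $1$-morphism $(H,\gamma)$; hence they transport verbatim. The same remark applies to the section and linearity axioms of a horizontal connection (Definition~\ref{definition:linear-horizontal-connection}) and to the orthogonality and direct-sum conditions of a linear connection (Definition~\ref{definition:connection}). Functoriality of the three assignments on $2$-morphisms then follows exactly as in the proof of Lemma~\ref{lemma:induced-lax-tangent-morphism-from-differential-bundles}.

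Next I would upgrade $\Gamma_{(\q,k)}$ to a lax tangent morphism. Since $\Gamma_\q$ is already lax tangent, it suffices to observe that the whiskered connection is compatible with the distributive law of $\Gamma_\q$. This is precisely a whiskering of the statement that $k$ is a \emph{tangent} natural transformation, i.e.\ that $k$ is compatible with the canonical flip in the sense recorded by the tangent bundle functor of $\VLC(\X,\TT)$, namely $k_\T=\T k\o c$; right whiskering by $(H,\gamma)$ preserves this identity. One then reads off that $\Gamma_{(\q,k)}$ is strong, respectively strict, exactly when the distributive laws $\beta$ and $\theta$ are invertible, respectively identities, which is the hypothesis that $(G,\beta)$ and $(K,\theta)$ are strong, respectively strict. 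Naturality in $(\X',\TT')$ is verified as in Lemma~\ref{lemma:induced-lax-tangent-morphism-from-differential-objects}: for $(F,\alpha)\colon(\X,\TT)\to(\X',\TT')$ the two composites agree by associativity of whiskering, and strictness of each reindexing $2$-functor $[F,\alpha\|-]$ promotes the comparison to a tangent natural transformation.

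The hard part will be the horizontal bundle in the cases of $\Gamma_{(\q,h)}$ and $\Gamma_{(\q;k,h)}$. Whereas the source of a vertical connection is $\bar\T(K,\theta)$, which whiskers along $(H,\gamma)$ with no difficulty, the source $\bar\H\q$ of a horizontal connection is defined as a pointwise pullback of the projection of $\q$ along the tangent projection. One must therefore check that right whiskering by $(H,\gamma)$ commutes with the formation of this horizontal bundle, that is, that $\bar\H(\Gamma_\q(H,\gamma))$ is canonically the whiskering $(\bar\H\q)_{(H,\gamma)}$, so that $h_{(H,\gamma)}$ has the correct domain. This is exactly where the hypothesis that $\q$ is a \emph{pointwise} display differential bundle is used: by Definition~\ref{definition:pointwise-tangent-display-map} the pullback defining $\bar\H\q$ is preserved by every functor of the form $\CC[F\|\X''']$, and right whiskering by the underlying $1$-morphism of $(H,\gamma)$ is such a functor. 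Once this identification is in place, the horizontal section and linearity axioms for $h_{(H,\gamma)}$ follow by whiskering as before, and the linear case $\Gamma_{(\q;k,h)}$ is the simultaneous combination of the vertical and horizontal arguments.
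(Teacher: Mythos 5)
Your proposal is correct and follows essentially the same route as the paper, which simply defers to the argument of Lemma~\ref{lemma:induced-lax-tangent-morphism-from-differential-objects}: define the functor by whiskering, observe that the equational axioms and tangent-compatibility transport along right whiskering, and deduce naturality from associativity of composition together with strictness of the reindexing functors. Your additional observation that the pointwise display hypothesis is exactly what guarantees $\bar\H(\Gamma_\q(H,\gamma))\cong(\bar\H\q)_{(H,\gamma)}$, so that the whiskered horizontal connection has the correct domain, is the one genuinely non-routine detail and you have placed it correctly.
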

\begin{proof}
The proof of this lemma is fairly similar to that of Lemma~\ref{lemma:induced-lax-tangent-morphism-from-differential-objects}. We leave it to the reader to complete the details.
\end{proof}

By Lemma~\ref{lemma:induced-lax-tangent-morphism-from-linear-connections}, the vertical linear connection $\Univ{k^\VLC}$ on $\UnivQ^\VLC$, the horizontal linear connection $\Univ{h^\HLC}$ on $\UnivQ^\HLC$, and the linear connection $(\Univ{k^\LC},\Univ{h^\LC})$ on $\UnivQ^\LC$, of Proposition~\ref{proposition:universality-linear-connections} induce three strict tangent natural transformations
\begin{align*}
\Gamma_{\Univ{\UnivQ^\VLC,k^\VLC}}&\colon[\X',\TT'\|\VLC(\X,\TT)]\to\VLC[\X',\TT'\|\X,\TT]\\
\Gamma_{\Univ{\q^\HLC,h^\HLC}}&\colon[\X',\TT'\|\HLC(\X,\TT)]\to\HLC[\X',\TT'\|\X,\TT]\\
\Gamma_{\Univ{\q^\LC,k^\LC,h^\LC}}&\colon[\X',\TT'\|\LC(\X,\TT)]\to\LC[\X',\TT'\|\X,\TT]
\end{align*}
natural in $(\X',\TT')$.
\par The goal is to prove that these three transformations are invertible. Consider another tangent category $(\X',\TT')$ together with a pointwise display differential bundle $\q\colon(K,\theta)\to(G,\beta)$ in the Hom-tangent category $[\X',\TT'\|\X,\TT]$ equipped with a vertical connection $k$, with a horizontal connection $h$, and a linear connection $(k,h)$ (each case to be taken separately). For every $A\in\X'$, the tuples
\begin{align*}
&(\q_A\colon KA\to GA,k_A\colon\T KA\to KA)\\
&(\q_A\colon KA\to GA,h\colon\H{\q_A}\to\T KA)\\
&(\q_A\colon KA\to GA;k_A\colon\T KA\to KA,h\colon\H{\q_A}\to\T KA)\\
\end{align*}
are a vertical linear connection, a horizontal linear connection, and a linear connection in $(\X,\TT)$. Therefore, we can define the functors
\begin{align*}
\Lambda[\q,k]&\colon(\X',\TT')\to\VLC(\X,\TT)\\
\Lambda[\q,h]&\colon(\X',\TT')\to\HLC(\X,\TT)\\
\Lambda[\q;k,h]&\colon(\X',\TT')\to\LC(\X,\TT)\\
\end{align*}
which send an object $A$ of $\X'$ to the connections $(\q_A,k_A)$, $(\q_A,h_A)$, and $(\q_A;k_A,h_A)$, respectively, and a morphism $f\colon A\to B$ to $(Gf,Kf)$. By the naturality of $k$, $h$, and the structure morphisms of $\q$, $(Gf,Kf)$ becomes a morphism of (vertical/horizontal) connections of $(\X,\TT)$. Furthermore, $\Lambda[\q,k]$, $\Lambda[\q,h]$, and $\Lambda[\q;k,h]$ come with a distributive law induced by $\beta$ and $\theta$.

\begin{lemma}
\label{lemma:universality-linear-connections}
Given a pointwise display differential bundle structure $\q\colon(K,\theta)\to(G,\beta)$ in the Hom-tangent category $[\X',\TT'\|\X,\TT]$, equipped with a vertical linear connection $k$, a horizontal linear connection $h$, or a linear connection $(k,h)$, the functors $\Lambda[\q,k]$, $\Lambda[\q,h]$, and $\Lambda[\q;k,h]$ define lax tangent morphisms:
\begin{align*}
\Lambda[\q,k]&\colon(\X',\TT')\to\VLC(\X,\TT)\\
\Lambda[\q,h]&\colon(\X',\TT')\to\HLC(\X,\TT)\\
\Lambda[\q;k,h]&\colon(\X',\TT')\to\LC(\X,\TT)\\
\end{align*}
\end{lemma}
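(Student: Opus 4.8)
The plan is to build each of the three morphisms on top of the lax tangent morphism $\Lambda[\q]\colon(\X',\TT')\to\DB(\X,\TT)$ already produced in Lemma~\ref{lemma:universality-differential-bundles}, and then lift it along the forgetful strict tangent morphism $\U$ from the relevant connection tangent category down to $\DB(\X,\TT)$. Recall that $\Lambda[\q]$ sends an object $A$ to the display differential bundle $\q_A\colon KA\to GA$, a morphism $f\colon A\to B$ to the linear morphism $(Gf,Kf)$, and carries the distributive law $\Lambda[\beta,\theta]$ whose component at $A$ is $(\beta_A,\theta_A)\colon\q_{\T'A}\to\T^\DB\q_A$. Since the three cases are formally identical, I would treat the vertical connection $\Lambda[\q,k]$ in full and then indicate the (minor) changes for $\Lambda[\q,h]$ and $\Lambda[\q;k,h]$.

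For $\Lambda[\q,k]$ I would verify three things in turn. First, on objects: the discussion preceding the lemma records that each tuple $(\q_A,k_A)$ is a vertical linear connection of $(\X,\TT)$, so $A\mapsto(\q_A,k_A)$ lands in $\VLC(\X,\TT)$. Second, on morphisms: the naturality square of $k\colon\bar\T(K,\theta)\Rightarrow(K,\theta)$ at $f\colon A\to B$ reads $Kf\o k_A=k_B\o\T Kf$, which is precisely the requirement that $(Gf,Kf)$ commute with the vertical connections, so $\Lambda[\q,k]$ is a functor into $\VLC(\X,\TT)$. Third, on the distributive law: I would show that $\Lambda[\beta,\theta]$ already lifts to a distributive law for $\Lambda[\q,k]$, i.e.\ that each $(\beta_A,\theta_A)$ is a morphism of $\VLC(\X,\TT)$. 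The tangent bundle functor of $\VLC(\X,\TT)$ equips $\T^\DB\q_A$ with the vertical connection $(k_A)_\T=\T k_A\o c_{KA}$, while the distributive law of $\bar\T(K,\theta)$ at $A$ is $c_{KA}\o\T\theta_A$; unwinding the statement that $k$ is a \emph{tangent} natural transformation then gives exactly
\begin{align*}
&\theta_A\o k_{\T'A}=\T k_A\o c_{KA}\o\T\theta_A=(k_A)_\T\o\T\theta_A
\end{align*}
which is the compatibility of $\theta_A$ with the vertical connections. The remaining lax-tangent-morphism axioms (compatibility with $p,z,s,l,c$) are inherited verbatim from $\Lambda[\q]$, since the structural transformations of $\VLC(\X,\TT)$ are those of $(\X,\TT)$.

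The cases $\Lambda[\q,h]$ and $\Lambda[\q;k,h]$ would follow the same three-step pattern, with $h\colon\bar\H\q\Rightarrow\bar\T(K,\theta)$ in place of (or alongside) $k$: its ordinary naturality forces $(Gf,Kf)$ to commute with the horizontal connections, and its tangent-naturality forces each $(\beta_A,\theta_A)$ to be a morphism of $\HLC(\X,\TT)$ (resp.\ $\LC(\X,\TT)$). I expect the horizontal connection to be the only genuinely delicate point, since its domain is the pointwise horizontal bundle $\bar\H\q$ and the tangent bundle functor of $\HLC(\X,\TT)$ acts through the more intricate map $h_\T$ built from $\<\pi_1c,\pi_2\>$ on $\H\T E$. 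The observation that defuses this is that, because $\q$ is a \emph{pointwise} display differential bundle, $\bar\H\q$ is computed as a pointwise pullback; hence its tangent structure and the induced map $h_\T$ agree objectwise with those of $(\X,\TT)$, and every identity to be checked reduces to the corresponding one already known in $(\X,\TT)$. For the linear case $\Lambda[\q;k,h]$ there is then nothing new: it sends $A$ to $(\q_A;k_A,h_A)$ and $f$ to $(Gf,Kf)$, and the vertical and horizontal compatibilities verified above hold simultaneously, exhibiting it as a lax tangent morphism into $\LC(\X,\TT)$.
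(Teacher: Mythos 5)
Your proposal is correct and follows essentially the same route as the paper's (very brief) proof: define the functor objectwise, use ordinary naturality of the connection $2$-morphisms to see that $(Gf,Kf)$ is a morphism of connections, and use tangent-naturality of $k$ (resp.\ $h$) to see that the components of $\Lambda[\beta,\theta]$ are morphisms in $\VLC$, $\HLC$, or $\LC$. Your explicit verification $\theta_A\o k_{\T'A}=(k_A)_\T\o\T\theta_A$ and the remark that the pointwise horizontal bundle reduces the $\HLC$ case to objectwise identities supply detail the paper leaves implicit, but the argument is the same.
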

\begin{proof}
In the previous discussion, we defined $\Lambda[\q,k]$, $\Lambda[\q,h]$, and $\Lambda[\q;k,h]$ as the lax tangent morphisms which pick out the connections $(\q_A,k_A)$, $(\q_A,h_A)$, and $(\q_A;k_A,h_A)$ for each $A\in\X'$, respectively, and whose distributive law is induced by $\beta$ and $\theta$. In particular, the naturality of the structure morphisms of $\q$ and of the connections makes these functors well-defined, the compatibility of the structure morphisms, $\beta$ and $\theta$, makes them into morphisms of connections, and the compatibility between $\beta$ and $\theta$ and the tangent structures make them into lax tangent morphisms.
\end{proof}

We can now prove the universal property of vertical, horizontal, and linear connections, which is the main result of this section.

\begin{theorem}
\label{theorem:universality-linear-connections}
The vertical linear connection $\Univ{\UnivQ^\VLC,k^\VLC}$, the horizontal linear connection $\Univ{\q^\HLC,h^\HLC}$, and the linear connection $\Univ{\q^\LC;k^\LC,h^\LC}$ of Proposition~\ref{proposition:universality-linear-connections} are universal. Concretely, the induced strict tangent natural transformations
\begin{align*}
\Gamma_{\Univ{\UnivQ^\VLC,k^\VLC}}&\colon[\X',\TT'\|\VLC(\X,\TT)]\to\VLC[\X',\TT'\|\X,\TT]\\
\Gamma_{\Univ{\q^\HLC,h^\HLC}}&\colon[\X',\TT'\|\HLC(\X,\TT)]\to\HLC[\X',\TT'\|\X,\TT]\\
\Gamma_{\Univ{\q^\LC,k^\LC,h^\LC}}&\colon[\X',\TT'\|\LC(\X,\TT)]\to\LC[\X',\TT'\|\X,\TT]
\end{align*}
make the functors
\begin{align*}
&\TngCat^\op\xrightarrow{[-\|\X,\TT]}\TngCat^\op\xrightarrow{\VLC^\op}\TngCat^\op\\
&\TngCat^\op\xrightarrow{[-\|\X,\TT]}\TngCat^\op\xrightarrow{\HLC^\op}\TngCat^\op\\
&\TngCat^\op\xrightarrow{[-\|\X,\TT]}\TngCat^\op\xrightarrow{\LC^\op}\TngCat^\op
\end{align*}
which send a tangent category $(\X',\TT')$ to the tangent categories $\VLC[\X',\TT'\|\X,\TT]$, $\HLC[\X',\TT'\|\X,\TT]$, and $\LC[\X',\TT'\|\X,\TT]$, respectively, into corepresentable functors. In particular, $\Gamma_{\Univ{\UnivQ^\VLC,k^\VLC}}$, $\Gamma_{\Univ{\q^\HLC,h^\HLC}}$, and $\Gamma_{\Univ{\q^\LC,k^\LC,h^\LC}}$ are invertible.
\end{theorem}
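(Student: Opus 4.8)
The plan is to follow verbatim the strategy used for Theorems~\ref{theorem:universality-differential-objects} and~\ref{theorem:universality-differential-bundles}, treating the three flavours in parallel. In each case the candidate inverse is already in hand: Lemma~\ref{lemma:universality-linear-connections} produces the lax tangent morphisms $\Lambda[\q,k]$, $\Lambda[\q,h]$, and $\Lambda[\q;k,h]$ out of a pointwise connection in the Hom-tangent category, while Lemma~\ref{lemma:induced-lax-tangent-morphism-from-linear-connections} produces the comparison transformations $\Gamma$. It therefore suffices to check that $\Gamma$ and $\Lambda$ are mutually inverse on objects and on morphisms, and that $\Lambda$ is a \emph{strict} tangent morphism, so that the resulting functor isomorphism is in fact an isomorphism of tangent categories.

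For the vertical case, a lax tangent morphism $(F,\alpha)\colon(\X',\TT')\to\VLC(\X,\TT)$ sends each $A$ to a vertical linear connection $(\q_A,k_A)$; applying $\Gamma$ postcomposes with the universal connection of Proposition~\ref{proposition:universality-linear-connections}, which amounts to recording the family $(\q_A,k_A)$ as a pointwise vertical connection in $[\X',\TT'\|\X,\TT]$, and $\Lambda$ reassembles exactly this family into $(F,\alpha)$. The same bookkeeping on a morphism $\varphi$ shows $\Lambda[\Gamma(\varphi)]=\varphi$, and the reverse composite is checked identically. Strictness of $\Lambda$ follows because both $\Lambda$ applied to the tangent bundle of $(\q,k)$ and $\bar\T\Lambda[\q,k]$ send $A$ to the vertical connection with underlying differential bundle $\T^\DB\q_A$ and vertical connection $(k_A)_\T=\T k_A\circ c$, matching the definition of the tangent bundle functor on $\VLC(\X,\TT)$; hence $\Lambda$ inverts $\Gamma$ as tangent morphisms.

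The horizontal and linear cases run along the same lines, with one genuine subtlety. A horizontal connection lives over the \emph{pointwise horizontal bundle} $\bar\H\q$, the pointwise pullback of $\q$ along its projection in the Hom-tangent category. For $\Gamma$ and $\Lambda$ to match up, I would need to know that forming this horizontal bundle commutes with postcomposition by the total object and with evaluation at each $A\in\X'$; this is precisely guaranteed by the pointwiseness of the universal display differential bundle (Definition~\ref{definition:pointwise-tangent-display-map} and Lemma~\ref{lemma:display-universal-differential-bundle}), so that $\bar\H(\Gamma_\q(H,\gamma))=(\bar\H\q)\circ(H,\gamma)$ and, evaluated pointwise, $\H(\q_A)$ is the ordinary horizontal bundle of $\q_A$ in $(\X,\TT)$. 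Granting this, the section and linearity axioms of $h$ transfer objectwise exactly as the vertical axioms did, and strictness is verified through the formula for $h_\T$ in the definition of the tangent bundle functor on $\HLC(\X,\TT)$. The linear case is then obtained by carrying $k$ and $h$ simultaneously; the two extra conditions of Definition~\ref{definition:connection} — orthogonality of the vertical and horizontal components and their direct-sum decomposition of $\id_{\T E}$ — are equational identities in the Hom-tangent category (the sum being taken in the additive structure of the relevant pullback), so they hold for the universal connection iff they hold objectwise, and are thus transported by $\Gamma$ and $\Lambda$ without extra work.

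The hard part will therefore not be the algebra of the connection axioms, which is equational and transfers mechanically as in the earlier theorems, but the correct handling of the horizontal bundle: I must confirm that the pointwise pullback defining $\bar\H\q$ is stable under the $\Gamma$/$\Lambda$ correspondence. Once this compatibility is extracted from pointwiseness, invertibility of all three comparison transformations follows, and $\Lambda$ supplies the inverse tangent morphisms, establishing corepresentability.
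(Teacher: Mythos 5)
Your proposal is correct and follows essentially the same route as the paper, which itself proves this theorem by deferring to the argument of Theorem~\ref{theorem:universality-differential-objects}: exhibit $\Lambda$ from Lemma~\ref{lemma:universality-linear-connections} as the inverse of $\Gamma$, check the two composites objectwise and on morphisms, and verify strictness of $\Lambda$ against the tangent bundle functors of $\VLC$, $\HLC$, and $\LC$. Your explicit treatment of the compatibility of the pointwise horizontal bundle with the $\Gamma$/$\Lambda$ correspondence is a point the paper leaves implicit, and you resolve it correctly by appealing to pointwiseness of the universal display differential bundle.
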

\begin{proof}
The proof of this theorem is fairly similar to that of Theorem~\ref{theorem:universality-differential-objects}.
\end{proof}

Theorem~\ref{theorem:universality-linear-connections} establishes the correct universal property of vertical, horizontal, and linear connections. Thanks to this result, we can finally introduce the notion of vertical, horizontal, and linear connections in the formal context of tangentads. Let us start with vertical connections.

\begin{definition}
\label{definition:construction-vertical-linear-connections}
A tangentad $(\X,\TT)$ in a $2$-category $\CC$ \textbf{admits the construction of vertical linear connections} if there exists a tangentad $\VLC(\X,\TT)$ of $\CC$ together with a pointwise display tangent bundle $\UnivQ^\VLC\colon\Tot^\VLC\to\Base^\VLC$ in the Hom-tangent category $[\VLC(\X,\TT)\|\X,\TT]$ and a vertical linear connection $\Univ{k^\VLC}$ on $\UnivQ^\VLC$ such that the induced tangent natural transformation $\Gamma_{\UnivQ^\VLC,\Univ{k^\VLC}}$ of Lemma~\ref{lemma:induced-lax-tangent-morphism-from-linear-connections} is invertible. The vertical linear connection $\Univ{\UnivQ^\VLC,k^\VLC}\=(\UnivQ^\VLC,\Univ{k^\VLC})$ is called the \textbf{universal vertical linear connection} of $(\X,\TT)$ and $\VLC(\X,\TT)$ is called the \textbf{tangentad of vertical linear connections} of $(\X,\TT)$.
\end{definition}

\begin{definition}
\label{definition:construction-vertical-linear-connections-2-category}
A $2$-category $\CC$ \textbf{admits the construction of vertical linear connections} provided that every tangentad of $\CC$ admits the construction of vertical linear connections.
\end{definition}

The next step is to introduce horizontal linear connections.

\begin{definition}
\label{definition:construction-horizontal-linear-connections}
A tangentad $(\X,\TT)$ in a $2$-category $\CC$ \textbf{admits the construction of horizontal linear connections} if there exists a tangentad $\HLC(\X,\TT)$ of $\CC$ together with a pointwise display tangent bundle $\UnivQ^\HLC\colon\Tot^\HLC\to\Base^\HLC$ in the Hom-tangent category $[\HLC(\X,\TT)\|\X,\TT]$ and a horizontal linear connection $\Univ{h^\HLC}$ on $\UnivQ^\HLC$ such that the induced tangent natural transformation $\Gamma_{\UnivQ^\HLC,\Univ{h^\HLC}}$ of Lemma~\ref{lemma:induced-lax-tangent-morphism-from-linear-connections} is invertible. The horizontal linear connection $\Univ{\q^\HLC,h^\HLC}\=(\UnivQ^\HLC,\Univ{h^\HLC})$ is called the \textbf{universal horizontal linear connection} of $(\X,\TT)$ and $\HLC(\X,\TT)$ is called the \textbf{tangentad of horizontal linear connections} of $(\X,\TT)$.
\end{definition}

\begin{definition}
\label{definition:construction-horizontal-linear-connections-2-category}
A $2$-category $\CC$ \textbf{admits the construction of horizontal linear connections} provided that every tangentad of $\CC$ admits the construction of horizontal linear connections.
\end{definition}

Finally, we can introduce linear connections.

\begin{definition}
\label{definition:construction-linear-connections}
A tangentad $(\X,\TT)$ in a $2$-category $\CC$ \textbf{admits the construction of linear connections} if there exists a tangentad $\LC(\X,\TT)$ of $\CC$ together with a pointwise display tangent bundle $\UnivQ^\LC\colon\Tot^\LC\to\Base^\LC$ in the Hom-tangent category $[\LC(\X,\TT)\|\X,\TT]$ and a linear connection $(\Univ{k^\LC},\Univ{h^\LC})$ on $\UnivQ^\LC$ such that the induced tangent natural transformation $\Gamma_{\UnivQ^\LC;\Univ{k^\LC},\Univ{h^\LC}}$ of Lemma~\ref{lemma:induced-lax-tangent-morphism-from-linear-connections} is invertible. The linear connection $\Univ{\q^\LC;k^\LC,h^\LC}\=(\UnivQ^\LC;\Univ{k^\LC},\Univ{h^\LC})$ is called the \textbf{universal linear connection} of $(\X,\TT)$ and $\LC(\X,\TT)$ is called the \textbf{tangentad of linear connections} of $(\X,\TT)$.
\end{definition}

\begin{definition}
\label{definition:construction-linear-connections-2-category}
A $2$-category $\CC$ \textbf{admits the construction of linear connections} provided that every tangentad of $\CC$ admits the construction of linear connections.
\end{definition}

We can now rephrase Theorem~\ref{theorem:universality-linear-connections} as follows.

\begin{corollary}
\label{corollary:universality-linear-connections}
The $2$-category $\Cat$ of categories admits the constructions of vertical, horizontal, and linear connections and the tangentads of vertical, horizontal, and linear connections of a tangentad $(\X,\TT)$ of $\Cat$ are the tangent categories $\VLC(\X,\TT)$, $\HLC(\X,\TT)$, and $\LC(\X,\TT)$ of vertical, horizontal, and linear connections of $(\X,\TT)$, respectively.
\end{corollary}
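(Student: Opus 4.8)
The plan is to recognize that this corollary is simply the reformulation of Theorem~\ref{theorem:universality-linear-connections} in the language of Definitions~\ref{definition:construction-vertical-linear-connections}, \ref{definition:construction-horizontal-linear-connections}, and~\ref{definition:construction-linear-connections}, exactly as Corollaries~\ref{corollary:universality-differential-objects} and~\ref{corollary:universality-differential-bundles} reformulate Theorems~\ref{theorem:universality-differential-objects} and~\ref{theorem:universality-differential-bundles}. The starting point is Example~\ref{example:tangent-categories}, which identifies tangentads in $\Cat$ with ordinary tangent categories, so that the Hom-categories of $\Tng(\Cat)$ are precisely the Hom-tangent categories $[\X',\TT'\|\X,\TT]$ of Proposition~\ref{proposition:hom-tangent-categories} equipped with their pointwise tangent structure.

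First I would take as candidate tangentads the tangent categories $\VLC(\X,\TT)$, $\HLC(\X,\TT)$, and $\LC(\X,\TT)$ constructed in Section~\ref{subsection:tangent-category-connections}; since these are tangent categories, they are tangentads in $\Cat$. Next I would equip them with the universal connections produced in Proposition~\ref{proposition:universality-linear-connections}: the vertical linear connection $\Univ{k^\VLC}$ on $\UnivQ^\VLC$, the horizontal linear connection $\Univ{h^\HLC}$ on $\UnivQ^\HLC$, and the linear connection $(\Univ{k^\LC},\Univ{h^\LC})$ on $\UnivQ^\LC$, each living in the appropriate Hom-tangent category. These are exactly the data required by Definitions~\ref{definition:construction-vertical-linear-connections}, \ref{definition:construction-horizontal-linear-connections}, and~\ref{definition:construction-linear-connections}.

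It then remains to verify the universal property demanded by those definitions, namely that the induced tangent natural transformations $\Gamma_{\Univ{\UnivQ^\VLC,k^\VLC}}$, $\Gamma_{\Univ{\q^\HLC,h^\HLC}}$, and $\Gamma_{\Univ{\q^\LC,k^\LC,h^\LC}}$ of Lemma~\ref{lemma:induced-lax-tangent-morphism-from-linear-connections} are invertible. But this is precisely the content of Theorem~\ref{theorem:universality-linear-connections}, so the three constructions are available and the tangentads of vertical, horizontal, and linear connections of $(\X,\TT)$ are $\VLC(\X,\TT)$, $\HLC(\X,\TT)$, and $\LC(\X,\TT)$, respectively. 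The only genuinely delicate point, and the one I would check explicitly, is that the $\Gamma$ maps appearing in the abstract definitions, which are instances of the general $2$-categorical construction of Lemma~\ref{lemma:induced-lax-tangent-morphism-from-linear-connections}, agree with the concrete $\Gamma$ maps of the theorem; this is a matter of unwinding the identification of $\Tng(\Cat)$-Hom-categories with Hom-tangent categories, and presents no real obstacle.
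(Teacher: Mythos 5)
Your proposal is correct and matches the paper exactly: the corollary is stated there as a direct rephrasing of Theorem~\ref{theorem:universality-linear-connections}, with the tangent categories $\VLC(\X,\TT)$, $\HLC(\X,\TT)$, $\LC(\X,\TT)$ and the universal connections of Proposition~\ref{proposition:universality-linear-connections} supplying the data required by Definitions~\ref{definition:construction-vertical-linear-connections}--\ref{definition:construction-linear-connections}, and the invertibility of the $\Gamma$ maps supplied by the theorem. The paper gives no further argument, so your additional remark about matching the abstract and concrete $\Gamma$ constructions is, if anything, slightly more careful than the source.
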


To ensure that Definitions~\ref{definition:construction-vertical-linear-connections},~\ref{definition:construction-horizontal-linear-connections}, and~\ref{definition:construction-linear-connections} are well-posed, one requires the tangentads of vertical, horizontal, and linear connections of a given tangentad to be unique. The next proposition establishes that such constructions are defined uniquely up to a unique isomorphism.

\begin{proposition}
\label{proposition:uniqueness-linear-connections}
If a tangentad $(\X,\TT)$ admits the construction of (vertical/horizontal) linear connections, the tangentad of (vertical/horizontal) linear connections of $(\X,\TT)$ is unique up to a unique isomorphism which extends to an isomorphism of the corresponding universal (vertical/horizontal) linear connections of $(\X,\TT)$.
\end{proposition}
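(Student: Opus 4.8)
The plan is to follow verbatim the strategy used in the proof of Proposition~\ref{proposition:uniqueness-differential-objects}, since all three cases (vertical, horizontal, and linear) are governed by identical corepresentability statements, namely the three clauses of Theorem~\ref{theorem:universality-linear-connections}. I treat the vertical case explicitly; the horizontal and linear cases are obtained by substituting $(\q,h)$ and $(\q;k,h)$ for $(\q,k)$ together with the corresponding universal connection.

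First I would reduce the claim to the uniqueness of a corepresenting object. By Definition~\ref{definition:construction-vertical-linear-connections}, a tangentad of vertical linear connections is precisely a tangentad $\VLC(\X,\TT)$ equipped with a universal vertical linear connection whose induced $\Gamma_{\UnivQ^\VLC,\Univ{k^\VLC}}$ is invertible; by Theorem~\ref{theorem:universality-linear-connections} this is equivalent to $\VLC(\X,\TT)$ corepresenting the functor $(\X',\TT')\mapsto\VLC[\X',\TT'\|\X,\TT]$. Thus the statement becomes the standard fact that a corepresenting object, together with its universal element, is determined up to a unique isomorphism.

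Concretely, suppose $\VLC'(\X,\TT)$ is a second tangentad of vertical linear connections with its own universal vertical linear connection. I would feed the identity tangent morphism $\id_{\VLC'(\X,\TT)}$ through the inverse of the $\Gamma$ associated to $\VLC'(\X,\TT)$ to obtain the universal connection on $\VLC'(\X,\TT)$, and then through the inverse of $\Gamma_{\UnivQ^\VLC,\Univ{k^\VLC}}$ to produce a tangent morphism $\Phi\colon\VLC'(\X,\TT)\to\VLC(\X,\TT)$. Symmetrically, starting from $\id_{\VLC(\X,\TT)}$ I obtain a tangent morphism $\Psi\colon\VLC(\X,\TT)\to\VLC'(\X,\TT)$. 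By naturality of $\Gamma$ in $(\X',\TT')$, asserted in Lemma~\ref{lemma:induced-lax-tangent-morphism-from-linear-connections}, each of $\Phi$ and $\Psi$ carries the universal vertical linear connection of its source to that of its target.

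Finally I would verify that $\Phi$ and $\Psi$ are mutually inverse. Since $\Gamma$ is an isomorphism of tangent categories, two tangent endomorphisms of $\VLC'(\X,\TT)$ agree as soon as they induce the same pointwise vertical linear connection; the composite $\Psi\Phi$ and $\id_{\VLC'(\X,\TT)}$ both induce the universal connection, hence coincide, and symmetrically $\Phi\Psi=\id_{\VLC(\X,\TT)}$. Uniqueness of this isomorphism follows identically: any tangent isomorphism compatible with the universal connections must correspond under $\Gamma$ to the identity element, hence equals $\Phi$. The horizontal and linear cases are word-for-word the same. I do not anticipate a genuine obstacle; the only point requiring mild care is the bookkeeping that the abstract Yoneda-style isomorphism is a tangent morphism intertwining the universal connections, which is immediate from the strictness and naturality recorded in Lemma~\ref{lemma:induced-lax-tangent-morphism-from-linear-connections} and Theorem~\ref{theorem:universality-linear-connections}.
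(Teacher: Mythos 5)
Your proposal is correct and follows essentially the same route as the paper: the paper's proof simply defers to the argument given for Proposition~\ref{proposition:uniqueness-differential-objects}, which is exactly the Yoneda-style argument you spell out (feed the identity through the two universal properties to get mutually inverse tangent morphisms intertwining the universal connections). Your version is in fact slightly more detailed than the paper's, in that you explicitly invoke the corepresentability of Theorem~\ref{theorem:universality-linear-connections} and the naturality from Lemma~\ref{lemma:induced-lax-tangent-morphism-from-linear-connections} to justify the mutual-inverse and uniqueness steps.
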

\begin{proof}
The proof of this proposition is fairly similar to that of Proposition~\ref{proposition:uniqueness-differential-objects}. We leave it to the reader to complete the details.
\end{proof}

\subsection{The universal property of affine connections}
\label{subsection:universal-property-affine-connections}
This section aims to characterize the correct universal property of the tangent categories $\VAC(\X,\TT)$, $\HAC(\X,\TT)$, and $\AC(\X,\TT)$ of the various flavours of affine connections on differential bundles of a tangent category $(\X,\TT)$. For starters, notice that, for each of these three tangent categories, there is a strict tangent morphism which forgets the connection:
\begin{align*}
\U^\VAC&\colon\VAC(\X,\TT)\to(\X,\TT)\\
\U^\HAC&\colon\HAC(\X,\TT)\to(\X,\TT)\\
\U^\AC&\colon\AC(\X,\TT)\to(\X,\TT)
\end{align*}
Define the following natural transformations:
\begin{align}
\label{equation:universal-affine-connections}
\begin{split}
\Univ{k^\VAC}_{(M,k)}&\colon\bar\T^2\U^\VAC(M,k)=\T^2M\xrightarrow{k}\T M=\bar\T\U^\VAC(M,k)\\
\Univ{h^\HAC}_{(M,h)}&\colon\bar\T_2\U^\HAC(M,h)=\T_2M\xrightarrow{h}\T^2M=\bar\T^2\U^\HAC(M,h)\\
\Univ{k^\AC}_{(M;k,h)}&\colon\bar\T^2\U^\AC(M;k,h)=\T^2M\xrightarrow{k}\T M=\bar\T\U^\AC(M;k,h)\\
\Univ{h^\AC}_{(M;k,h)}&\colon\bar\T_2\U^\AC(M;k,h)=\T_2M\xrightarrow{h}\T^2M=\bar\T^2\U^\AC(M;k,h)
\end{split}
\end{align}

\begin{proposition}
\label{proposition:universality-affine-connections}
The natural transformations of Equation~\eqref{equation:universal-affine-connections} define a vertical affine connection $\Univ{k^\VAC}$ on $\U^\VAC$, a horizontal affine connection $\Univ{h^\HAC}$ on $\U^\HAC$, and an affine connection $(\Univ{k^\AC},\Univ{h^\AC}$ on $\U^\AC$, respectively.
\end{proposition}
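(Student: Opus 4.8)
The plan is to mirror the proof of Proposition~\ref{proposition:universality-linear-connections}, taking advantage of the fact that all relevant structure in the Hom-tangent categories $[\VAC(\X,\TT)\|\X,\TT]$, $[\HAC(\X,\TT)\|\X,\TT]$, and $[\AC(\X,\TT)\|\X,\TT]$ is computed pointwise. The key observation is that an affine connection is, by definition, a linear connection on the tangent bundle, so the display differential bundle underlying each of $\Univ{k^\VAC}$, $\Univ{h^\HAC}$, and $(\Univ{k^\AC},\Univ{h^\AC})$ is the structural tangent bundle $\p_\U\colon\bar\T\U\to\U$ of the relevant forgetful morphism $\U$ in the corresponding Hom-tangent category. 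I would first record that this is a pointwise display differential bundle: by Proposition~\ref{proposition:hom-tangent-categories} the Hom-tangent category carries a pointwise tangent structure, the tangent bundle projection is always a tangent display map (its defining pullbacks are the $n$-fold pullbacks of $p$ along itself, which exist and are pointwise by the definition of a tangentad), and hence its horizontal bundle $\bar\H(\p_\U)$ is computed pointwise as $\bar\T_2\U$. This is precisely why the affine constructions, unlike the linear ones, require no display hypothesis on morphisms (cf. Lemma~\ref{lemma:LC-functoriality}).

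Next I would check that the four transformations of Equation~\eqref{equation:universal-affine-connections} are tangent natural transformations. Naturality is immediate from the definitions of $\VAC(\X,\TT)$, $\HAC(\X,\TT)$, and $\AC(\X,\TT)$: a morphism in any of these tangent categories is a morphism of the underlying objects that commutes with the vertical and/or horizontal affine connection, and this commutation is exactly the naturality square of $\Univ{k^\VAC}$, $\Univ{h^\HAC}$, $\Univ{k^\AC}$, or $\Univ{h^\AC}$. Compatibility with the tangent structure follows from the definition of the tangent bundle functors on these three tangent categories, which send a connection to its canonical prolongation built from $\T k$ (respectively the horizontal analogue) and the canonical flip $c$; this is the same mechanism, already used for $\Univ{k^\VLC}$ in Proposition~\ref{proposition:universality-linear-connections}, that witnesses compatibility with $\bar\T$.

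Finally, the statement that $\Univ{k^\VAC}$ is a vertical connection, $\Univ{h^\HAC}$ a horizontal one, and $(\Univ{k^\AC},\Univ{h^\AC})$ a linear connection on $\p_\U$ reduces to a family of pointwise identities. Every condition in Definitions~\ref{definition:linear-vertical-connection},~\ref{definition:linear-horizontal-connection}, and~\ref{definition:connection} — the retraction/section equations, the two linearity conditions for each connection, the orthogonality square, and the direct-sum equation — is assembled from the tangent structure, the additive-bundle structure, and the pullbacks defining $\bar\H$, each of which is pointwise in the Hom-tangent category. Evaluating at an object $(M,k)$, $(M,h)$, or $(M;k,h)$ therefore returns exactly the axioms holding because $k$, $h$, and $(k,h)$ are vertical, horizontal, and affine connections on $M$ in $(\X,\TT)$. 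The one step warranting genuine care — and the main potential obstacle — is the direct-sum condition $\<k,p\>(z\times l)\T s_q+\<\T q,p\>h=\id_{\T E}$ of Definition~\ref{definition:connection}, since it involves fibre-wise addition: I would verify that this addition is the pointwise sum operation of the structural differential bundle $\p_\U$, so that the single equation in the Hom-tangent category is literally the pointwise family of the corresponding equations in $(\X,\TT)$. Granting this, all the connection axioms transfer and the proposition follows.
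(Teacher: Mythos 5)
Your proposal is correct and follows essentially the same route as the paper: naturality from the definition of morphisms in $\VAC(\X,\TT)$, $\HAC(\X,\TT)$, and $\AC(\X,\TT)$, compatibility with $\bar\T$ from the definition of the tangent bundle functors on those categories, and the connection axioms verified pointwise. If anything, your identification of the underlying bundle as the structural tangent bundle $\bar\p$ of the Hom-tangent category (display by the tangentad axioms themselves) is more precise than the paper's wording, which loosely says the bundles are "constructed from the universal pointwise display differential bundle."
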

\begin{proof}
We already mentioned that $\U^\VAC$, $\U^\HAC$, and $\U^\AC$ are pointwise display differential bundles since they are constructed from the universal pointwise display differential bundle. The naturality of $\Univ{k^\VAC}$, $\Univ{h^\HAC}$, $\Univ{k^\AC}$, and $\Univ{h^\AC}$ is equivalent to the compatibilities between the morphisms of $\VAC(\X,\TT)$, $\HAC(\X,\TT)$, and $\AC(\X,\TT)$ with the connections. Furthermore, these natural transformations are compatible with the tangent structures by definition of the tangent bundle functors in each respective tangent category. Finally, the axioms required for these transformations to define connections are a direct consequence of each $(M,k)\in\VAC(\X,\TT)$, $(M,h)\in\HAC(\X,\TT)$, and $(M;k,h)\in\AC(\X,\TT)$ being vertical, horizontal, and affine connections in $(\X,\TT)$.
\end{proof}

To characterize the correct universal property enjoyed by the connections of Proposition~\ref{proposition:universality-affine-connections}, let us first unwrap the definition of a (vertical/horizontal) affine connection in each Hom-tangent category $[\X',\TT'\|\X'',\TT'']$, where $(\X',\TT')$ and $(\X'',\TT'')$ are tangent categories. Let us start with a vertical affine connection.
\begin{description}
\item[Base object] A lax tangent morphism $(G,\beta)\colon(\X',\TT')\to(\X'',\TT'')$;

\item[Vertical connection] The vertical affine connection consists of a tangent natural transformation:
\begin{align*}
&k\colon\bar\T^2(G,\beta)\to\bar\T(G,\beta)
\end{align*}
\end{description}
satisfying the axioms of a vertical affine connection. A morphism of vertical affine connections in the Hom-tangent category $[\X,\TT\|\X',\TT']$ from a vertical affine connection $(G,\beta;k)$ to a vertical affine connection $(G',\beta';k')$ consists of a tangent natural transformation $\varphi\colon(G,\beta)\Rightarrow(G',\beta')$ which commutes with the vertical affine connections.
\par Similarly, a horizontal affine connection consists of:
\begin{description}
\item[Base object] A lax tangent morphism $(G,\beta)\colon(\X',\TT')\to(\X'',\TT'')$;

\item[Horizontal connection] The horizontal affine connection consists of a tangent natural transformation:
\begin{align*}
&h\colon\bar\T_2(G,\beta)\to\bar\T(G,\beta)
\end{align*}
\end{description}
satisfying the axioms of a horizontal affine connection.
\par A morphism of horizontal affine connections in the Hom-tangent category $[\X,\TT\|\X',\TT']$ from a horizontal affine connection $(G,\beta;h)$ to a horizontal affine connection $(G',\beta';h')$ consists of a tangent natural transformation $\varphi\colon(G,\beta)\Rightarrow(G',\beta')$ which commutes with the horizontal affine connections.
\par Finally, an affine connection consists of:
\begin{description}
\item[Base object] A lax tangent morphism $(G,\beta)\colon(\X',\TT')\to(\X'',\TT'')$;

\item[Affine connection] The affine connection consists of a pair of tangent natural transformations:
\begin{align*}
&k\colon\bar\T^2(G,\beta)\to\bar\T(G,\beta)\\
&h\colon\bar\T_2(G,\beta)\to\bar\T^2(G,\beta)
\end{align*}
such that $(G,\beta;k)$ and $(G,\beta;h)$ are a vertical and a horizontal affine connections, respectively;
\end{description}
satisfying the axioms of an affine connection.
\par A morphism of affine connections in the Hom-tangent category $[\X,\TT\|\X',\TT']$ from an affine connection $(G,\beta;k,h)$ to an affine connection $(G',\beta';k',h')$ consists of a tangent natural transformation $\varphi\colon(G,\beta)\Rightarrow(G',\beta')$ which commutes with the vertical and the horizontal affine connections.
\par For each tangent category $(\X',\TT')$, a vertical affine connection $(G,\beta;k)$ in $[\X'',\TT''\|\X''',\TT''']$ induces a functor
\begin{align*}
&\Gamma_{(G,\beta;k)}\colon[\X',\TT'\|\X'',\TT'']\to\VAC[\X',\TT'\|\X''',\TT''']
\end{align*}
which sends a lax tangent morphism $(H,\gamma)\colon(\X',\TT')\to(\X'',\TT'')$ to the vertical affine connection
\begin{align*}
&\Gamma_{(G,\beta;k)}(H,\gamma)\=((H,\gamma)\o\U^\VAC,k_{(H,\gamma)})
\end{align*}
in the Hom-tangent category $[\X',\TT'\|\X''',\TT''']$. Concretely, $k_{(H,\gamma)}$ is the natural transformation:
\begin{equation*}
\begin{tikzcd}
{(\X',\TT')} & {(\X'',\TT'')} && {(\X''',\TT''')}
\arrow["{(H,\gamma)}", from=1-1, to=1-2]
\arrow[""{name=0, anchor=center, inner sep=0}, "{(G,\beta)}"', curve={height=18pt}, from=1-2, to=1-4]
\arrow[""{name=1, anchor=center, inner sep=0}, "{\bar\T(G,\beta)}", curve={height=-18pt}, from=1-2, to=1-4]
\arrow["k",shorten <=5pt, shorten >=5pt, Rightarrow, from=1, to=0]
\end{tikzcd}
\end{equation*}

Similarly, a horizontal affine connection $(G,\beta;h)$ in the Hom-tangent category $[\X'',\TT''\|\X''',\TT''']$ induces a functor
\begin{align*}
&\Gamma_{(G,\beta;h)}\colon[\X',\TT'\|\X'',\TT'']\to\HAC[\X',\TT'\|\X''',\TT''']
\end{align*}
which sends a lax tangent morphism $(H,\gamma)\colon(\X',\TT')\to(\X'',\TT'')$ to the horizontal affine connection
\begin{align*}
&\Gamma_{(G,\beta;h)}(H,\gamma)\=((H,\gamma)\o\U^\HAC,h_{(H,\gamma)})
\end{align*}
in the Hom-tangent category $[\X',\TT'\|\X''',\TT''']$. Concretely, $h_{(H,\gamma)}$ is the natural transformation:
\begin{equation*}
\begin{tikzcd}
{(\X',\TT')} & {(\X'',\TT'')} && {(\X''',\TT''')}
\arrow["{(H,\gamma)}", from=1-1, to=1-2]
\arrow[""{name=0, anchor=center, inner sep=0}, "{\bar\T_2(G,\beta)}"', curve={height=18pt}, from=1-2, to=1-4]
\arrow[""{name=1, anchor=center, inner sep=0}, "{\bar\T(G,\beta)}", curve={height=-18pt}, from=1-2, to=1-4]
\arrow["h"',shorten <=5pt, shorten >=5pt, Rightarrow, from=0, to=1]
\end{tikzcd}
\end{equation*}
Furthermore,a affine connection $(\q;k,h)$ in the Hom-tangent category $[\X'',\TT''\|\X''',\TT''']$ induces a functor
\begin{align*}
&\Gamma_{(\q;k,h)}\colon[\X',\TT'\|\X'',\TT'']\to\AC[\X',\TT'\|\X''',\TT''']
\end{align*}
which sends a lax tangent morphism $(H,\gamma)\colon(\X',\TT')\to(\X'',\TT'')$ to the horizontal affine connection
\begin{align*}
&\Gamma_{(\q;k,h)}(H,\gamma)\=((H,\gamma)\o\U^\AC;k_{H,\gamma},h_{(H,\gamma)})
\end{align*}
in the Hom-tangent category $[\X',\TT'\|\X''',\TT''']$. These constructions are functorial, as shown by the next lemma.

\begin{lemma}
\label{lemma:induced-lax-tangent-morphism-from-affine-connections}
Given an object $(G,\beta)$ in the Hom-tangent category $[\X'',\TT''\|\X''',\TT''']$, a vertical affine connection $k$ on $\q$, a horizontal affine connection $h$ on $\q$, and an affine connection $(k,h)$ on $\q$ induce lax tangent morphisms
\begin{align*}
\Gamma_{(G,\beta;k)}&\colon[\X',\TT'\|\X'',\TT'']\to\VAC[\X',\TT'\|\X''',\TT''']\\
\Gamma_{(G,\beta;h)}&\colon[\X',\TT'\|\X'',\TT'']\to\HAC[\X',\TT'\|\X''',\TT''']\\
\Gamma_{(\q;k,h)}&\colon[\X',\TT'\|\X'',\TT'']\to\AC[\X',\TT'\|\X''',\TT''']
\end{align*}
for each tangentad $(\X',\TT')$, which are natural in $(\X',\TT')$ and strong (strict) when both $(G,\beta)$ and $(G,\beta)$ are strong (strict).
\end{lemma}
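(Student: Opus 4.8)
The plan is to follow the strategy of Lemma~\ref{lemma:induced-lax-tangent-morphism-from-differential-objects} and Lemma~\ref{lemma:induced-lax-tangent-morphism-from-linear-connections} almost verbatim, exploiting the fact that affine connections are simply connections on the canonical tangent bundle. In contrast to the linear case, the base datum here is a single lax tangent morphism $(G,\beta)$ rather than a pointwise display differential bundle $\q\colon(K,\theta)\to(G,\beta)$: the relevant bundles $\bar\T(G,\beta)$, $\bar\T^2(G,\beta)$, and $\bar\T_2(G,\beta)$ are all assembled from the ambient tangent structure, and the pointwise $n$-fold pullbacks $\bar\T_n$ exist and are preserved in every Hom-tangent category by the tangentad axioms. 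This makes the affine case structurally closer to the differential objects case than to the linear connections case. The one structural fact used throughout is that, by Proposition~\ref{proposition:hom-tangent-categories}, the tangent bundle functor $\bar\T$ commutes with precomposition, so that $\bar\T((G,\beta)\o(H,\gamma))=\bar\T(G,\beta)\o(H,\gamma)$ and likewise for $\bar\T^2$ and $\bar\T_2$.

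First I would fix a tangentad $(\X',\TT')$ and define $\Gamma_{(G,\beta;k)}$ on objects by whiskering: a lax tangent morphism $(H,\gamma)\colon(\X',\TT')\to(\X'',\TT'')$ is sent to the pair $((G,\beta)\o(H,\gamma),k_{(H,\gamma)})$, where $k_{(H,\gamma)}$ is the $2$-morphism obtained by whiskering $k$ with $(H,\gamma)$, as displayed in the diagram preceding the statement; on a tangent $2$-morphism $\psi\colon(H,\gamma)\Rightarrow(H',\gamma')$ one sets $\Gamma_{(G,\beta;k)}(\psi)\=(G,\beta)\psi$. Because whiskering is a $2$-functor in each variable, the equational axioms of a vertical affine connection (Definition~\ref{definition:linear-vertical-connection}) transport from $k$ to $k_{(H,\gamma)}$, so the image is again a vertical affine connection and $(G,\beta)\psi$ is a morphism of such. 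The same recipe applies mutatis mutandis to the horizontal connection $h$ and to the pair $(k,h)$, using Definitions~\ref{definition:linear-horizontal-connection} and~\ref{definition:connection}; here one additionally invokes that the whiskering of $\bar\T_2(G,\beta)$ along $(H,\gamma)$ agrees with $\bar\T_2$ of the composite, which follows from the preservation of pointwise pullbacks.

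Next I would endow $\Gamma_{(G,\beta;k)}$ with a lax tangent structure: exactly as in Lemma~\ref{lemma:induced-lax-tangent-morphism-from-differential-objects}, the distributive law $\beta$ of $(G,\beta)$ together with that of $(H,\gamma)$ induces a distributive law on the whiskered morphism, and compatibility of $k$ with $\beta$ (it is a tangent natural transformation) guarantees that this distributive law is a morphism of vertical affine connections. When $(G,\beta)$ is strong, respectively strict, the induced distributive law inherits invertibility, respectively triviality, so $\Gamma_{(G,\beta;k)}$ is strong, respectively strict. Naturality in $(\X',\TT')$ is verified exactly as in Lemma~\ref{lemma:induced-lax-tangent-morphism-from-differential-objects}: for a lax tangent morphism $(F,\alpha)\colon(\X,\TT)\to(\X',\TT')$ one checks the identity
\begin{align*}
&\Gamma_{(G,\beta;k)}\o[F,\alpha\|\X'',\TT'']=\VAC[F,\alpha\|\X''',\TT''']\o\Gamma_{(G,\beta;k)},
\end{align*}
and the strictness of each $[F,\alpha\|\X'',\TT'']$ upgrades this equality of functors to a tangent natural transformation; the identical argument applies to $\Gamma_{(G,\beta;h)}$ and $\Gamma_{(\q;k,h)}$.

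The only step that demands genuine care, rather than routine transport of arguments already in place, is the verification for the full affine connection $(k,h)$ that whiskering respects the orthogonality and direct-sum conditions of Definition~\ref{definition:connection}, the latter of which involves the additive (sum and negation) structure. I expect this to be the main, though still routine, obstacle: one observes that the additive bundle structure on the Hom-tangent categories is computed pointwise, so the direct-sum equation---being an identity of $2$-morphisms built from $k$, $h$, and the structural transformations---holds after whiskering precisely because it holds pointwise in the target tangentad. With this in hand, all three assignments are lax tangent morphisms of the claimed strength, natural in $(\X',\TT')$, and the remaining details parallel the proof of Lemma~\ref{lemma:induced-lax-tangent-morphism-from-differential-objects}.
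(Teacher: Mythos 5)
Your proposal is correct and follows essentially the same route as the paper, which simply defers to the argument of Lemma~\ref{lemma:induced-lax-tangent-morphism-from-differential-objects}: define $\Gamma$ by whiskering, let $\beta$ induce the distributive law (inheriting strength or strictness), and obtain naturality in $(\X',\TT')$ from associativity of composition together with strictness of $[F,\alpha\|-]$. Your additional observations --- that the affine case is structurally closer to the differential objects case than the linear one, and that the direct-sum axiom transports because the additive structure of the Hom-tangent category is pointwise --- are accurate elaborations of details the paper leaves to the reader.
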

\begin{proof}
The proof of this lemma is fairly similar to that of Lemma~\ref{lemma:induced-lax-tangent-morphism-from-differential-objects}. We leave it to the reader to complete the details.
\end{proof}

By Lemma~\ref{lemma:induced-lax-tangent-morphism-from-affine-connections}, the vertical affine connection $\Univ{k^\VAC}$ on $\U^\VAC$, the horizontal affine connection $\Univ{h^\HAC}$ on $\U^\HAC$, and the affine connection $(\Univ{k^\AC},\Univ{h^\AC})$ on $\U^\AC$, of Proposition~\ref{proposition:universality-affine-connections} induce three strict tangent natural transformations
\begin{align*}
\Gamma_{\Univ{\U^\VAC,k^\VAC}}&\colon[\X',\TT'\|\VAC(\X,\TT)]\to\VAC[\X',\TT'\|\X,\TT]\\
\Gamma_{\Univ{\U^\HAC,h^\HAC}}&\colon[\X',\TT'\|\HAC(\X,\TT)]\to\HAC[\X',\TT'\|\X,\TT]\\
\Gamma_{\Univ{\U^\AC,k^\AC,h^\AC}}&\colon[\X',\TT'\|\AC(\X,\TT)]\to\AC[\X',\TT'\|\X,\TT]
\end{align*}
natural in $(\X',\TT')$.
\par The goal is to prove that these three transformations are invertible. Consider another tangent category $(\X',\TT')$ together with an object $(G,\beta)$ in the Hom-tangent category $[\X',\TT'\|\X,\TT]$ equipped with a vertical connection $k$, with a horizontal connection $h$, and an affine connection $(k,h)$ (each case to be taken separately). For every $A\in\X'$, the tuples
\begin{align*}
&(GA,k_A\colon\T^2A\to\T GA)\\
&(GA,h\colon\H{ GA}\to\T^2A)\\
&(GA;k_A\colon\T^2A\to\T GA,h\colon\H{ GA}\to\T^2A)\\
\end{align*}
are a vertical affine connection, a horizontal affine connection, and an affine connection in $(\X,\TT)$. Therefore, we can define the functors
\begin{align*}
\Lambda[G,\beta;k]&\colon(\X',\TT')\to\VAC(\X,\TT)\\
\Lambda[G,\beta;h]&\colon(\X',\TT')\to\HAC(\X,\TT)\\
\Lambda[\q;k,h]&\colon(\X',\TT')\to\AC(\X,\TT)\\
\end{align*}
which send an object $A$ of $\X'$ to the connections $(GA,k_A)$, $(GA,h_A)$, and $(GA;k_A,h_A)$, respectively, and a morphism $f\colon A\to B$ to $Gf$. By the naturality of $k$, $h$, and the structure morphisms of $\q$, $Gf$ becomes a morphism of (vertical/horizontal) connections of $(\X,\TT)$. Furthermore, $\Lambda[G,\beta;k]$, $\Lambda[G,\beta;h]$, and $\Lambda[\q;k,h]$ come with a distributive law induced by $\beta$ and $\theta$.

\begin{lemma}
\label{lemma:universality-affine-connections}
Given an object $(G,\beta)$ in the Hom-tangent category $[\X',\TT'\|\X,\TT]$, equipped with a vertical affine connection $k$, a horizontal affine connection $h$, or an affine connection $(k,h)$, the functors $\Lambda[G,\beta;k]$, $\Lambda[G,\beta;h]$, and $\Lambda[G,\beta;k,h]$ define lax tangent morphisms:
\begin{align*}
\Lambda[G,\beta;k]&\colon(\X',\TT')\to\VAC(\X,\TT)\\
\Lambda[G,\beta;h]&\colon(\X',\TT')\to\HAC(\X,\TT)\\
\Lambda[G,\beta;k,h]&\colon(\X',\TT')\to\AC(\X,\TT)\\
\end{align*}
\end{lemma}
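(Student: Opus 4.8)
I need to prove that the three functors $\Lambda[G,\beta;k]$, $\Lambda[G,\beta;h]$, and $\Lambda[G,\beta;k,h]$ — which were already constructed at the level of objects and morphisms in the discussion preceding this lemma — are genuinely lax tangent morphisms into the respective tangent categories $\VAC(\X,\TT)$, $\HAC(\X,\TT)$, and $\AC(\X,\TT)$. Since these three cases are structurally parallel, my plan is to give the argument in full for the vertical case $\Lambda[G,\beta;k]$ and then indicate that the horizontal and combined cases follow by the same reasoning, as is the convention throughout this paper.

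**The approach.** This lemma is the exact analogue of Lemma~\ref{lemma:universality-differential-objects} (for differential objects) and Lemma~\ref{lemma:universality-differential-bundles} (for differential bundles), and the proof should mirror those almost verbatim. There are three things to verify, each corresponding to a piece of structure already produced above. First, the assignment $A\mapsto(GA,k_A)$ on objects and $f\mapsto Gf$ on morphisms is a well-defined functor $(\X',\TT')\to\VAC(\X,\TT)$: this follows from the naturality of $k$ (as a tangent natural transformation in the Hom-tangent category) together with the fact that each $Gf$ commutes with the $k_A$, which is precisely the statement that $Gf$ is a morphism of vertical affine connections. Second, the candidate distributive law $\Lambda[\beta]$, induced pointwise by $\beta$, is a morphism \emph{of vertical affine connections} $\Lambda[G,\beta;k](\T'A)\to\T^{\VAC}(\Lambda[G,\beta;k](A))$: this is where I use that $k$ is a \emph{tangent} natural transformation, i.e.\ that it is compatible with the distributive law $\beta$ on $(G,\beta)$ and with the tangent-bundle structure of $\VAC(\X,\TT)$. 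Third, the coherence axioms of a lax tangent morphism (compatibility of $\Lambda[\beta]$ with $p$, $z$, $s$, $l$, $c$) hold because $\beta$ already satisfies them as the distributive law of $(G,\beta)$, and the structure maps of $\VAC(\X,\TT)$ are defined to agree with those of $(\X,\TT)$ on the underlying objects.

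**Key steps, in order.** I would proceed as follows. (i) Record that the object- and morphism-assignments were already defined in the paragraph before the lemma, and invoke naturality of $k$ to confirm functoriality. (ii) Exhibit $\Lambda[\beta]_A\=\beta_A$ as the distributive law, and verify it is a morphism of vertical affine connections by chasing the single compatibility square that says $k$ respects $\beta$ — concretely, that $\bar\T k$ and $\beta$ fit together with the $c$-twist defining $\T^{\VAC}$. (iii) Conclude the lax-tangent-morphism axioms from those already satisfied by $\beta$. Then I would remark that the horizontal case uses compatibility of $h$ with $\beta$ in place of $k$ (noting the horizontal bundle $\bar\H$ is built from pointwise pullbacks, which are preserved), and the combined affine case is simply the conjunction of the two.

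**Main obstacle.** The only genuinely non-formal point is step (ii) in the horizontal case: unlike the vertical connection, whose data $k\colon\bar\T^2(G,\beta)\to\bar\T(G,\beta)$ lives entirely among iterates of $\bar\T$, the horizontal connection involves the pointwise horizontal bundle $\bar\T_2(G,\beta)$ and the map $h\colon\bar\T_2(G,\beta)\to\bar\T^2(G,\beta)$, so one must check that the distributive law induced by $\beta$ interacts correctly with the pullback defining $\bar\T_2$ and with the twisting $\<\pi_1 c,\pi_2\>$ used in the tangent bundle functor of $\HAC(\X,\TT)$. This reduces to the naturality of $c$ and the fact that pointwise pullbacks in the Hom-tangent category are computed as in the target, both of which are available; but it is the step where the bookkeeping is least automatic. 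Given the paper's established style, I would carry out the vertical computation and then write: ``The horizontal and affine cases are analogous, using the compatibility of $h$ with $\beta$ and the fact that $\bar\H$ is computed by pointwise pullbacks; we leave the details to the reader.''
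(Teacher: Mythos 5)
Your proposal is correct and follows essentially the same route as the paper's proof: functoriality from naturality of $k$ (resp.\ $h$), the distributive law induced pointwise by $\beta$ being a morphism of connections from the compatibility of the connection data with $\beta$, and the lax-tangent-morphism coherences inherited from those of $(G,\beta)$. Your extra attention to the horizontal case (the interaction of $\beta$ with the pointwise pullback $\bar\T_2$ and the $c$-twist) is a detail the paper leaves implicit but does not change the argument.
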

\begin{proof}
In the previous discussion, we defined $\Lambda[G,\beta;k]$, $\Lambda[G,\beta;h]$, and $\Lambda[G,\beta;k,h]$ as the lax tangent morphisms which pick out the connections $(GA,k_A)$, $(GA,h_A)$, and $(GA;k_A,h_A)$ for each $A\in\X'$, respectively, and whose distributive law is induced by $\beta$. In particular, the naturality of the structure morphisms of $\q$ and of the connections makes these functors well-defined, the compatibility of the structure morphisms, $\beta$, makes them into morphisms of connections, and the compatibility between $\beta$ and the tangent structures makes them into lax tangent morphisms.
\end{proof}

We can now prove the universal property of vertical, horizontal, and affine connections, which is the main result of this section.

\begin{theorem}
\label{theorem:universality-affine-connections}
The vertical affine connection $\Univ{\U^\VAC,k^\VAC}$, the horizontal affine connection $\Univ{\U^\HAC,h^\HAC}$, and the affine connection $\Univ{\U^\AC;k^\AC,h^\AC}$ of Proposition~\ref{proposition:universality-affine-connections} are universal. Concretely, the induced strict tangent natural transformations
\begin{align*}
\Gamma_{\Univ{\U^\VAC,k^\VAC}}&\colon[\X',\TT'\|\VAC(\X,\TT)]\to\VAC[\X',\TT'\|\X,\TT]\\
\Gamma_{\Univ{\U^\HAC,h^\HAC}}&\colon[\X',\TT'\|\HAC(\X,\TT)]\to\HAC[\X',\TT'\|\X,\TT]\\
\Gamma_{\Univ{\U^\AC,k^\AC,h^\AC}}&\colon[\X',\TT'\|\AC(\X,\TT)]\to\AC[\X',\TT'\|\X,\TT]
\end{align*}
make the functors
\begin{align*}
&\TngCat^\op\xrightarrow{[-\|\X,\TT]}\TngCat^\op\xrightarrow{\VAC^\op}\TngCat^\op\\
&\TngCat^\op\xrightarrow{[-\|\X,\TT]}\TngCat^\op\xrightarrow{\HAC^\op}\TngCat^\op\\
&\TngCat^\op\xrightarrow{[-\|\X,\TT]}\TngCat^\op\xrightarrow{\AC^\op}\TngCat^\op
\end{align*}
which send a tangent category $(\X',\TT')$ to the tangent categories $\VAC[\X',\TT'\|\X,\TT]$, $\HAC[\X',\TT'\|\X,\TT]$, and $\AC[\X',\TT'\|\X,\TT]$, respectively, into corepresentable functors. In particular, $\Gamma_{\Univ{\U^\VAC,k^\VAC}}$, $\Gamma_{\Univ{\U^\HAC,h^\HAC}}$, and $\Gamma_{\Univ{\U^\AC,k^\AC,h^\AC}}$ are invertible.
\end{theorem}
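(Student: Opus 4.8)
The plan is to mirror the proof of Theorem~\ref{theorem:universality-differential-objects}, running the three cases ($\VAC$, $\HAC$, $\AC$) in parallel and spelling out only the vertical case, since the horizontal and the combined affine cases are entirely analogous. The key observation is that the functor $\Gamma_{\Univ{\U^\VAC,k^\VAC}}$ and the functor $\Lambda[-;-]$ of Lemma~\ref{lemma:universality-affine-connections} are mutually inverse because each merely repackages the same data. A lax tangent morphism $(F,\alpha)\colon(\X',\TT')\to\VAC(\X,\TT)$ assigns to every $A\in\X'$ a vertical affine connection $(GA,k_A)$ and to every $f\colon A\to B$ a morphism $Gf$ compatible with the $k$'s, while a vertical affine connection $(G,\beta;k)$ in the Hom-tangent category $[\X',\TT'\|\X,\TT]$ is precisely the family $(k_A\colon\T^2 A\to\T GA)_A$ together with its naturality and tangent-compatibility conditions. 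These are the very same data, presented from two viewpoints. Here the affine case is in fact slightly cleaner than the linear case of Theorem~\ref{theorem:universality-linear-connections}, since the underlying bundle is fixed to be the tangent bundle and no pointwise-display-bundle bookkeeping is required.

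First I would verify the two round-trips. Given $(F,\alpha)$ as above, applying $\Gamma_{\Univ{\U^\VAC,k^\VAC}}$ postcomposes with the universal vertical affine connection $\Univ{k^\VAC}$, which extracts exactly the family $(GA,k_A)$; feeding this into $\Lambda[-;-]$ reassembles the lax tangent morphism sending $A\mapsto(GA,k_A)$ and $f\mapsto Gf$, recovering $(F,\alpha)$ on the nose, and likewise on a morphism $\varphi\colon(F,\alpha)\to(F',\alpha')$. Conversely, starting from a vertical affine connection $(G,\beta;k)$ in $[\X',\TT'\|\X,\TT]$, the composite $\Gamma_{\Univ{\U^\VAC,k^\VAC}}(\Lambda[G,\beta;k])$ returns the same structure, since $\Lambda$ picks out $(GA,k_A)$ for each $A$ and $\Gamma$ reads off the connection natural transformation. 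The horizontal case uses $h\colon\bar\T_2(G,\beta)\to\bar\T^2(G,\beta)$ in place of $k$, and the affine case uses the pair $(k,h)$; in all three, the correspondence is the identity on the underlying data, so the bijections on objects and morphisms are immediate.

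The main technical step, and the point I expect to require genuine care, is to check that $\Lambda$ (equivalently, the inverse of $\Gamma$) is a \emph{strict tangent} morphism, not merely a functor. For this I would compare $\Lambda[\T^\VAC(G,\beta;k)]$ with $\bar\T(\Lambda[G,\beta;k])$ object-by-object. On the target side, $\bar\T$ postcomposes $\Lambda[G,\beta;k]$ with the tangent bundle functor of $\VAC(\X,\TT)$, which sends $(GA,k_A)$ to $(\T GA,\, \T k_A\o c)$, since the tangent structure on $\VAC$ acts on the vertical part as on $\VLC$. On the source side, the tangent bundle functor of $\VAC[\X',\TT'\|\X,\TT]$ transports $k$ to $\bar\T k\o c$ using the canonical flip of the Hom-tangent category, which is pointwise $c$; hence $\Lambda[\T^\VAC(G,\beta;k)]$ also sends $A$ to $(\T GA,\, \T k_A\o c)$. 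The two strict tangent morphisms therefore coincide, the canonical-flip identities (in particular $c^2=\id$) guaranteeing compatibility exactly as $c^2=\id_{\T^2}$ was used for differential objects. The horizontal case requires the analogous comparison for $h$, whose tangent bundle transport $h_\T$ is built from $\<\pi_1 c,\pi_2\>$ as recorded for $\HLC(\X,\TT)$, and the affine case combines the two. Once strictness is established, $\Lambda$ inverts $\Gamma_{\Univ{\U^\VAC,k^\VAC}}$ as tangent categories; naturality in $(\X',\TT')$ from Lemma~\ref{lemma:induced-lax-tangent-morphism-from-affine-connections} then exhibits the three functors as corepresentable and shows $\Gamma_{\Univ{\U^\VAC,k^\VAC}}$, $\Gamma_{\Univ{\U^\HAC,h^\HAC}}$, and $\Gamma_{\Univ{\U^\AC,k^\AC,h^\AC}}$ are all invertible.
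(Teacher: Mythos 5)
Your proposal is correct and follows exactly the route the paper intends: the paper's own proof of this theorem is a one-line deferral to the proof of Theorem~\ref{theorem:universality-differential-objects}, and you have carried out precisely that argument (mutually inverse $\Gamma$ and $\Lambda$ via round-trip checks, plus strictness of $\Lambda$ as a tangent morphism by comparing the two tangent bundle transports of $k$ and $h$). No substantive difference from the paper's approach.
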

\begin{proof}
The proof of this theorem is fairly similar to that of Theorem~\ref{theorem:universality-differential-objects}.
\end{proof}

Theorem~\ref{theorem:universality-affine-connections} establishes the correct universal property of vertical, horizontal, and affine connections. Thanks to this result, we can finally introduce the notion of vertical, horizontal, and affine connections in the formal context of tangentads. Let us start with vertical connections.

\begin{definition}
\label{definition:construction-vertical-affine-connections}
A tangentad $(\X,\TT)$ in a $2$-category $\CC$ \textbf{admits the construction of vertical affine connections} if there exists a tangentad $\VAC(\X,\TT)$ of $\CC$ together with a tangent morphism $\U^\VAC\colon\VAC(\X,\TT)\to(\X,\TT)$ and a vertical affine connection $\Univ{k^\VAC}$ on $\U^\VAC$ such that the induced tangent natural transformation $\Gamma_{\U^\VAC,\Univ{k^\VAC}}$ of Lemma~\ref{lemma:induced-lax-tangent-morphism-from-affine-connections} is invertible. The vertical affine connection $\Univ{k^\VAC}\=(\U^\VAC,\Univ{k^\VAC})$ is called the \textbf{universal vertical affine connection} of $(\X,\TT)$ and $\VAC(\X,\TT)$ is called the \textbf{tangentad of vertical affine connections} of $(\X,\TT)$.
\end{definition}

\begin{definition}
\label{definition:construction-vertical-affine-connections-2-category}
A $2$-category $\CC$ \textbf{admits the construction of vertical affine connections} provided that every tangentad of $\CC$ admits the construction of vertical affine connections.
\end{definition}

The next step is to introduce horizontal affine connections.

\begin{definition}
\label{definition:construction-horizontal-affine-connections}
A tangentad $(\X,\TT)$ in a $2$-category $\CC$ \textbf{admits the construction of horizontal affine connections} if there exists a tangentad $\HAC(\X,\TT)$ of $\CC$ together with a tangent morphism $\U^\HAC\colon\HAC(\X,\TT)\to(\X,\TT)$ and a horizontal affine connection $\Univ{h^\HAC}$ on $\U^\HAC$ such that the induced tangent natural transformation $\Gamma_{\U^\HAC,\Univ{h^\HAC}}$ of Lemma~\ref{lemma:induced-lax-tangent-morphism-from-affine-connections} is invertible. The horizontal affine connection $\Univ{\U^\HAC,h^\HAC}\=(\U^\HAC,\Univ{h^\HAC})$ is called the \textbf{universal horizontal affine connection} of $(\X,\TT)$ and $\HAC(\X,\TT)$ is called the \textbf{tangentad of horizontal affine connections} of $(\X,\TT)$.
\end{definition}

\begin{definition}
\label{definition:construction-horizontal-affine-connections-2-category}
A $2$-category $\CC$ \textbf{admits the construction of horizontal affine connections} provided that every tangentad of $\CC$ admits the construction of horizontal affine connections.
\end{definition}

Finally, we can introduce affine connections.

\begin{definition}
\label{definition:construction-affine-connections}
A tangentad $(\X,\TT)$ in a $2$-category $\CC$ \textbf{admits the construction of affine connections} if there exists a tangentad $\AC(\X,\TT)$ of $\CC$ together with a tangent morphism $\U^\AC\colon\AC(\X,\TT)\to(\X,\TT)$ and an affine connection $(\Univ{k^\AC},\Univ{h^\AC})$ on $\U^\AC$ such that the induced tangent natural transformation $\Gamma_{\U^\AC;\Univ{k^\AC},\Univ{h^\AC}}$ of Lemma~\ref{lemma:induced-lax-tangent-morphism-from-affine-connections} is invertible. The affine connection $\Univ{\U^\AC;k^\AC,h^\AC}\=(\U^\AC;\Univ{k^\AC},\Univ{h^\AC})$ is called the \textbf{universal affine connection} of $(\X,\TT)$ and $\AC(\X,\TT)$ is called the \textbf{tangentad of affine connections} of $(\X,\TT)$.
\end{definition}

\begin{definition}
\label{definition:construction-affine-connections-2-category}
A $2$-category $\CC$ \textbf{admits the construction of affine connections} provided that every tangentad of $\CC$ admits the construction of affine connections.
\end{definition}

We can now rephrase Theorem~\ref{theorem:universality-affine-connections} as follows.

\begin{corollary}
\label{corollary:universality-affine-connections}
The $2$-category $\Cat$ of categories admits the constructions of vertical, horizontal, and affine connections and the tangentads of vertical, horizontal, and affine connections of a tangentad $(\X,\TT)$ of $\Cat$ are the tangent categories $\VAC(\X,\TT)$, $\HAC(\X,\TT)$, and $\AC(\X,\TT)$ of vertical, horizontal, and affine connections of $(\X,\TT)$, respectively.
\end{corollary}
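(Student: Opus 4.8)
The plan is to derive this corollary directly from Theorem~\ref{theorem:universality-affine-connections} by specialising the ambient $2$-category to $\CC=\Cat$. First I would recall, via Example~\ref{example:tangent-categories}, that a tangentad in $\Cat$ is precisely a tangent category; hence the phrase ``every tangentad of $\Cat$'' appearing in Definitions~\ref{definition:construction-vertical-affine-connections-2-category},~\ref{definition:construction-horizontal-affine-connections-2-category}, and~\ref{definition:construction-affine-connections-2-category} ranges exactly over tangent categories. Thus it suffices to check that each individual tangent category admits the three constructions.

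Next, I would fix a tangent category $(\X,\TT)$. For the vertical case, I propose $\VAC(\X,\TT)$ as the candidate tangentad of vertical affine connections, $\U^\VAC\colon\VAC(\X,\TT)\to(\X,\TT)$ as the forgetful tangent morphism, and the vertical affine connection $\Univ{k^\VAC}$ of Proposition~\ref{proposition:universality-affine-connections} as the candidate universal connection. The sole hypothesis to verify in Definition~\ref{definition:construction-vertical-affine-connections} is the invertibility of the induced tangent natural transformation $\Gamma_{\Univ{\U^\VAC,k^\VAC}}$, and this is exactly the content of Theorem~\ref{theorem:universality-affine-connections}. The horizontal and affine cases follow verbatim, replacing $\VAC(\X,\TT)$ with $\HAC(\X,\TT)$ and $\AC(\X,\TT)$, and the universal connection with $\Univ{h^\HAC}$ and $(\Univ{k^\AC},\Univ{h^\AC})$ respectively; in each case Theorem~\ref{theorem:universality-affine-connections} supplies the required invertibility, so Definitions~\ref{definition:construction-horizontal-affine-connections} and~\ref{definition:construction-affine-connections} are satisfied.

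Finally, since every tangent category admits each of the three constructions, the defining conditions of Definitions~\ref{definition:construction-vertical-affine-connections-2-category},~\ref{definition:construction-horizontal-affine-connections-2-category}, and~\ref{definition:construction-affine-connections-2-category} are met, so $\Cat$ admits the constructions of vertical, horizontal, and affine connections, with the respective tangentads being $\VAC(\X,\TT)$, $\HAC(\X,\TT)$, and $\AC(\X,\TT)$. I do not expect any serious obstacle here: the entire mathematical weight is carried by Theorem~\ref{theorem:universality-affine-connections}, and the corollary is its reformulation in the language of Definitions~\ref{definition:construction-vertical-affine-connections}--\ref{definition:construction-affine-connections-2-category}. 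The only point meriting a line of comment is that the candidate objects $\VAC(\X,\TT)$, $\HAC(\X,\TT)$, and $\AC(\X,\TT)$ are genuinely tangentads in $\Cat$, that is, tangent categories; this is guaranteed by Lemma~\ref{lemma:LC-functoriality} together with the affine analogue established in~\cite{blute:affine-spaces}.
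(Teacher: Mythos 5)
Your proposal is correct and matches the paper's approach exactly: the paper presents this corollary as a direct rephrasing of Theorem~\ref{theorem:universality-affine-connections} in the language of Definitions~\ref{definition:construction-vertical-affine-connections}--\ref{definition:construction-affine-connections-2-category}, which is precisely the specialisation-to-$\Cat$ argument you give. The only substance is the invertibility of the three $\Gamma$ transformations, supplied by the theorem, as you note.
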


To ensure that Definitions~\ref{definition:construction-vertical-affine-connections},~\ref{definition:construction-horizontal-affine-connections}, and~\ref{definition:construction-affine-connections} are well-posed, one requires the tangentads of vertical, horizontal, and affine connections of a given tangentad to be unique. The next proposition establishes that such constructions are defined uniquely up to a unique isomorphism.

\begin{proposition}
\label{proposition:uniqueness-affine-connections}
If a tangentad $(\X,\TT)$ admits the construction of (vertical/horizontal) affine connections, the tangentad of (vertical/horizontal) affine connections of $(\X,\TT)$ is unique up to a unique isomorphism which extends to an isomorphism of the corresponding universal (vertical/horizontal) affine connections of $(\X,\TT)$.
\end{proposition}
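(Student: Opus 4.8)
The plan is to transcribe the argument of Proposition~\ref{proposition:uniqueness-differential-objects}, now invoking the corepresentability proved in Theorem~\ref{theorem:universality-affine-connections} in place of Theorem~\ref{theorem:universality-differential-objects}. I spell out only the vertical affine case; the horizontal and the full affine cases are identical after replacing $\VAC$, $\U^\VAC$, and $\Univ{k^\VAC}$ by their $\HAC$, respectively $\AC$, counterparts and carrying along the additional structural $2$-morphism.

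Suppose $\VAC(\X,\TT)$ and $\VAC'(\X,\TT)$ are two tangentads of vertical affine connections of $(\X,\TT)$, with universal vertical affine connections $(\U^\VAC,\Univ{k^\VAC})$ and $(\U',\kappa)$, respectively. By Definition~\ref{definition:construction-vertical-affine-connections}, each comes with an invertible strict tangent natural transformation $\Gamma$, natural in $(\X',\TT')$. Evaluating the universal property of $\VAC(\X,\TT)$ at $(\X',\TT')=\VAC'(\X,\TT)$, the universal connection $(\U',\kappa)$, regarded as a vertical affine connection in the Hom-tangent category $[\VAC'(\X,\TT)\|\X,\TT]$, is classified by a unique tangent morphism $F\=\Gamma_{\Univ{\U^\VAC,k^\VAC}}^{-1}(\U',\kappa)\colon\VAC'(\X,\TT)\to\VAC(\X,\TT)$. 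Symmetrically, the universal property of $\VAC'(\X,\TT)$ produces a unique tangent morphism $G\colon\VAC(\X,\TT)\to\VAC'(\X,\TT)$.

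It then remains to check that $F$ and $G$ are mutually inverse. Both $G\o F$ and $\id_{\VAC'(\X,\TT)}$ are tangent endomorphisms of $\VAC'(\X,\TT)$, hence are classified, via the invertible $\Gamma$ of $\VAC'(\X,\TT)$ evaluated at $(\X',\TT')=\VAC'(\X,\TT)$, by vertical affine connections in $[\VAC'(\X,\TT)\|\X,\TT]$. A short computation using the defining properties of $F$ and $G$ together with the functoriality of $\Gamma$ shows that both are classified by the universal connection $(\U',\kappa)$; invertibility of $\Gamma$ then forces $G\o F=\id$, and dually $F\o G=\id$. That this isomorphism extends to an isomorphism of the universal connections is built into the construction, since by definition $F$ is the unique tangent morphism whose image under $\Gamma$ is $(\U',\kappa)$, so $F$ carries $(\U^\VAC,\Univ{k^\VAC})$ to $(\U',\kappa)$ and $G$ does the reverse.

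The one point meriting attention — and the closest thing to an obstacle — is that this Yoneda-style argument must yield an isomorphism of tangentads in the ambient $2$-category $\CC$, not merely an isomorphism of the underlying Hom-categories. This is automatic because $\Gamma$ is a strict tangent natural transformation that is natural in $(\X',\TT')$ as a $1$-morphism of tangentads, so the classifying $1$-morphisms $F$ and $G$ are genuine tangent morphisms and their mutual inversion is witnessed at the level of $\CC$. All remaining verifications are a direct transcription of Proposition~\ref{proposition:uniqueness-differential-objects}, and I would leave them to the reader, exactly as was done for Propositions~\ref{proposition:uniqueness-differential-bundles} and~\ref{proposition:uniqueness-linear-connections}.
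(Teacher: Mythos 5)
Your argument is exactly the Yoneda-style transcription of Proposition~\ref{proposition:uniqueness-differential-objects} that the paper itself invokes for this statement, classifying each universal affine connection through the other tangentad's corepresentability and checking the two classifying morphisms invert each other. The proposal is correct and matches the paper's intended proof.
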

\begin{proof}
The proof of this proposition is fairly similar to that of Proposition~\ref{proposition:uniqueness-differential-objects}. We leave it to the reader to complete the details.
\end{proof}

\subsection{The universal property of linear sections}
\label{subsection:universal-property-linear-sections}
One of the goals of the formal theory of connections is to extend to the formal context of tangentads notions like the covariant derivative, the curvature, and the torsion of a connection. These are operations which involve vector fields and sections of the linear connection on which the connection is defined. Thus, in order to introduce these concepts, we first need to capture the sections of differential bundles with a connection.
\par In~\cite[Section~3.2]{lanfranchi:tangentads-II}, we introduced the tangentad of vector fields of a given tangentad via a universal property. We adopt a similar strategy and introduce the tangentad of \textbf{linear sections} of differential bundles with a connection.
\par For starters, consider a pointwise display differential bundle $\q\colon(K,\theta)\to(G,\beta)$ of the Hom-tangent category $[\X',\TT'\|\X,\TT]$; assume also that the underlying tangent morphisms $(G,\beta)$ and $(K,\theta)$ are strong. Via the universal property of linear connections, when $(\X,\TT)$ and $(\X',\TT')$ are tangent categories, the data of a pointwise display differential bundle $\q$ is equivalent to a strong tangent morphism
\begin{align*}
&\Lambda[\q]\colon(\X',\TT')\to\LC(\X,\TT)
\end{align*}
which sends each $A\in\X'$ to the display differential bundle $\q_A\colon KA\to GA$. Define the tangent category $\LS(\q)$ of linear sections of $\q$ as follows:
\begin{description}
\item[Objects] An object of $\LS(\q)$ is a pair $(A,s)$ formed by an object $A$ of $\X'$ together with a morphism $s\colon GA\to KA$, which is a section of $q_A\colon KA\to GA$, that is, the following diagram commutes:
\begin{equation*}
\begin{tikzcd}
& KA \\
GA && GA
\arrow["{q_A}", from=1-2, to=2-3]
\arrow["s", from=2-1, to=1-2]
\arrow[equals, from=2-1, to=2-3]
\end{tikzcd}
\end{equation*}

\item[Morphisms] A morphism of $\LS(\q)$ from a linear section $(A,s)$ to another linear section $(B,t)$ consists of a morphism $f\colon A\to B$ of $\X'$ which commutes with the two linear sections, that is, such that the following diagram commutes:
\begin{equation*}
\begin{tikzcd}
KA & KB \\
GA & GB
\arrow["Kf", from=1-1, to=1-2]
\arrow["s", from=2-1, to=1-1]
\arrow["Gf"', from=2-1, to=2-2]
\arrow["t"', from=2-2, to=1-2]
\end{tikzcd}
\end{equation*}

\item[Tangent bundle functor] The tangent bundle functor $\T^\LS\colon\LS(\q)\to\LS(\q)$ sends a pair $(A,s)$ to $(\T'A,s_{\T'})$, where:
\begin{align*}
&s_{\T'}\colon G\T'A\xrightarrow{\beta}\T GA\xrightarrow{\T s}\T KA\xrightarrow{\theta^{-1}}K\T'A
\end{align*}
Moreover, it sends a morphism $f\colon(A,s)\to(B,t)$ to $\T f$;

\item[Structural natural transformations] The structural natural transformations of $\LS(\q)$ are the same as the ones of $(\X,\TT)$.
\end{description}

\begin{proposition}
\label{proposition:linear-sections-tangent-category}
If $\q\colon(K,\theta)\to(G,\beta)$ is a pointwise display differential bundle on the Hom-tangent category $[\X',\TT'\|\X,\TT]$ whose underlying tangent morphisms $(G,\beta)$ and $(K,\theta)$ are strong, the linear sections of $\q$ form a tangent category $\LS(\q)$.
\end{proposition}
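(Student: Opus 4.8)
The plan is to show that the forgetful functor $\U\colon\LS(\q)\to(\X',\TT')$, sending $(A,s)$ to $A$ and a morphism to its underlying $\X'$-morphism, creates a tangent structure from $\TT'$. Conceptually, $\LS(\q)$ is the inserter of the pair $(G,\beta),(K,\theta)$ cut down by the equifier carving out the section condition $q\o s=\id$, so it is a PIE limit, and the task is to verify that the pointwise tangent structure of $\TT'$ restricts along $\U$. Since $\U$ is faithful and will strictly preserve $\T$ together with every structural $2$-morphism, all of the equational axioms of a tangent structure (coassociativity and the lift--flip compatibilities, the braiding axioms, local linearity) are inherited from $(\X',\TT')$ the moment the relevant data are shown to live in $\LS(\q)$. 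Thus the real content is threefold: (i) that $\T^\LS$ is well defined, (ii) that each structural transformation lifts, and (iii) that the tangent pullbacks are created by $\U$.

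For (i) the only nontrivial point is that $s_{\T'}\=\theta^{-1}\o\T s\o\beta$ is again a section. Here I would use that the projection $q$ is a \emph{tangent} natural transformation, so pointwise $\T(q_A)\o\theta_A=\beta_A\o q_{\T'A}$, which, since $(G,\beta)$ is strong and hence $\beta$ invertible, rearranges to $q_{\T'A}\o\theta_A^{-1}=\beta_A^{-1}\o\T(q_A)$. Then
\begin{align*}
q_{\T'A}\o s_{\T'}=q_{\T'A}\o\theta_A^{-1}\o\T s\o\beta_A=\beta_A^{-1}\o\T(q_A)\o\T s\o\beta_A=\beta_A^{-1}\o\T(q_A\o s)\o\beta_A=\id_{G\T'A},
\end{align*}
using functoriality of $\T$ and $q_A\o s=\id$. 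That $\T^\LS$ sends morphisms of linear sections to morphisms of linear sections, and is functorial, then follows from naturality of $\beta$ and $\theta$ together with functoriality of $\T'$; note that invertibility of $\theta$ (the strongness of $(K,\theta)$) is exactly what allows $s_{\T'}$ to be defined at all.

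For (ii) I would check that each component of $p,z,s,l,c$ (and of the negation, when present) is a morphism of linear sections. Representatively, for the projection one computes $K(p_A)\o s_{\T'}=p_{KA}\o\T s\o\beta_A=s\o p_{GA}\o\beta_A=s\o G(p_A)$, where the first step uses the compatibility $K(p_A)=p_{KA}\o\theta_A$ of the distributor with $p$, the second uses naturality of $p\colon\T\Rightarrow\id$ at $s$, and the third uses $p_{GA}\o\beta_A=G(p_A)$. The remaining structural transformations are handled by the same three ingredients — compatibility of $\beta$ and $\theta$ with the relevant structure map, naturality of that map, and invertibility of the distributors — so I would present this one computation and leave the rest as routine.

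The main obstacle is (iii): exhibiting the pullback powers $\T_n^\LS$ and the locally-linear (vertical-lift universality) pullback in $\LS(\q)$ and checking that $\T^\LS$ preserves them. I would argue that $\U$ \emph{creates} these pullbacks: the underlying pullback $\T'_n A$ already exists in $\X'$ as part of $\TT'$, and a section on it is forced by requiring the projections $\pi_i$ to become $\LS$-morphisms; the candidate components $s_{\T'}\o G\pi_i$ agree after postcomposition with $p$ — by step (ii) — and hence form a cone over the cospan whose limit is the corresponding pullback in $\X$. For this cone to induce a \emph{unique} section $s_n\colon G\T'_n A\to K\T'_n A$, and for the resulting object to satisfy the universal property inside $\LS(\q)$, one needs $K$ and $G$ to preserve the pullback $\T'_n A$ (and likewise the locally-linear pullback). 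This preservation — equivalently, that the canonical comparisons $K\T'_n\to\T_n K$ assembled from the invertible distributor are themselves invertible — is precisely where the strongness hypothesis must be leveraged, and it is the technical heart of the argument. Once the pullbacks are created they are automatically preserved by $\T^\LS$, since $\T^\LS$ lies over $\T'$, which preserves them; combined with the equational inheritance noted above, this shows that $\LS(\q)$ is a tangent category.
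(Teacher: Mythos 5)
Your overall strategy is the one the paper intends (its own proof is a one-line reduction to the analogous statement for vector fields), and steps (i) and (ii) are correct: the section property of $s_{\T'}$ follows exactly as you say from the tangent-naturality square $\T q_A\o\theta_A=\beta_A\o q_{\T'A}$ together with invertibility of $\beta$ and $\theta$, and your representative computation for $p$ is the right template for the remaining structural transformations.

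The gap is in step (iii). You correctly reduce the existence of $\T^{\LS}_n(A,s)=(\T'_nA,s_n)$ and of the locally-linear pullback to the claim that $G$ and $K$ preserve the pullbacks $\T'_nA$ (equivalently, that the induced comparisons $K\T'_nA\to\T_nKA$ and $G\T'_nA\to\T_nGA$ are invertible), but you then assert that this is ``precisely where the strongness hypothesis must be leveraged'' without deriving it. Invertibility of the distributors $\beta$ and $\theta$ does not by itself make the induced comparison $\langle\theta\o K\pi_1\,\theta\o K\pi_n\rangle\colon K\T'_nA\to\T_nKA$ invertible: it is a map into a limit, and nothing in the definition of a strong (as opposed to strict, or pullback-preserving) tangent morphism forces it to be an isomorphism. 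Note that the paper is careful elsewhere to add exactly this kind of preservation as an explicit hypothesis when it matters (``preserves finite products'' in Lemma~\ref{lemma:DO-functoriality}, ``display tangent morphism'' in Definition~\ref{definition:display-preserving}), and that in the two instances where the proposition is actually used --- vector fields, where $G=\id$ and $K=\T$ so preservation of $\T_n$ is a tangent-category axiom, and $\UnivQ^\LC\colon\Tot^\LC\to\Base^\LC$, where the two legs are strict and $\T^\LC_n$ is computed over $\T_n$ --- the preservation holds for reasons specific to those examples. To close the argument you must either show that the notion of strong tangent morphism in force here already entails invertibility of the comparisons $F\T'_n\to\T_nF$ (and of the comparison for the vertical-lift pullback), or add that preservation as an explicit hypothesis and verify it in the intended applications; as written, the ``technical heart'' you identify is flagged but not supplied.
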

\begin{proof}
The proof is fairly similar to proving that vector fields of a tangent category $(\X,\TT)$ form a tangent category $\VF(\X,\TT)$~\cite[Proposition~2.10]{cockett:differential-equations}, thus, we leave it to the reader to spell out the details.
\end{proof}

The tangent category $\LS(\q)$ comes with a strict tangent morphism
\begin{align*}
&\U\colon\LS(\q)\to(\X',\TT')
\end{align*}
which forgets about the linear section, that is, it sends each pair $(A,s)$ to its underlying object $A\in\X'$. By composing the forgetful functor $\U$ with the strong tangent morphism $\Lambda[\q]\colon(\X',\TT')\to\LC(\X,\TT)$ and using the universal property of linear connections, one obtains a new pointwise display differential bundle
\begin{align*}
&\q_\U\=\Gamma_{\q}(\Lambda[\q]\o\U)\colon(K,\theta)\o\U\xrightarrow{\q_{\U}}(G,\beta)\o\U
\end{align*}
in the Hom-tangent category $[\LS(\q)\|\X,\TT]$. Furthermore, $\q_\U$ comes equipped with a natural transformation which picks out each linear section:
\begin{align*}
&\Univ{s}_{(A,s)}\colon((G,\beta)\o\U)(A,s)=GA\xrightarrow{s}KA=((K,\theta)\o\U)(A,s)
\end{align*}
However, for every $A\in\X'$, $\Univ s_{(A,s)}=s\colon GA\to KA$ is a section of ${\q_\U}_{(A,s)}=q_A\colon KA\to GA$. Thus, $\Univ s$ is a section of the projection of the pointwise display differential bundle $\q_\U$.

\begin{proposition}
\label{universality-linear-sections}
If $\q\colon(K,\theta)\to(G,\beta)$ is a pointwise display differential bundle on the Hom-tangent category $[\X',\TT'\|\X,\TT]$ whose underlying tangent morphisms $(G,\beta)$ and $(K,\theta)$ are strong, $\Univ s$ is a section of the projection $\q_\U\colon(K,\theta)\o\U\to(G,\beta)\o\U$ in the Hom-tangent category $[\X',\TT'\|\X,\TT]$.
\end{proposition}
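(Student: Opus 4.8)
The plan is to show that $\Univ s$ is a genuine morphism of the Hom-tangent category $[\LS(\q)\|\X,\TT]$ in which $\q_\U$ lives, going from $(G,\beta)\o\U$ to $(K,\theta)\o\U$, and that, as such a morphism, it is a section of $\q_\U$. Concretely, three conditions must be checked: that $\Univ s$ is natural in the underlying categories, that it is compatible with the distributive laws (hence a \emph{tangent} natural transformation), and that $\q_\U\o\Univ s=\id$. The pointwise section identity has already been observed in the discussion preceding the statement, so the real content is to promote $\Univ s$ to a morphism of the Hom-tangent category.

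Naturality and the section identity I would dispatch directly from the definitions. A morphism $f\colon(A,s)\to(B,t)$ of $\LS(\q)$ is, by definition, a morphism of $\X'$ making the square with edges $s$, $t$, $Gf$, $Kf$ commute; this is exactly the naturality square of $\Univ s$ evaluated at $f$, so naturality is automatic. For the section identity, the component of $\q_\U\o\Univ s$ at $(A,s)$ is $q_A\o s$, and since an object of $\LS(\q)$ is precisely a pair $(A,s)$ with $s$ a section of $q_A$, this component equals $\id_{GA}$; hence $\q_\U\o\Univ s=\id$ holds componentwise, which is the section identity in the Hom-tangent category.

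The crux is the compatibility with the tangent structure, and this is exactly where the hypothesis that $(K,\theta)$ be strong is used. The tangent bundle functor of $\LS(\q)$ sends $(A,s)$ to $(\T'A,s_{\T'})$, so the component of $\Univ s$ at $\T^\LS(A,s)$ is $s_{\T'}$; and since $\U$ is strict, the distributive-law components of $(G,\beta)\o\U$ and $(K,\theta)\o\U$ at $(A,s)$ are just $\beta_A$ and $\theta_A$. The tangent-naturality square therefore reduces to the identity $\theta_A\o s_{\T'}=\T s\o\beta_A$. But $s_{\T'}$ was defined precisely as the composite $G\T'A\xrightarrow{\beta}\T GA\xrightarrow{\T s}\T KA\xrightarrow{\theta^{-1}}K\T'A$, so substituting this expression and cancelling $\theta_A$ against $\theta_A^{-1}$ collapses the square to a triviality. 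This cancellation is legitimate only because $\theta$ is invertible, which is the strongness assumption on $(K,\theta)$; this is the single step where the hypothesis is genuinely needed, and hence the main (though mild) obstacle. The remainder of the verification parallels the proof that the universal section of the vector-field construction is a tangent natural transformation in~\cite[Section~3.2]{lanfranchi:tangentads-II}, so I would import that bookkeeping rather than repeat it.
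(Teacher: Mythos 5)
Your proposal is correct and follows essentially the same route as the paper's proof: the section identity is observed componentwise, naturality is read off from the definition of morphisms in $\LS(\q)$, and tangent-naturality reduces to the defining formula $s_{\T'}=\theta^{-1}\o\T s\o\beta$ for the tangent structure on $\LS(\q)$. The paper states these three points more tersely, but the argument is the same; your explicit identification of where the strongness of $(K,\theta)$ enters is a correct (if already built into the construction of $\T^\LS$) elaboration.
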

\begin{proof}
As previously discussed, $\Univ s$ composed with the projection $\q_\U$ is equal to the identity. Thus, to conclude, it suffices to show that $\Univ s$ is a morphism in the correct category. However, the naturality of $\Univ s$ is a direct consequence of the definition of the morphisms of $\LS(\q)$, while the compatibility of $\Univ s$ with the tangent structures is a consequence of the definition of the tangent structure on $\LS(\q)$.
\end{proof}

We claim that $\Univ s$ is the \emph{universal section} of $\q_\U$. To formalize this idea, consider a tangent category $(\X'',\TT'')$ and let us define a new tangent category $\LS_\q(\X'',\TT'')$ as follows:
\begin{description}
\item[Objects] An object of $\LS_\q(\X'',\TT'')$ is a tuple $(F,\alpha;\xi)$ formed by a lax tangent morphism $(F,\alpha)\colon(\X'',\TT'')\to(\X',\TT')$ and a natural transformation
\begin{align*}
&\xi\colon(G,\beta)\o(F,\alpha)\to(K,\theta)\o(F,\alpha)
\end{align*}
which is a section of the projection of the pointwise display differential bundle:
\begin{align*}
&\q_{(F,\alpha)}\=\Gamma_{\q}(\Lambda[\q]\o\U)\colon(K,\theta)\o(F,\alpha)\xrightarrow{\q_{(F,\alpha)}}(G,\beta)\o(F,\alpha)
\end{align*}

\item[Morphisms] A morphism of $\LS_\q(\X'',\TT'')$ from $(F,\alpha;\xi)$ to $(F',\alpha';\xi')$ is a tangent natural transformation $\psi\colon(F,\alpha)\Rightarrow(F',\alpha')$ which commutes with the sections, that is, the following diagram commutes for every $X\in\X''$:
\begin{equation*}
\begin{tikzcd}
GFX & KFX \\
{GF'X} & {KF'X}
\arrow["{\xi_X}", from=1-1, to=1-2]
\arrow["{G\psi_X}"', from=1-1, to=2-1]
\arrow["{K\psi_X}", from=1-2, to=2-2]
\arrow["{\xi'_X}"', from=2-1, to=2-2]
\end{tikzcd}
\end{equation*}

\item[Tangent bundle functor] The tangent bundle functor of $\LS_\q(\X'',\TT'')$ sends $(F,\alpha;\xi)$ to $(\bar\T(F,\alpha);\xi_{\T'})$, where:
\begin{align*}
&\xi_{\T'}\colon G\o\T'\o F\xrightarrow{\beta_F}\T\o G\o F\xrightarrow{\T\xi}\T\o K\o F\xrightarrow{\theta^{1}_F}K\o\T'\o F
\end{align*}

\item[Structural natural transformations] The structural natural transformations of $\LS_\q(\X'',\TT'')$ are as in the Hom-tangent category $[\X'',\TT''\|\X',\TT']$.
\end{description}
There is an equivalent description of $\LS_\q(\X'',\TT'')$ that is worth mentioning. First, notice that, since $[\X',\TT'\|\X,\TT]$ is a tangent category, by Corollary~\ref{corollary:universality-differential-bundles}, it admits the construction of linear connections. In particular, one can define the universal vertical linear connection $(\hat\UnivQ^\VLC,\hat{\Univ k})$ for the tangent category $[\X',\TT'\|\X,\TT]$, where $\hat\UnivQ^\VLC$ is a pointwise display differential bundle in the Hom-tangent category
\begin{align*}
&[\DB_\pt[\X',\TT'\|\X,\TT]\|[\X',\TT'\|\X,\TT]]
\end{align*}
Then, we can construct the tangent category $\LS(\UnivQ^\VLC)$ of linear sections of the pointwise display differential bundle $\UnivQ^\VLC$. Such a tangent category coincides with the tangent category $\LS_\q(\X'',\TT'')$.
\par To characterize the universal property enjoyed by $\LS(\q)$, consider a lax tangent morphism:
\begin{align*}
(H,\gamma)&\colon(\X'',\TT'')\to\LS(\q)
\end{align*}
By precomposing $(\q_\U,\Univ s)$ by $(H,\gamma)$ we define an object $(\U\o(H,\gamma);\Univ s_{(H,\gamma)})$ of $\LS_\q(\X'',\TT'')$. This extends to a functor:
\begin{align*}
&\Gamma_{\Univ s}\colon[\X'',\TT''\|\LS(\q)]\to\LS_\q(\X'',\TT'')
\end{align*}
Conversely, consider an object $(F,\alpha;\xi)$ of $\LS_\q(\X'',\TT'')$. Define the following functor:
\begin{align*}
&\Lambda[F,\alpha;\xi]\colon(\X'',\TT'')\to\LS(\q)
\end{align*}
which sends each object $X\in\X''$ to the pair $(FX,\xi_X\colon GFX\to KFX)$ and each $f\colon X\to Y$ of $\X''$ to $Ff$. Furthermore, the distributive law $\alpha$ of $(F,\alpha)$ lifts to a distributive law for $\Lambda[F,\alpha;\xi]$. We claim that $\Gamma_{\Univ s}$ and $\Lambda\colon\LS_\q(\X'',\TT'')\to[\X'',\TT''\|\LS(\q)]$ are inverse to each other.

\begin{theorem}
\label{theorem:universality-linear-sections}
Consider a pointwise display differential bundle $\q\colon(K,\theta)\to(G,\beta)$ on the Hom-tangent category $[\X',\TT'\|\X,\TT]$ whose underlying tangent morphisms $(G,\beta)$ and $(K,\theta)$ are strong. The section $\Univ s$ of $\q_\U$ is universal. Concretely, the induced strict natural transformation
\begin{align*}
&\Gamma_{\Univ s}\colon[\X'',\TT''\|\LS(\q)]\to\LS_\q(\X'',\TT'')
\end{align*}
makes the functor
\begin{align*}
&\TngCat\xrightarrow{\LS_\q}\TngCat
\end{align*}
which sends each tangent category $(\X'',\TT'')$ to $\LS_\q(\X'',\TT'')$ into a corepresentable functor. In particular, $\Gamma_{\Univ s}$ is invertible.
\end{theorem}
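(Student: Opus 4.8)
The plan is to prove the claim already announced in the discussion preceding the statement: that the functor $\Lambda\colon\LS_\q(\X'',\TT'')\to[\X'',\TT''\|\LS(\q)]$ is a two-sided inverse of $\Gamma_{\Univ s}$, naturally in $(\X'',\TT'')$. Since $\LS(\q)$ together with the universal section $\Univ s$ of $\q_\U$ is precisely the data that should exhibit corepresentability, proving that $\Gamma_{\Univ s}$ is an isomorphism of tangent categories natural in $(\X'',\TT'')$ is exactly the assertion that $\LS_\q$ is corepresented by $\LS(\q)$. The whole argument follows the template of the proof of Theorem~\ref{theorem:universality-differential-objects}, with differential objects replaced by linear sections and the universal differential structure replaced by $\Univ s$.

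First I would compute $\Lambda\o\Gamma_{\Univ s}$. Given a lax tangent morphism $(H,\gamma)\colon(\X'',\TT'')\to\LS(\q)$, each $HX$ is by definition a pair $(A_X,s_X)$ with $A_X=\U HX\in\X'$ and $s_X\colon GA_X\to KA_X$ a section of $q_{A_X}$. By construction $\Gamma_{\Univ s}(H,\gamma)=(\U\o(H,\gamma);\Univ s_{(H,\gamma)})$, and since $\Univ s$ picks out $s_X$ at each object, applying $\Lambda$ returns the assignment $X\mapsto(A_X,s_X)=HX$; the same bookkeeping on morphisms (where both functors act by $\U$ on the underlying map) shows $\Lambda[\Gamma_{\Univ s}(H,\gamma)]=(H,\gamma)$.

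Next I would compute $\Gamma_{\Univ s}\o\Lambda$. Given $(F,\alpha;\xi)\in\LS_\q(\X'',\TT'')$, the functor $\Lambda[F,\alpha;\xi]$ sends each $X$ to $(FX,\xi_X)$ and each $f$ to $Ff$. Postcomposing with the forgetful $\U$ recovers $(F,\alpha)$, since $\U$ discards the section, while the section induced by $\Univ s$ along $\Lambda[F,\alpha;\xi]$ picks out $\xi_X$ at each $X$; hence $\Gamma_{\Univ s}(\Lambda[F,\alpha;\xi])=(F,\alpha;\xi)$, and the analogous computation on morphisms closes this half.

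Finally I would verify that $\Lambda$ is a strict tangent morphism, so that it inverts $\Gamma_{\Univ s}$ as tangent morphisms and not merely as functors. The only nontrivial point is compatibility with the tangent bundle functors: evaluating $\bar\T(\Lambda[F,\alpha;\xi])$ at $X$ gives $\T^\LS(FX,\xi_X)=(\T'FX,\theta^{-1}_{FX}\o\T\xi_X\o\beta_{FX})$, whereas applying $\Lambda$ to the tangent-bundle image $(\bar\T(F,\alpha);\xi_{\T'})$ of $(F,\alpha;\xi)$ yields at $X$ the pair $(\T'FX,(\xi_{\T'})_X)$ with $(\xi_{\T'})_X=\theta^{-1}_{FX}\o\T\xi_X\o\beta_{FX}$, using that $(G,\beta)$ and $(K,\theta)$ are strong so that $\theta^{-1}$ is defined. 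These coincide by componentwise evaluation of the whiskered transformations, and since the remaining structural natural transformations on both sides are inherited verbatim from $(\X,\TT)$, $\Lambda$ is strict. I expect this last compatibility — tracking the lifted sections $s_{\T'}$ and $\xi_{\T'}$ through the distributors $\beta$ and $\theta$ — to be the only genuine bookkeeping obstacle, and it is resolved exactly as in the corresponding step of Theorem~\ref{theorem:universality-differential-objects}. Naturality of the whole correspondence in $(\X'',\TT'')$ is immediate from the naturality already recorded for $\Gamma$ and $\Lambda$, completing the proof of corepresentability.
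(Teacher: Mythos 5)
Your proposal is correct and follows essentially the same route as the paper's own proof: both establish that $\Lambda$ and $\Gamma_{\Univ s}$ are mutually inverse by checking the two composites on objects and morphisms, and both conclude by verifying that $\Lambda$ is a strict tangent morphism via the identification of $\Lambda[\T^\LS(F,\alpha;\xi)]$ and $\bar\T(\Lambda[F,\alpha;\xi])$ through the lifted section $\xi_{\T'}$. Your explicit unwinding of $\xi_{\T'}$ as $\theta^{-1}_{FX}\o\T\xi_X\o\beta_{FX}$ merely spells out what the paper leaves implicit.
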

\begin{proof}
Consider a tangent category $(\X'',\TT'')$ and a lax tangent morphism $(H,\gamma)\colon(\X'',\TT'')\to\LS(\q)$, which sends each $X\in\X''$ to the pair:
\begin{align*}
&HX=(FX,\xi_X\colon GFX\to KFX)
\end{align*}
We want to compare $(H,\gamma)$ with $\Lambda[\Gamma_{\Univ s}(H,\gamma)]$. Concretely, $\Lambda[\Gamma_{\Univ s}(H,\gamma)]$ sends an object $X$ of $\X''$ to the section $\Gamma_{\Univ s}(H,\gamma)_X$. However, this corresponds to the section $(FX,\xi_X)$. Given a morphism $\varphi\colon(H,\gamma)\to(H',\gamma')$ of $[\X'',\TT''\|\LS(\q))]$
\begin{align*}
&\varphi_X\colon(FX,\xi_X\colon GFX\to KFX)\to(F'X,\xi'_X\colon GF'X\to KF'X)
\end{align*}
which coincides with the morphism $\Lambda[\Gamma_{\Univ s}(\varphi)]$, since:
\begin{align*}
&\Gamma_{\Univ s}(\varphi)_X=FX\xrightarrow{\varphi_X}F'X
\end{align*}
Conversely, consider an object $(F,\alpha;\xi)$ of $\LS_\q(\X'',\TT'')$. Therefore, $\Lambda[F,\alpha;\xi]$ sends each $X$ to $(FX,\xi_X)$ where $\xi_X\colon GFX\to KFX$. Therefore, $\Gamma_{\Univ s}(\Lambda[F,\alpha;\xi])$ coincides with $(F,\alpha;\xi)$. Similarly, $\Gamma_{\Univ s}(\Lambda[\varphi])$ coincides with $\varphi$, for each $\varphi\colon(F,\alpha;\xi)\to(F',\alpha';\xi'))$.
\par Finally, it is not hard to see that the functor
\begin{align*}
&\Lambda\colon\LS_\q(\X'',\TT'')\to[\X',\TT'\|\LS(\q)]
\end{align*}
extends to a strict tangent morphism, since both $\Lambda[\T^\LS(F,\alpha;\xi)]$ and $\bar\T(\Lambda[F,\alpha;\xi])$ send each $X\in\X''$ to the section
\begin{align*}
&(\T'X,\xi_{\T'})
\end{align*}
Therefore, $\Lambda$ inverts $\Gamma_{\Univ s}$.
\end{proof}

Thanks to Theorem~\ref{theorem:universality-linear-sections}, we can now introduce the construction of linear sections in the context of tangentads.

\begin{definition}
\label{definition:construction-linear-sections}
Consider a pair of tangentads $(\X',\TT')$ and $(\X,\TT)$ in a $2$-category $\CC$ and a pointwise (display) differential bundle $\q\colon(K,\theta)\to(G,\beta)$ in the Hom-tangent category $[\X',\TT'\|\X,\TT]$. We say that $\q$ \textbf{admits the construction of linear sections} of $\q$ if there exists a tangentad $\LS(\q)$ of $\CC$ together with a tangent morphism $\U\colon\LS(\q)\to(\X',\TT')$ and a section $\Univ s$ of $\q_\U$ such that the induced tangent natural transformation $\Gamma_{\Univ s}$ is invertible. The section $(\U,\Univ s)$ is called the \textbf{universal linear section} of $\q$ and $\LS(\q)$ is called the \textbf{tangentad of linear sections} of $\q$.
\end{definition}

\begin{definition}
\label{definition:construction-linear-sections-2-category}
A $2$-category $\CC$ \textbf{admits the construction of linear sections} provided that every pointwise differential bundle $\q\colon(K,\theta)\to(G,\beta)$ in every Hom-tangent category $[\X',\TT'\|\X,\TT]$ admits the construction of linear sections.
\end{definition}

We can now rephrase Theorem~\ref{theorem:universality-linear-sections} as follows.

\begin{corollary}
\label{corollary:universality-linear-sections}
The $2$-category $\Cat$ of categories admits the constructions of linear sections and the tangentad of linear sections of a pointwise (display) differential bundle $\q\colon(K,\theta)\to(G,\beta)$ is the tangent category $\LS(\q)$.
\end{corollary}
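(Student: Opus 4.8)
The plan is to specialize Theorem~\ref{theorem:universality-linear-sections} to the ambient $2$-category $\CC=\Cat$. By Example~\ref{example:tangent-categories}, a tangentad in $\Cat$ is precisely a tangent category, and by Proposition~\ref{proposition:hom-tangent-categories} the Hom-tangent categories $[\X',\TT'\|\X,\TT]$ are the categories of lax tangent morphisms equipped with their pointwise tangent structure. Thus unwinding Definition~\ref{definition:construction-linear-sections} in the case $\CC=\Cat$ amounts to exhibiting, for each pointwise display differential bundle $\q\colon(K,\theta)\to(G,\beta)$ whose underlying tangent morphisms $(G,\beta)$ and $(K,\theta)$ are strong, a tangent category $\LS(\q)$, a strict tangent morphism $\U\colon\LS(\q)\to(\X',\TT')$, and a section $\Univ s$ of the induced bundle $\q_\U$ in $[\LS(\q)\|\X,\TT]$ whose associated comparison $\Gamma_{\Univ s}$ is invertible. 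The corollary is therefore a direct rephrasing of the theorem once this dictionary is in place.

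First I would invoke Proposition~\ref{proposition:linear-sections-tangent-category} to obtain the tangent category $\LS(\q)$ of linear sections of $\q$, together with the forgetful strict tangent morphism $\U\colon\LS(\q)\to(\X',\TT')$. Next, Proposition~\ref{universality-linear-sections} supplies the section $\Univ s$ of the pointwise display differential bundle $\q_\U$, and records that $\Univ s$ is indeed a morphism in the correct Hom-tangent category (its naturality is forced by the definition of the morphisms of $\LS(\q)$, and its compatibility with the tangent structure by the definition of $\T^\LS$). Finally, Theorem~\ref{theorem:universality-linear-sections} establishes that the induced strict natural transformation $\Gamma_{\Univ s}$ is invertible, with explicit inverse $\Lambda$ sending $(F,\alpha;\xi)$ to the lax tangent morphism $\Lambda[F,\alpha;\xi]$. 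Assembling these three cited results yields exactly the data demanded by Definition~\ref{definition:construction-linear-sections}, so $\q$ admits the construction of linear sections and its tangentad of linear sections is $\LS(\q)$.

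The hard part will not be any new mathematics but rather bookkeeping: one must check that the hypotheses of Theorem~\ref{theorem:universality-linear-sections} line up with the requirements of the definition. Concretely, the strongness of $(K,\theta)$ is precisely what makes the tangent bundle functor $\T^\LS$ of $\LS(\q)$ well defined, since $s_{\T'}$ is built using $\theta^{-1}$; this is why Definition~\ref{definition:construction-linear-sections} is naturally read in the strong case, and I would flag this dependence explicitly. One also verifies that the isomorphism $\Gamma_{\Univ s}$ produced at the level of $\Cat$ is genuinely \emph{strict} as a tangent morphism, so that the invertibility condition of Definition~\ref{definition:construction-linear-sections} is met on the nose rather than merely up to isomorphism. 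Neither of these is an essential obstacle, and the verification runs parallel to the corresponding step in the proof of Theorem~\ref{theorem:universality-linear-sections}.
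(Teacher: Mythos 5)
Your proposal is correct and matches the paper's approach: the corollary is presented there as a direct rephrasing of Theorem~\ref{theorem:universality-linear-sections}, assembling Proposition~\ref{proposition:linear-sections-tangent-category}, Proposition~\ref{universality-linear-sections}, and the invertibility of $\Gamma_{\Univ s}$ exactly as you do. Your explicit flagging of the strongness hypothesis on $(K,\theta)$ (needed for $\theta^{-1}$ in $\T^\LS$) is a sensible addition but not a departure from the paper's argument.
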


Given a tangentad $(\X,\TT)$ (whose projection is tangent display) in the Hom-tangent category $[\X,\TT\|\X,\TT]$ of lax tangent endo-morphisms over $(\X,\TT)$, there is always a canonical choice of pointwise (display) differential bundle, that is, the tangent bundle $\bar\p\colon(\T,c)\to\id_{(\X,\TT)}$ over the identity $\id_{(\X,\TT)}$. Thus, the tangentad $\LS(\bar\p)$ of linear sections of $\bar\p$ is precisely the tangentad of vector fields of $(\X,\TT)$.

\begin{theorem}
\label{theorem:vector-fields-as-linear-sections}
Consider a tangentad $(\X,\TT)$ of a $2$-category $\CC$. $(\X,\TT)$ admits the construction of vector fields if and only if the tangent bundle $\bar\p\colon(\T,c)\to\id_{(\X,\TT)}$ in the Hom-tangent category $[\X,\TT\|\X,\TT]$ admits the construction of linear sections.
\end{theorem}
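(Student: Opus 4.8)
The plan is to show that, once both notions are unwound, ``admits the construction of vector fields'' and ``$\bar\p$ admits the construction of linear sections'' are literally the same corepresentability requirement, so that each holds exactly when the other does. First I would recall from~\cite{lanfranchi:tangentads-II} that $(\X,\TT)$ admits the construction of vector fields precisely when there is a tangentad $\VF(\X,\TT)$, a tangent morphism $\U\colon\VF(\X,\TT)\to(\X,\TT)$, and a universal vector field on $\U$, namely a section $\Univ v$ of the whiskered tangent bundle projection $\bar\p_\U$ in the Hom-tangent category $[\VF(\X,\TT)\|\X,\TT]$, whose induced comparison functor $\Gamma_{\Univ v}$ is invertible and natural in the parameter tangentad. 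This is the verbatim analogue of Definition~\ref{definition:construction-linear-sections} specialised to the pointwise display differential bundle $\q=\bar\p$.

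Next I would check that $\LS(\bar\p)$ is even available. Proposition~\ref{proposition:linear-sections-tangent-category} requires the base and total $1$-morphisms of $\q$ to be strong, and for $\bar\p\colon(\T,c)\to\id_{(\X,\TT)}$ the base $(G,\beta)=\id_{(\X,\TT)}$ is strict while the total $(K,\theta)=(\T,c)$ is strong, since $c$ is invertible. Unwinding the definition of $\LS(\q)$ at $\q=\bar\p$ then gives $G=\id$, $K=\T$, and $q_A=p_A$, so an object is a pair $(A,s)$ with $s$ a section of $p_A\colon\T A\to A$, i.e.\ a vector field, while the tangent bundle functor sends $(A,s)$ to $(\T A,s_{\T})$ with $s_{\T}=\theta^{-1}\circ\T s\circ\beta=c\circ\T s$, which is exactly the tangent structure carried by $\VF(\X,\TT)$. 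Hence $\LS(\bar\p)$ and $\VF(\X,\TT)$ have the same underlying tangent category, and the forgetful morphism and the universal section $\Univ s=\Univ v$ coincide.

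The heart of the argument is then to identify the two target functors of the corepresentability statements. For a parameter tangentad $(\X'',\TT'')$, an object of $\LS_{\bar\p}(\X'',\TT'')$ is a lax tangent morphism $(F,\alpha)\colon(\X'',\TT'')\to(\X,\TT)$ together with a section $\xi$ of the projection of the pointwise differential bundle $\bar\p_{(F,\alpha)}$; since $\bar\p$ is the tangent bundle over the identity, this projection is the whiskering $p_{(F,\alpha)}\colon(\T,c)\circ(F,\alpha)\to(F,\alpha)$ of $p$, and $\xi$ is exactly a vector field on $(F,\alpha)$ in the Hom-tangent category $[\X'',\TT''\|\X,\TT]$. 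Thus $\LS_{\bar\p}(\X'',\TT'')$ is the tangent category of vector fields of $[\X'',\TT''\|\X,\TT]$, which is precisely the target functor appearing in the vector-field corepresentability condition, and $\Gamma_{\Univ s}$ and $\Gamma_{\Univ v}$ are the same induced comparison functor. Since the two universal properties are then one and the same statement, $(\X,\TT)$ admits one construction if and only if it admits the other, and when they exist the representing tangentads agree by uniqueness of corepresenting objects up to canonical isomorphism.

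I expect the main obstacle to be the careful bookkeeping that all these identifications are genuinely $2$-categorical and hold in an arbitrary ambient $\CC$, not merely in $\Cat$: one must verify that the distributive-law data and the naturality in $(\X'',\TT'')$ built into $\Gamma_{\Univ s}$ match those built into $\Gamma_{\Univ v}$, and in particular that the passage from $\bar\p_{(F,\alpha)}$ to the whiskered projection $p_{(F,\alpha)}$ respects the strong-morphism hypotheses imposed on $\LS$, so that both comparison functors land in the same tangent category on the nose.
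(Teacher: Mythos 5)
Your proposal is correct and follows essentially the same route the paper intends: the paragraph preceding the theorem simply observes that specialising the linear-section universal property to $\q=\bar\p\colon(\T,c)\to\id_{(\X,\TT)}$ reproduces verbatim the corepresentability condition defining the construction of vector fields, so the two notions coincide and the equivalence is immediate. Your additional checks (strongness of $(\T,c)$ via invertibility of $c$, the identification $\LS_{\bar\p}(\X'',\TT'')\cong\VF[\X'',\TT''\|\X,\TT]$, and the matching of the comparison functors $\Gamma_{\Univ s}$ and $\Gamma_{\Univ v}$) are exactly the bookkeeping the paper leaves implicit.
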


We finally introduce the type of linear sections that we will use in the following section.

\begin{definition}
\label{definition:formal-linear-sections-connections}
A tangentad $(\X,\TT)$ in a $2$-category $\CC$ \textbf{admits the construction of linear sections on differential bundles with a connection} when it admits the construction of linear connections and the pointwise display differential bundle $\UnivQ^\LC\colon\Tot^\LC\to\Base^\LC$ of Proposition~\ref{proposition:universality-linear-connections} admits the construction of linear sections. In the following, we denote by $\LSC(\X,\TT)$ the tangentad $\LS(\UnivQ^\LC)$ of linear sections of $\UnivQ^\LC$.
\end{definition}

\subsection{The formal structures of connections}
\label{subsection:structures-connections}
In this section, we extend some constructions related to linear and affine connections. We start by showing that $\VLC$, $\HLC$, $\LC$, $\VAC$, $\HAC$, and $\AC$ are $2$-functorial operations on the $2$-category of tangentads and strong tangent morphisms. Next, using the construction of linear sections on differential bundles with a connection, we introduce the constructions of the covariant derivative and the curvature tensor of linear connections. Finally, we discuss how to formalize the torsion tensor for affine connections.

\subsubsection*{The functoriality of the construction of connections}
\label{subsubsection:functoriality-connections}
In this section, we show that the constructions of (vertical/horizontal) linear and affine connections are $2$-functorial in the $2$-category of tangentads of $\CC$.

\begin{proposition}
\label{proposition:LC-functoriality}
There are three $2$-functors
\begin{align*}
\VLC&\colon\Tng_\Dsply(\CC)\to\Tng_\cong(\CC)\\
\HLC&\colon\Tng_\Dsply(\CC)\to\Tng_\cong(\CC)\\
\LC&\colon\Tng_\Dsply(\CC)\to\Tng_\cong(\CC)
\end{align*}
on the $2$-category of tangentads and display tangent morphisms, which send a tangentad $(\X,\TT)$ to the tangent tangentads of (vertical/horizontal) linear connections of $(\X,\TT)$.
\end{proposition}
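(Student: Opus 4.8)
The plan is to follow the proofs of Propositions~\ref{proposition:DO-functoriality} and~\ref{proposition:DB-functoriality} essentially verbatim, replacing the construction of differential objects (respectively differential bundles) by that of (vertical/horizontal) linear connections. I would treat $\VLC$ in full and then observe that $\HLC$ and $\LC$ go through identically, the only difference being whether one transports the vertical connection, the horizontal connection, or both. First I would take a display tangent morphism $(F,\alpha)\colon(\X,\TT)\to(\X',\TT')$ in the sense of Definition~\ref{definition:display-tangent-morphisms-formal} and note that, by Proposition~\ref{proposition:hom-tangent-categories}, precomposition gives a strong tangent morphism
\[
[\VLC(\X,\TT)\|F,\alpha]\colon[\VLC(\X,\TT)\|\X,\TT]\to[\VLC(\X,\TT)\|\X',\TT'],
\]
which is \emph{display} precisely because $(F,\alpha)$ is a formal display tangent morphism. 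I would then apply the $2$-functor $\VLC$ of Lemma~\ref{lemma:LC-functoriality} to it and use the universal property of Theorem~\ref{theorem:universality-linear-connections} to identify the codomain $\VLC[\VLC(\X,\TT)\|\X',\TT']$ with $[\VLC(\X,\TT)\|\VLC(\X',\TT')]$. Evaluating the resulting functor at the universal vertical linear connection of Proposition~\ref{proposition:universality-linear-connections} then yields the tangent morphism $\VLC(F,\alpha)\colon\VLC(\X,\TT)\to\VLC(\X',\TT')$.

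Next I would construct the action on $2$-cells: a tangent $2$-morphism $\varphi\colon(F,\alpha)\Rightarrow(G,\beta)$ induces, via the $2$-functoriality of the Hom-construction (Proposition~\ref{proposition:hom-tangent-categories}), a tangent natural transformation $[\VLC(\X,\TT)\|\varphi]$; applying $\VLC$ and evaluating at the universal connection produces $\VLC(\varphi)\colon\VLC(F,\alpha)\Rightarrow\VLC(G,\beta)$. To finish, I would verify the $2$-functor axioms — preservation of identities and compatibility with horizontal and vertical composition — which follow by combining the $2$-functoriality of the Hom-construction, the $2$-functoriality of $\VLC$, $\HLC$, and $\LC$ on tangent categories (Lemma~\ref{lemma:LC-functoriality}), and the uniqueness clause built into the universal property of Theorem~\ref{theorem:universality-linear-connections}, exactly as in Proposition~\ref{proposition:DB-functoriality}.

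The hard part will be checking that $\VLC(F,\alpha)$ is a genuinely \emph{strong} tangent morphism into $\VLC(\X',\TT')$, rather than a merely lax one; this is where the display hypothesis does the real work. Because $[\VLC(\X,\TT)\|F,\alpha]$ is display, it preserves pointwise tangent display maps together with their tangent pullbacks, so it sends the universal pointwise display differential bundle $\UnivQ^\VLC$ to a pointwise display differential bundle and carries $\Univ{k^\VLC}$ to a vertical connection on it, making the comparison isomorphism of Theorem~\ref{theorem:universality-linear-connections} applicable on the nose. For $\HLC$ and $\LC$ I would additionally invoke preservation of the horizontal bundle $\H q$ — which is itself a tangent pullback of the tangent bundle along $p$ — so that the horizontal component is transported correctly; this too is a consequence of the display hypothesis, and beyond it the remaining bookkeeping is routine.
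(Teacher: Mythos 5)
Your proposal is correct and follows exactly the route the paper intends: the paper's own proof of this proposition is a one-line deferral to the arguments of Propositions~\ref{proposition:DO-functoriality} and~\ref{proposition:DB-functoriality}, and your write-up is precisely the spelling-out of those details (precompose, apply the construction to the Hom-tangent categories, identify the codomain via the universal property, evaluate at the universal connection), with the correct identification of where the display hypothesis is actually used.
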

\begin{proof}
The proof is fairly similar to the one of PropositionsThe proof is fairly similar to the proofs of Proposition~\ref{proposition:DO-functoriality}, and~\ref{proposition:DB-functoriality}. Thus, we leave it to the reader to spell out the details.
\end{proof}

We obtain a similar result for affine connections.

\begin{proposition}
\label{proposition:AC-functoriality}
There are three $2$-endofunctors
\begin{align*}
\VAC&\colon\Tng_\cong(\CC)\to\Tng_\cong(\CC)\\
\HAC&\colon\Tng_\cong(\CC)\to\Tng_\cong(\CC)\\
\AC&\colon\Tng_\cong(\CC)\to\Tng_\cong(\CC)
\end{align*}
on the $2$-category of tangentads and strong tangent morphisms, which send a tangentad $(\X,\TT)$ to the tangent tangentads of (vertical/horizontal) affine connections of $(\X,\TT)$.
\end{proposition}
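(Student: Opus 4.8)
The plan is to mimic the proofs of Proposition~\ref{proposition:DO-functoriality} and Proposition~\ref{proposition:DB-functoriality}, replacing the universal differential object (respectively, differential bundle) with the universal affine connection of Proposition~\ref{proposition:universality-affine-connections}, whose universal property is established in Theorem~\ref{theorem:universality-affine-connections}. I treat $\VAC$ in detail; the arguments for $\HAC$ and $\AC$ are identical, merely carrying along the extra horizontal (respectively, vertical-and-horizontal) data. Fix a strong tangent morphism $(F,\alpha)\colon(\X,\TT)\to(\X',\TT')$ of $\Tng_\cong(\CC)$. By Proposition~\ref{proposition:hom-tangent-categories}, postcomposition by $(F,\alpha)$ induces a tangent morphism
\begin{align*}
&[\VAC(\X,\TT)\|F,\alpha]\colon[\VAC(\X,\TT)\|\X,\TT]\to[\VAC(\X,\TT)\|\X',\TT']
\end{align*}
and, since $(F,\alpha)$ is strong, this induced morphism is strong as well. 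Because the $2$-functor $\VAC$ of Lemma~\ref{lemma:LC-functoriality} acts on $\TngCat_\cong$, I may apply it to this strong tangent morphism.

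Next I would extract the tangent morphism between tangentads via the universal property. Applying $\VAC$ produces a strong tangent morphism
\begin{align*}
&\VAC[\VAC(\X,\TT)\|\X,\TT]\xrightarrow{\VAC[\VAC(\X,\TT)\|F,\alpha]}\VAC[\VAC(\X,\TT)\|\X',\TT']\cong[\VAC(\X,\TT)\|\VAC(\X',\TT')]
\end{align*}
where the final isomorphism is the universal property of affine connections of Theorem~\ref{theorem:universality-affine-connections}. The universal vertical affine connection is an object of $\VAC[\VAC(\X,\TT)\|\X,\TT]$, and its image under this composite is an object of $[\VAC(\X,\TT)\|\VAC(\X',\TT')]$, that is, a tangent morphism
\begin{align*}
&\VAC(F,\alpha)\colon\VAC(\X,\TT)\to\VAC(\X',\TT')
\end{align*}
whose underlying tangent morphism is $(F,\alpha)\o\U^\VAC$ and which is strong because the whole pipeline preserves strongness. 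For a $2$-morphism $\varphi\colon(F,\alpha)\Rightarrow(G,\beta)$, I would form the tangent natural transformation $\VAC[\VAC(\X,\TT)\|\varphi]$ and evaluate it at the universal vertical affine connection to obtain
\begin{align*}
&\VAC(\varphi)\colon\VAC(F,\alpha)\Rightarrow\VAC(G,\beta)
\end{align*}
Preservation of identities, composition, and the interchange law then transports along the isomorphism of the universal property from the corresponding identities in $\TngCat_\cong$, exactly as in Proposition~\ref{proposition:DO-functoriality}; this is routine bookkeeping via Lemma~\ref{lemma:induced-lax-tangent-morphism-from-affine-connections}.

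The main point to verify—and the chief conceptual difference from the linear case of Proposition~\ref{proposition:LC-functoriality}—is that here $\VAC$, $\HAC$, and $\AC$ are $2$-endofunctors on $\Tng_\cong(\CC)$ rather than on $\Tng_\Dsply(\CC)$. This is precisely because affine connections live on the canonical tangent bundle $\bar\p\colon(\T,c)\to\id$, which every strong tangent morphism preserves up to coherent isomorphism, so no display-preservation hypothesis is required; for linear connections on an arbitrary differential bundle one genuinely needs the induced Hom-morphism to preserve pointwise tangent display maps, hence the display condition there. The subtle verification in the present proof is therefore the strongness of $[\VAC(\X,\TT)\|F,\alpha]$, which is what allows $\VAC$ on $\TngCat_\cong$ to be applied at all; this follows from the strongness of $(F,\alpha)$ together with the pointwise description of the tangent structure on Hom-tangent categories from Proposition~\ref{proposition:hom-tangent-categories}. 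Once strongness is secured and the identification $\VAC[\VAC(\X,\TT)\|\X',\TT']\cong[\VAC(\X,\TT)\|\VAC(\X',\TT')]$ is in place, the remaining steps are entirely parallel to the differential-object and differential-bundle cases, so I would leave their verification to the reader.
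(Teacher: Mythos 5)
Your proposal is correct and follows exactly the route the paper intends: the paper states Proposition~\ref{proposition:AC-functoriality} without an explicit proof, implicitly deferring to the arguments of Propositions~\ref{proposition:DO-functoriality} and~\ref{proposition:DB-functoriality}, and your expansion (postcompose with $(F,\alpha)$, apply $\VAC$ to the induced strong Hom-morphism, identify $\VAC[\VAC(\X,\TT)\|\X',\TT']$ with $[\VAC(\X,\TT)\|\VAC(\X',\TT')]$ via Theorem~\ref{theorem:universality-affine-connections}, and evaluate at the universal affine connection) is precisely that argument. Your observation that the display hypothesis is unnecessary here because affine connections live on the tangent bundle $\bar\p$ itself correctly accounts for the change of domain from $\Tng_\Dsply(\CC)$ to $\Tng_\cong(\CC)$.
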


\subsubsection*{The covariant derivative of connections}
\label{subsubsection:covariant-derivative}
Linear and affine connections in a tangent category come with a notion of covariant derivative. In this section, we show how to formalize this concept in the context of tangentads. Recall that the covariant derivative $\nabla^k$ associated to a (vertical) linear connection $k$ on a differential bundle $\q\colon E\to M$ is an operation which takes a vector field $v\colon M\to\T M$ of the base object $M$ of $\q$ and a section $s\colon M\to E$ of $\q$ and yields a new section $\nabla^k_vs$ of $\q$. 
\par Therefore, while we consider only tangentads $(\X,\TT)$ which admit the construction of (vertical) linear connections, we must also assume that $(\X,\TT)$ admits the construction of vector fields and the one of sections of differential bundles with a connection. In particular, let $\Univ k$ denotes the universal vertical connection of $(\X,\TT)$, $\UnivQ^\VLC$ the underlying pointwise display differential bundle of the universal vertical connection, $\Univ v$ the universal vector field of $(\X,\TT)$, and $\Univ s$ the universal section of $\UnivQ^\VLC$.
\par We must also choose a vector field of the base of a linear connection with a connection, together with a section of the differential bundle. For this, we need to require the existence of the following $2$-pullback in $\CC$:
\begin{equation}
\label{equation:covariant-derivative-pullback}
\begin{tikzcd}
{\VF(\X,\TT)\times_{(\X,\TT)}\LSC(\X,\TT)} && {\LSC(\X,\TT)} \\
&& {\VLC(\X,\TT)} \\
{\VF(\X,\TT)} && {(\X,\TT)}
\arrow["{\pi_\LS}", from=1-1, to=1-3]
\arrow["{\pi_{\VF}}"', from=1-1, to=3-1]
\arrow["\lrcorner"{anchor=center, pos=0.125}, draw=none, from=1-1, to=3-3]
\arrow["{\U^\LS}", from=1-3, to=2-3]
\arrow["{\Base^\VLC}", from=2-3, to=3-3]
\arrow["{\U^\VF}"', from=3-1, to=3-3]
\end{tikzcd}
\end{equation}
Notice that, by~\cite[Proposition~4.7]{lanfranchi:tangentads-II}, the diagram of Equation~\eqref{equation:covariant-derivative-pullback}, becomes a $2$-pullback in $\Tng(\CC)$.
\par In the context of tangent categories, the objects of $\VF(\X,\TT)\times_{(\X,\TT)}\LSC(\X,\TT)$ are fourth-tuples $(\q,k;v,s)$ formed by a display differential bundle $\q\colon E\to M$, a vertical connection $k$ on $\q$, a vector field $v\colon M\to\T M$ on the base object of $\q$ and a section $s\colon M\to E$ of $\q$. A morphism
\begin{align*}
(f,g)&\colon(\q,k;v,s)\to(\q',k';v',s')
\end{align*}
of $\VF(\X,\TT)\times_{(\X,\TT)}\LSC(\X,\TT)$ is a linear morphism of differential bundles $(f,g)\colon\q\to\q'$ which commutes with the vertical connections, whose base morphism $f$ commutes with the vector fields, that is, $v'\o f=\T f\o v$, and such that $(f,g)$ commutes with the sections, that is, $s'\o f=g\o s$. Finally, the tangent structure of $\VF(\X,\TT)\times_{(\X,\TT)}\LSC(\X,\TT)$ is computed as in $\VF(\X,\TT)$ and $\LSC(\X,\TT)$.
\par We can use the tangent morphisms
\begin{align*}
&\VF(\X,\TT)\times_{(\X,\TT)}\LSC(\X,\TT)\xrightarrow{\pi_\VF}\VF(\X,\TT)\\
&\VF(\X,\TT)\times_{(\X,\TT)}\LSC(\X,\TT)\xrightarrow{\pi_\LS}\LSC(\X,\TT)\xrightarrow{\U^\LS}\VLC(\X,\TT)
\end{align*}
to pull back the universal vector field $\Univ v$, universal vertical linear connection $\Univ k$, the underlying linear connection $\UnivQ^\VLC$, and the universal section $\Univ s$ to $\VF(\X,\TT)\times_{(\X,\TT)}\LSC(\X,\TT)$ and obtain a vector field $\tilde{\Univ v}\=\Gamma_{\Univ v}(\pi_\VF)$, a vertical linear connection $\tilde{\Univ k}\=\Gamma_{\Univ k}(\U^\LS\o\pi_\LS)$, a linear connection ${\tilde{\UnivQ}}^\VLC\=\Gamma_{\UnivQ^\VLC}(\U^\LS\o\pi_\LS)$, and a section $\tilde{\Univ s}\=\Gamma_{\Univ s}(\pi_\LS)$ of ${\tilde{\UnivQ}}^\VLC$. Since $\tilde{\Univ k}$ is a vertical linear connection on ${\tilde{\UnivQ}}^\VLC$, it defines a covariant derivative $\nabla^{\tilde{\Univ k}}$. Thus, by applying $\nabla^{\tilde{\Univ k}}$ to the vector field $\tilde{\Univ v}$ and to the section $\tilde{\Univ s}$ of ${\tilde{\UnivQ}}^\VLC$ we obtain a new section $\nabla^{\tilde{\Univ k}}_{\tilde{\Univ v}}\tilde{\Univ s}$ to ${\tilde{\UnivQ}}^\VLC$.
\par However, since
\begin{align*}
&{\tilde{\UnivQ}}^\VLC=\UnivQ^\VLC_{\U^\LS\o\pi_\LS}
\end{align*}
is obtained by pre-composing $\UnivQ^\VLC$ by $\U^\LS\o\pi_\LS$, it follows that
\begin{align*}
&(\U_\LS\o\pi_\LS,\nabla^{\tilde{\Univ k}}_{\tilde{\Univ v}}\tilde{\Univ s})
\end{align*}
defines an object of $\LS_{\UnivQ^\VLC}(\VF(\X,\TT)\times_{(\X,\TT)}\LSC(\X,\TT))$. By the universal property of linear sections, the section $\nabla^{\tilde{\Univ k}}_{\tilde{\Univ v}}\tilde{\Univ s}$ corresponds to a tangent morphism:
\begin{align*}
&\nabla\colon\VF(\X,\TT)\times_{(\X,\TT)}\LSC(\X,\TT)\to\LSC(\X,\TT)
\end{align*}

\begin{definition}
\label{definition:construction-covariant-derivative}
A tangentad $(\X,\TT)$ of a $2$-category $\CC$ \textbf{admits the construction of the covariant derivative} when it admits the constructions of vertical linear connections, the construction of vector fields, the construction of linear sections of differential bundles with a connection, and the $2$-pullback of Equation~\eqref{equation:covariant-derivative-pullback} exists in $\CC$ of tangentads of $\CC$. Under these conditions, the \textbf{covariant derivative operator} of $(\X,\TT)$ is the tangent morphism
\begin{align*}
&\nabla\colon\VF(\X,\TT)\times_{(\X,\TT)}\LSC(\X,\TT)\to\LSC(\X,\TT)
\end{align*}
which corresponds to the linear section $\nabla^{\tilde{\Univ k}}_{\tilde{\Univ v}}\tilde{\Univ s}$.
\end{definition}

\begin{definition}
\label{definition:construction-covariant-derivative-2-category}
A $2$-category $\CC$ \textbf{admits the construction of the covariant derivative} provided that every tangentad of $\CC$ admits such a construction.
\end{definition}

\begin{theorem}
\label{theorem:covariant-derivative}
The $2$-category $\Cat$ of categories admits the construction of the covariant derivative. Moreover, given a tangent category $(\X,\TT)$, the corresponding covariant derivative operator is a functor which sends a fourth-tuple $(\q,k;v,s)$ formed by a display differential bundle $\q\colon E\to M$, a vertical linear connection $k$ of $\q$, a vector field $v$ of $M$ and a section $s$ of $\q$, to the section $\nabla^k_vs$ obtained by applying the covariant derivative of the vertical connection $k$ to $v$ and $s$.
\end{theorem}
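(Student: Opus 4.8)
The plan is to check, one at a time, that each hypothesis of Definition~\ref{definition:construction-covariant-derivative} is met by an arbitrary tangent category $(\X,\TT)$, viewed as a tangentad in $\Cat$, and then to read off the resulting operator $\nabla$ explicitly. First I would recall that $\Cat$ admits the construction of vertical linear connections by Corollary~\ref{corollary:universality-linear-connections}, and the construction of linear sections of any pointwise display differential bundle by Corollary~\ref{corollary:universality-linear-sections}; applying the latter to $\UnivQ^\VLC$ of Proposition~\ref{proposition:universality-linear-connections} shows that $\Cat$ admits the construction of linear sections on differential bundles with a connection. Moreover, combining Corollary~\ref{corollary:universality-linear-sections} with Theorem~\ref{theorem:vector-fields-as-linear-sections} applied to the canonical tangent bundle $\bar\p\colon(\T,c)\to\id_{(\X,\TT)}$ in $[\X,\TT\|\X,\TT]$ shows that $\Cat$ also admits the construction of vector fields. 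Finally, since $\Cat$ has all pullbacks, the $2$-pullback of Equation~\eqref{equation:covariant-derivative-pullback} exists in $\Cat$, so all the conditions of Definition~\ref{definition:construction-covariant-derivative} hold and $\Cat$ admits the construction of the covariant derivative.

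Next I would make the abstract recipe of Definition~\ref{definition:construction-covariant-derivative} concrete in $\Cat$. The pullback $\VF(\X,\TT)\times_{(\X,\TT)}\LSC(\X,\TT)$ in $\Cat$ has as objects precisely the four-tuples $(\q,k;v,s)$: an object of $\VF(\X,\TT)$ contributes a vector field $v\colon M\to\T M$, an object of $\LSC(\X,\TT)$ contributes a (vertical) linear connection $(\q,k)$ together with a section $s$ of $q\colon E\to M$, and the fibre-product condition along $\U^\VF$ and $\Base^\VLC\o\U^\LS$ forces the base $M$ of $\q$ to coincide with the base of $v$. Pulling the universal data back along the two legs of the pullback then gives, at each such tuple, $\tilde{\Univ v}$ evaluating to $v$, $\tilde{\Univ k}$ evaluating to $k$, ${\tilde{\UnivQ}}^\VLC$ evaluating to $\q$, and $\tilde{\Univ s}$ evaluating to $s$.

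The heart of the argument is then a pointwise computation. Since $\tilde{\Univ k}$ is a vertical linear connection on ${\tilde{\UnivQ}}^\VLC$, Definition~\ref{definition:covariant-derivative} produces the combinator $\nabla^{\tilde{\Univ k}}$, and applying it to $\tilde{\Univ v}$ and $\tilde{\Univ s}$ yields, at the tuple $(\q,k;v,s)$, exactly the composite $M\xrightarrow{v}\T M\xrightarrow{\T s}\T E\xrightarrow{k}E$, that is, the section $\nabla^k_v s$. By the universal property of linear sections (Theorem~\ref{theorem:universality-linear-sections}), this universal section $\nabla^{\tilde{\Univ k}}_{\tilde{\Univ v}}\tilde{\Univ s}$ corresponds under $\Gamma_{\Univ s}^{-1}$ to a tangent morphism $\nabla\colon\VF(\X,\TT)\times_{(\X,\TT)}\LSC(\X,\TT)\to\LSC(\X,\TT)$, whose underlying functor therefore sends $(\q,k;v,s)$ to the linear section $\nabla^k_v s$, as claimed.

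The main obstacle will not be any single calculation but the bookkeeping across the chain of universal-property identifications. I must confirm that the pointwise descriptions of $\tilde{\Univ v}$, $\tilde{\Univ k}$, ${\tilde{\UnivQ}}^\VLC$, and $\tilde{\Univ s}$ really are defined over a common base at each object, so that the combinator $\nabla^{\tilde{\Univ k}}$ can be evaluated on $\tilde{\Univ v}$ and $\tilde{\Univ s}$ simultaneously; this is exactly what the pullback of Equation~\eqref{equation:covariant-derivative-pullback} arranges. I would then verify that $\nabla$ is correct on morphisms: a morphism $(f,g)\colon(\q,k;v,s)\to(\q',k';v',s')$ of the pullback category is sent by $\nabla$ to the base morphism $f$, and the required equation $\nabla^{k'}_{v'}s'\o f=g\o\nabla^{k}_{v}s$ follows from the compatibility of $(f,g)$ with $v$, $s$, and $k$, i.e.\ from the naturality of $\tilde{\Univ v}$, $\tilde{\Univ s}$, and $\tilde{\Univ k}$. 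Once this functoriality is in place, Theorem~\ref{theorem:universality-linear-sections} guarantees that $\nabla$ is the unique tangent morphism with this pointwise behaviour, completing the proof.
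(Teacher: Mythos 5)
Your proposal is correct and follows essentially the same route as the paper, which establishes this theorem implicitly through the construction preceding Definition~\ref{definition:construction-covariant-derivative}: identify the objects of the $2$-pullback as four-tuples $(\q,k;v,s)$, pull back the universal data along the projections, apply the covariant derivative combinator pointwise, and invoke the universal property of linear sections (Theorem~\ref{theorem:universality-linear-sections}) to obtain $\nabla$. Your additional verifications — that $\Cat$ satisfies each prerequisite of Definition~\ref{definition:construction-covariant-derivative} and that $\nabla$ acts correctly on morphisms — are details the paper leaves implicit, and they are handled correctly.
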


\subsubsection*{The curvature of connections}
\label{subsubsection:curvature}
In this section, we extend the notion of curvature tensor of linear connections to the formal context of tangentads. For starters, recall that the curvature tensor $\Riem$ of a (vertical) linear connection $k$ on a display differential bundle $\q\colon E\to M$ of a tangent category $(\X,\TT)$ is an operation which sends two vector fields $u,v\colon M\to\T M$ of the base object $M$ of $\q$ and a section $s\colon M\to E$ of $\q$ to a new section $\Riem^k_{u,v}s\colon M\to E$ of $\q$.
\par This suggests considering a tangentad $(\X,\TT)$ of a $2$-category $\CC$ which admits the constructions of (vertical) linear connections, the construction of vector fields, and the construction of linear sections of differential bundles with a connection. Moreover, we shall assume the existence of the following trifold $2$-pullback in $\CC$:
\begin{equation}
\label{equation:curvature-pullback}
\begin{tikzcd}
{\VF_2(\X,\TT)\times_{(\X,\TT)}\LSC(\X,\TT)} && {\LSC(\X,\TT)} \\
& {\VF(\X,\TT)} & {\VLC(\X,\TT)} \\
{\VF(\X,\TT)} && {(\X,\TT)}
\arrow["{\pi_3}", from=1-1, to=1-3]
\arrow["{\pi_2}"', from=1-1, to=2-2]
\arrow["{\pi_1}"', from=1-1, to=3-1]
\arrow["{\U^\LS}", from=1-3, to=2-3]
\arrow["{\U^\VF}"', from=2-2, to=3-3]
\arrow["\Base", from=2-3, to=3-3]
\arrow[""{name=0, anchor=center, inner sep=0}, "{\U^\VF}"', from=3-1, to=3-3]
\arrow["\lrcorner"{anchor=center, pos=0.125}, draw=none, from=1-1, to=0]
\end{tikzcd}
\end{equation}
Notice that, by~\cite[Proposition~4.7]{lanfranchi:tangentads-II}, the diagram of Equation~\eqref{equation:curvature-pullback}, becomes a $2$-pullback in $\Tng(\CC)$.
\par In the context of tangent categories, the objects of $\VF_2(\X,\TT)\times_{(\X,\TT)}\LSC(\X,\TT)$ are quintuples $(\q,k;u,v,s)$ formed by a display differential bundle $\q\colon E\to M$ of $(\X,\TT)$, a vertical linear connection $k$ on $\q$, a pair of vector fields $u,v\colon M\to\T M$ on the base object $M$ of $\q$, and a section $s\colon M\to E$ of $\q$. A morphism from $(\q,k;u,v,s)$ to $(\q',k';u',v',s')$ is a linear morphism $(f,g)\colon\q\to\q'$ of differential bundles which commute with the vertical connections $k$ and $k'$, whose base morphism $f\colon M\to M'$ commutes with the pair of vector fields $u$, $u'$ and $v$, $v'$, that is, $v'\o f=\T f\o v$ and $u'\o f=\T f\o u$, and such that $(f,g)$ commutes with the sections $s$ and $s'$, that is, $s'\o f=g\o s$. The tangent structure on $\VF_2(\X,\TT)\times_{(\X,\TT)}\LSC(\X,\TT)$ acts on the linear connection with connection $(\q,k)$ as in $\VLC(\X,\TT)$, on the vector fields $u$ and $v$ as in $\VF(\X,\TT)$, and on the section $s$ as in $\LSC(\X,\TT)$.
\par We can use the tangent morphisms
\begin{align*}
&\VF_2(\X,\TT)\times_{(\X,\TT)}\LSC(\X,\TT)\xrightarrow{\pi_1}\VF(\X,\TT)\\
&\VF_2(\X,\TT)\times_{(\X,\TT)}\LSC(\X,\TT)\xrightarrow{\pi_2}\VF(\X,\TT)\\
&\VF_2(\X,\TT)\times_{(\X,\TT)}\LSC(\X,\TT)\xrightarrow{\pi_3}\LSC(\X,\TT)\xrightarrow{\U^\LS}\VLC(\X,\TT)\\
\end{align*}
to pull back the universal vector field $\Univ v$, universal vertical linear connection $\Univ k$, the underlying linear connection $\UnivQ^\VLC$, and the universal section $\Univ s$ to $\VF_2(\X,\TT)\times_{(\X,\TT)}\LSC(\X,\TT)$ and obtain two vector fields $\tilde{\Univ v}_1\=\Gamma_{\Univ v}(\pi_1)$ and $\tilde{\Univ v}_2\=\Gamma_{\Univ v}(\pi_2)$, a vertical linear connection $\tilde{\Univ k}\=\Gamma_{\Univ k}(\U^\LS\o\pi_3)$, a linear connection ${\tilde{\UnivQ}}^\VLC\=\Gamma_{\UnivQ^\VLC}(\U^\LS\o\pi_3)$, and a section $\tilde{\Univ s}\=\Gamma_{\Univ s}(\pi_3)$ of ${\tilde{\UnivQ}}^\VLC$.

\par Since $\tilde{\Univ k}$ is a vertical linear connection on ${\tilde{\UnivQ}}^\VLC$, it defines a curvature tensor $\Riem^{\tilde{\Univ k}}$. Thus, by applying $\Riem^{\tilde{\Univ k}}$ to the vector fields $\tilde{\Univ v}_1$ and $\tilde{\Univ v}_2$ and to the section $\tilde{\Univ s}$ of ${\tilde{\UnivQ}}^\VLC$ we obtain a new section $\Riem^{\tilde{\Univ k}}_{\tilde{\Univ v}_1,\tilde{\Univ v}_2}\tilde{\Univ s}$ of ${\tilde{\UnivQ}}^\VLC$.
\par However, since
\begin{align*}
&{\tilde{\UnivQ}}^\VLC=\UnivQ^\VLC_{\U^\LS\o\pi_3}
\end{align*}
is obtained by pre-composing $\UnivQ^\VLC$ by $\U^\LS\o\pi_3$, it follows that
\begin{align*}
&(\U_\LS\o\pi_3,\Riem^{\tilde{\Univ k}}_{\tilde{\Univ v}_1,\tilde{\Univ v}_2}\tilde{\Univ s})
\end{align*}
defines an object of $\LS_{\UnivQ^\VLC}(\VF_2(\X,\TT)\times_{(\X,\TT)}\LSC(\X,\TT))$. By the universal property of linear sections, the section $\Riem^{\tilde{\Univ k}}_{\tilde{\Univ v}_1,\tilde{\Univ v}_2}\tilde{\Univ s}$ corresponds to a tangent morphism:
\begin{align*}
&\Riem\colon\VF_2(\X,\TT)\times_{(\X,\TT)}\LSC(\X,\TT)\to\LSC(\X,\TT)
\end{align*}

\begin{definition}
\label{definition:curvature-construction}
A tangentad $(\X,\TT)$ of a $2$-category $\CC$ \textbf{admits the construction of the curvature tensor} when it admits the constructions of vertical linear connections, the construction of vector fields, the construction of linear sections of differential bundles with a connection, and the trifold $2$-pullback of Equation~\eqref{equation:curvature-pullback} exists in $\CC$. Under these conditions, the \textbf{curvature tensor operator} of $(\X,\TT)$ is the tangent morphism
\begin{align*}
&\Riem\colon\VF_2(\X,\TT)\times_{(\X,\TT)}\LSC(\X,\TT)\to\LSC(\X,\TT)
\end{align*}
which corresponds to the linear section $\Riem^{\tilde{\Univ k}}_{\tilde{\Univ v}_1,\tilde{\Univ v}_2}\tilde{\Univ s}$.
\end{definition}

\begin{definition}
\label{definition:construction-curvature-2-category}
A $2$-category $\CC$ \textbf{admits the construction of the curvature tensor} provided that every tangentad of $\CC$ admits such a construction.
\end{definition}

\begin{theorem}
\label{theorem:curvature-construction}
The $2$-category $\Cat$ of categories admits the construction of the curvature tensor. Moreover, given a tangent category $(\X,\TT)$, the corresponding curvature tensor operator is a functor which sends a tuple $(\q,k;u,v,s)$ formed by a display differential bundle $\q\colon E\to M$, a vertical linear connection $k$ of $\q$, a pair of vector fields $u$ and $v$ of $M$ and a section $s$ of $\q$, to the section $\Riem^k_{u,v}s$ obtained by applying the curvature tensor of the vertical connection $k$ to $u$, $v$, and $s$.
\end{theorem}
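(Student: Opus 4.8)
The plan is to verify that $\Cat$ satisfies the hypotheses of Definition~\ref{definition:curvature-construction} for every tangentad, and then to identify the resulting operator with the classical curvature combinator. The argument runs in close parallel to that of Theorem~\ref{theorem:covariant-derivative}, the only genuine difference being that the output section is assembled from $\Curv^k$ rather than from the covariant derivative.

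First I would check the three required constructions. By Corollary~\ref{corollary:universality-linear-connections}, $\Cat$ admits the construction of vertical linear connections; by~\cite[Theorem~4.8]{lanfranchi:tangentads-II}, $\Cat$ admits the construction of vector fields; and by Corollary~\ref{corollary:universality-linear-sections}, applied to the universal pointwise display differential bundle $\UnivQ^\VLC$ as in Definition~\ref{definition:formal-linear-sections-connections}, $\Cat$ admits the construction of linear sections of differential bundles with a connection, yielding $\LSC(\X,\TT)$. Since $\Cat$ possesses all strict finite $2$-limits, and in particular all iterated pullbacks, the trifold $2$-pullback $\VF_2(\X,\TT)\times_{(\X,\TT)}\LSC(\X,\TT)$ of Equation~\eqref{equation:curvature-pullback} exists in $\Cat$. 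Hence every tangentad of $\Cat$, that is, every tangent category, admits the construction of the curvature tensor.

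It then remains to compute the operator. Unwinding Section~\ref{subsubsection:curvature}, an object of $\VF_2(\X,\TT)\times_{(\X,\TT)}\LSC(\X,\TT)$ is a quintuple $(\q,k;u,v,s)$, and pulling the universal data back along $\pi_1$, $\pi_2$, and $\U^\LS\o\pi_3$ produces the universal vector fields $\tilde{\Univ v}_1,\tilde{\Univ v}_2$, the universal vertical connection $\tilde{\Univ k}$ with underlying bundle $\tilde{\UnivQ}^\VLC$, and the universal section $\tilde{\Univ s}$. Evaluating the resulting section $\Riem^{\tilde{\Univ k}}_{\tilde{\Univ v}_1,\tilde{\Univ v}_2}\tilde{\Univ s}$ at $(\q,k;u,v,s)$ and applying the concrete description of the curvature tensor in a tangent category (Definition~\ref{definition:curvature}), one obtains the composite
\[
M\xrightarrow{v}\T M\xrightarrow{\T u}\T^2M\xrightarrow{\T^2s}\T^2E\xrightarrow{\Curv^k}E,
\]
which is precisely $\Riem^k_{u,v}s$. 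By the universal property of linear sections (Theorem~\ref{theorem:universality-linear-sections}), this section corresponds to the tangent morphism $\Riem$ sending $(\q,k;u,v,s)$ to $\Riem^k_{u,v}s$, as claimed. Since $\Curv^k$ involves the negation, this computation implicitly takes place in the Hom-tangent categories equipped with the negatives inherited from the target, exactly as in the classical setting.

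The construction is entirely formal once the preconditions hold, so I do not expect a deep obstacle; the only care required is bookkeeping. Specifically, one must check that the pointwise evaluation of the universal curvature at each object reproduces $\Riem^k_{u,v}s$ and, crucially, that the correspondence supplied by Theorem~\ref{theorem:universality-linear-sections} is functorial, that is, that a morphism $(f,g)\colon(\q,k;u,v,s)\to(\q',k';u',v',s')$ of the trifold pullback is carried to the expected morphism of linear sections. This amounts to the compatibility of $\Riem^k$ with linear morphisms of connections whose base commutes with the chosen vector fields and with the sections, and it is verified just as in the proof of Theorem~\ref{theorem:covariant-derivative}.
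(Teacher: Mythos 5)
Your proposal is correct and follows essentially the same route as the paper, which treats the theorem as an immediate consequence of the construction preceding it: the prerequisite constructions hold in $\Cat$ by the corollaries you cite, the trifold $2$-pullback exists since $\Cat$ has all strict $2$-limits, and the operator is identified pointwise with $\Riem^k_{u,v}s$ via the universal property of linear sections. Your remarks on the implicit need for negatives and on checking functoriality on morphisms of the trifold pullback are sensible additions that the paper leaves tacit.
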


\subsubsection*{The torsion of connections}
\label{subsubsection:torsion}
In this section, we extend the notion of torsion tensor of affine connections to the formal context of tangentads. For starters, recall that the torsion tensor $\TorsTens$ of a (vertical) affine connection $k$ on an object $M$ of a tangent category $(\X,\TT)$ is an operation which sends two vector fields $u,v\colon M\to\T M$ of $M$ to a new vector field $\TorsTens^k_uv\colon M\to\T M$ of $M$. This suggests considering a tangentad $(\X,\TT)$ of a $2$-category $\CC$ which admits the constructions of (vertical) linear connections, and the construction of vector fields. Moreover, we shall assume the existence of the following $2$-pullback in the $2$-category $\Tng(\CC)$ of tangentads of $\CC$:
\begin{equation}
\label{equation:torsion-pullback}
\begin{tikzcd}
{\VF_2(\X,\TT)\times_{(\X,\TT)}\VAC(\X,\TT)} && {\VAC(\X,\TT)} \\
& {\VF(\X,\TT)} \\
{\VF(\X,\TT)} && {(\X,\TT)}
\arrow["{\pi_3}", from=1-1, to=1-3]
\arrow["{\pi_2}"', from=1-1, to=2-2]
\arrow["{\pi_1}"', from=1-1, to=3-1]
\arrow["{\U^\VAC}", from=1-3, to=3-3]
\arrow["{\U^\VF}"', from=2-2, to=3-3]
\arrow[""{name=0, anchor=center, inner sep=0}, "{\U^\VF}"', from=3-1, to=3-3]
\arrow["\lrcorner"{anchor=center, pos=0.125}, draw=none, from=1-1, to=0]
\end{tikzcd}
\end{equation}
Notice that, by~\cite[Proposition~4.7]{lanfranchi:tangentads-II}, the diagram of Equation~\eqref{equation:torsion-pullback}, becomes a $2$-pullback in $\Tng(\CC)$.
\par In the context of tangent categories, the objects of $\VF_2(\X,\TT)\times_{(\X,\TT)}\VAC(\X,\TT)$ are fourth-tuples $(M,k;u,v)$ formed by an object $M$ of $(\X,\TT)$, a vertical affine connection $k$ on $M$, and a pair of vector fields $u,v\colon M\to\T M$ on $M$. A morphism from $(\q,k;u,v)$ to $\q',k';u',v')$ is a morphism $f\colon M\to M'$ which commutes with the vertical connections $k$ and $k'$ and which commutes with the pair of vector fields $u$, $u'$ and $v$, $v'$, that is, $v'\o f=\T f\o v$ and $u'\o f=\T f\o u$. The tangent structure on $\VF_2(\X,\TT)\times_{(\X,\TT)}\VAC(\X,\TT)$ acts on the vertical affine connection $(M,k)$ as in $\VAC(\X,\TT)$, and on the vector fields $u$ and $v$ as in $\VF(\X,\TT)$.
\par We can use the tangent morphisms
\begin{align*}
&\VF_2(\X,\TT)\times_{(\X,\TT)}\VAC(\X,\TT)\xrightarrow{\pi_1}\VF(\X,\TT)\\
&\VF_2(\X,\TT)\times_{(\X,\TT)}\VAC(\X,\TT)\xrightarrow{\pi_2}\VF(\X,\TT)\\
&\VF_2(\X,\TT)\times_{(\X,\TT)}\VAC(\X,\TT)\xrightarrow{\pi_3}\VAC(\X,\TT)
\end{align*}
to pull back the universal vector field $\Univ v$, and universal vertical affine connection $\Univ k$ to $\VF_2(\X,\TT)\times_{(\X,\TT)}\VAC(\X,\TT)$ and obtain two vector fields $\tilde{\Univ v}_1\=\Gamma_{\Univ v}(\pi_1)$ and $\tilde{\Univ v}_2\=\Gamma_{\Univ v}(\pi_2)$, and a vertical affine connection $\tilde{\Univ k}\=\Gamma_{\Univ k}(\pi_3)$.

\par Since $\tilde{\Univ k}$ is a vertical affine connection, it defines a torsion tensor $\TorsTens^{\tilde{\Univ k}}$. Thus, by applying $\TorsTens^{\tilde{\Univ k}}$ to the vector fields $\tilde{\Univ v}_1$ and $\tilde{\Univ v}_2$ we obtain a new vector field $\TorsTens^{\tilde{\Univ k}}_{\tilde{\Univ v}_1}\tilde{\Univ v}_2$.
\par However, by the universal property of vector fields, the vector field $\TorsTens^{\tilde{\Univ k}}_{\tilde{\Univ v}_1}\tilde{\Univ v}_2$ corresponds to a tangent morphism:
\begin{align*}
&\TorsTens\colon\VF_2(\X,\TT)\times_{(\X,\TT)}\VAC(\X,\TT)\to\VF(\X,\TT)
\end{align*}

\begin{definition}
\label{definition:torsion-construction}
A tangentad $(\X,\TT)$ of a $2$-category $\CC$ \textbf{admits the construction of the torsion tensor} when it admits the constructions of vertical linear connections, the construction of vector fields, the construction of linear sections of differential bundles with a connection, and the $2$-pullback of Equation~\eqref{equation:curvature-pullback} exists in $\CC$. Under these conditions, the \textbf{torsion tensor operator} of $(\X,\TT)$ is the tangent morphism
\begin{align*}
&\TorsTens\colon\VF_2(\X,\TT)\times_{(\X,\TT)}\VAC(\X,\TT)\to\VF(\X,\TT)
\end{align*}
which corresponds to the vector field $\TorsTens^{\tilde{\Univ k}}_{\tilde{\Univ v}_1}\tilde{\Univ v}_2$.
\end{definition}

\begin{definition}
\label{definition:construction-torsion-2-category}
A $2$-category $\CC$ \textbf{admits the construction of the torsion tensor} provided that every tangentad of $\CC$ admits such a construction.
\end{definition}

\begin{theorem}
\label{theorem:torsion-construction}
The $2$-category $\Cat$ of categories admits the construction of the torsion tensor. Moreover, given a tangent category $(\X,\TT)$, the corresponding torsion tensor operator is a functor which sends a tuple $(M,k;u,v)$ formed by an object $M$ of $(\X,\TT)$, a vertical affine connection $k$ on $M$, and a pair of vector fields $u$ and $v$ of $M$ to the vector field $\TorsTens^k_uv$ obtained by applying the torsion tensor of the vertical connection $k$ to $u$ and $v$.
\end{theorem}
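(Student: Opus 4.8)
The plan is to follow the same template as the proofs of Theorems~\ref{theorem:covariant-derivative} and~\ref{theorem:curvature-construction}, exploiting the fact that the torsion construction is actually lighter than those two: its output is a vector field rather than a linear section, so that only the constructions of vector fields and of vertical affine connections are genuinely needed (the clause about linear sections in Definition~\ref{definition:torsion-construction} is inherited verbatim from the curvature case and plays no role here, just as the internal reference there to Equation~\eqref{equation:curvature-pullback} should read Equation~\eqref{equation:torsion-pullback}).

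First I would verify that $\Cat$ meets the hypotheses of Definition~\ref{definition:torsion-construction}. By Corollary~\ref{corollary:universality-affine-connections}, $\Cat$ admits the construction of vertical affine connections, with tangentad $\VAC(\X,\TT)$; the construction of vector fields in $\Cat$ was established in~\cite{lanfranchi:tangentads-II} (alternatively, it follows from Theorem~\ref{theorem:vector-fields-as-linear-sections} together with Corollary~\ref{corollary:universality-linear-sections}). Since $\Cat$ has all pullbacks, computed objectwise, the underlying pullback of Equation~\eqref{equation:torsion-pullback} exists, and by~\cite[Proposition~4.7]{lanfranchi:tangentads-II} it lifts to a $2$-pullback in $\Tng(\Cat)$, exactly as recorded beneath that equation. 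This dispatches the first sentence of the theorem.

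Next I would invoke the general construction preceding Definition~\ref{definition:torsion-construction}: pull back the universal vector field $\Univ v$ and the universal vertical affine connection $\Univ k$ along the projections $\pi_1,\pi_2,\pi_3$ to obtain $\tilde{\Univ v}_1,\tilde{\Univ v}_2$ and $\tilde{\Univ k}$, form the vector field $\TorsTens^{\tilde{\Univ k}}_{\tilde{\Univ v}_1}\tilde{\Univ v}_2$, and convert it by the universal property of vector fields into the tangent morphism $\TorsTens$. To pin down the explicit formula I would evaluate at an arbitrary object $(M,k;u,v)$ of $\VF_2(\X,\TT)\times_{(\X,\TT)}\VAC(\X,\TT)$: by the pointwise nature of the pullbacks in $\Cat$, the pulled-back universal data return $u$, $v$, and $k$ objectwise, so $\TorsTens^{\tilde{\Univ k}}_{\tilde{\Univ v}_1}\tilde{\Univ v}_2$ evaluates to $\TorsTens^k_u v$ in the sense of Definition~\ref{definition:torsion}, yielding the claimed description of the operator.

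I do not expect a serious obstacle here, which is precisely why the two analogous theorems are disposed of as ``fairly similar''. The one point meriting explicit comment is that this objectwise assignment $(M,k;u,v)\mapsto\TorsTens^k_u v$ assembles into a genuine tangent morphism rather than a mere object-level map; this is not checked by hand but handed off to the universal property of vector fields, since $\TorsTens^{\tilde{\Univ k}}_{\tilde{\Univ v}_1}\tilde{\Univ v}_2$ is by construction a vector field (a section of the tangent bundle) in the relevant Hom-tangent category $[\X',\TT'\|\X,\TT]$, and that universal property converts such a vector field into the required morphism $\TorsTens\colon\VF_2(\X,\TT)\times_{(\X,\TT)}\VAC(\X,\TT)\to\VF(\X,\TT)$.
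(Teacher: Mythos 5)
Your proposal is correct and follows essentially the same route as the paper, which leaves this theorem unproved and relies on the construction in the preceding discussion: form the $2$-pullback of Equation~\eqref{equation:torsion-pullback}, pull back $\Univ v$ and $\Univ k$ along the projections, apply the torsion tensor, and convert the resulting vector field into the tangent morphism $\TorsTens$ via the universal property of vector fields. Your observations that Definition~\ref{definition:torsion-construction} inherits the linear-sections hypothesis and the reference to Equation~\eqref{equation:curvature-pullback} from the curvature case, and that neither is actually needed here, are accurate.
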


\subsection{The construction of connections via (P)IE limits}
\label{subsection:PIE-limits-connections}
PIE limits can be used to formally construct algebraic theories, such as the construction of algebras of a monad internal to a $2$-category. In~\cite[Section~4.5]{lanfranchi:tangentads-II}, we used this property of PIE limits to construct the tangentads of vector fields of the tangentads of a $2$-category, by means of inserters and equifiers. The same cannot be done for the construction of differential objects and the constructions of differential bundles, since non-equational axioms (the universality of the differential projection of a differential object and the universality of the vertical lift of a display differential bundle, together with the existence of all pullbacks along the projection) are involved in the definitions of these two concepts.
\par The theory of affine connections is entirely algebraic, that is, it only involves equational axioms, and the only non-equational axioms demanded in the theory of linear connections are related to the differential bundles on which the connections are defined. In this section, we harness this property of connections and show how to construct the tangentads of (vertical/horizontal) affine and linear connections of the tangentads of a $2$-category via inserters and equifiers.
\par For starters, consider a tangent category $(\X,\TT)$ and let us construct the tangent categories of vertical, horizontal and linear connections of $(\X,\TT)$. Recall that a vertical connection on a differential bundle $\q\colon E\to M$ is a morphism $k\colon\T E\to E$ and a horizontal connection on $\q$ is a morphism $h\colon\H q=E\times_M\T M\to\T E$. Consider the following two inserters:
\begin{align*}
(\I^k_0,\TT^k_0)&\colon\bar\T\Tot\Rightarrow\Tot        &&(\I^h_0,\TT^h_0)\colon\bar\H\UnivQ\Rightarrow\bar\T\Tot
\end{align*}
where $\UnivQ\colon\Tot\to\Base$ is the universal pointwise display differential bundle of $(\X,\TT)$ and $\bar\H\UnivQ=\Tot\times_\Base\bar\T\Base$ denotes the pointwise horizontal bundle of $\UnivQ$, which is the tangent morphism which sends each differential bundle $\q\colon E\to M$ to $\H q=E\times_M\T M\in(\X,\TT)$. Concretely, $(\I^k_0,\TT^k_0)$ is a tangent category equipped with a strict tangent morphism $V^k_0\colon(\I^k_0,\TT^k_0)\to\DB(\X,\TT)$ and a tangent $2$-morphism:
\begin{equation*}
\begin{tikzcd}
{(\I^k_0,\TT^k_0)} & {\DB(\X,\TT)} \\
{\DB(\X,\TT)} & {(\X,\TT)}
\arrow["V^k_0", from=1-1, to=1-2]
\arrow["V^k_0"', from=1-1, to=2-1]
\arrow["{K_0}"', Rightarrow, from=1-2, to=2-1]
\arrow["{\bar\T\Tot}", from=1-2, to=2-2]
\arrow["\Tot"', from=2-1, to=2-2]
\end{tikzcd}
\end{equation*}
and $(\I^h_0,\TT^h_0)$ is a tangent category equipped with a strict tangent morphism $V^h_0\colon(\I^h_0,\TT^h_0)\to\DB(\X,\TT)$ and a $2$-morphism:
\begin{equation*}
\begin{tikzcd}
{(\I^h_0,\TT^h_0)} & {\DB(\X,\TT)} \\
{\DB(\X,\TT)} & {(\X,\TT)}
\arrow["{V^h_0}", from=1-1, to=1-2]
\arrow["{V^h_0}"', from=1-1, to=2-1]
\arrow["{H_0}"', Rightarrow, from=1-2, to=2-1]
\arrow["{\bar\H\UnivQ}", from=1-2, to=2-2]
\arrow["{\bar\T\Tot}"', from=2-1, to=2-2]
\end{tikzcd}
\end{equation*}
The objects of $(\I^k_0,\TT^k_0)$ are pairs $(\q,k_0)$ formed by a display differential bundle $\q\colon E\to M$ together with a morphism $k_0\colon\T E\to E$ and morphisms $(f,g)\colon(\q,k_0)\to(\q',k_0')$ are morphisms $(f,g)\colon\q\to\q'$ of differential bundles which commute with $k_0$ and $k_0'$, that is, $k_0'\o\T g=k_0\o g$.
\par The objects of $(\I^h_0,\TT^h_0)$ are pairs $(\q,h_0)$ formed by a display differential bundle $\q\colon E\to M$ together with a morphism $h_0\colon\H q\to\T E$ and morphisms $(f,g)\colon(\q,h_0)\to(\q',h_0')$ are morphisms $(f,g)\colon\q\to\q'$ of differential bundles which commute with $h_0$ and $h_0'$, that is, $h_0'\o\T g=h_0\o(g\times_f\T f)$.
\par So far, the morphisms $k_0$ and $h_0$ not need to satisfy any equational axioms. To impose the correct constraints, we use equifiers. For the sake of simplicity, we focus on vertical connections and leave it to the reader to replicate the same procedure for horizontal connections.
\par According to~\cite[Lemma~3.3]{cockett:connections}, a morphism $k\colon\T E\to E$ need to satisfy the following equations in order to define a vertical connection on $\q\colon E\to M$:
\begin{align}
\label{equation:vertical-connections-equifiers}
\begin{split}
&k\o l_q=\id_E\\
&q\o k=q\o p_E\\
&l_q\o k=\T k\o l_E\\
&l_q\o k=\T k\o c_E\T l_q
\end{split}
\end{align}
Let us start by imposing the equation $k\o l_q=\id_E$. Consider the equifier:
\begin{align*}
&(\E^k_1,\TT^k_1)\colon K_0\o\Univ{l_q}_{V^k_0}\to\id_{\Tot\o V^k_0}
\end{align*}
Concretely, $(\E^k_1,\TT^k_1)$ is a tangent category equipped with a strict tangent morphism $W_1^k\colon(\E^k_1,\TT^k_1)\to(\I^h_0,\TT^h_0)$ satisfying the following equation:
\begin{align*}
&(K_0)_{W_1^k}\o\Univ{l_q}_{V^k_0\o W_1^k}=\id_{\Tot\o V^k_0\o W_1^k}
\end{align*}
Thus, $(\E^k_1,\TT^k_1)$ is the tangent subcategory of $(\I^k_0,\TT^k_0)$ spanned by the objects $(\q,k_1)$, where $l_q\o k_1=\id_E$. In the following, let us denote by $V_1^k\=W_1^k\o V_0^k$ and $K_1\=(K_0)_{W_1^k}\colon\bar\T\Tot\o V_1^k\to\Tot\o V_1^k$. Next, we want to impose the equation $q\o k=q\o p$. Consider the equifier:
\begin{align*}
(\E^k_2,\TT^k_2)\colon\UnivQ_{V_1^k}\o K_1\to\o\bar p_{\Tot\o V_1^k}
\end{align*}
$(\E^k_2,\TT^k_2)$ is a tangent category equipped with a strict tangent morphism $W_2^k\colon(\E^k_2,\TT^k_2)\to(\E^k_1,\TT^k_1)$ satisfying the following equation:
\begin{align*}
&\UnivQ_{V^k_1\o W_2^k}\o(K_1)_{W_2^k}=\UnivQ_{V^k_1\o W_2^k}\o\bar p_{\Tot\o V_1^k\o W_2^k}
\end{align*}
Let denote by $V_2^k\=W_2^k\o V_1^k$ and $K_2\=(K_1)_{W_2^k}\colon\bar\T\Tot\o V_2^k\to\Tot\o V_2^k$. To impose the equation $l_q\o k=\T k\o l$, consider the equifier:
\begin{align*}
(\E^k_3,\TT^k_3)\colon\Univ{l_q}_{V_2^k}\o K_2\to\bar\T K_2\o\bar l_{\Tot\o V_2^k}
\end{align*}
$(\E^k_3,\TT^k_3)$ is a tangent category equipped with a strict tangent morphism $W_3^k\colon(\E^k_3,\TT^k_3)\to(\E^k_2,\TT^k_2)$ satisfying the following equation:
\begin{align*}
&\Univ{l_q}_{V_2^k\o W_3^k}\o(K_2)_{W_3^k}=\bar\T(K_2)_{W_3^k}\o\bar l_{\Tot\o V_2^k\o W_3^k}
\end{align*}
Let denote by $V_3^k\=W_3^k\o V_2^k$ and $K_3\=(K_2)_{W_3^k}\colon\bar\T\Tot\o V_3^k\to\Tot\o V_3^k$. Finally, to impose the equation $l_q\o k=\T k\o c\o\T l_q$, consider the equifier:
\begin{align*}
(\E^k_4,\TT^k_4)\colon\Univ{l_q}_{V_3^k}\o K_3\to\bar\T K_3\o\bar c_{\Tot\o V_3^k}\o\bar \T\Univ{l_q}_{V_3^k}
\end{align*}
$(\E^k_3,\TT^k_3)$ is a tangent category equipped with a strict tangent morphism $W_3^k\colon(\E^k_3,\TT^k_3)\to(\E^k_2,\TT^k_2)$ satisfying the following equation:
\begin{align*}
&\Univ{l_q}_{V_3^k\o W_4^k}\o(K_3)_{W_4^k}=\bar\T(K_3)_{W_4^k}\o\bar c_{\Tot\o V_3^k\o W_4^k}\o\bar \T\Univ{l_q}_{V_3^k\o W_4^k}
\end{align*}
$(\E^k_4,\TT^k_4)$ is precisely the tangent subcategory of $(\I^k_0,\TT^k_0)$ spanned by those objects $(\q,k)$ whose morphisms $k\colon\T E\to E$ satisfy all the Equations~\eqref{equation:vertical-connections-equifiers}.

\begin{theorem}
\label{theorem:PIE-limits-vertical-linear-connections}
Consider a tangentad $(\X,\TT)$ in a $2$-category $\CC$ which admits the construction of display differential bundles and suppose that the inserter $\I^k_0$ and the equifiers $\E^k_1\,\E^k_4$ exist in $\CC$. Thus, $(\X,\TT)$ admits the construction of vertical linear connections in $\CC$ and the tangentad $\VLC(\X,\TT)$ of vertical linear connections of $(\X,\TT)$ is the equifier $(\E^k_4,\TT^k_4)$.
\end{theorem}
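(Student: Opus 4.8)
The plan is to check that the equifier $(\E^k_4,\TT^k_4)$ fulfils the universal property of Definition~\ref{definition:construction-vertical-linear-connections}. First I would promote the hypothesised PIE limits from $\CC$ to $\Tng(\CC)$: exactly as in the construction of the tangentad of vector fields~\cite[Section~4.5]{lanfranchi:tangentads-II}, an inserter or equifier existing in $\CC$ lifts to one in $\Tng(\CC)$, the lifted tangent structures $\TT^k_0\,\TT^k_4$ being computed by post-composition with $(\T,c)$ as in Proposition~\ref{proposition:hom-tangent-categories}. Writing $V^k_4\=W^k_4\o W^k_3\o W^k_2\o W^k_1\o V^k_0\colon\E^k_4\to\DB(\X,\TT)$ for the composite of the tower, I would equip $\E^k_4$ with the pointwise display differential bundle $\UnivQ^\VLC\=\Gamma_{\UnivQ}(V^k_4)$ obtained by restricting the universal pointwise display differential bundle $\UnivQ$ of $(\X,\TT)$ along $V^k_4$, and with the universal vertical connection $\Univ{k^\VLC}\=K_4$, the restriction of the inserted $2$-morphism $K_0$. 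Since $V^k_4$ factors through every equifier, $K_4$ satisfies all four equations of~\eqref{equation:vertical-connections-equifiers}, so Proposition~\ref{proposition:universality-linear-connections} certifies that $\Univ{k^\VLC}$ is a vertical linear connection on $\UnivQ^\VLC$.

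The crux is to prove that the induced comparison $\Gamma_{\UnivQ^\VLC,\Univ{k^\VLC}}\colon[\X',\TT'\|\E^k_4]\to\VLC[\X',\TT'\|\X,\TT]$ is invertible for every tangentad $(\X',\TT')$. I would establish this by applying the Hom-tangent-category $2$-functor $[\X',\TT'\|-]$ to the entire tower. Since its underlying $\Cat$-valued functor is representable, it preserves PIE limits; hence $[\X',\TT'\|\I^k_0]$ is the inserter of $[\X',\TT'\|\bar\T\Tot]=\bar\T\o[\X',\TT'\|\Tot]$ and $[\X',\TT'\|\Tot]$, and each $[\X',\TT'\|\E^k_i]$ is the corresponding equifier. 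Now Theorem~\ref{theorem:universality-differential-bundles} identifies $[\X',\TT'\|\DB(\X,\TT)]$ with $\DB_\pt[\X',\TT'\|\X,\TT]$, so an object of $[\X',\TT'\|\I^k_0]$ is precisely a pointwise display differential bundle $\q\colon(K,\theta)\to(G,\beta)$ of $[\X',\TT'\|\X,\TT]$ together with an unconstrained tangent natural transformation $k\colon\bar\T(K,\theta)\to(K,\theta)$. Climbing the tower of equifiers then forces on $k$ exactly the four equations of~\eqref{equation:vertical-connections-equifiers}, because the equified $2$-morphisms $\Univ{l_q}$, $\UnivQ$, $\bar p$, $\bar l$, and $\bar c$ restrict to the structural morphisms $l_q$, $q$, $p$, $l$, and $c$ of the Hom-tangent category. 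By~\cite[Lemma~3.3]{cockett:connections}, these equations say precisely that $k$ is a vertical linear connection on $\q$; thus $[\X',\TT'\|\E^k_4]$ is exactly $\VLC[\X',\TT'\|\X,\TT]$, and unwinding the definitions shows this identification is the comparison $\Gamma_{\UnivQ^\VLC,\Univ{k^\VLC}}$.

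Finally I would confirm that this isomorphism of underlying categories upgrades to an isomorphism of tangent categories. This is where the main difficulty lies: the tangent bundle functor of $\VLC[\X',\TT'\|\X,\TT]$ sends $(\q,k)$ to $(\T^\LC\q,k_\T)$ with $k_\T=\T k\o c$, and one must check that the $c$-twist in $k_\T$ is precisely the twist produced by the post-composition with $(\T,c)$ that defines $\bar\T$ on the Hom-tangent category, so that the lifted tangent structure $\TT^k_4$ transported by $[\X',\TT'\|-]$ agrees on the nose with the connection tangent structure. This matching follows from the same bookkeeping that handled the vector-field case in~\cite[Section~4.5]{lanfranchi:tangentads-II}. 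Granting it, $\Gamma_{\UnivQ^\VLC,\Univ{k^\VLC}}$ is a strict tangent natural transformation which is invertible and natural in $(\X',\TT')$; by Definition~\ref{definition:construction-vertical-linear-connections} this means $(\X,\TT)$ admits the construction of vertical linear connections, with tangentad of vertical linear connections $(\E^k_4,\TT^k_4)$.
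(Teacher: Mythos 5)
Your proposal is correct and follows essentially the same route as the paper's proof: both arguments reduce the claim to showing that, for every tangentad $(\X',\TT')$, the Hom-tangent category $[\X',\TT'\|\E^k_4,\TT^k_4]$ is isomorphic to $\VLC[\X',\TT'\|\X,\TT]$, using the representability of the inserter and equifiers (equivalently, their universal properties) to identify objects of the former with pointwise display differential bundles carrying a $2$-morphism $k$ subject to the four equations of Equation~\eqref{equation:vertical-connections-equifiers}, and then checking that the tangent structures agree. Your phrasing in terms of the Hom $2$-functor preserving PIE limits is just the universal property of the equifier stated in the form the paper invokes, so the two proofs coincide in substance.
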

\begin{proof}
The proof is fairly similar to the proof of~\cite[Theorem~4.8]{lanfranchi:tangentads-II}. The idea is to prove that for every tangentad $(\X',\TT')$ of $\CC$, the tangent category $\Equif(\X',\TT';\varphi_4^k,\psi_4^k)\cong[\X',\TT'\|\E_4^k,\TT^k_4]$, is isomorphic to the tangent category $\VLC[\X',\TT'\|\X,\TT]$ of vertical linear connections on the Hom-tangent category $[\X',\TT'\|\X,\TT]$, where $\varphi_4^k$ and $\psi_4^k$ denote the two $2$-morphisms equified by $(\E_4^k,\TT^k_4)$. One starts by observing that an object of $\Equif(\X',\TT';\varphi_4^k,\psi_4^k)$ consists of a lax tangent morphism $(F,\alpha)\colon(\X',\TT')\to(\E_4^k,\TT^k_4)$ such that $F_{\varphi_4^k}=F_{\psi_4^k}$. By postcomposing $(F,\alpha)$ with
\begin{align*}
&V_4^k\=W_4^k\o V_3^k=W_4^k\o{\dots}\o W_1^k\o V_0^k\colon(\E_4^k,\TT^k_4)\to(\I_0^k,\TT^k_0)
\end{align*}
we can equip $(F,\alpha)\o V_4^k$ with the tangent $2$-morphism $K_4\=(K_0)_{V_4^k}\colon\bar\T\Tot\o V_4^k\to\Tot\o V_4^k$. Using the properties of the equifiers, we can show that $K_4$ is a vertical linear connection on $(F,\alpha)\o V_4^k$ in the Hom-tangent category $[\X',\TT'\|\X,\TT]$. Moreover, by the universal properties of the inserter $\I_0^k$ and the equifiers $\E_1^k,\E_2^k,\E_3^k$, one can show that every vertical connection in the Hom-tangent category $[\X',\TT'\|\X,\TT]$ corresponds to an object of $\Equif(\X',\TT';\varphi_4^k,\psi_4^k)$. This correspondence extends to an isomorphism of tangent categories:
\begin{align*}
&\Equif(\X',\TT';\varphi_4^k,\psi_4^k)\cong\VLC[\X',\TT'\|\X,\TT]
\end{align*}
Finally, by using the universal property of the equifier $\E_4^k$, we obtain an isomorphism
\begin{align*}
&[\X',\TT'\|\E_4^k,\TT_4^k]\cong\Equif(\X',\TT';\varphi_4^k,\psi_4^k)\cong\VLC[\X',\TT'\|\X,\TT]
\end{align*}
Thus, $(\E_4^k,\TT_4^k)$ is the tangentad of vertical linear connections of $(\X,\TT)$.
\end{proof}

With a similar strategy, one can also construct horizontal linear connections with inserters and equifiers. Let denote by $(\E^h_1,\TT^h_1)\,(\E^h_4,\TT^h_4)$ the equifiers corresponding to the equations
\begin{align*}
&\T q\o h=\pi_1\\
&p_E\o h=\pi_2\\
&l_M\o h=\T h\o(z_E\times_{z_M}l_M)\\
&c_E\o\T l_q\o h=\T h\o(l_q\times_{z_M}z_{\T M})
\end{align*}
respectively.

\begin{theorem}
\label{theorem:PIE-limits-horizontal-linear-connections}
Consider a tangentad $(\X,\TT)$ in a $2$-category $\CC$ which admits the construction of display differential bundles and suppose that the inserter $\I^h_0$ and the equifiers $\E^h_1\,\E^h_4$ exist in $\CC$. Thus, $(\X,\TT)$ admits the construction of horizontal linear connections in $\CC$ and the tangentad $\HLC(\X,\TT)$ of horizontal linear connections of $(\X,\TT)$ is the equifier $(\E^h_4,\TT^h_4)$.
\end{theorem}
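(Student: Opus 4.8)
The plan is to transport the argument of Theorem~\ref{theorem:PIE-limits-vertical-linear-connections} mutatis mutandis to the horizontal setting. First I would recall that the inserter $(\I^h_0,\TT^h_0)$ freely adjoins a $2$-morphism $h_0\colon\bar\H\UnivQ\Rightarrow\bar\T\Tot$, subject to no constraints, and that the equifiers $(\E^h_1,\TT^h_1)\,(\E^h_4,\TT^h_4)$ successively impose the four equations displayed before the statement, which are exactly the unwound conditions of Definition~\ref{definition:linear-horizontal-connection}. Writing $V_4^h$ for the composite $W_4^h\o\dots\o W_1^h\o V_0^h\colon(\E_4^h,\TT^h_4)\to(\I_0^h,\TT^h_0)$ of the structural strict tangent morphisms produced by the inserter and the equifiers, and $H_4\=(H_0)_{V_4^h}\colon\bar\H\UnivQ\o V_4^h\to\bar\T\Tot\o V_4^h$ for the restricted inserter $2$-cell, the goal is to show that $(\E^h_4,\TT^h_4)$ represents the construction of horizontal linear connections of $(\X,\TT)$.

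The core of the proof is to produce, for every tangentad $(\X',\TT')$ of $\CC$, a chain of isomorphisms of tangent categories
\[
[\X',\TT'\|\E_4^h,\TT^h_4]\cong\Equif(\X',\TT';\varphi_4^h,\psi_4^h)\cong\HLC[\X',\TT'\|\X,\TT],
\]
where $\varphi_4^h$ and $\psi_4^h$ are the two $2$-morphisms equified by $(\E^h_4,\TT^h_4)$. The left isomorphism is the universal property of the equifier. For the right one, an object of $\Equif(\X',\TT';\varphi_4^h,\psi_4^h)$ is a lax tangent morphism $(F,\alpha)\colon(\X',\TT')\to(\E^h_4,\TT^h_4)$; postcomposing by $V_4^h$ equips $(F,\alpha)\o V_4^h$ with the restricted $2$-morphism $H_4$, and the four equifier constraints guarantee that $H_4$ satisfies all the axioms of Definition~\ref{definition:linear-horizontal-connection}, hence defines a horizontal linear connection on the pointwise display differential bundle $\UnivQ\o(F,\alpha)\o V_4^h$ in the Hom-tangent category $[\X',\TT'\|\X,\TT]$. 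Conversely, the universal properties of the inserter $\I_0^h$ and of the equifiers $\E_1^h,\E_2^h,\E_3^h$ show that every horizontal linear connection in this Hom-tangent category factors uniquely through $(\E^h_4,\TT^h_4)$, giving the inverse correspondence; the invertibility of the induced $\Gamma_{\Univ{\q^\HLC,h^\HLC}}$ then follows exactly as in Theorem~\ref{theorem:universality-linear-connections}.

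The only genuine departure from the vertical case, and the step I expect to require the most care, is the presence of the horizontal bundle $\bar\H\UnivQ=\Tot\times_\Base\bar\T\Base$ as the source of $h$, together with the fibre products $z_E\times_{z_M}l_M$ and $l_q\times_{z_M}z_{\T M}$ appearing in the last two defining equations. In the vertical case the source of $k$ is simply $\bar\T\Tot$, so no auxiliary pullbacks intervene; here one must verify that $\bar\H\UnivQ$ and these fibre products remain pointwise, that is, are preserved by all the $1$-morphisms $\CC[F\|\X]$ used to whisker and transport the inserter and equifier data. This is precisely guaranteed by the pointwise-display hypothesis encoded in $\UnivQ$ through Definition~\ref{definition:pointwise-tangent-display-map}, which ensures that the pointwise horizontal bundle is computed fibrewise and that the $2$-cells $H_0$, $\varphi_4^h$, $\psi_4^h$ behave correctly under the relevant universal properties. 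Once this compatibility is recorded, the remaining verifications reduce to the same routine equifier bookkeeping as in the vertical setting, which I would leave to the reader as is done there.
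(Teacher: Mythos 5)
Your proposal is correct and follows essentially the same route as the paper, which simply declares the proof to be substantially the same as that of Theorem~\ref{theorem:PIE-limits-vertical-linear-connections}; your chain of isomorphisms $[\X',\TT'\|\E_4^h,\TT^h_4]\cong\Equif(\X',\TT';\varphi_4^h,\psi_4^h)\cong\HLC[\X',\TT'\|\X,\TT]$ mirrors exactly the argument given there. Your additional remark that the horizontal bundle $\bar\H\UnivQ$ and the fibre products in the last two equations must be checked to be pointwise (which the pointwise-display hypothesis on $\UnivQ$ guarantees) is a genuine point of care that the paper leaves implicit, but it does not change the structure of the argument.
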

\begin{proof}
The proof is substantially the same as the proof of Theorem~\ref{theorem:PIE-limits-vertical-linear-connections}.
\end{proof}

To construct (full) linear connections, let us already assume that a tangentad $(\X,\TT)$ admits the constructions of vertical and horizontal linear connections and let us assume that the following $2$-pullback diagram exists in $\CC$:
\begin{equation}
\label{equation:linear-connections-PIE}
\begin{tikzcd}
{\LC_0(\X,\TT)} & {\HLC(\X,\TT)} \\
{\VLC(\X,\TT)} & {\DB(\X,\TT)}
\arrow["{\pi_\HLC}", from=1-1, to=1-2]
\arrow["{\pi_\VLC}"', from=1-1, to=2-1]
\arrow["\lrcorner"{anchor=center, pos=0.125}, draw=none, from=1-1, to=2-2]
\arrow["{\U^\HLC}", from=1-2, to=2-2]
\arrow["{\U^\VLC}"', from=2-1, to=2-2]
\end{tikzcd}
\end{equation}
In the context of tangent categories, $\LC_0(\X,\TT)$ is the tangent category of triples $(\q;k,h)$ formed by a display differential bundle $\q$, a vertical linear connection $k$ of $\q$, and a horizontal linear connection $h$ of $\q$. However, there is no required compatibility between $k$ and $h$. To impose the right equations, once again, we turn to equifiers. We need to impose the equations
\begin{align*}
&k\o h=z_q\o q\o\pi_2\\
&s_E\o\left\<\xi_q\o\<k,p_M\>,h\o\<p_E,\T q\>\right\>=\id_{\T E}
\end{align*}
by means of two equifiers that we denote by $(\E^{(k,h)}_1,\TT^{(k,h)}_1)$ and $(\E^{(k,h)}_2,\TT^{(k,h)}_2)$, respectively.

\begin{theorem}
\label{theorem:PIE-limits-linear-connections}
Consider a tangentad $(\X,\TT)$ in a $2$-category $\CC$. Under the hypotheses of Theorems~\ref{theorem:PIE-limits-vertical-linear-connections} and~\ref{theorem:PIE-limits-horizontal-linear-connections} and assuming the existence of the $2$-pullback of Equation~\eqref{equation:linear-connections-PIE} and of the equifiers $\E^{(k,h)}_1$ and $\E^{(k,h)}_2$ in $\CC$, $(\X,\TT)$ admits the construction of linear connections. Moreover, the tangentad $\LC(\X,\TT)$ of linear connections of $(\X,\TT)$ is the equifier $(\E^{(k,h)}_2,\TT^{(k,h)}_2)$.
\end{theorem}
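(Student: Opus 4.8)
The plan is to follow the same strategy as in the proof of Theorem~\ref{theorem:PIE-limits-vertical-linear-connections}: for every tangentad $(\X',\TT')$ of $\CC$ I would exhibit an isomorphism of tangent categories $[\X',\TT'\|\E^{(k,h)}_2,\TT^{(k,h)}_2]\cong\LC[\X',\TT'\|\X,\TT]$, natural in $(\X',\TT')$, from which the universal property demanded by Definition~\ref{definition:construction-linear-connections} follows and identifies $(\E^{(k,h)}_2,\TT^{(k,h)}_2)$ as $\LC(\X,\TT)$.

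First I would use that the representable $2$-functor $\Tng(\CC)[\X',\TT';-]$ preserves $2$-limits and lifts, via Proposition~\ref{proposition:hom-tangent-categories}, to the Hom-tangent-category assignment $[\X',\TT'\|-]$; since $2$-pullbacks are $2$-limits and the forgetful functor to $\Cat$ reflects them, $[\X',\TT'\|-]$ sends the $2$-pullback of Equation~\eqref{equation:linear-connections-PIE} to a $2$-pullback of tangent categories. Combining this with the universal properties established in Theorems~\ref{theorem:PIE-limits-vertical-linear-connections} and~\ref{theorem:PIE-limits-horizontal-linear-connections}, which give $[\X',\TT'\|\VLC(\X,\TT)]\cong\VLC[\X',\TT'\|\X,\TT]$ and $[\X',\TT'\|\HLC(\X,\TT)]\cong\HLC[\X',\TT'\|\X,\TT]$ compatibly over $\DB_\pt[\X',\TT'\|\X,\TT]$, identifies $[\X',\TT'\|\LC_0(\X,\TT)]$ with the tangent category of pointwise display differential bundles of $[\X',\TT'\|\X,\TT]$ carrying both a vertical and a horizontal linear connection on the same bundle, but with no compatibility imposed between them.

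Next I would identify the two tangent $2$-morphisms on $\LC_0(\X,\TT)$ whose equifiers enforce the two axioms of Definition~\ref{definition:connection}, namely the orthogonality equation $k\o h=z_q\o q\o\pi_2$ and the direct-sum equation $s_E\o\<\xi_q\o\<k,p_M\>,h\o\<p_E,\T q\>\>=\id_{\T E}$. Both are equational, so $\E^{(k,h)}_1$ and then $\E^{(k,h)}_2$ successively cut out the sub-tangentad on which these equations hold. Exactly as in the vertical case, postcomposing a lax tangent morphism $(F,\alpha)\colon(\X',\TT')\to(\E^{(k,h)}_2,\TT^{(k,h)}_2)$ with the composite down to $\LC_0(\X,\TT)$ and then with the universal vertical and horizontal connections produces a linear connection in $[\X',\TT'\|\X,\TT]$; conversely, the universal properties of the $2$-pullback and of the two equifiers show that every linear connection in $[\X',\TT'\|\X,\TT]$ arises uniquely in this way, yielding $[\X',\TT'\|\E^{(k,h)}_2,\TT^{(k,h)}_2]\cong\Equif(\X',\TT';\ldots)\cong\LC[\X',\TT'\|\X,\TT]$.

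The main obstacle will be the bookkeeping around the direct-sum axiom. Unlike the purely compositional equations handled in Theorems~\ref{theorem:PIE-limits-vertical-linear-connections} and~\ref{theorem:PIE-limits-horizontal-linear-connections}, this equation mixes the two connections through the additive bundle structure (the sum $s_E$ together with a pairing of $k$ and $h$), so I would need to verify carefully that the $2$-morphism being equified is a genuine tangent $2$-morphism between the intended pointwise functors and that its equification is stably preserved when passing to an arbitrary Hom-tangent category $[\X',\TT'\|\X,\TT]$. Once this compatibility is checked, stringing the isomorphisms together precisely as in Theorem~\ref{theorem:PIE-limits-vertical-linear-connections} completes the argument.
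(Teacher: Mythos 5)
Your proposal is correct and follows exactly the strategy the paper intends: the paper states Theorem~\ref{theorem:PIE-limits-linear-connections} without proof, relying on the preceding construction of $\LC_0(\X,\TT)$ and the two equifiers together with the template of Theorem~\ref{theorem:PIE-limits-vertical-linear-connections}, and your argument---preservation of the $2$-pullback of Equation~\eqref{equation:linear-connections-PIE} by the representable $[\X',\TT'\|-]$, identification of $[\X',\TT'\|\LC_0(\X,\TT)]$ with pairs of vertical and horizontal connections on a common pointwise display differential bundle, and then equification of the orthogonality and direct-sum axioms---is precisely that intended argument. Your closing caveat about checking that the equified $2$-morphisms are genuine tangent $2$-morphisms is the right detail to verify, but it poses no obstruction since both sides of the direct-sum equation are built from structural tangent natural transformations of the universal differential bundle.
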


\begin{corollary}
\label{corollary:PIE-limits-linear-connections}
If $\CC$ admits the construction of differential bundles and it admits inserters and equifiers, then $\CC$ admits the constructions of vertical and horizontal linear connections. Moreover, if the $2$-pullback of Equation~\eqref{equation:linear-connections-PIE} is defined for each tangentad of $\CC$, then $\CC$ also admits the construction of linear connections.
\end{corollary}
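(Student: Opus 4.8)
The plan is to deduce this corollary directly from Theorems~\ref{theorem:PIE-limits-vertical-linear-connections},~\ref{theorem:PIE-limits-horizontal-linear-connections}, and~\ref{theorem:PIE-limits-linear-connections} by checking that the global hypotheses on $\CC$ supply, for \emph{every} tangentad of $\CC$, the specific PIE limits those theorems require. First I would unwind the $2$-categorical definitions: by Definitions~\ref{definition:construction-vertical-linear-connections-2-category},~\ref{definition:construction-horizontal-linear-connections-2-category}, and~\ref{definition:construction-linear-connections-2-category}, it suffices to prove that each individual tangentad $(\X,\TT)$ of $\CC$ admits the constructions of vertical, horizontal, and (under the extra pullback hypothesis) linear connections. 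So the whole argument reduces to verifying, for an arbitrary but fixed $(\X,\TT)$, the hypotheses of the three theorems above.

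Next I would check those hypotheses. Since $\CC$ admits the construction of differential bundles, Definition~\ref{definition:construction-differential-bundles-2-category} gives that $(\X,\TT)$ admits the construction of display differential bundles, producing the tangentad $\DB(\X,\TT)$ together with the universal pointwise display differential bundle $\UnivQ\colon\Tot\to\Base$ out of which the inserters $\I^k_0$, $\I^h_0$ and the equifiers $\E^k_1,\dots,\E^k_4$, $\E^h_1,\dots,\E^h_4$ are built. The defining diagrams of these inserters and equifiers are diagrams of tangent morphisms between tangentads, so the only point to check is that the assumption that $\CC$ admits inserters and equifiers guarantees their existence in $\Tng(\CC)$. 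This is precisely the transfer of PIE limits from $\CC$ to $\Tng(\CC)$ established for vector fields in~\cite[Section~4.5]{lanfranchi:tangentads-II}; invoking it, the iterated inserters and equifiers all exist. Theorems~\ref{theorem:PIE-limits-vertical-linear-connections} and~\ref{theorem:PIE-limits-horizontal-linear-connections} then apply and yield $\VLC(\X,\TT)=(\E^k_4,\TT^k_4)$ and $\HLC(\X,\TT)=(\E^h_4,\TT^h_4)$, which proves the first claim once $(\X,\TT)$ is allowed to range over all tangentads.

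For the second claim I would additionally feed in the hypothesis that the $2$-pullback of Equation~\eqref{equation:linear-connections-PIE} exists for each tangentad. Combined with the existence of the equifiers $\E^{(k,h)}_1$ and $\E^{(k,h)}_2$ — again supplied by the global assumption that $\CC$ admits equifiers — this is exactly the hypothesis of Theorem~\ref{theorem:PIE-limits-linear-connections}, which then produces $\LC(\X,\TT)=(\E^{(k,h)}_2,\TT^{(k,h)}_2)$ for every tangentad $(\X,\TT)$. As $(\X,\TT)$ was arbitrary, Definition~\ref{definition:construction-linear-connections-2-category} closes the argument.

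I expect the main obstacle to be bookkeeping rather than anything conceptual: one must be careful that the PIE limits named in the three theorems are formed in $\Tng(\CC)$ (their vertices are tangentads and the comparison maps are tangent morphisms), whereas the corollary only hypothesizes inserters and equifiers in $\CC$. The crux is therefore to cite the lifting of PIE limits from $\CC$ to $\Tng(\CC)$ and to confirm that the successively-formed equifiers really are equifiers of the intended pairs of tangent $2$-morphisms; once this transfer is granted, the three theorems apply verbatim and the corollary follows formally.
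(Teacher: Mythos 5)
Your proposal is correct and is exactly the intended argument: the paper gives no explicit proof because the corollary follows immediately by instantiating Theorems~\ref{theorem:PIE-limits-vertical-linear-connections},~\ref{theorem:PIE-limits-horizontal-linear-connections}, and~\ref{theorem:PIE-limits-linear-connections} at an arbitrary tangentad of $\CC$, whose hypotheses (existence of the specific inserters and equifiers \emph{in $\CC$}, plus the $2$-pullback for the last claim) are supplied by the global assumptions. Your cautionary remark about forming the PIE limits in $\Tng(\CC)$ versus $\CC$ is already absorbed into the proofs of those theorems, which take the limits in $\CC$ and show the resulting vertices carry the required tangent structure, so no additional transfer argument is needed.
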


To construct (vertical/horizontal) affine connections via inserters and equifiers, one needs to follow the same procedures of Theorems~\ref{theorem:PIE-limits-vertical-linear-connections},~\ref{theorem:PIE-limits-horizontal-linear-connections}, and~\ref{theorem:PIE-limits-linear-connections}, but replacing the universal pointwise display differential bundle $\UnivQ\colon\Tot\to\Base$ with the tangent bundle $\bar p\colon\bar\T\to\id_{(\X,\TT)}$ in the Hom-tangent category $[\X,\TT\|\X,\TT]$. In particular, the pullback of Equation~\eqref{equation:linear-connections-PIE} is replaced by the $2$-pullback diagram:
\begin{equation}
\label{equation:affine-connections-PIE}
\begin{tikzcd}
{\AC_0(\X,\TT)} & {\HAC(\X,\TT)} \\
{\VAC(\X,\TT)} & {(\X,\TT)}
\arrow["{\pi_\HAC}", from=1-1, to=1-2]
\arrow["{\pi_\VAC}"', from=1-1, to=2-1]
\arrow["\lrcorner"{anchor=center, pos=0.125}, draw=none, from=1-1, to=2-2]
\arrow["{\U^\HAC}", from=1-2, to=2-2]
\arrow["{\U^\VAC}"', from=2-1, to=2-2]
\end{tikzcd}
\end{equation}
We denote by $({\I_0'}^k,{\TT'_0}^k)$, $({\E_1'}^k,{\TT'_1}^k)\,({\E_4'}^k,{\TT'_4}^k)$, and $({\I_0'}^h,{\TT'_0}^h)$, $({\E_1'}^h,{\TT'_1}^h)\,({\E_4'}^h,{\TT'_4}^h)$ the corresponding inserters and equifiers to construct $\VAC(\X,\TT)$ and $\HAC(\X,\TT)$ and $({\E_1'}^{(k,h)},{\TT'_1}^{(k,h)})$, $({\E_2'}^{(k,h)},{\TT'_2}^{(k,h)})$ the two remaining equifiers to construct $\AC(\X,\TT)$.

\begin{theorem}
\label{theorem:PIE-limits-vertical-affine-connections}
Consider a tangentad $(\X,\TT)$ in a $2$-category $\CC$ and assume that the inserter $\I_0'^k$ and the equifiers $\E_1'^k\,E_4'^k$ exists in $\CC$. Thus, $(\X,\TT)$ admits the construction of vertical affine connections in $\CC$. Moreover, the tangentad $\VAC(\X,\TT)$ of vertical affine connections of $(\X,\TT)$ is the equifier $(\E_4'^k,\TT_4'^k)$.
\end{theorem}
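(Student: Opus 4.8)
The plan is to run the argument of Theorem~\ref{theorem:PIE-limits-vertical-linear-connections} essentially verbatim, replacing throughout the universal pointwise display differential bundle $\UnivQ\colon\Tot\to\Base$ with the tangent bundle $\bar p\colon\bar\T\to\id_{(\X,\TT)}$ of the Hom-tangent category $[\X,\TT\|\X,\TT]$, as prescribed in the discussion preceding the theorem. This substitution is legitimate because a vertical affine connection on $M$ is by definition a vertical linear connection on the tangent bundle $\p_M$; in particular, the construction of display differential bundles is not needed here. Concretely, the inserter $({\I_0'}^k,{\TT'_0}^k)$ freely adjoins a $2$-morphism $k_0\colon\bar\T^2\to\bar\T$, so its objects are pairs $(M,k_0)$ with $k_0\colon\T^2M\to\T M$, and the four equifiers $({\E_1'}^k,{\TT'_1}^k),\dots,({\E_4'}^k,{\TT'_4}^k)$ impose on $k_0$ the equations of Equation~\eqref{equation:vertical-connections-equifiers}, now read with the differential bundle $\q$ taken to be $\p_M$. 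Thus $({\E_4'}^k,{\TT'_4}^k)$ is the tangent subcategory of $({\I_0'}^k,{\TT'_0}^k)$ spanned by those $(M,k)$ for which $k$ is a vertical affine connection.

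The core of the proof is the universal property: for every tangentad $(\X',\TT')$ of $\CC$ I would establish a chain of isomorphisms of tangent categories
\begin{align*}
[\X',\TT'\|{\E_4'}^k,{\TT'_4}^k]\cong\Equif(\X',\TT';{\varphi_4'}^k,{\psi_4'}^k)\cong\VAC[\X',\TT'\|\X,\TT],
\end{align*}
where ${\varphi_4'}^k$ and ${\psi_4'}^k$ are the two $2$-morphisms equified by $({\E_4'}^k,{\TT'_4}^k)$. The first isomorphism is the universal property of the equifier. For the second, an object of $\Equif(\X',\TT';{\varphi_4'}^k,{\psi_4'}^k)$ is a lax tangent morphism $(F,\alpha)\colon(\X',\TT')\to({\E_4'}^k,{\TT'_4}^k)$ satisfying $F_{{\varphi_4'}^k}=F_{{\psi_4'}^k}$; postcomposing with $V_4'^k\=W_4'^k\o{\dots}\o W_1'^k\o V_0'^k$ and restricting the inserted $2$-morphism $K_0$ produces a tangent $2$-morphism $k\colon\bar\T^2\o(F,\alpha)\to\bar\T\o(F,\alpha)$ in $[\X',\TT'\|\X,\TT]$. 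The equifier conditions guarantee that $k$ satisfies precisely the four axioms of a vertical affine connection on $(F,\alpha)$ in the sense of Theorem~\ref{theorem:universality-affine-connections}, while the universal properties of the inserter ${\I_0'}^k$ and of the successive equifiers ${\E_1'}^k,{\E_2'}^k,{\E_3'}^k$ show that every vertical affine connection in $[\X',\TT'\|\X,\TT]$ arises uniquely in this fashion and that the correspondence is functorial and compatible with the pointwise tangent structure.

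I expect the main obstacle to be one of bookkeeping rather than of substance: one must check that the $2$-morphism obtained by restricting $K_0$ is genuinely a tangent $2$-morphism, that is, compatible with the distributive law, and that each equifier equation, which involves the structural $2$-morphisms $l$, $p$, and $c$, transcribes correctly under the interchange law of $\CC$ into the corresponding axiom in the Hom-tangent category. Since the only structural change from Theorem~\ref{theorem:PIE-limits-vertical-linear-connections} is the substitution of $\bar p$ for $\UnivQ$, and since none of the vertical connection axioms refers to the horizontal bundle, this verification transfers without modification. To conclude, I would identify the forgetful morphism $\U^\VAC$ with $V_4'^k\colon({\E_4'}^k,{\TT'_4}^k)\to(\X,\TT)$ and observe that the displayed isomorphism, natural in $(\X',\TT')$, is exactly the assertion that $\Gamma_{\U^\VAC,\Univ{k^\VAC}}$ is invertible; hence $({\E_4'}^k,{\TT'_4}^k)$ is the tangentad of vertical affine connections of $(\X,\TT)$.
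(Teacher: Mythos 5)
Your proposal is correct and follows exactly the route the paper intends: the paper gives no separate proof for this theorem, instead relying on the preceding remark that one repeats the argument of Theorem~\ref{theorem:PIE-limits-vertical-linear-connections} with the universal pointwise display differential bundle $\UnivQ\colon\Tot\to\Base$ replaced by the tangent bundle $\bar p\colon\bar\T\to\id_{(\X,\TT)}$, which is precisely what you carry out. Your elaboration of the chain of isomorphisms $[\X',\TT'\|{\E_4'}^k,{\TT'_4}^k]\cong\Equif(\X',\TT';{\varphi_4'}^k,{\psi_4'}^k)\cong\VAC[\X',\TT'\|\X,\TT]$ and your observation that the display-differential-bundle hypothesis can be dropped because the affine axioms involve only the structural $2$-morphisms $p$, $l$, $c$ are both faithful to, and somewhat more explicit than, the paper's own treatment.
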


\begin{theorem}
\label{theorem:PIE-limits-horizontal-affine-connections}
Consider a tangentad $(\X,\TT)$ in a $2$-category $\CC$ and assume that the inserter $\I_0'^h$ and the equifiers $\E_1'^h\,\E_4'^h$ exists in $\CC$. Thus, $(\X,\TT)$ admits the construction of horizontal affine connections in $\CC$. Moreover, the tangentad $\HAC(\X,\TT)$ of horizontal affine connections of $(\X,\TT)$ is the equifier $(\E_4'^h,\TT_4'^h)$.
\end{theorem}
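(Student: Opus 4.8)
The plan is to follow verbatim the proof of Theorem~\ref{theorem:PIE-limits-vertical-linear-connections}, performing the substitution of the universal pointwise display differential bundle $\UnivQ\colon\Tot\to\Base$ by the tangent bundle $\bar p\colon\bar\T\to\id_{(\X,\TT)}$ of the Hom-tangent category $[\X,\TT\|\X,\TT]$, as prescribed in the discussion preceding Theorem~\ref{theorem:PIE-limits-vertical-affine-connections}. The goal is to establish, for every tangentad $(\X',\TT')$ of $\CC$, an isomorphism of tangent categories
\[
[\X',\TT'\|\E_4'^h,\TT_4'^h]\cong\HAC[\X',\TT'\|\X,\TT],
\]
natural in $(\X',\TT')$. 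Granting this, the universal property of affine connections established in Theorem~\ref{theorem:universality-affine-connections} together with the universal property of the equifier $\E_4'^h$ exhibit $(\E_4'^h,\TT_4'^h)$ as the tangentad of horizontal affine connections of $(\X,\TT)$, in the sense of Definition~\ref{definition:construction-horizontal-affine-connections}.

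First I would record the concrete shape of the inserter and equifiers under this substitution. Since for affine connections the underlying differential bundle is the tangent bundle, its pointwise horizontal bundle $\bar\H\bar p$ is $\bar\T_2$ and its total object is $\bar\T$, so the inserter $\I_0'^h$ inserts a $2$-morphism $H_0\colon\bar\T_2\Rightarrow\bar\T^2$, whose objects are pairs $(M,h_0)$ with $h_0\colon\T_2M\to\T^2M$ subject to no equation. The four equifiers $\E_1'^h,\dots,\E_4'^h$ then impose successively the four defining axioms of a horizontal affine connection, namely the equations listed before Theorem~\ref{theorem:PIE-limits-horizontal-linear-connections} with $q$, $z_q$, $l_q$, and the bundle projections replaced by $p$, $z$, $l$, and the projections of $\bar\T_2$, respectively. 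As stressed at the beginning of Section~\ref{subsection:PIE-limits-connections}, the theory of affine connections is purely equational, so each of these axioms is genuinely equifiable and, unlike the case of differential bundles, no pullback along a projection must be constructed by hand.

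The core step is the chain of isomorphisms
\[
[\X',\TT'\|\E_4'^h,\TT_4'^h]\cong\Equif(\X',\TT';\varphi_4'^h,\psi_4'^h)\cong\HAC[\X',\TT'\|\X,\TT],
\]
where $\varphi_4'^h$ and $\psi_4'^h$ are the two $2$-morphisms equified by $\E_4'^h$ and the first isomorphism is the universal property of the equifier. For the second isomorphism, an object of $\Equif(\X',\TT';\varphi_4'^h,\psi_4'^h)$ is a lax tangent morphism $(F,\alpha)\colon(\X',\TT')\to(\E_4'^h,\TT_4'^h)$ with $F_{\varphi_4'^h}=F_{\psi_4'^h}$; postcomposing with the composite $V_4'^h\=W_4'^h\o\dots\o W_1'^h\o V_0'^h$ equips $(F,\alpha)\o V_4'^h$ with the $2$-morphism $H_4\=(H_0)_{V_4'^h}\colon\bar\T_2\o V_4'^h\to\bar\T^2\o V_4'^h$, and the four equifier conditions say precisely that $H_4$ satisfies the axioms of a horizontal affine connection in $[\X',\TT'\|\X,\TT]$. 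Conversely, the universal properties of the inserter $\I_0'^h$ and of the equifiers $\E_1'^h,\E_2'^h,\E_3'^h$ show that every horizontal affine connection in $[\X',\TT'\|\X,\TT]$ arises uniquely in this way; one then checks that the correspondence is functorial on morphisms and commutes with the tangent bundle functors, yielding the desired isomorphism of tangent categories.

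The main obstacle is purely a matter of bookkeeping: one must verify that the four equations cut out by $\E_1'^h,\dots,\E_4'^h$, after the substitution of $\UnivQ$ by $\bar p$, agree exactly with Definition~\ref{definition:linear-horizontal-connection} specialised to $\q=\bar p$, and that the source and target of the equified $2$-morphisms $\varphi_4'^h,\psi_4'^h$ are the correct composites of $\bar\T_2$, $\bar\T^2$, and the pointwise pullbacks in the Hom-tangent category. Since this is entirely parallel to the vertical linear case of Theorem~\ref{theorem:PIE-limits-vertical-linear-connections} and the affine theory introduces no additional non-equational axioms, these verifications are routine diagram chases, which I would leave to the reader as is done for Theorem~\ref{theorem:PIE-limits-horizontal-linear-connections}.
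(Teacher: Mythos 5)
Your proposal is correct and matches the paper's intended argument exactly: the paper gives no separate proof for this theorem, instead directing the reader to repeat the proof of Theorem~\ref{theorem:PIE-limits-vertical-linear-connections} with $\UnivQ$ replaced by $\bar p\colon\bar\T\to\id_{(\X,\TT)}$, which is precisely the chain of isomorphisms $[\X',\TT'\|\E_4'^h,\TT_4'^h]\cong\Equif(\X',\TT';\varphi_4'^h,\psi_4'^h)\cong\HAC[\X',\TT'\|\X,\TT]$ you construct. Your identification of $\bar\H\bar p$ with $\bar\T_2$ and of the inserted $2$-morphism as $\bar\T_2\Rightarrow\bar\T^2$ is also consistent with Equation~\eqref{equation:universal-affine-connections}.
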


\begin{theorem}
\label{theorem:PIE-limits-affine-connections}
Consider a tangentad $(\X,\TT)$ in a $2$-category $\CC$. Under the assumptions of Theorems~\ref{theorem:PIE-limits-vertical-affine-connections} and~\ref{theorem:PIE-limits-horizontal-affine-connections} and assuming the existence of the $2$-pullback of Equation~\eqref{equation:affine-connections-PIE} and of the equifiers $\E'^{(k,h)}_1$ and $\E'^{(k,h)}_2$ in $\CC$, $(\X,\TT)$ admits the construction of affine connections. Moreover, the tangentad $\AC(\X,\TT)$ of affine connections of $(\X,\TT)$ is the equifier $(\E'^{(k,h)}_2,\TT'^{(k,h)}_2)$.
\end{theorem}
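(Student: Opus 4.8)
The plan is to run, \emph{mutatis mutandis}, the argument of Theorem~\ref{theorem:PIE-limits-linear-connections}, systematically replacing the universal pointwise display differential bundle $\UnivQ\colon\Tot\to\Base$ by the tangent bundle $\bar p\colon\bar\T\to\id_{(\X,\TT)}$ of the Hom-tangent category $[\X,\TT\|\X,\TT]$. The reason this substitution is legitimate is that, in contrast with a general display differential bundle, the tangent bundle is automatically a pointwise tangent display map: its pullbacks are the $n$-fold pullbacks $\T_n$ of the projection, and these exist and are preserved by every iterate $\T^m$ by the defining axioms of a tangentad (Definition~\ref{definition:tangentad}). This is exactly why the affine constructions $\VAC$, $\HAC$, and $\AC$ are $2$-functorial on $\Tng_\cong(\CC)$ rather than on $\Tng_\Dsply(\CC)$, and it means no display hypothesis on $(\X,\TT)$ is required beyond the PIE limits listed in the statement.

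First I would invoke Theorems~\ref{theorem:PIE-limits-vertical-affine-connections} and~\ref{theorem:PIE-limits-horizontal-affine-connections} to realize $\VAC(\X,\TT)$ and $\HAC(\X,\TT)$ as the equifiers $(\E_4'^k,\TT_4'^k)$ and $(\E_4'^h,\TT_4'^h)$, each carrying its universal affine connection. Taking the $2$-pullback of Equation~\eqref{equation:affine-connections-PIE} then yields a tangentad $\AC_0(\X,\TT)\=\VAC(\X,\TT)\times_{(\X,\TT)}\HAC(\X,\TT)$ whose objects, in the case $\CC=\Cat$, are triples $(M;k,h)$ equipping a single base $M$ with a vertical and a horizontal affine connection, but imposing no compatibility between $k$ and $h$. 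Since the covariant Hom-$2$-functors are representable and hence preserve $2$-limits, and since by Proposition~\ref{proposition:hom-tangent-categories} this preservation lifts pointwise to $\TngCat$, applying $[\X',\TT'\|-]$ to Equation~\eqref{equation:affine-connections-PIE} presents $[\X',\TT'\|\AC_0(\X,\TT)]$ as the pullback $\VAC[\X',\TT'\|\X,\TT]\times_{[\X',\TT'\|\X,\TT]}\HAC[\X',\TT'\|\X,\TT]$, that is, as the tangent category of pairs of unrelated vertical and horizontal affine connections in the Hom-tangent category.

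Next I would cut out the compatible pairs using the two equifiers $\E'^{(k,h)}_1$ and $\E'^{(k,h)}_2$, which impose the orthogonality and direct-sum axioms of Definition~\ref{definition:connection} specialized to $\bar p$. As in the proof of Theorem~\ref{theorem:PIE-limits-vertical-linear-connections}, each equifier is the tangent subobject on which an equality of two parallel tangent $2$-morphisms holds, and applying $[\X',\TT'\|-]$ imposes exactly the corresponding equation pointwise on the Hom-tangent category. Chaining these identifications and finishing with the universal property of the terminal equifier yields
\begin{align*}
&[\X',\TT'\|\E'^{(k,h)}_2,\TT'^{(k,h)}_2]\cong\Equif(\X',\TT';\varphi,\psi)\cong\AC[\X',\TT'\|\X,\TT],
\end{align*}
naturally in $(\X',\TT')$, where $\varphi,\psi$ are the $2$-morphisms equified by $\E'^{(k,h)}_2$. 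This natural isomorphism exhibits the induced $\Gamma$ as invertible and identifies $(\E'^{(k,h)}_2,\TT'^{(k,h)}_2)$ with the tangentad $\AC(\X,\TT)$ of affine connections.

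The step I expect to demand the most care is verifying that the direct-sum axiom is genuinely an equation between two well-defined \emph{parallel} $2$-morphisms in the Hom-tangent category, so that an equifier applies to it. Its left-hand side is built from the additive combinator together with a composite passing through the second-order object $\bar\T_2$ and the horizontal object $\bar\H$; one must check that each of these arises as a bona fide $1$-morphism whose defining pullbacks are pointwise, which once again reduces to the local-linearity and pullback-preservation clauses of Definition~\ref{definition:tangentad}. Once the common source and target $1$-morphisms of the two sides are pinned down, the equifier $\E'^{(k,h)}_2$ applies formally and the remaining verifications are the routine bookkeeping already dispatched in the linear case.
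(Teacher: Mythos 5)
Your proposal is correct and follows exactly the route the paper intends: the paper gives no explicit proof of this theorem, instead declaring just before the statement that one repeats the procedure of Theorems~\ref{theorem:PIE-limits-vertical-linear-connections}--\ref{theorem:PIE-limits-linear-connections} with $\UnivQ\colon\Tot\to\Base$ replaced by $\bar p\colon\bar\T\to\id_{(\X,\TT)}$, which is precisely your argument. Your added justification that the tangent bundle is automatically a pointwise tangent display map (so no display hypothesis is needed, consistent with $\AC$ being defined on $\Tng_\cong(\CC)$ rather than $\Tng_\Dsply(\CC)$) is a detail the paper leaves tacit and is worth making explicit.
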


\begin{corollary}
\label{corollary:PIE-affine-linear-connections}
If $\CC$ admits inserters and equifiers, then $\CC$ admits the constructions of vertical and horizontal affine connections. Moreover, if the $2$-pullback of Equation~\eqref{equation:affine-connections-PIE} is defined for each tangentad of $\CC$, then $\CC$ also admits the construction of affine connections.
\end{corollary}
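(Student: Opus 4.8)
The plan is to deduce the statement directly from Theorems~\ref{theorem:PIE-limits-vertical-affine-connections}, \ref{theorem:PIE-limits-horizontal-affine-connections}, and~\ref{theorem:PIE-limits-affine-connections}, applied uniformly to every tangentad of $\CC$, in exact parallel with the deduction of Corollary~\ref{corollary:PIE-limits-linear-connections} in the linear case. The substantive point to check is that the inserters and equifiers appealed to in those theorems, which are PIE limits taken in the $2$-category $\Tng(\CC)$ of tangentads rather than in $\CC$ itself, are available as soon as $\CC$ admits inserters and equifiers.

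First I would invoke the fact that PIE limits lift along the forgetful $2$-functor $\Tng(\CC)\to\CC$, the same mechanism used to build vector fields from inserters and equifiers in~\cite[Section~4.5]{lanfranchi:tangentads-II}: whenever $\CC$ has inserters and equifiers, so does $\Tng(\CC)$. Hence, for each tangentad $(\X,\TT)$ of $\CC$, the inserters $\I_0'^k$ and $\I_0'^h$ and the equifiers $\E_1'^k,\dots,\E_4'^k$ and $\E_1'^h,\dots,\E_4'^h$ required by Theorems~\ref{theorem:PIE-limits-vertical-affine-connections} and~\ref{theorem:PIE-limits-horizontal-affine-connections} all exist. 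Here I would emphasise the contrast with the linear case: since vertical and horizontal affine connections are defined on the always-present tangent bundle $\bar p\colon\bar\T\to\id_{(\X,\TT)}$ of the Hom-tangent category $[\X,\TT\|\X,\TT]$ rather than on an arbitrary display differential bundle, no assumption that $\CC$ admits the construction of display differential bundles is needed.

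Applying Theorems~\ref{theorem:PIE-limits-vertical-affine-connections} and~\ref{theorem:PIE-limits-horizontal-affine-connections}, every tangentad $(\X,\TT)$ then admits the construction of vertical and of horizontal affine connections, with $\VAC(\X,\TT)$ and $\HAC(\X,\TT)$ realised as the equifiers $(\E_4'^k,\TT_4'^k)$ and $(\E_4'^h,\TT_4'^h)$. As this holds for an arbitrary tangentad, Definitions~\ref{definition:construction-vertical-affine-connections-2-category} and~\ref{definition:construction-horizontal-affine-connections-2-category} yield the first assertion.

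For the second assertion I would add the hypothesis that the $2$-pullback of Equation~\eqref{equation:affine-connections-PIE} exists for each tangentad; together with the two remaining equifiers $\E'^{(k,h)}_1$ and $\E'^{(k,h)}_2$, which exist because $\CC$ admits equifiers, Theorem~\ref{theorem:PIE-limits-affine-connections} shows that every tangentad admits the construction of affine connections, whence $\CC$ does by Definition~\ref{definition:construction-affine-connections-2-category}. The main, and indeed essentially the only, obstacle is the lifting of PIE limits along $\Tng(\CC)\to\CC$; once that is granted, the corollary is bookkeeping over the three cited theorems.
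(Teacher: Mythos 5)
Your proposal is correct and matches the paper's (implicit) argument: the corollary is exactly the uniform application of Theorems~\ref{theorem:PIE-limits-vertical-affine-connections}, \ref{theorem:PIE-limits-horizontal-affine-connections}, and~\ref{theorem:PIE-limits-affine-connections} to every tangentad of $\CC$, with the observation that the required inserters and equifiers are guaranteed once $\CC$ admits them all, and that no display-differential-bundle hypothesis is needed since the affine constructions live over the always-available tangent bundle $\bar p\colon\bar\T\to\id_{(\X,\TT)}$. Your remark on lifting PIE limits along $\Tng(\CC)\to\CC$ is the same mechanism the paper borrows from~\cite[Section~4.5]{lanfranchi:tangentads-II}, so there is no substantive divergence.
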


\subsection{Applications}
\label{subsection:examples-connections}
In Section~\ref{subsection:definition-tangentad}, we listed some examples of tangentads and in Sections~\ref{subsection:examples-differential-objects},~\ref{subsection:examples-differential-bundles}, and~\ref{subsection:examples-differential-bundles} we computed the constructions of vector fields, differential objects, and linear connections for tangent monads, tangent fibrations, tangent indexed categories, tangent split restriction categories, and showed how to extend these constructions to tangent restriction categories. In this section, we compute the constructions of linear and affine connections for these examples and show how to extend them to tangent restriction categories. Since the proofs are fairly similar to those for vector fields, we provide the final constructions without giving the details of the proofs. We also focus on (full) linear connections, since the construction of affine connections is fairly similar.

\subsubsection*{Tangent monads}
\label{subsubsection:connections-tangent-monads}
Let us start by considering tangent monads. We previously discussed that strong tangent morphisms lift to the tangent categories of linear connections. It is not hard to convince ourselves that every strong tangent monad $(S,\alpha)$ on a tangent category $(\X,\TT)$, that is, a tangent monad whose underlying tangent morphism is strong, lifts to $\LC(\X,\TT)$ as the tangent monad $\LC(S,\alpha)$.

\begin{theorem}
\label{theorem:connections-tangent-monads}
In the $2$-category $\TngMnd$ of tangent monads, each strong tangent monad $(S,\alpha)$ on a tangent category admits the construction of linear connections. Moreover, the tangentads of linear connections of $(S,\alpha)$ is the tangent monad $\LC(S,\alpha)$.
\end{theorem}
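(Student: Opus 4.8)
The plan is to follow the template of Theorem~\ref{theorem:differential-objects-tangent-monads}, replacing the construction $\DO$ with $\LC$. As a preliminary, I would record the lifting already sketched before the statement: since $\LC$ is $2$-functorial on display tangent morphisms by Lemma~\ref{lemma:LC-functoriality} and strong tangent morphisms preserve linear connections, a strong tangent monad $(S,\alpha)$ on $(\X,\TT)$ lifts to a tangent monad $\LC(S,\alpha)$ on $\LC(\X,\TT)$. Concretely, $\LC(S,\alpha)$ sends a linear connection $(\q\colon E\to M;k,h)$ to the linear connection on $\LC(S,\alpha)(\q)$ whose vertical part is $Sk\o\alpha^{-1}$ and whose horizontal part is the morphism induced from $h$ by $\alpha^{-1}$; the unit $\eta$ and the multiplication $\mu$ of $S$ become morphisms of linear connections by naturality and their compatibility with $\alpha$.

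The main step is to verify that $(\LC(\X,\TT),\LC(S,\alpha))$ represents the tangentad of linear connections of $(S,\alpha)$ in $\TngMnd$. Consider a tangent monad $(S',\alpha')$ on a tangent category $(\X',\TT')$. Unwrapping the definition, a linear connection in the Hom-tangent category $[(\X',\TT';S',\alpha')\|(\X,\TT;S,\alpha)]$ amounts to a pointwise display differential bundle $\q\colon(K,\theta)\to(G,\beta)$ equipped with a distributive law $\tau$, compatible with the monad structures and with $\alpha,\alpha'$, together with tangent natural transformations $k$ and $h$ that form a linear connection on $\q$ and that are themselves natural transformations of monads, i.e.\ compatible with $\tau$. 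By the universal property of linear connections (Theorem~\ref{theorem:universality-linear-connections}), the underlying data $(\q;k,h)$ corresponds to a lax tangent morphism $\Lambda[\q;k,h]\colon(\X',\TT')\to\LC(\X,\TT)$, and the compatibility of the differential bundle structure, of $k$, and of $h$ with $\tau$ equips $\Lambda[\q;k,h]$ with a distributive law $\Theta$ turning it into a morphism of tangent monads $\Lambda[\q;k,h]\colon(\X',\TT';S',\alpha')\to(\LC(\X,\TT);\LC(S,\alpha))$.

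Finally I would check uniqueness, exactly as in the differential-object case: $(\Lambda[\q;k,h],\Theta)$ is the unique morphism of tangent monads whose image under $\Gamma_{\Univ{\q^\LC,k^\LC,h^\LC}}$ recovers $(\q;k,h)$ together with its monad distributive law $\tau$. The main obstacle will be the bookkeeping required to confirm that the lifted distributive law $\Theta$ is simultaneously a morphism of differential bundles and of the vertical and horizontal connections; this reduces to checking that $\tau$ intertwines $k$ with $Sk\o\alpha^{-1}$, and similarly for $h$, which follows from $k$ and $h$ being natural transformations of monads, but demands some care because the horizontal part $h$ lands in the pointwise horizontal bundle $\bar\H\q$ rather than directly in $\bar\T(K,\theta)$. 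As these checks are routine given the universal properties already in place, I would present the construction and leave the equational verifications to the reader, in line with Theorem~\ref{theorem:differential-objects-tangent-monads}.
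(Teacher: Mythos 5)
Your proposal is correct and matches the argument the paper intends: the paper omits the proof of this theorem, deferring to the analogous proof of Theorem~\ref{theorem:differential-objects-tangent-monads}, and your write-up is precisely that proof transported to $\LC$ — lift $(S,\alpha)$ to $\LC(S,\alpha)$ via strength, unwrap a linear connection in the Hom-tangent category of $\TngMnd$ as $(\q;k,h)$ plus a distributive law $\tau$ compatible with everything, and use $\Lambda[\q;k,h]$ together with the induced $\Theta$ to establish the universal property. Your flagged subtlety about the horizontal part landing in $\bar\H\q$ is the right thing to watch, but it is handled exactly as you indicate.
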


\subsubsection*{Tangent fibrations}
\label{subsubsection:connections-tangent-fibrations}
Since the underlying functor $\Pi\colon(\X',\TT')\to(\X,\TT)$ of a tangent fibration is a strict tangent morphism, hence strong, it lifts to a strict tangent morphism $\LC(\Pi)\colon\LC(\X',\TT')\to\LC(\X,\TT)$. However, since $\Pi$ is a tangent fibration, one can show that $\LC(\Pi)$ is also a tangent fibration.

\begin{lemma}
\label{lemma:connections-tangent-fibration}
Consider a (cloven) tangent fibration $\Pi\colon(\X',\TT')\to(\X,\TT)$ between two tangent categories. The strict tangent morphism
\begin{align*}
\LC(\Pi)&\colon\LC(\X',\TT')\to\LC(\X,\TT)
\end{align*}
which sends a linear connection $(\q';k',h')$ to linear connection $(\Pi(\q');\Pi(k'),\Pi(h'))$, is a (cloven) tangent fibration.
\end{lemma}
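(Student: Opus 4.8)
The plan is to lift the Cartesian structure one level up, from the differential bundles underlying the connections to the connections themselves, exploiting that $\DB(\Pi)\colon\DB(\X',\TT')\to\DB(\X,\TT)$ is already known to be a (cloven) tangent fibration by Lemma~\ref{lemma:differential-bundles-tangent-fibration}. Concretely, I must verify the three conditions from the definition of a tangent fibration (Example~\ref{example:tangent-fibrations}): that $\LC(\Pi)$ is a cloven fibration, that it strictly preserves the tangent structure, and that the tangent bundle functor $\T^\LC$ preserves the chosen Cartesian lifts. Strict preservation of the tangent structure is immediate, since the tangent structures on $\LC(\X',\TT')$ and $\LC(\X,\TT)$ are defined componentwise from those of $(\X',\TT')$ and $(\X,\TT)$, which $\Pi$ preserves strictly. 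The substance of the proof lies in constructing the Cartesian lifts and checking they are preserved by $\T^\LC$.

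For the fibration structure, I would proceed as in the proof of Lemma~\ref{lemma:differential-bundles-tangent-fibration}. Consider a morphism of linear connections $(f,g)\colon(\q_1;k_1,h_1)\to\LC(\Pi)(\q';k',h')$ in $\LC(\X,\TT)$ together with a linear connection $(\q';k',h')$ over its codomain. First take the Cartesian lift $(\varphi_f,\varphi_g)\colon(f,g)^\*\q'\to\q'$ of the underlying differential-bundle morphism, which exists because $\DB(\Pi)$ is a tangent fibration. I then equip $(f,g)^\*\q'$ with a vertical connection: since $(f,g)$ is a morphism of vertical connections, the composite $k'\o\T'\varphi_g$ lies over $g\o k_1$ in the base, so by the universal property of the Cartesian lift $\varphi_g$ it factors uniquely as $\varphi_g\o(f,g)^\*k'$ for a morphism $(f,g)^\*k'\colon\T'((f,g)^\*E')\to(f,g)^\*E'$ lying over $k_1$. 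The horizontal connection $(f,g)^\*h'$ is lifted in the same spirit, using the universal property of the horizontal bundle $\H$ — itself a pullback, hence built from Cartesian lifts — together with $h'$.

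Next I would verify that the lifted data genuinely define a linear connection, i.e.\ that $(f,g)^\*k'$ and $(f,g)^\*h'$ satisfy the axioms of Definitions~\ref{definition:linear-vertical-connection},~\ref{definition:linear-horizontal-connection}, and~\ref{definition:connection}. The technique is the one used repeatedly for differential objects and bundles: each axiom is an equation of morphisms into $(f,g)^\*E'$ whose two sides lie over the same base morphism (because the corresponding axiom already holds downstairs for $(\q_1;k_1,h_1)$) and become equal after postcomposition with the Cartesian $\varphi_g$; uniqueness in the universal property of $\varphi_g$ then forces the equation. That the tangent-laden axioms descend in this way relies on $\Pi$ preserving the tangent structure strictly and on $\T'$ preserving Cartesian lifts. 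Finally, I would show that $(\varphi_f,\varphi_g)$ is Cartesian in $\LC(\X',\TT')$: its universality among morphisms of linear connections follows from its universality among morphisms of differential bundles, after checking, again by the same uniqueness argument, that the induced factorizations automatically commute with the connections.

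I expect the main obstacle to be the two additive-flavoured axioms of a full linear connection, namely the orthogonality condition and the direct-sum identity $\<k,p\>(z\times l)\T s_q+\<\T q,p\>h=\id$ of Definition~\ref{definition:connection}, since these interleave the additive bundle structure with the tangent structure. Handling them requires knowing that the pulled-back differential bundle $(f,g)^\*\q'$ carries its additive structure compatibly with the Cartesian lift, which is exactly what $\DB(\Pi)$ being a tangent fibration provides; once this is in hand, the same lift-the-equation argument applies. The remaining verification — that $\T^\LC$ sends the Cartesian lift $(\varphi_f,\varphi_g)$ to a Cartesian lift — reduces to the corresponding fact for $\DB(\Pi)$ together with the compatibility of the lifted connections with the tangent-lifted connections $k_\T$ and $h_\T$, and is routine.
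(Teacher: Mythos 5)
Your proposal is correct and matches the strategy the paper intends: the paper states this lemma without proof (declaring the arguments ``fairly similar'' to earlier ones), and your argument is precisely the lift-via-Cartesian-universal-property technique used in detail for Lemma~\ref{lemma:differential-objects-tangent-fibration}, layered on top of Lemma~\ref{lemma:differential-bundles-tangent-fibration}. The only minor imprecision is that the lift of the horizontal connection is obtained from the Cartesianness of $\T'\varphi_g$ applied to the composite $\H((f,g)^\*\q')\to\H\q'\xrightarrow{h'}\T'E'$, with the universal property of the horizontal bundle used only to build the comparison map $\H((f,g)^\*\q')\to\H\q'$; this does not affect the correctness of the argument.
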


\begin{theorem}
\label{theorem:connections-tangent-fibrations}
The $2$-category $\Fib$ of (cloven) fibrations admits the constructions of linear connections. In particular, the tangentad of linear connections of a (cloven) tangent fibration $\Pi$ is the tangent fibration $\LC(\Pi)$.
\end{theorem}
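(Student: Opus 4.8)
The plan is to mirror the arguments used for differential objects and differential bundles in Theorems~\ref{theorem:differential-objects-tangent-fibrations} and~\ref{theorem:differential-bundles-tangent-fibrations}, building on Lemma~\ref{lemma:connections-tangent-fibration}, which already asserts that $\LC(\Pi)\colon\LC(\X',\TT')\to\LC(\X,\TT)$ is a (cloven) tangent fibration and hence a tangentad in $\Fib$. First I would establish the content of that lemma: given a linear morphism of connections downstairs and a linear connection $(\q';k',h')$ upstairs lying over its codomain, one uses the universal property of the Cartesian lift of $\Pi$ — together with the universality of the vertical lift and of the pullback defining the horizontal bundle — to lift the vertical connection $k'$ and the horizontal connection $h'$ uniquely so that they commute with the lift, exactly as the differential projection is lifted in Lemma~\ref{lemma:differential-objects-tangent-fibration}. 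Since $\Pi$ strictly preserves the tangent and differential-bundle structure, the retract, linearity, orthogonality, and direct-sum axioms are inherited, so the lifted data again forms a linear connection and $\LC(\Pi)$ is a tangent fibration.

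For the universal property, I would take an arbitrary tangent fibration $\Pi_\b\colon(\X'_\b,\TT_\b)\to(\X_\b,\TT_\b)$ together with a morphism of tangent fibrations $(G,\beta;G',\beta')\colon\Pi_\b\to\Pi$ carrying a linear-connection structure on its associated pointwise display differential bundle in the Hom-tangent category $[\Pi_\b\|\Pi]$ of $\Fib$. Unwrapping this in the style of Theorem~\ref{theorem:differential-objects-tangent-fibrations} yields two families of structure morphisms, one over the base objects and one over the total objects, which assemble object-wise into linear connections in $(\X,\TT)$ and in $(\X',\TT')$ respectively and which agree after applying $\Pi$. By the $\Cat$-level universal property of Corollary~\ref{corollary:universality-linear-connections}, these families correspond to lax tangent morphisms $(H,\gamma)\colon(\X_\b,\TT_\b)\to\LC(\X,\TT)$ and $(H',\gamma')\colon(\X'_\b,\TT'_\b)\to\LC(\X',\TT')$, and the agreement after $\Pi$ says precisely that $\LC(\Pi)\o(H',\gamma')=(H,\gamma)\o\Pi_\b$, so the pair commutes with the fibrations by construction.

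The main obstacle, as in the differential-objects case, is to show that $(H,H')$ preserves Cartesian lifts, so that it is genuinely a $1$-morphism of $\Fib$. I would fix a morphism $f$ of $(\X_\b,\TT_\b)$ and a Cartesian lift $\varphi_f$ along $\Pi_\b$, observe that $G'\varphi_f$ is Cartesian along $\Pi$ because $(G,G')$ is a morphism of fibrations, and then run the diagram chase of Theorem~\ref{theorem:differential-objects-tangent-fibrations}: any comparison morphism $w$ of linear connections downstairs produces, via the Cartesian universal property, a unique lift $\xi_w$, and one checks that $\xi_w$ is compatible with each component of the connection — the vertical connection $k$ and the horizontal connection $h$ — by projecting along $\Pi$ and invoking the universal property of $G'\varphi_f$. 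This step must be carried out separately for $k$ and for $h$, the differential-bundle compatibilities being already supplied by Theorem~\ref{theorem:differential-bundles-tangent-fibrations}; the orthogonality and direct-sum equations, being purely equational, then transfer automatically. Finally, uniqueness of $(H,H')$ follows from the uniqueness in the $\Cat$-level universal property of Theorem~\ref{theorem:universality-linear-connections} together with the uniqueness of Cartesian lifts, exactly as for differential objects, completing the verification that $\LC(\Pi)$ is the tangentad of linear connections of $\Pi$ and hence that $\Fib$ admits the construction of linear connections.
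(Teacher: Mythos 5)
Your proposal matches the paper's intended argument: the paper omits the proof of this theorem, stating only that it follows the same pattern as the tangent-fibration cases for differential objects and differential bundles (Theorems~\ref{theorem:differential-objects-tangent-fibrations} and~\ref{theorem:differential-bundles-tangent-fibrations}), which is exactly the strategy you carry out — lifting $k'$ and $h'$ along Cartesian lifts for Lemma~\ref{lemma:connections-tangent-fibration}, then verifying the universal property by building $(H,H')$ and checking preservation of Cartesian lifts component by component. No substantive differences or gaps.
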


\subsubsection*{Tangent indexed categories}
\label{subsubsection:connections-tangent-categories}
In this section, we employ the Grothendieck $2$-equivalence $\TngFib\simeq\TngIndx$ to compute the tangentad of linear connections for tangent indexed categories.

\begin{proposition}
\label{proposition:equivalence-connections}
Let $\CC$ and $\CC'$ be two $2$-categories and suppose there is a $2$-equivalence $\Xi\colon\Tng(\CC)\simeq\Tng(\CC')$ of the $2$-categories of tangentads of $\CC$ and $\CC'$. If a tangentad $(\X,\TT)$ of $\CC$ admits the construction of linear connections and $\LC(\X,\TT)$ denotes the tangentad of linear connections of $(\X,\TT)$, so does $\Xi(\X,\TT)$ and the tangentad of linear connections of $\Xi(\X,\TT)$ is $\Xi(\LC(\X,\TT))$.
\end{proposition}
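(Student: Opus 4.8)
The plan is to transport the universal property of linear connections (Theorem~\ref{theorem:universality-linear-connections}) across the $2$-equivalence $\Xi$, exactly as in the proofs of Propositions~\ref{proposition:equivalence-differential-objects} and~\ref{proposition:equivalence-differential-bundles} (modelled on~\cite[Proposition~5.5]{lanfranchi:tangentads-II}). The starting observation is that a $2$-equivalence $\Xi\colon\Tng(\CC)\simeq\Tng(\CC')$ restricts to equivalences of Hom-categories $\Tng(\CC)[\X',\TT';\X,\TT]\simeq\Tng(\CC')[\Xi(\X',\TT');\Xi(\X,\TT)]$. By Proposition~\ref{proposition:hom-tangent-categories} the tangent structure on these Hom-categories is induced by $\bar\T$, which $\Xi$ preserves up to coherent $2$-isomorphism; hence the above become equivalences of Hom-\emph{tangent} categories $[\X',\TT'\|\X,\TT]\simeq[\Xi(\X',\TT')\|\Xi(\X,\TT)]$.

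First I would check that these equivalences preserve the ingredients of a pointwise display differential bundle together with a linear connection. Since a pointwise tangent display map is characterised by its pullbacks being preserved by all functors of type $\CC[F\|\X]$ (Definition~\ref{definition:pointwise-tangent-display-map}), and a $2$-equivalence preserves $2$-limits and thus pointwise pullbacks, $\Xi$ sends the universal pointwise display differential bundle $\UnivQ^\LC$ together with its universal linear connection $(\Univ{k^\LC},\Univ{h^\LC})$ in $[\LC(\X,\TT)\|\X,\TT]$ to a pointwise display differential bundle equipped with a linear connection in $[\Xi(\LC(\X,\TT))\|\Xi(\X,\TT)]$. This furnishes the candidate universal linear connection for $\Xi(\X,\TT)$.

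Next I would establish corepresentability. For any tangentad $(\Y,\mathbb{S})$ of $\CC'$, essential surjectivity of $\Xi$ gives $(\Y,\mathbb{S})\simeq\Xi(\X',\TT')$ for some tangentad $(\X',\TT')$ of $\CC$, and stringing together the equivalence of Hom-tangent categories with the isomorphism $\Gamma_{\Univ{\q^\LC,k^\LC,h^\LC}}$ of Theorem~\ref{theorem:universality-linear-connections} yields
\begin{align*}
\LC[\Y,\mathbb{S}\|\Xi(\X,\TT)]&\simeq\LC[\X',\TT'\|\X,\TT]\cong[\X',\TT'\|\LC(\X,\TT)]\\
&\simeq[\Xi(\X',\TT')\|\Xi(\LC(\X,\TT))]\simeq[\Y,\mathbb{S}\|\Xi(\LC(\X,\TT))].
\end{align*}
Because $\Xi$ is a $2$-functor and each $\Gamma$ is defined by postcomposition with the universal connection, I would verify that this composite equivalence coincides with the transform induced by the transported connection, so that the latter is invertible. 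This is precisely what Definition~\ref{definition:construction-linear-connections} requires, and it identifies $\Xi(\LC(\X,\TT))$ as the tangentad of linear connections of $\Xi(\X,\TT)$.

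The hard part will be the bookkeeping in that last identification: one must confirm that the equivalence of Hom-tangent categories induced by $\Xi$ is genuinely compatible, \emph{as a tangent functor}, with the postcomposition operations defining the various $\Gamma$'s, and that the coherence isomorphisms witnessing $\Xi$'s preservation of the tangent structure on Hom-categories do not obstruct invertibility. Since every object in sight is preserved by $\Xi$ up to coherent $2$-isomorphism and each $\Gamma$ is manifestly natural in its argument, this compatibility is routine but tedious, which is why—as in the analogous propositions—the detailed verification may be left to the reader.
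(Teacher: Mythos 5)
Your proposal follows exactly the route the paper intends: the paper offers no argument for this proposition beyond the pattern set by Propositions~\ref{proposition:equivalence-differential-objects} and~\ref{proposition:equivalence-differential-bundles}, which simply defer to the transport-of-universal-property argument of \cite[Proposition~5.5]{lanfranchi:tangentads-II}, and that is precisely what you carry out (transport $\UnivQ^\LC$ with $(\Univ{k^\LC},\Univ{h^\LC})$ along the induced equivalences of Hom-tangent categories and chain the resulting equivalences with $\Gamma_{\Univ{\q^\LC,k^\LC,h^\LC}}$). The one point to watch --- which the paper also leaves implicit --- is that your chain only produces an \emph{equivalence}, whereas Definition~\ref{definition:construction-linear-connections} asks for the induced $\Gamma$ to be invertible, so the ``bookkeeping'' you defer to the reader is where the remaining content lies; this is not a defect relative to the paper's own treatment.
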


Proposition~\ref{proposition:equivalence-connections} together with the Grothendieck $2$-equivalence $\TngFib\cong\TngIndx$ allows us to compute the constructions of linear connections of tangent indexed categories. Consider a tangent indexed category $(\X,\TT;\IND,\TT')$ whose underlying indexed category $\IND\colon\X^\op\to\Cat$ sends an object $M$ of $\X$ to the category $\X^M$ and a morphism $f\colon M\to N$ to the functor $f^\*\colon\X^N\to\X^M$. Define $\LC(\X,\TT;\IND,\TT')$ as follows:
\begin{description}
\item[Base tangent category] The base tangent category is $\LC(\X,\TT)$;

\item[Indexed category] The indexed category $\LC(\IND)\colon\LC(\X,\TT)^\op\to\Cat$ sends a linear connection $(\q;k,h)$ of $(\X,\TT)$ to the category $\LC^{(\q;k,h)}$ whose objects are tuples $(\q';k',h')$ formed by a differential bundle $\q'\in\DB^\q$ and by two morphisms:
\begin{align*}
k'&\colon\T'^EE'\to k^\*E'       &h'&\colon{\pi_1^E}^\*E'\times_{(\pi_1\o q)^\*M'}{\pi_2^{\T M}}^\*\T'^MM'\to h^\*\T'^EE'
\end{align*}
such that $(\q,\q';k,k';h,h')$ form a linear connection in the tangent category of elements of $\IND$. Morphisms of $\LC^{(\q;k,h)}$ are morphisms $(\varphi,\psi)\colon\q'\to\q''$ of $\DB^\q$ which commute with the connections.
\par Furthermore, $\LC(\IND)$ sends a morphism $(f,g)\colon(\q_1;k_1,h_1)\to(\q_2;k_2,h_2)$ of linear connections to the functor
\begin{align*}
(f,g)^\*&\colon\LC^{(\q_1;k_1,h_1)}\to\LC^{(\q_2;k_2,h_2)}
\end{align*}
which sends a tuple $(\q';k',h')$ to $((f,g)^\*\q';(f,g)^\*k',(f,g)^\*h')$, where $(f,g)^\*\q'=(\DB(\IND)(f,g))(\q)$ and:
\begin{align*}
&(f,g)^\*k'\colon\T^{E_1}g^\*E'\xrightarrow{\xi^g}(\T g)^\*\T^{E_2}E'\xrightarrow{(\T g)^\*k'}(\T g)^\*k_2^\*E'\xrightarrow{k_2\o\T g=g\o k_1}k_1^\*g^\*E'\\
&(f,g)^\*h'\colon{\pi_1^{E_1}}^\*g^\*E'\times_{(\pi_1^{E_1}\o q_1)^\*f^\*M'}{\pi_2^{\T M_1}}^\*\T^{M_1}f^\*M'\cong\\
&\qquad\cong(g\times_f\T f)^\*{\pi_1^{E_2}}^\*E'\times_{(g\times_f\T f)^\*(\pi_1^{E_2}\o q_2)M'}(g\times_f\T f)^\*{\pi_2^{\T M_2}}^\*\T^{M_2}M'\xrightarrow{(g\times_f\T f)^\*h'}(g\times_f\T f)^\*h^\*\T^{E_2}E'\cong\\
&\qquad\cong h^\*(\T g)^\*\T^{E_2}E'\xrightarrow{\xi^g}h^\*\T^{E_1}g^\*E'
\end{align*}
where we used the following equalities:
\begin{align*}
&g\o\pi_1^{E_1}=\pi_1^{E_2}\o(g\times_f\T f)       &&g\o\pi_2^{E_1}=\pi_2^{E_2}\o(g\times_f\T f)
&&f\o q_1\o\pi_1^{E_1}=q_2\o\pi_1^{E_2}\o(g\times_f\T f)
\end{align*}

\item[Indexed tangent bundle functor] The indexed tangent bundle functor $\LC(\T')$ consists of the list of functors
\begin{align*}
&\T'^{(\q;k,h)}\colon\LC^\q\to\LC^{\T^\LC(\q;k,h)}
\end{align*}
which send a tuple $(\q';k',h')$ to the tuple $\T^\DB\q';\T^{(\q;k,h)}k',\T^{(\q;k,h)}h')$ where:
\begin{align*}
&\T^{(\q;k,h)}k'\colon\T^{\T E}\T'^EE'\xrightarrow{c'}c^\*\T^{\T E}\T'^EE'\xrightarrow{\T'^Ek'}c^\*\T'^Ek^\*E'\xrightarrow{\xi_k}c^\*(\T k)^\*\T'^EE'\xrightarrow{\IND_2}k_\T^\*\T'^EE'\\
&\T^{(\q;k,h)}h'\colon\pi_1^\*\T'^EE'\times_{\T'^MM'}\pi_2^\*\T'^{\T M}\T'^MM'\xrightarrow{\id\times c'}(\T\pi_1)^\*\T'^EE'\times_{\T'^MM'}\pi_2^\*c^\*\T'^{\T M}\T'^MM'\cong\\
&\qquad\cong(\id_E\times_Mc)^\*\T'^{E\times_M\T M}(\pi_1^\*E'\times_{M'}\T'^MM')\xrightarrow{(\id_E\times_Mc)^\*\T'^{E\times_M\T M}h'}(\id_E\times_Mc)^\*\T'^{E\times_M\T M}h^\*\T'^EE'\mathdash\\
&\qquad\xrightarrow{\xi_h}(\id_E\times_Mc)^\*(\T h)^\*\T'^{\T E}\T'^EE'\xrightarrow{(\id_E\times_Mc)^\*(\T h)^\*c'}(\id_E\times_Mc)^\*(\T h)^\*c^\*\T'^{\T E}\T'^EE'\xrightarrow{\IND_2}h_\T^\*\T'^{\T E}\T'^EE'
\end{align*}
Moreover, $\T'^\q$ sends a morphism $(\varphi,\psi)\colon(\q'_1;k'_1,h'_1)\to(\q'_2;k'_2,h'_2)$ to $(\T'^M\varphi,\T'^E\psi)$. The distributors of $\T'^\q$ are pairs $(\xi^f,\xi^g)$ of distributors of $\T'^M$ and $\T'^E$;

\item[Indexed natural transformations] The structural indexed natural transformations of $\LC(\IND)$ are the same as for $\IND$.
\end{description}

\begin{theorem}
\label{theorem:connections-tangent-indexed-categories}
The $2$-category $\Indx$ of indexed categories admits the construction of linear connections. In particular, given a tangent indexed category $(\X,\TT;\IND,\TT')$, the tangent indexed category of linear connections of $(\X,\TT;\IND,\TT')$ is the tangent indexed category $\LC(\X,\TT;\IND,\TT')=(\LC(\X,\TT),\LC(\IND),\LC(\TT'))$.
\end{theorem}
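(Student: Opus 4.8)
The plan is to mirror the strategy used for differential objects in Theorem~\ref{theorem:differential-objects-tangent-indexed-categories}, transporting the construction of linear connections across the Grothendieck $2$-equivalence $\TngFib\simeq\TngIndx$ of~\cite[Theorem~5.5]{lanfranchi:grothendieck-tangent-cats}. First I would invoke Proposition~\ref{proposition:equivalence-connections}: since Theorem~\ref{theorem:connections-tangent-fibrations} establishes that $\Fib$ admits the construction of linear connections, and since the Grothendieck equivalence identifies the $2$-category of tangent fibrations with that of tangent indexed categories, Proposition~\ref{proposition:equivalence-connections} immediately yields that $\Indx$ admits the construction of linear connections as well. This settles the first assertion of the theorem with no further work.

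For the explicit identification, let $\Pi_{(\IND,\TT')}\colon(\X',\TT')\to(\X,\TT)$ denote the tangent fibration associated to the tangent indexed category $(\X,\TT;\IND,\TT')$, where $(\X',\TT')$ is its tangent category of elements. By Proposition~\ref{proposition:equivalence-connections}, the tangentad of linear connections of $(\X,\TT;\IND,\TT')$ corresponds under the equivalence to $\LC(\Pi_{(\IND,\TT')})$, which by Theorem~\ref{theorem:connections-tangent-fibrations} is the tangent fibration $\LC(\Pi_{(\IND,\TT')})\colon\LC(\X',\TT')\to\LC(\X,\TT)$. The remaining task is to translate this tangent fibration back into an indexed category and check that it agrees, componentwise, with the concrete data $(\LC(\X,\TT),\LC(\IND),\LC(\TT'))$ spelled out before the statement.

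The key computation is the identification of the fibres. A linear connection in the total tangent category $(\X',\TT')$ is exactly a datum $(\q,\q';k,k';h,h')$ forming a linear connection in the tangent category of elements, and projecting it along $\LC(\Pi_{(\IND,\TT')})$ returns the base linear connection $(\q;k,h)$. Hence the fibre over $(\q;k,h)$ consists precisely of those tuples $(\q';k',h')$ for which $(\q,\q';k,k';h,h')$ is a linear connection, which is by definition the category $\LC^{(\q;k,h)}$. A parallel unwinding shows that the reindexing functors $(f,g)^\*$ and the indexed tangent bundle functor $\T'^{(\q;k,h)}$ produced by the Grothendieck translation coincide with the formulas given for $\LC(\IND)$ and $\LC(\TT')$, with the coherence isomorphisms $\IND_2$ and the distributors $\xi^f,\xi^g$ appearing exactly as in those composites.

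I expect the main obstacle to be this last verification: matching the abstractly-produced reindexing and tangent-bundle functors against the explicit composites written out for $\LC(\IND)$ and $\LC(\TT')$. In particular, the formula for $\T'^{(\q;k,h)}h'$ involves several nested applications of the canonical flip $c'$, the distributors $\xi_k$ and $\xi_h$, and the compositor $\IND_2$, and confirming that the Grothendieck construction reproduces exactly this morphism—rather than one differing by a coherence cell—is the delicate point. Since this is purely a matter of bookkeeping the coherence data, and is entirely analogous to the differential-object and differential-bundle cases treated in Theorems~\ref{theorem:differential-objects-tangent-indexed-categories} and~\ref{theorem:differential-bundles-tangent-indexed-categories}, I would present the final constructions and leave the detailed diagram chase to the reader, as done there.
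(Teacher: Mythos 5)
Your proposal matches the paper's approach: the paper omits an explicit proof here (stating in the section preamble that the arguments parallel the earlier cases), but its proof of the analogous Theorem~\ref{theorem:differential-objects-tangent-indexed-categories} proceeds exactly as you describe — invoke the transport result (there Proposition~\ref{proposition:equivalence-differential-objects}, here Proposition~\ref{proposition:equivalence-connections}) together with the Grothendieck $2$-equivalence and the fibration-level theorem, then identify the fibres by unwinding. Your identification of the coherence-matching in $\LC(\IND)$ and $\LC(\TT')$ as the only delicate point is accurate, and deferring it as bookkeeping is consistent with what the paper does.
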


\subsubsection*{Tangent split restriction categories}
\label{subsubsection:connections-tangent-split-restriction-categories}
In this section, we consider the construction of linear connections for tangent split restriction categories.

\begin{lemma}
\label{lemma:connections-tangent-split-restriction-categories}
Consider a tangent split restriction category $(\X,\TT)$ and let $\LC(\X,\TT)$ be the category whose objects are linear connections $(\q;k,h)$ in the tangent subcategory $\Tot(\X,\TT)$ and morphisms are morphisms $(f,g)$ of linear connections such that also their restriction idempotents $(\bar f,\bar g)$ are also morphisms of linear connections. The tangent split restriction structure on $(\X,\TT)$ lifts to $\LC(\X,\TT)$.
\end{lemma}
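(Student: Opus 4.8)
The plan is to mirror the proof of Lemma~\ref{lemma:differential-objects-tangent-split-restriction-categories}, adapting it to the richer structure of a linear connection, namely the underlying display differential bundle $\q$ together with its vertical part $k$ and horizontal part $h$. The two things to establish are that $\LC(\X,\TT)$ is a split restriction category and that $\TT^\LC$ is a genuine tangent restriction structure on it.

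First I would establish that $\LC(\X,\TT)$ is a split restriction category. By the very definition of its morphisms, the restriction idempotent $(\bar f,\bar g)$ of a morphism $(f,g)$ of linear connections is again a morphism of linear connections, so the assignment $(f,g)\mapsto(\bar f,\bar g)$ endows $\LC(\X,\TT)$ with a restriction structure inherited from $(\X,\TT)$; the restriction axioms hold because they hold componentwise in $(\X,\TT)$. To see that these idempotents split, I would use that a restriction idempotent on a linear connection $(\q;k,h)$ commutes with all the structural morphisms $q$, $z_q$, $s_q$, $l_q$, $k$, and $h$. Since $(\X,\TT)$ is split, such an idempotent splits in $(\X,\TT)$, and the resulting splitting object inherits a unique linear-connection structure for which the two splitting maps become morphisms of linear connections; this exhibits the splitting inside $\LC(\X,\TT)$.

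Second I would verify that $\TT^\LC$ is a tangent restriction structure. As in Lemma~\ref{lemma:differential-objects-tangent-split-restriction-categories}, the decisive observation is that in a split restriction category the restriction products and restriction pullbacks coincide with the ordinary products and pullbacks computed in the subcategory of total maps; consequently $\TT^\LC$ is a tangent restriction structure on $\LC(\X,\TT)$ precisely when it restricts to an honest tangent structure on $\Tot(\LC(\X,\TT))$. By construction $\Tot(\LC(\X,\TT))$ is exactly $\LC(\Tot(\X,\TT))$, the tangent category of linear connections on the total tangent category $\Tot(\X,\TT)$, which is a genuine tangent category by the ordinary theory recalled in Section~\ref{subsection:tangent-category-connections}. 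Hence $\TT^\LC$ restricts to a tangent structure there, and $\LC(\X,\TT)$ is a tangent split restriction category.

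The main obstacle will be the bookkeeping around the horizontal component $h\colon\H q\to\T E$: the horizontal bundle $\H q=E\times_M\T M$ and the fibre power $E_2$ are now restriction pullbacks, so I must check that the restriction idempotent acts coherently on them, that is, that the idempotent induced on $\H q$ by $\bar g$ on $E$ and $\T\bar f$ on $\T M$ commutes with $h$ and splits compatibly with the chosen splittings of $\bar f$ and $\bar g$. This follows from the universal property of restriction pullbacks together with their preservation by the tangent bundle functor, which is part of the definition of a tangent restriction category; the remaining verifications are routine and parallel to the differential-object case, so I would leave them to the reader.
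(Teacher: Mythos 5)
Your proposal is correct and follows essentially the route the paper intends: the paper omits the proof of this lemma, deferring to the argument of Lemma~\ref{lemma:differential-objects-tangent-split-restriction-categories}, and your adaptation (idempotents inherited componentwise, splittings lifted along the structural morphisms, and the reduction of the tangent restriction axioms to the honest tangent structure on $\Tot(\LC(\X,\TT))=\LC(\Tot(\X,\TT))$) is exactly that argument transported to linear connections. Your added care about the restriction pullbacks $\H q$ and $E_2$ and their preservation by $\T$ is the right place to flag the only genuinely new bookkeeping.
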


\begin{definition}
\label{definition:restriction-linear-connection}
A \textbf{restriction differential bundle} in a Cartesian tangent split restriction category $(\X,\TT)$ is an object of $\LC(\X,\TT)$.
\end{definition}

Concretely, a restriction differential bundle consists of a restriction differential bundle $\q\colon E\to M$ together with two total maps
\begin{align*}
k&\colon\T E\to E     &h&\colon E\times_M\T M\to\T E
\end{align*}
where $E\times_M\T M$ is the restriction pullback of $q$ along $p_M$:
\begin{equation*}
\begin{tikzcd}
{E\times_M\T M} & {\T M} \\
E & M
\arrow["{\pi_2}", from=1-1, to=1-2]
\arrow["{\pi_1}"', from=1-1, to=2-1]
\arrow["\lrcorner"{anchor=center, pos=0.125}, draw=none, from=1-1, to=2-2]
\arrow["{p_M}", from=1-2, to=2-2]
\arrow["q"', from=2-1, to=2-2]
\end{tikzcd}
\end{equation*}
The total maps $k$ and $h$ satisfy the same equational axioms of a linear connection in ordinary tangent category theory.

\begin{theorem}
\label{theorem:connections-tangent-split-restriction-categories}
The $2$-category $\sRestrCat$ of split restriction categories admits the construction of linear connections. In particular, the tangentad of linear connections of a Cartesian tangent split restriction category $(\X,\TT)$ is the Cartesian tangent split restriction category $\LC(\X,\TT)$.
\end{theorem}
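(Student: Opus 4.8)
The plan is to follow the template of the proof of Theorem~\ref{theorem:differential-objects-tangent-split-restriction-categories}, replacing the differential-object datum by a linear-connection datum on a differential bundle. Recall from Example~\ref{example:tangent-restriction-categories} that the tangentads of $\sRestrCat$ are precisely the tangent split restriction categories, so it suffices to show that an arbitrary Cartesian tangent split restriction category $(\X,\TT)$ admits the construction of linear connections with $\LC(\X,\TT)$ as its tangentad of linear connections. The candidate object already exists: Lemma~\ref{lemma:connections-tangent-split-restriction-categories} establishes that $\LC(\X,\TT)$ is a tangent split restriction category, so only the universal property of Definition~\ref{definition:construction-linear-connections} needs to be verified.

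First I would fix a tangent split restriction category $(\X',\TT')$ together with a linear connection $\q\colon(K,\theta)\to(G,\beta)$ equipped with $(k,h)$ in the Hom-tangent category $[\X',\TT'\|\X,\TT]$; here $(G,\beta)$ and $(K,\theta)$ are lax tangent morphisms built from restriction functors and total distributive laws, while $q,z_q,s_q,l_q,k,h$ are total tangent natural transformations. Evaluating at each $A\in\X'$ produces a tuple $(\q_A\colon KA\to GA;k_A,h_A)$ of total maps of $(\X,\TT)$ which, by the equational axioms of a linear connection together with the restriction-pullback universality of the underlying differential bundle (inherited exactly as in Theorem~\ref{theorem:differential-bundles-tangent-split-restriction-categories}), is a restriction linear connection in the sense of Definition~\ref{definition:restriction-linear-connection}, i.e.\ an object of $\LC(\X,\TT)$.

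Next I would assemble these pointwise data into a lax tangent morphism $(H,\gamma)\colon(\X',\TT')\to\LC(\X,\TT)$ sending $A$ to $(\q_A;k_A,h_A)$ and $f\colon A\to B$ to the pair $(Gf,Kf)$, with distributive law $\gamma$ induced by $\beta$ and $\theta$. The naturality of $q,z_q,s_q,l_q,k,h$ makes each $(Gf,Kf)$ a morphism of linear connections, and since $G$ and $K$ are restriction functors one has $\bar{Hf}=(\bar{Gf},\bar{Kf})=H\bar f$, so $H$ preserves restriction idempotents and the restriction idempotent of each $(Gf,Kf)$ is again a morphism of connections, as required for $\LC(\X,\TT)$. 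Finally, I would check that $(H,\gamma)$ is the unique lax tangent morphism $(\X',\TT')\to\LC(\X,\TT)$ corresponding to $(\q;k,h)$, giving the invertibility of $\Gamma_{\UnivQ^\LC;\Univ{k^\LC},\Univ{h^\LC}}$ and hence the universal property.

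The step I expect to be the main obstacle is isolating the non-equational content: one must confirm that the restriction-pullback universality of the vertical lift and the existence of the relevant restriction pullbacks are entirely absorbed into the underlying restriction differential bundle, so that the extra data $k$ and $h$ contribute only the purely equational constraints recorded in Definition~\ref{definition:restriction-linear-connection}. Once this separation is made, the verifications of totality, naturality, and uniqueness reduce to routine bookkeeping identical in form to the split-restriction arguments for differential objects and differential bundles.
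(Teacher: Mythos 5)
Your proposal is correct and follows exactly the template the paper intends: the paper omits the proof of this theorem (stating only that the arguments are "fairly similar" to the earlier cases), and your argument mirrors the proof of Theorem~\ref{theorem:differential-objects-tangent-split-restriction-categories} in the way the author clearly has in mind, including the key observation that the non-equational axioms are carried entirely by the underlying restriction differential bundle so that $k$ and $h$ only impose equational constraints on total maps. No gaps.
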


\subsubsection*{Tangent restriction categories: a general approach}
\label{subsubsection:connections-tangent-restriction-categories}
To extend the construction of linear connections to tangent restriction categories, we adopt a similar technique to the one used to extend the constructions differential bundles.
\par Consider a pullback-extension context, that is, two $2$-categories $\CC$ and $\DD$ together with two $2$-functors
\begin{align*}
&\Xi\colon\DD\leftrightarrows\Tng(\CC)\colon\Inc
\end{align*}
together with a natural $2$-transformation
\begin{align*}
\eta_\X&\colon\X\to\Inc(\Xi(\X))
\end{align*}
natural in $\X\in\DD$. Let us also consider an object $\X$ of $\DD$ such that the tangentad $\Xi(\X)$ of $\CC$ admits the construction of linear connections. Finally, let us assume the existence of the following $2$-pullback in $\Parall_2(\DD)$:
\begin{equation}
\label{equation:pullback-extension-connections}
\begin{tikzcd}
{\LC(\X)} & {\Inc(\LC(\Xi(\X))} \\
\X & {\Inc(\Xi(\X))}
\arrow["{\LC(\eta,\eta)}", from=1-1, to=1-2]
\arrow["{(\Base^\LC_\X,\Tot^\LC_\X)}"', from=1-1, to=2-1]
\arrow["\lrcorner"{anchor=center, pos=0.125}, draw=none, from=1-1, to=2-2]
\arrow["{(\Inc(\Base^\LC_{\Xi(\X)}),\Inc(\Tot^\LC_{\Xi(\X)}))}", from=1-2, to=2-2]
\arrow["{(\eta,\eta)}"', from=2-1, to=2-2]
\end{tikzcd}
\end{equation}

\begin{definition}
\label{definition:connections-extension}
Let $(\CC,\DD;\Inc,\Xi;\eta)$ be a pullback-extension context. An object $\X$ of $\DD$ admits the \textbf{extended construction of linear connections} (w.r.t. to the pullback-extension context) if $\Xi(\X)$ admits the construction of linear connections in $\CC$, and the $2$-pullback diagram of Equation~\eqref{equation:pullback-extension-connections} exists in $\Parall_2(\DD)$. In this scenario, the \textbf{object of linear connections} (w.r.t. to the pullback-extension context) of $\X$ is the object $\LC(\X)$ of $\DD$, that is, the $2$-pullback of $(\Inc(\Base^\LC_{\Xi(\X)}),\Inc(\Tot^\LC_{\Xi(\X)}))$ along $(\eta,\eta)$.
\end{definition}

Consider a generic tangent restriction category $(\X,\TT)$ and let us unwrap the definition of $\LC(\Split_R(\X,\TT))$. The objects of $\LC(\Split_R(\X,\TT))$ are tuples $(\q;k,h)$ formed by a restriction differential bundle $\q\colon E\to M$ of $\Split_R(\X,\TT)$ together with two morphisms
\begin{align*}
k&\colon\T E\to E       &h&\colon E\times_M\T M\to\T E
\end{align*}
such that:
\begin{align*}
&\bar{k}=\bar q                     &&\bar q\o k=k\\
&\bar{h}=\bar q\times_M\T\bar z_q   &&\T\bar q\o h=h
\end{align*}
Moreover, $k$ and $h$ satisfy the axioms of a restriction differential bundle.
\par A morphism $(f,g)\colon(\q_1;k_1,h_1)\to(\q_2;k_2,h_2)$ of $\LC(\Split_R(\X,\TT))$ consists of a morphism $(f,g)\colon\q_1\to\q_2$ of restriction differential bundles which commutes with the structural morphism of the linear connections and with their restriction idempotents.
\par Now, consider a tangent (non-necessarily split) restriction category $(\X,\TT)$ and define $\LC(\X,\TT)$ to be the full subcategory of $\LC(\Split_R(\X,\TT))$ spanned by the objects $\q\colon E\to M$ where $q$, $z_q$, $s_q$, $l_q$, $k$, and $h$ are total maps in $(\X,\TT)$, which is precisely when $q$ and $z_q$ are total.

\begin{lemma}
\label{lemma:connections-tangent-restriction-categories}
Let $(\X,\TT)$ be a tangent restriction category. The subcategory $\LC(\X,\TT)$ of $\LC(\Split_R(\X,\TT))$ spanned by the objects $(\q;k,h)$ where $q$ and $z_q$ are total in $(\X,\TT)$ is a tangent restriction category.
\end{lemma}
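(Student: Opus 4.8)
The plan is to show that the tangent split restriction structure carried by $\LC(\Split_R(\X,\TT))$, established in Theorem~\ref{theorem:connections-tangent-split-restriction-categories}, descends to the full subcategory $\LC(\X,\TT)$ spanned by those linear connections $(\q;k,h)$ whose projection $q$ and zero $z_q$ are total. The argument runs in close parallel with Lemma~\ref{lemma:differential-bundles-tangent-restriction-categories} and with the treatment of vector fields in \cite[Section~5]{lanfranchi:tangentads-II}, so I would only highlight the points that genuinely need checking.

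First I would observe that, being a full subcategory of the restriction category $\LC(\Split_R(\X,\TT))$, $\LC(\X,\TT)$ inherits the restriction combinator: for any morphism $(f,g)$ of the subcategory its restriction idempotent $\bar{(f,g)}=(\bar f,\bar g)$ is an endomorphism of the source object, hence again a morphism of the full subcategory. Thus $\LC(\X,\TT)$ is automatically a restriction category. Next I would verify that the tangent endofunctor $\T^\LC$ of $\LC(\Split_R(\X,\TT))$ restricts to $\LC(\X,\TT)$: since the tangent functor of a restriction category preserves restriction idempotents, $\bar{\T f}=\T\bar f$, it carries total maps to total maps, so if $q$ and $z_q$ are total then the projection $\T q$ and the zero $\T z_q$ of $\T^\DB\q$ are total too. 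Hence $\T^\LC$ preserves the defining totality condition and restricts to a restriction endofunctor. The structural total transformations $p,z,s,l,c$ coincide with those of $(\X,\TT)$, so they remain total and natural on the subcategory.

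The substantive part, which I expect to be the main obstacle, is the verification of the restriction-limit axioms: the restriction $n$-fold pullbacks of $p$ along itself and the restriction pullback witnessing the universality of the vertical lift must exist in $\LC(\X,\TT)$ and be computed as in $\LC(\Split_R(\X,\TT))$. Following the principle used in Lemma~\ref{lemma:differential-objects-tangent-split-restriction-categories} — that in a split restriction category restriction pullbacks coincide with total pullbacks in the subcategory of total maps — I would reduce this to checking that $\TT^\LC$ restricts to a genuine tangent structure on $\Tot(\LC(\X,\TT))$. The only real content is that the apex of each such restriction pullback, regarded as a linear connection, again has total projection and zero; this holds because totality is stable under the pullbacks along $p$ and under the tangent functor, as established in the previous step. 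Granting these verifications, which are tedious but routine, $\LC(\X,\TT)$ satisfies all the axioms of a tangent restriction category, completing the proof.
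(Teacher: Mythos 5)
The paper states this lemma without proof (the whole subsection explicitly defers to the analogous arguments for vector fields and differential bundles), so your proposal is filling a gap the paper leaves open rather than diverging from an explicit argument. Your overall strategy — inherit the restriction combinator by fullness, check that $\T^\LC$ preserves the totality condition because $\T$ preserves restriction idempotents, and verify that the apexes of the required restriction pullbacks again satisfy the totality condition — is correct and is exactly the pattern the paper uses for differential objects (Lemma~\ref{lemma:differential-objects-tangent-split-restriction-categories}) and implicitly intends here. One small caveat: the reduction ``restriction pullbacks coincide with total pullbacks in $\Tot(-)$'' is only valid for \emph{split} restriction categories, and $\LC(\X,\TT)$ need not be split when $(\X,\TT)$ is not. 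The cleaner way to phrase your final step is to compute the restriction pullbacks (the $n$-fold pullbacks of $p$ and the universality pullback for $l$) in the ambient split category $\LC(\Split_R(\X,\TT))$, where they exist by Lemma~\ref{lemma:connections-tangent-split-restriction-categories}, and then observe that their apexes have total projection and zero — which is what you in fact check — so that, by fullness, they remain restriction pullbacks in the subcategory. With that rephrasing your argument is complete.
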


We can finally prove the main theorem of this section.

\begin{theorem}
\label{theorem:connections-tangent-restriction-categories}
Consider the pullback-extension context $(\RestrCat,\TngRestrCat;\Inc,\Split_R;\eta)$ of tangent restriction categories. The $2$-category $\TngRestrCat$ admits the extended construction of linear connections with respect to this pullback-extension context. Moreover, the object of linear connections of a tangent restriction category is the tangent restriction category $\LC(\X,\TT)$.
\end{theorem}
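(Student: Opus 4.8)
The plan is to mirror the proof of Theorem~\ref{theorem:differential-objects-tangent-restriction-categories}, adapting it to the parallel setting dictated by Equation~\eqref{equation:pullback-extension-connections}. First I would observe that the hypothesis of Definition~\ref{definition:connections-extension} is met: by Theorem~\ref{theorem:connections-tangent-split-restriction-categories}, the tangent split restriction category $\Split_R(\X,\TT)$ admits the construction of linear connections, with $\LC(\Split_R(\X,\TT))$ as its tangentad of linear connections. It then remains to compute the $2$-pullback of the pair $(\Inc(\Base^\LC_{\Split_R(\X,\TT)}),\Inc(\Tot^\LC_{\Split_R(\X,\TT)}))$ along $(\eta,\eta)$ in $\Parall_2(\TngRestrCat)$ and to identify it with the tangent restriction category $\LC(\X,\TT)$ described before Lemma~\ref{lemma:connections-tangent-restriction-categories}.

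Next I would analyse the objects of this parallel $2$-pullback. By the universal property in $\Parall_2(\TngRestrCat)$, such an object is a restriction linear connection $(\q;k,h)\in\LC(\Split_R(\X,\TT))$ together with the data witnessing that \emph{both} its base object $M$ and its total object $E$ factor through $\eta$, i.e.\ lie in the essential image of $(\X,\TT)$ inside $\Split_R(\X,\TT)$. Since an object of $\Split_R(\X,\TT)$ is an object of $(\X,\TT)$ equipped with a restriction idempotent, this condition is precisely the triviality of the two idempotents that $\Split_R$ attaches to $M$ and to $E$. As recorded in the discussion preceding Lemma~\ref{lemma:connections-tangent-restriction-categories} and by an argument analogous to~\cite[Theorem~5.11]{lanfranchi:tangentads-II}, these idempotents are the restriction idempotents $\bar{z_q}$ and $\bar q$, so their simultaneous triviality is equivalent to the totality of $z_q$ and of $q$.

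I would then invoke the reduction already observed for $\LC(\Split_R(\X,\TT))$: once $q$ and $z_q$ are total, the restriction idempotents $\bar{s_q}$, $\bar{l_q}$, $\bar k$, and $\bar h$ --- each determined by $\bar q$ and $\bar{z_q}$ --- collapse to identities, so every structure map of the connection is total. Hence the objects selected by the $2$-pullback are exactly the restriction linear connections all of whose structure maps are total, which is by definition the full subcategory $\LC(\X,\TT)$; morphisms and the tangent restriction structure transport across this identification by the same bookkeeping, using that $\LC(\X,\TT)$ is a tangent restriction category by Lemma~\ref{lemma:connections-tangent-restriction-categories}. By the universal property of the $2$-pullback in $\Parall_2(\TngRestrCat)$, $\LC(\X,\TT)$ is therefore the object of linear connections of $(\X,\TT)$, which proves both claims of the theorem.

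The step I expect to be the main obstacle is the second one: confirming that passing to $\Parall_2(\TngRestrCat)$ correctly enforces the triviality of the base \emph{and} total idempotents simultaneously, and that these idempotents are indeed $\bar{z_q}$ and $\bar q$. This is exactly the reason the plain $2$-pullback used for differential objects must here be replaced by a parallel one --- a single forgetful morphism cannot see both the base and the total object at once --- and it requires care to check that the universal property of the parallel pullback imposes both factorisations through $\eta$ rather than only one.
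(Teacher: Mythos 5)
The paper omits a proof of this theorem (the section explicitly states that the final constructions are given without proof details), but your argument is exactly the intended one: it mirrors the proof of Theorem~\ref{theorem:differential-objects-tangent-restriction-categories}, replacing the single forgetful morphism by the pair $(\Base^\LC,\Tot^\LC)$ in $\Parall_2(\TngRestrCat)$ and correctly identifying the two idempotents whose triviality the parallel pullback enforces with $\bar{z_q}$ and $\bar q$, hence with totality of $z_q$ and $q$, matching the description of $\LC(\X,\TT)$ given before Lemma~\ref{lemma:connections-tangent-restriction-categories}. No gaps; this is essentially the same approach the paper takes for the analogous results.
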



\begingroup

\endgroup

\end{document}